\newcommand{\vol}{\mathrm{Vol}}
\newcommand{\I}{\mathrm{i}}
\newcommand{\ep}{\epsilon}
\newcommand{\abs}[1]{| #1|}
\newcommand{\mm}{\mathcal{M}}
\newcommand{\mr}{\mathcal{R}}
\newcommand{\MU}{\mathcal{U}}
\newcommand{\mc}{\mathcal{C}}
\newcommand{\ms}{\mathcal{S}}
\newcommand{\tf}{\tilde{f}}
\newcommand{\tl}{\tilde{L}}
\newcommand{\hq}{\widehat{Q}}
\newtheorem{thm}{Theorem}
\newtheorem{lmm}[thm]{Lemma}
\newtheorem{cor}[thm]{Corollary}
\newtheorem{prop}[thm]{Proposition}
\newcommand{\cc}{\mathbb{C}}
\newcommand{\ee}{\mathbb{E}}
\newcommand{\cp}{\mathcal{P}}
\newcommand{\pp}{\mathbb{P}}
\newcommand{\ra}{\rightarrow}
\newcommand{\rr}{\mathbb{R}}
\newcommand{\var}{\mathrm{Var}}
\newcommand{\zz}{\mathbb{Z}}
\newcommand{\fpar}[2]{\frac{\partial #1}{\partial #2}}
\begin{document}
\title{Invariant measures and the soliton resolution conjecture}
\author{Sourav Chatterjee}
\address{Courant Institute of Mathematical Sciences, New York University, 251 Mercer Street, New York, NY 10012}
\email{sourav@cims.nyu.edu}
\thanks{Research partially supported by  NSF grant DMS-1005312}
\keywords{Nonlinear Schr\"odinger Equation, Invariant Measure, Soliton, Large Deviations}

\begin{abstract}
The soliton resolution conjecture for the focusing nonlinear Schr\"odinger equation (NLS) is the vaguely worded claim that a global solution of the NLS, for generic initial data, will eventually resolve into a radiation component that disperses like a linear solution, plus a localized component that behaves like a soliton or multi-soliton solution. Considered to be one of the fundamental open problems in the area of nonlinear dispersive equations, this conjecture has eluded a proof or even a precise formulation till date. 

This paper proves a ``statistical version'' of this conjecture at mass-subcritical nonlinearity, in the following sense. The uniform probability distribution on the set of all functions with a given mass and energy, if such a thing existed, would be a natural invariant measure for the NLS flow and would reflect the long-term behavior for ``generic initial data'' with that mass and energy. Unfortunately, such a probability measure does not exist. We circumvent this problem by constructing a sequence of discrete measures that, in principle, approximate this fictitious probability distribution as the grid size goes to zero. We then show that a continuum limit of this sequence of probability measures does exist in a certain sense, and in agreement with the soliton resolution conjecture, the limit measure concentrates on the unique ground state soliton. Combining this with results from ergodic theory, we present a tentative formulation and proof of the soliton resolution conjecture in the discrete setting. 

The above results, following in the footsteps of a program of studying the long-term behavior of nonlinear dispersive equations through their natural invariant measures initiated by Lebowitz, Rose and Speer, and carried forward by Bourgain, McKean, Tzvetkov, Oh and others, is proved using a combination of techniques from large deviations, PDE, harmonic analysis  and bare hands probability theory. It is valid in any dimension. 
\end{abstract}
\maketitle

\setcounter{tocdepth}{1}

\tableofcontents{}

\section{Introduction}\label{intro}
\subsection{Probabilistic motivation}
Suppose that we are asked to choose a function $f:\rr^d \ra\cc$ uniformly at random from the set of all $v:\rr^d \ra\cc$ satisfying $M(v)=m$ for some given constant $m$, where
\begin{equation}\label{massdef}
M(v) := \int_{\rr^d} |v(x)|^2 dx. 
\end{equation}
While this question does not make sense mathematically, the only reasonable answer that one can give is that $f$ must be equal to zero almost everywhere. Paradoxically, this $f$ does not satisfy $M(f)=m$. The paradox is resolved if we view this question as the limit of a sequence of discrete questions: First approximate $\rr^d$ by a large box $[-L, L]^d$; then discretize this box by splitting it as a union of many small cubes; finally, choose a function $f:\rr^d \ra\cc$ uniformly from the set of all functions $v$ that are piecewise constant in these small cubes and zero outside the box $[-L,L]^d$, and satisfy $M(v)=m$. This is a probabilistically sensible question; the resulting $f$ approaches zero in the $L^\infty$ norm as the box size goes to infinity.

Now suppose that we add one more constraint, namely, that $f$ should satisfy $H(f)=E$, where $H$ is the functional
\begin{equation}\label{energydef}
H(v):=\frac{1}{2}\int_{\rr^d} |\nabla v(x)|^2dx - \frac{1}{p+1}\int_{\rr^d} |v(x)|^{p+1} dx,
\end{equation}
and $p>1$  and $E\in \rr$ are given constants. The motivation for adding this second constraint comes from the study of  microcanonical invariant measures of nonlinear Schr\"odinger equations (more on this later). One problem that arises immediately is that if $v$ satisfies $M(v)=m$ and $H(v)=E$, so does the function $u(x):= \alpha_0 v(x+x_0)$ for any $x_0\in \rr^d$ and $\alpha_0\in S^1$, where $S^1$ is the unit circle in $\cc$. Thus, it is reasonable to first quotient the function space by the equivalence relation $\sim$, where $u\sim v$ means that $u$ and $v$ are related in the above manner. 

When $p$ satisfies the ``subcriticality'' condition  $p<1+4/d$, standard results from the theory of nonlinear Schr\"odinger equations imply that the set of functions $v$ that minimize $H(v)$ given $M(v)=m$ form a unique equivalence class of the relation~$\sim$. This equivalence class is known as the ``ground state soliton'' of mass $m$. The main result of this manuscript (Theorem \ref{ourmain}) says that if we attempt to choose an equivalence class uniformly at random from all classes satisfying $M(v)=m$ and $H(v)=E$, by first discretizing the problem and then passing to the continuum limit, then we end up choosing this ground state soliton. As before, there is no paradox in the fact that the ground state soliton may not satisfy the constraint~$H(v)=E$. While the problem is quite simple for the single constraint $M(v)=m$, the addition of the second constraint $H(v)=E$ somehow renders it unreasonably difficult; indeed, nearly the entirety of this long manuscript is devoted to the proof of Theorem \ref{ourmain}.

The above result is a small step towards understanding  uniform probability distributions on manifolds in function spaces that are defined by a finite number of constraints. These distributions arise as ``microcanonical'' invariant measures for Hamiltonian flows on such manifolds. The conserved quantities for the flow give the constraints defining the manifold. A preliminary attempt with a simpler problem was made in~\cite{chatterjee10}. All of this, and how it connects to  the behavior of nonlinear Schr\"odinger flows and ideas from statistical physics, will be discussed in greater detail in the remainder of this section.

\subsection{The nonlinear Schr\"odinger equation}
A complex-valued function $u$ of two variables $x$ and $t$, where $x\in \rr^d$ is the space variable and $t\in \rr$ is the time  variable, is said to satisfy a $d$-dimensional nonlinear Schr\"odinger equation (NLS) if 
\begin{equation}\label{nlsequation}
\I\partial_t u = - \Delta u + \kappa |u|^{p-1}u,
\end{equation}
where $\Delta$ is the Laplacian operator in $\rr^d$, $p>1$ is the nonlinearity parameter, and $\kappa$ is a parameter which is either $+1$ or $-1$. When $\kappa=1$, the equation is called ``defocusing'', and when $\kappa=-1$ it is called ``focusing''.

The study of the NLS and other nonlinear dispersive equations is a large and growing area in the analysis of PDE, with numerous open questions and conjectures. For a very readable general introduction, see Tao~\cite{tao06}. For a more specialized account of the state of affairs in the study of NLS, see the lecture notes of Rapha\"el \cite{raphael08}. The NLS arises in many areas of the pure and applied sciences, including Bose-Einstein condensation, Langmuir waves in plasmas, nonlinear optics, and a number of other fields \cite{ESY2, ESY,KSS,Z, BKA, FKM, W, rumpf04}.

The NLS is an infinite dimensional Hamiltonian flow, with Hamiltonian given by 
\begin{equation*}
H(v) = \frac{1}{2}\int_{\rr^d} |\nabla v(x)|^2dx + \frac{\kappa}{p+1}\int_{\rr^d} |v(x)|^{p+1} dx. 
\end{equation*}
(Note that in the focusing case ($\kappa=-1$), this is just the function $H$ defined in \eqref{energydef}.) Consequently, if $u$ is a solution to \eqref{nlsequation}, then $H(u(t,\cdot))$ is the same for all $t$. Since $H(v)$ is commonly called the energy of $v$ in the context of Hamiltonian flows, the previous sentence can be restated as: ``The NLS flow conserves energy''. Another important conserved quantity is the mass $M(v)$, defined in \eqref{massdef}.
 
A significant amount of information is known about the defocusing NLS; in particular, it is known that in many situations, solutions of the defocusing equation disperse like solutions of the linear Schr\"odinger equation (see \cite[p.~154]{tao06}). Here ``dispersion'' means that while $M(u(t,\cdot))$ remains conserved, for every compact set $K\subseteq \rr^d$,
\[
\lim_{t\ra\infty} \int_K |u(x,t)|^2 dx = 0. 
\]
In the focusing case, however, dispersion may not occur. This is demonstrated quite simply by a special class of solutions called ``solitons'' or ``standing waves''. These are solutions of the form $u(x,t) = v(x)e^{\I \omega t}$, where $\omega$ is a positive constant and the function $v$ is a solution of  the soliton equation 
\begin{equation}\label{solitonequation}
-\omega v  = -\Delta v - |v|^{p-1} v. 
\end{equation}
Often, the function $v(x)$  is also called a soliton. Such functions are known to be smooth and exponentially rapidly decreasing (see e.g. \cite[Section 8.1]{cazenave89}), and if one makes the further assumption that $v$ is non-negative and spherically symmetric then there is a unique solution to \eqref{solitonequation} for each $\omega > 0$ \cite{coffman72, strauss89, bl79}; we refer to this $v$ as the ``ground state''. There also exist radial solutions which change sign, see \cite{bgk83}; such solutions are called ``excited states''.

The focusing equation is said to have mass-subcritical nonlinearity if the nonlinearity parameter $p$ satisfies the subcriticality condition 
\[
1<p< 1+\frac{4}{d}.
\]
Mass-subcritical nonlinearity has important consequences. For instance, if $p < 1+4/d$, then it is easy to show that all solutions with initial data in $H^1$ are global and bounded in $H^1$ (see \cite[Section 1.1]{raphael08}). Another important feature of this regime is that for any $m > 0$, 
\begin{equation}\label{emindef}
E_{\min}(m) := \inf_{v\;:\; M(v) = m} H(v) \in ( -\infty, 0),
\end{equation}
and the infimum is achieved at the ground state soliton with mass $m$. In fact, it is simple to prove by a scaling argument that when $p < 1+4/d$, the function $E_{\min}$ has  the form 
\begin{equation}\label{eminform}
E_{\min}(m)= m^\alpha E_{\min}(1),
\end{equation}
where $E_{\min}(1) \in (-\infty,0)$ and $\alpha>1$ is a constant that is explicitly determined by $p$ and $d$ (see \cite[Section 1.4]{raphael08}). 
The infimum is achieved uniquely: any energy minimizing function $v$ must be of the form 
\[
v(x) = Q_{\lambda(m)}(x-x_0) e^{\I \gamma_0},
\]
where $x_0\in \rr^d$ and $\gamma_0\in \rr$, and $Q_{\lambda(m)}$ is the unique ground state soliton with mass $m$. The ground state soliton $Q_{\lambda(m)}$ has the following explicit form: Let $Q$ be the unique positive and radially symmetric solution of the equation 
\begin{equation}\label{solitondefine}
-Q = -\Delta Q - |Q|^{p-1} Q.
\end{equation}
For each $\lambda> 0$, let 
\begin{equation}\label{solitonscaling}
Q_\lambda(x) := \lambda^{2/(p-1)}Q(\lambda x). 
\end{equation}
Then for  each $m >0$, there is a unique $\lambda(m) > 0$ such that $Q_{\lambda(m)}$ is the ground state soliton of mass $m$. For all of the above claims about ground state solitons in the mass-subcritical regime, see \cite[Sections 1.2 and 1.3]{raphael08}.  The uniqueness of the ground state is a deep result. See  \cite[Appendix B]{tao06} for details. 

When $p \ge 1+4/d$, much less is known; it is currently an area of active research (see \cite{kenigmerle06, killipvisan10} for recent developments and pointers to the literature). 

Even in the mass-subcritical case, little is known about the long-term behavior of solutions. One particularly important conjecture, sometimes called the ``soliton resolution conjecture'' (see Tao~\cite[p.~154]{tao06}), claims (vaguely) that as $t\ra\infty$, the solution $u(\cdot, t)$ would look more and more like a soliton, or a union of a finite number of receding solitons. The claim may not hold for all initial conditions, but is expected to hold for ``most'' (i.e.\ generic) initial data. In the critical and supercritical regimes, the conjecture is still supposed to be true, but with the additional imposition that the solution does not blow up. The conjecture is based mainly on numerical simulations, although there has been a limited amount of progress towards a proof (see \cite{nakanishischlag11, soffer06, tao04,  tao07, tao09} and references therein). The only case where one can give a heuristic treatment is when $d=1$ and $p=3$, where the NLS is completely integrable (see \cite{segurablowitz, novoksenov80, zakharovshabat72}). The soliton resolution conjecture has been investigated for other dispersive systems, with partial results~\cite{miura76, schuur86, segur73,  eckhaus85, eckhausschuur83}. For significant recent progress on the soliton resolution conjecture for the energy-critical wave equation and a far more extensive survey of the literature around the conjecture, see~\cite{dkm}.

\subsection{Invariant measures for the NLS}
One approach to understanding the long-term behavior of global solutions is through the study of invariant Gibbs measures. Roughly, the idea is as follows. Since the NLS is a Hamiltonian flow, one might expect by Liouville's theorem that Lebesgue measure on the space of all functions of suitable regularity, if such a thing existed, would be an invariant measure for the flow (see, e.g., \cite[p.\ 68]{arnold89} for a statement of Liouville's theorem in the finite dimensional setting). Since the flow preserves energy, this would imply that Gibbs measures that have density  proportional to 
\begin{equation}\label{invmeasure0}
\exp(-\kappa\beta H(v))
\end{equation}
with respect to this fictitious Lebesgue measure (where $\beta$ is arbitrary) would also be invariant for the flow. One way to make this rigorous is to first restrict the system to the unit torus $\mathbb{T}^d$ and then consider Gibbs measures that have density proportional to 
\begin{equation}\label{invmeasure}
\exp\biggl(\kappa\beta\int_{\mathbb{T}^d} |v(x)|^{p+1} dx\biggr)
\end{equation}
with respect to the free-field Gaussian measure (see \cite{lmw11}) on the appropriate space of distributions on $\mathbb{T}^d$. This is the pioneering idea of Lebowitz, Rose and Speer \cite{lrs88}. For such a thing to make sense in $d\ge 2$, one has to interpret the integral in the Wick-ordered sense. 

These Gibbs measures exist for the defocusing case ($\kappa=1$) for all $p$ in $d = 1$ (without Wick ordering) and for $p \le 5$ in $d = 2$, and $p \le 3$ in $d = 3$~\cite{GJ}. Furthermore, despite the fact that this measure is supported on rough functions, Bourgain showed that it is invariant under the dynamics given by~\eqref{nlsequation} for $d \le 2$ \cite{bourgain96}. This means that the dynamics can be defined (after Wick ordering modification in $d = 2$) on a set of full measure with respect to this Gibbs measure.

The focusing case ($\kappa=-1$) is more delicate. Since $H$ is unbounded from below, it is obvious that the Gibbs measure cannot exist without some restrictions on its domain. It was shown in \cite{lrs88} that in $d = 1$, the Gibbs measure exists for $p = 3$ when restricted to $L^2$ balls and that it exists for $p = 5$ with the additional condition of small $\beta$. The development was continued by Bourgain \cite{bourgain94}, McKean \cite{mckean95}, McKean and Vaninsky in \cite{mckean94, mckean97a, mckean97b} and Zhidkov \cite{zhidkov91}. In $d = 2$, Jaffe showed that the measure exists for $p = 2$ for real $u$ when restricted to $L^2$ balls and after Wick ordering (see \cite{lmw11}); while Brydges and Slade \cite{brydgesslade96} showed that this does not work when $p = 3$.

Invariant measures coupled with Bourgain's development \cite{bourgain94, bourgain96, bourgain98, bourgain00} of the so-called $X^{s,b}$ spaces (``Bourgain spaces'') for constructing global solutions has led to important developments in this field. Recently, striking advances have been made by Tzvetkov and coauthors \cite{tzvetkov06, burqtzvetkov07, tzvetkov08, burqtzvetkov08, burqtzvetkov08b, tzvetkov10, thomanntzvetkov10} and Oh and coauthors \cite{collianderoh09, oh09, oh09b, oh10, oh-etal10, ohquastel10} and others (e.g.\ \cite{nahmod-etal11}) who use invariant measures and Bourgain's method to construct global solutions of the NLS and other nonlinear dispersive equations with random initial data. 

Qualitative features of the infinite volume limit of Gibbs measures were studied  by  Brydges and Slade \cite{brydgesslade96} in $d=2$ and  Rider \cite{rider02, rider03} in~$d=1$. Invariant Gibbs measures for the cubic discrete nonlinear Schr\"odinger equation (DNLS) in $d\ge 3$ were studied in \cite{ck10}. 

An idea that is gaining traction in the physics circles in recent years is that of considering microcanonical ensembles (see e.g.\ \cite{rumpfnewell01, rumpf04} and references therein). The general idea -- which has already been discussed at the beginning of this section -- is to consider an abstract manifold of functions satisfying certain constraints (usually two) and then trying to understand the characteristics of a function picked uniformly at random from this manifold. Often, the physicists alternately characterize the uniform distribution as the ``maximum entropy'' distribution. For example, in the context of the NLS, one looks at the ``uniform distribution'' on the space of all functions with a given mass and energy. The relevance of this to the long term behavior of NLS flows is heuristically justified through Liouville's theorem; we have more on this in the next section. It is in an attempt to understand these physical heuristics that I got interested in this line of research (and I thank Persi Diaconis -- who heard about it from Julien Barr\'e -- for communicating these problems to me a few years ago). In an early paper \cite{chatterjee10}, I tried to understand the behavior of functions chosen uniformly from all functions satisfying $\int |u(x)|^2dx = m$ and $\int |u(x)|^{p+1} dx = -E$, completely ignoring the gradient term in the Hamiltonian. Already in this simplified situation one can prove interesting phase transitions and localization phenomena. In a later paper with Kay Kirkpatrick \cite{ck10}, the gradient term was added to the analysis, but the nonlinearity parameter was taken to be so large that the gradient term became practically unimportant. In both \cite{chatterjee10} and~\cite{ck10}, the settings were discrete and too crude to allow passage to a continuum limit. The purpose of the current manuscript is to undertake the more serious task of analyzing regimes where the gradient term actually matters, and a continuum limit can be taken.

\section{The main result}\label{basic}
Assume that $\kappa = -1$ for the rest of this manuscript. Given $m > 0$ and $E> E_{\min}(m)$ (where $E_{\min}(m)$ is the minimum energy for mass $m$, as defined in \eqref{emindef}), let 
\begin{equation}\label{sem}
S(E,m) := \{v\in H^1(\rr^d): M(v)=m \text{ and } H(v)=E\}
\end{equation}
be the set of all $H^1$ functions of mass $m$ and energy $E$. 
Since the NLS flow~\eqref{nlsequation} preserves mass and energy, the same heuristic via Liouville's theorem that led to \eqref{invmeasure0} would imply that a ``uniform distribution'' on $S(E, m)$, if such a thing existed, would be an invariant measure for the flow. In physics parlance, these measures would be the ``Microcanonical Ensembles'' corresponding to the  ``Canonical Ensembles'' given by \eqref{invmeasure0}.

If the soliton resolution conjecture is indeed true, and an invariant measure like the microcanonical ensemble suggested  above indeed exists and describes the long-term behavior of the typical NLS flow with a given mass and energy, then it should put all its mass on soliton or multi-soliton functions. This may seem like a contradiction since such functions may not have energy $E$  required for membership in $S(E,m)$. However there is no actual contradiction since $S(E,m)$ is not compact under any reasonable metric.

Our goal is to go ahead and try to give a meaning to the abstract nonsense outlined above. To give a meaning to the notion of a uniform probability distribution on the set of all functions with a given mass and energy, we restrict ourselves first to a finite region of space, and then to a discretization of it. Instead of $\rr^d$, therefore, our space would be the discrete grid 
\[
V_n = \{0,1,\ldots, n-1\}^d = (\zz/n\zz)^d.
\]
We imagine this set embedded in $\rr^d$ as $hV_n$, where $h>0$ is a parameter representing the grid size. Note that  $hV_n$ is a discrete approximation of the box $[0,nh]^d$. We would eventually want to send $h$ to zero and $nh$ to $\infty$. 

The mass and energy of a function $v:V_n \ra \cc$ at grid size $h$ and box size $n$ are defined in analogy with \eqref{massdef} and \eqref{energydef} as 
\begin{equation}\label{mhndef}
M_{h,n}(v) := h^d\sum_{x\in V_n} |v(x)|^2,  
\end{equation}
and 
\begin{equation}\label{ehndef}
H_{h,n}(v) := \frac{h^d}{2} \sum_{x,y\in V_n\atop x\sim y} \biggl|\frac{v(x) - v(y)}{h}\biggr|^2 - \frac{h^d}{p+1}\sum_{x\in V_n} |v(x)|^{p+1},
\end{equation}
where $x\sim y$ means that $x$ and $y$ are neighbors in $V_n$. For simplicity, we endow $V_n$ with the graph structure of a discrete torus, i.e.\ identifying $n$ with~$0$. Let $\cc^{V_n}$ denote the set of all functions from $V_n$ into $\cc$. Take any $\ep>0$, $m > 0$ and $E\in \rr$, and define the set 
\begin{align}\label{ssem}
S_{\ep, h, n}(E, m) &:= \{v\in \cc^{V_n}: |M_{h,n}(v) - m|\le \ep, |H_{h,n}(v)-E|\le \ep\}. 
\end{align}
Clearly, $S_{\ep,h,n}(E,m)$ is a finite volume subset of the finite dimensional space $\cc^{V_n}$. This set is a ``manageable'' version of the set $S(E,m)$ defined in \eqref{sem}. Indeed, as $\ep \ra 0$, $h \ra 0$ and $nh\ra \infty$, the set $S_{\ep, h,n}(E,m)$ may be imagined as tending to the limit set $S(E,m)$. 

We have chosen $\ep >0$ to ensure that the volume is nonzero whenever the set is non-empty. In this situation, the uniform probability distribution on this set is well-defined. This uniform probability distribution, besides being an approximation to our abstract object of interest, has also a concrete interpretation as a natural invariant measure for an appropriate discrete NLS evolution on $V_n$, to be discussed in the next section.

Fix $E$ and $m$ such that $E> E_{\min}(m)$. Given  $\ep$, $h$ and $n$, let $\mu_{\ep, h,n}$ be the uniform probability distribution on $S_{\ep,h,n}(E,m)$. Let  $f_{\ep, h, n}$ be a random function on $V_n$ with law $\mu_{\ep,h,n}$. Our main result, stated below, is that when the nonlinearity is mass-subcritical,   the random function $f_{\ep, h, n}$ converges in a certain sense to  the unique ground state soliton $Q_{\lambda(m)}$ of mass $m$ defined in Section \ref{intro}, as $(\ep, h, nh)$ goes to $(0,0,\infty)$ in a certain manner.  

To define the notion of convergence, we first need a way of comparing functions on $V_n$ with functions on $\zz^d$ and $\rr^d$. Given $v:V_n \ra\cc$, first define its extension $v^e$ to $\zz^d$ by simply defining $v^e$ as equal to $v$ on $V_n$ and zero outside. Next, given a function $w:\zz^d \ra \cc$, define its ``continuum image at grid size $h$'' as the function $\tilde{w}:\rr^d \ra\cc$, defined as follows. Given $y= (y_1,\ldots, y_d)\in \rr^d$, let $x = (x_1,\ldots, x_d)$ be the unique point in $\zz^d$ such that for each $i$,
\[
x_i \le y_i/h < x_i+1, 
\]
and let $\tilde{w}(y) := w(x)$. (In other words, $x_i = [y_i/h]$.) Lastly, given $v:V_n\ra\cc$, define its continuum image $\tilde{v}:\rr^d \ra\cc$ at grid size $h$ as the function $\tilde{v}^e$, that is, the continuum image of the extended function $v^e$.

For each $q\in [1,\infty]$ define a pseudometric $\tl^q$ on the set of measurable complex-valued functions on $\rr^d$ as
\begin{equation}\label{tlq1}
\tl^q(u,v) := \inf_{x_0\in \rr^d, \; \alpha_0\in S^1} \|u(\cdot)- \alpha_0v(\cdot+x_0)\|_q, 
\end{equation}
where $S^1$ is the unit circle in the complex plane and $\|\cdot \|_q$ denotes the usual $L^q$ norm of a complex-valued function on $\rr^d$ with respect to Lebesgue measure. (Note that $L^\infty$ is the essential supremum norm and not the supremum norm.) This is a pseudometric since $\tl^q(u, v)$ may be zero even if $u$ and $v$ are not equal, but $v$ is of the form $v(x)= \alpha_0 u(x + x_0)$ 
for some $x_0\in \rr^d$ and $\alpha_0\in S^1$. It is necessary to work with pseudometrics since the law of $f_{\ep,h,n}$ is invariant under translations and  multiplication by scalars of unit modulus.
\begin{thm}\label{ourmain}
Suppose that $1< p < 1+4/d$. Fix $E$ and $m$ such that $E> E_{\min}(m)$, where $E_{\min}$ is defined in \eqref{emindef}. Let $f_{\ep, h,n}$ be a uniform random choice from the set $S_{\ep,h,n}(E,m)$ defined in \eqref{ssem}, and let $\tf_{\ep,h,n}$ denote its continuum image at grid size $h$, as defined above. Let $\tl^q$ be the pseudometric defined above. Then for any $\delta > 0$ and any $q\in (2,\infty]$, 
\[
\lim_{h\ra0} \limsup_{\ep\ra0} \limsup_{n \ra\infty} \pp(\tl^q(\tf_{\ep, h,n}, Q_{\lambda(m)}) > \delta ) = 0,
\]
where $Q_{\lambda(m)}$ is the unique ground soliton of mass $m$ defined in Section \ref{intro}. Moreover, there is a sequence $(\ep_k, h_k, n_k)$ with $\ep_k \ra 0$, $h_k \ra 0$ and $n_k h_k \ra \infty$ as $k\ra\infty$, such that  for any fixed $\delta > 0$ and $q\in (2,\infty]$,
\[
\lim_{k\ra\infty} \pp(\tl^q(\tf_{\ep_k, h_k, n_k}, Q_{\lambda(m)}) > \delta) =0.
\]
\end{thm}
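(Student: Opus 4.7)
The plan is to realize $\mu_{\epsilon,h,n}$ as a conditional Gibbs distribution and use a large deviations principle to pin down the concentration point. Introduce the canonical Gibbs measure
\[
\nu_{\beta,\gamma,h,n}(dv)\propto \exp(-\beta H_{h,n}(v) - \gamma M_{h,n}(v))\,dv
\]
on $\cc^{V_n}$, understood with an $L^2$ cutoff that makes the measure normalizable in the focusing case (the discrete Gagliardo--Nirenberg inequality, valid because $p<1+4/d$, controls the nonlinear term by the kinetic term and the mass). For \emph{any} choice of $(\beta,\gamma)$, the uniform law $\mu_{\epsilon,h,n}$ is precisely $\nu_{\beta,\gamma,h,n}$ conditioned on $\{|M_{h,n}-m|\le\epsilon\}\cap\{|H_{h,n}-E|\le\epsilon\}$, so the strategy is to pick Lagrange multipliers $(\beta^*_{h,n},\gamma^*_{h,n})$ under which this shell becomes typical, and then to analyze $\nu_{\beta^*,\gamma^*,h,n}$ directly. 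This is classical equivalence-of-ensembles reasoning, but it is delicate in the focusing regime because $H$ is unbounded below and the multipliers $(\beta^*,\gamma^*)$ will scale with $(h,n)$.

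The second step is a large deviations analysis of the random field under $\nu_{\beta^*,\gamma^*,h,n}$ in the iterated limit $n\to\infty$, then $\epsilon\to 0$, then $h\to 0$. The heuristic is that an unconstrained sample on the mass sphere behaves like an i.i.d.\ complex Gaussian across the $n^d$ grid sites, whose typical gradient energy diverges as $h\to 0$; consequently, forcing $H_{h,n}\approx E$ is an exponentially rare event that selects configurations which are genuinely smooth at the grid scale. I would establish an LDP on a suitable function space with rate function of the form $\beta^* H(u) + \gamma^* M(u) + \mathcal{J}(u)$, where $\mathcal{J}$ is a Gaussian-type entropy arising from the grid-scale fluctuations; the derivation combines Cram\'er-type upper and lower bounds (the ``bare hands probability theory'' mentioned in the abstract) with harmonic-analysis input to pass from the graph Laplacian on $V_n$ to $-\Delta$ on $\rr^d$ and to quantify the error uniformly in $h$ and $n$. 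Decomposing a typical sample as $v = u + \xi$ with $u$ a smooth macroscopic profile and $\xi$ a grid-scale fluctuation, the variational problem reduces to minimizing $H(u)$ subject to $M(u)=m'$ for some effective mass $m'\le m$.

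Step three identifies this minimizer. Using the scaling identity \eqref{eminform} together with its strict monotonicity and concavity in the subcritical regime, one shows that any diversion of $L^2$ mass from the macroscopic soliton to the grid-scale noise is strictly suboptimal, forcing $m'=m$ in the limit $nh\to\infty$. The unique minimizer of $H$ at mass $m$ is then $Q_{\lambda(m)}$ by the uniqueness results for the ground state cited in \cite[Appendix B]{tao06}. The excess energy $E - E_{\min}(m) > 0$ is carried entirely by the fluctuation $\xi$, which spreads uniformly over the box of side $nh$; a direct moment estimate gives $\|\xi\|_{L^q}\to 0$ as $nh\to\infty$ for every $q>2$, which is precisely why the theorem excludes $q=2$ (where the radiation contributes $O(1)$ mass). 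Convergence of $\tf_{\epsilon,h,n}$ to $Q_{\lambda(m)}$ in the pseudometric $\tl^q$ then follows after optimizing over translations and unit-modulus phases. I expect the main obstacle to be step three --- ruling out competitors such as excited states, multi-solitons, or soliton-plus-ghost-mass configurations --- which uses subcriticality in its sharpest form; the equivalence of ensembles in step one also requires nontrivial quantitative work because the Hamiltonian is not convex. The moreover clause of the theorem, producing a single sequence $(\epsilon_k,h_k,n_k)$, is then a standard diagonal extraction from the triple iterated limit.
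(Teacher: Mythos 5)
Your outline diverges from the paper's in a way that creates a genuine gap at the central step, so it does not go through as written.

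First, a structural point: the paper does \emph{not} use equivalence of ensembles with tuned Lagrange multipliers $(\beta^*,\gamma^*)$. It uses a single fixed Gaussian reference (with $\beta=0$; see Lemma \ref{basiclmm}), which works because the Gaussian density is essentially constant on the mass shell; the uniform law on $S_{\ep,h,n}$ is then just the Gaussian conditioned on the shell, up to an $e^{2n^d\ep}$ factor. Your route would require a separate argument that the shell is typical under $\nu_{\beta^*,\gamma^*,h,n}$ --- itself a large deviations statement that must be proved, so the ensemble equivalence does not save work. More seriously, the ``LDP on a suitable function space with rate function $\beta^* H(u)+\gamma^* M(u)+\mathcal J(u)$'' leaves the key quantity $\mathcal J$ unspecified. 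The paper's analogue is a scalar LDP for the quadratic form $\sum_{x\sim y}|\xi(x)-\xi(y)|^2$ under the uniform measure on the unit sphere of $\cc^{V_n}$, with rate function the explicit $\Psi_d$ of \eqref{psiddef}, computed via the spectral decomposition of the torus Laplacian (Theorem \ref{psifacts}). Without an explicit rate function, the subsequent variational problem cannot be solved.

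Second, and this is where the proposed argument would fail: you claim that the scaling identity \eqref{eminform}, its monotonicity and concavity, already force $m'=m$ (``any diversion of $L^2$ mass to the noise is strictly suboptimal''). This is false at fixed $h>0$. The paper's Theorem \ref{discretelimit} shows that for fixed $h$ (and $E$ strictly above $E_{\min}(m,h)$), the typical configuration \emph{does} divert mass to the radiating part, and the soliton has mass $m'=m_0-m^*$ with $m^*>0$ determined by maximizing $\Theta(E,m,h)=\log m - \Psi_d(2h^2E/m)$ over $\mr(E_0,m_0,h)$. The diversion is entropically favorable at fixed $h$; the properties of $E_{\min}$ alone do not rule it out, because the competition is between the $\log m$ entropy gain of spreading mass into the noise and the $\Psi_d$ entropy cost of the gradient constraint. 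Only as $h\ra 0$ does $m^*\ra 0$ (Theorem \ref{varlim}), and that uses the blow-up of $\Psi_d$ near the origin, not $E_{\min}$'s scaling. Your decomposition $v=u+\xi$ by frequency/smoothness is also not the one that works; the paper decomposes by amplitude threshold (visible set $\{|f|>\delta\}$ versus invisible set $\{|f|\le\delta\}$), which is what makes the nonlinear term negligible on the invisible part and keeps the visible part supported on a set of bounded volume.

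Third, even granting the concentration at a discrete ground-state minimizer, you still need to show that discrete ground state solitons at grid size $h$ converge to $Q_{\lambda(m)}$ as $h\ra0$. This is Theorem \ref{solitonconv} and is far from automatic: it requires $h$-uniform regularity of discrete minimizers (Section \ref{solitonreg}), which in turn needs discrete Littlewood--Paley, a discrete Hardy--Littlewood--Sobolev inequality and discrete Green's function estimates (Section \ref{harmonic}), followed by orbital stability in the continuum. Your outline does not address this step at all. The remark about $q>2$ being what rules out the $O(1)$ radiation mass in $L^2$ is correct and is how the paper handles it, but the rest of the argument needs the three ingredients above filled in before it constitutes a proof.
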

Note that the energy of $f_{\ep,h,n}$ is converging to $E$, whereas the energy of $Q_{\lambda(m)}$ is $E_{\min}(m)$, which is strictly less than $E$. There is no contradiction here, since the metrics of convergence are  not strong  enough to ensure convergence of the Hamiltonian. Nor should they be,  since the difference $E-E_{\min}(m)$ denotes the amount of energy that has ``escaped'' to infinity when the NLS has flowed for a long (infinite) time. 

Let me emphasize here that Theorem \ref{ourmain} {\it does not}  say that a certain Gibbs measure concentrates on its lowest energy state in the infinite volume limit. In fact, the soliton $Q_{\lambda(m)}$ is not in the support of the measure $\mu_{\ep,h,n}$ at all, even in the limit. What Theorem \ref{ourmain} says is more subtle: In the infinite volume continuum limit, a typical function with mass $m$ and energy $E$ decomposes into an ``invisible'' or ``radiating'' part that is small in $L^\infty$ norm but contains a significant amount of energy due to microscopic fluctuations, and a ``visible part'' that is close to  the soliton $Q_{\lambda(m)}$ in the $L^\infty$ distance.

It may seem strange that while a certain amount of energy escapes to infinity, there is no escape of mass. Again there is no contradiction, since functions of arbitrarily small $L^2$ mass on $\rr^d$ can hold arbitrarily large amounts of energy by being very wiggly.

Theorem \ref{ourmain} does not model the full
dynamics of NLS; in particular, it does not model stable multisoliton solutions consisting of two or more receding solitons which do not collapse into a single ground state. This is possibly  because the
effect of recession ``outruns'' the thermodynamic convergence to equilibrium in the infinite volume setting, whereas multisoliton solutions eventually merge into a single soliton on the finite discrete torus considered in Theorem \ref{ourmain}.

Nevertheless, in the case of a finite discrete torus, Theorem \ref{ourmain} may be used to prove a version of the soliton resolution conjecture. This is the topic of the next section.

\section{Is this a proof of the soliton resolution conjecture on a large discrete torus with small grid size?}\label{srcsec}
A problem with the soliton resolution conjecture (SRC) is that its statement is not mathematically precise. The term `generic initial data' is particularly open to interpretation. For this reason, it may never be possible to completely settle the conjecture to everyone's satisfaction. Even so,  I will now make an attempt to prove a certain formulation of the conjecture on a large discrete torus. Whether this is actually a `correct' formulation may be a matter of contention.

Let all notation be as in Section \ref{basic}. Fix a positive integer $n$, and positive real numbers $h$ and $\ep$. As in Section \ref{basic}, let $V_n$ be the discrete torus $\{0,1,\ldots, n-1\}^d$. 
The discrete Laplacian on the torus $V_n$ with grid size $h$ is defined as
\begin{equation}\label{discretelap2}
\Delta v(x) := \frac{1}{h^2}\sum_{y\; :\;y\sim x} (v(y)-v(x)), 
\end{equation}
where $y\sim x$ denotes the sum over all neighbors of $x$ in  $V_n$, and $v$ is any complex-valued function on $V_n$. 

The discrete focusing nonlinear Schr\"odinger equation (DNLS) on $V_n$ with grid size $h$ and nonlinearity parameter $p$ is a family of coupled~ODEs
\begin{equation}\label{dnlseq2}
 \I \frac{du}{dt}    = - \Delta u - \abs{u}^{p-1} u,
\end{equation} 
where $\Delta$ is the discrete Laplacian defined above and $u(x,t)$ is a function on $V_n \times \rr$. The function $u_0(x)= u(x,0)$ is called the `initial data' for the flow. It is not difficult to show that the mass $M_{h,n}$ defined in \eqref{mhndef} and the energy $E_{h,n}$ defined in \eqref{ehndef} are conserved quantities for the DNLS flow.

The DNLS has been studied widely by physicists, but not so much by mathematicians, particularly in dimensions higher than one. For a recent survey of the mix of rigorous and non-rigorous results that exist in the literature, see \cite{kevrekidis09}. 

Let us formally christen the DNLS flow as $T_t$. That is, let us denote the function $u(\cdot, t)$ by $T_t u_0$. It is easy to establish by Picard iterations and the conservation of mass that for any $p> 1$, $T_t$ is a well-defined, one-to-one and continuous map for all $t\in \rr$ and satisfies $T_{t+s}=T_t T_s$; this is because in the discrete setting the right hand side of \eqref{dnlseq2} is a Lipschitz function of $u$ (under any reasonable metric) where the Lipschitz constant may be bounded by a function of the mass of $u$ and fixed quantities like $h$ and $n$. This is vastly simpler than the continuous case, where one has to use conservation of both mass and energy, together with the Gagliardo-Nirenberg inequality, to establish global well-posedness under the mass-subcriticality condition $p < 1+4/d$. This was proved by Ginibre and Velo \cite{GV} (see also \cite{K}).

Now fix some $m > 0$ and $E> E_{\min}(m)$.  Let $S = S_{\ep,h,n}(E,m)$ denote the set of all functions on $V_n$ with mass $\in [m-\ep, m+\ep]$ and energy $\in [E-\ep, E+\ep]$ at grid size $h$, defined in \eqref{ssem}.  By the Hamiltonian nature of the DNLS and the conservation of mass and energy, for any $t\in \rr$ the function $T_t$ maps $S$ onto itself, and the uniform probability distribution $\mu$ on $S$ is an invariant measure for $T_t$. 

Suppose that $p< 1+4/d$. Given any probability measure $\nu$ on $S$, we will say that {\it $\nu$ satisfies the soliton resolution conjecture (SRC) with error $\delta$} if 
\[
\nu\biggl\{f\in \cc^{V_n}:\limsup_{t\ra\infty} \frac{1}{t}\int_0^t 1_{\bigl\{\tl^\infty(\widetilde{T_s f},\, Q_{\lambda(m)})\,>\,\delta\bigr\}}\, ds < \delta \biggr\}=1, 
\]
where, as in Section \ref{basic}, $\widetilde{T_s f}$ denotes the continuum image of $T_sf$ at grid size~$h$, $\tl^\infty$ is the pseudometric defined in \eqref{tlq1}, and $Q_{\lambda(m)}$ is the ground state soliton of mass $m$. In other words, if the initial data is chosen according to the probability measure $\nu$ and $\nu$ satisfies SRC with a small error, then the DNLS flow will stay close to the ground state soliton `most of the time'.

 Let $\cp$ be the set of probability measures on $S$ endowed  with the usual weak* topology. Let $\mm\subseteq \cp$ be the set of all ergodic invariant probability measures of the map $T_1$ restricted to the set $S$. By standard results from ergodic theory and the Choquet representation theorem (see e.g.\ Remark~(2) following Theorem 6.10 in \cite{walters82}), there is a unique probability measure $\tau$ on $\mm$ such that the uniform distribution $\mu$ on $S$ may be represented as 
\[
\mu = \int_{\mm} \nu\, d\tau(\nu),
\]
in the sense that for all continuous $\phi: S\ra\rr$, 
\begin{equation}\label{ergdec}
\int_S \phi(f)\, d\mu(f) = \int_{\mm} \biggl(\int_S \phi(f) \, d\nu(f)\biggr) d\tau(\nu). 
\end{equation}
This measure $\tau$ may be called the `natural probability measure' on $\mm$. Intuitively, it chooses ergodic components proportional to their volume. 
\begin{thm}\label{src}
Suppose that $1< p< 1+4/d$. Fixing $\ep$, $h$ and $n$, let $T_t$ be the DNLS flow defined above. Fixing $E$ and $m$, let $\nu$ be a random ergodic invariant probability measure for $T_1$ chosen according to the `natural probability measure' $\tau$ on $\mm$. Then for any $\delta > 0$,
\[
\lim_{h\ra0}\limsup_{\ep\ra0} \limsup_{n\ra\infty} \pp(\textup{$\nu$ satisfies SRC with error $\delta$}) = 1. 
\]
\end{thm}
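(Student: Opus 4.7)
The plan is to bridge the static concentration result of Theorem \ref{ourmain} with the dynamical statement of SRC through the ergodic decomposition \eqref{ergdec} combined with Birkhoff's pointwise ergodic theorem. Theorem \ref{ourmain} says that under $\mu_{\ep,h,n}$, a typical function is $\tl^\infty$-close to $Q_{\lambda(m)}$; Birkhoff converts any such $\mu$-statement into one about time averages along the flow for each ergodic component $\nu$. So once we show that concentration transfers from $\mu$ to its typical ergodic components under $\tau$, SRC drops out.

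In detail, fix $\delta > 0$, let $\phi(f) := 1_{\{\tl^\infty(\tf, Q_{\lambda(m)}) > \delta\}}$, and set
\[
\psi(f) := \int_0^1 \phi(T_s f)\, ds.
\]
Since the DNLS is a Hamiltonian ODE on the finite-dimensional phase space $\cc^{V_n}$ and $S$ is a $T_t$-invariant level set, Liouville's theorem gives that $\mu$ is $T_t$-invariant for every $t \in \rr$. By Fubini together with this invariance, $\int \psi\, d\mu = \int \phi\, d\mu$, and Theorem \ref{ourmain} applied with $q = \infty$ shows that the latter vanishes in the iterated limit $\lim_{h\ra0}\limsup_{\ep\ra0}\limsup_{n\ra\infty}$. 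Extending \eqref{ergdec} from continuous to bounded Borel test functions by a routine monotone class argument and applying Markov's inequality,
\[
\tau\bigl(\bigl\{\nu \in \mm : \textstyle\int \psi\, d\nu \geq \delta\bigr\}\bigr) \;\leq\; \frac{1}{\delta}\int \psi\, d\mu \;=\; \frac{1}{\delta}\int \phi\, d\mu \;\longrightarrow\; 0.
\]

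For each $\nu \in \mm$ with $\int \psi\, d\nu < \delta$, Birkhoff's ergodic theorem applied to the $T_1$-action and the bounded observable $\psi$ gives, for $\nu$-a.e.\ $f$,
\[
\lim_{N \to \infty} \frac{1}{N}\sum_{k=0}^{N-1} \psi(T_k f) = \int \psi\, d\nu < \delta.
\]
The semigroup property $T_{k+s} = T_s T_k$ yields $\sum_{k=0}^{N-1} \psi(T_k f) = \int_0^N \phi(T_s f)\, ds$, and the bound $|\phi| \leq 1$ lets one pass from integer to real $t$ at cost $O(1/t)$, so the full continuous-time limit exists and equals $\int \psi\, d\nu < \delta$. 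This is exactly the statement that $\nu$ satisfies SRC with error $\delta$. Combined with the Markov bound above, this proves that the $\tau$-probability that $\nu$ satisfies SRC with error $\delta$ tends to $1$ in the required iterated limit.

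The only mild subtlety is that $\mm$ consists of measures that are ergodic and invariant only for the discrete map $T_1$, and need not be $T_s$-invariant for non-integer $s$; this is precisely why the argument is carried out with the time-averaged observable $\psi$ instead of $\phi$, so that discrete Birkhoff for $T_1$ alone suffices, while the $T_s$-invariance of $\mu$ (used only to compute $\int \psi\, d\mu$) comes for free from Liouville. In this sense there is no serious obstacle in this theorem beyond Theorem \ref{ourmain} itself; everything else is soft ergodic theory and measure-theoretic bookkeeping.
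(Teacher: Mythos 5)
Your proof is correct and follows essentially the same route as the paper: both apply Birkhoff's theorem for the $T_1$-action to the time-averaged observable (your $\psi$ is precisely the $\phi$ in the paper's proof), use the ergodic decomposition together with Markov's inequality, exploit Liouville/Fubini to reduce $\int\psi\,d\mu$ to $\int\phi\,d\mu$, and close with Theorem \ref{ourmain}. The only difference is presentational: where you invoke a monotone class argument to extend \eqref{ergdec} to the bounded Borel observable $\psi$, the paper carries this out concretely by sandwiching the indicator with continuous functions $g_j$ and passing to the limit via Fatou/monotone convergence — the same idea, just written out.
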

In other words, if $n$ is large and $\ep$ and $h$ are appropriately small, then nearly all ergodic components of the DNLS flow on $S$ satisfy SRC with small error. Theorem \ref{src} is proved in Section \ref{srcproof}.

\section{Microcanonical invariant measure for the discrete NLS}
The proof of Theorem \ref{ourmain} is based on a result for the discrete lattice with fixed grid size $h$. To state this result, we need some preparation. First, define the mass $M_h(v)$ and the energy $H_h(v)$ of a function $v:\zz^d \ra\cc$ at grid size $h$ just as in \eqref{mhndef} and \eqref{ehndef}, but after replacing $V_n$ by $\zz^d$. 

Given $h > 0$ and $m \ge 0$, let $E_{\max}(m, h)$ and $E_{\min}(m, h)$ denote the supremum and infimum of possible energies  of functions with mass~$m$ at grid size $h$.
\begin{thm}\label{elimitthm}
Suppose that $1<p < 1+4/d$. With the above definitions, for any $h > 0$ and $m >0$ we have $E_{\max}(m, h) = 2dm/h^2$ and $-\infty< E_{\min}(m,h)< 0$. Moreover, the function $E_{\min}$ satisfies for all positive $m$ and $m'$ the strict subadditive inequality $E_{\min}(m+m',h) < E_{\min}(m,h)+E_{\min}(m',h)$. Lastly,  $\lim_{h\ra0}E_{\min}(m, h)= E_{\min}(m)$ 
and the convergence is uniform over compact subsets of $(0,\infty)$. 
\end{thm}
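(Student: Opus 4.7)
The plan is to establish the four assertions in order: the formula for $E_{\max}$, finiteness and strict negativity of $E_{\min}$, strict subadditivity, and the continuum limit.

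\textbf{Formula for $E_{\max}$.} The upper bound comes from $|v(x)-v(y)|^2 \le 2(|v(x)|^2 + |v(y)|^2)$ summed over unordered neighbor pairs in $\zz^d$. Since each vertex has degree $2d$, this gives $\sum_{x\sim y}|v(x)-v(y)|^2 \le 4d\sum_x|v(x)|^2$, so the Dirichlet part of $H_h$ is at most $2dm/h^2$ and the nonlinear part is nonnegative. Attainment as a supremum is witnessed by a checkerboard trial $v(x) = (-1)^{x_1+\cdots+x_d}a$ on a cube of side $L$ (zero outside), with $a$ chosen so that $M_h(v) = m$ and $L\to\infty$; boundary edges become a vanishing fraction so the Dirichlet term approaches $2dm/h^2$, while the nonlinear term decays like $L^{-d(p-1)/2}$.

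\textbf{Finiteness and negativity of $E_{\min}$.} The crude bound $\|v\|_\infty^2 \le M_h(v)/h^d = m/h^d$ gives $h^d\sum_x|v|^{p+1} \le m^{(p+1)/2}h^{-d(p-1)/2}$, so the nonlinear part of $H_h$ is uniformly bounded and hence $E_{\min}(m,h) > -\infty$. For strict negativity I borrow the continuum scaling argument: fix a smooth compactly supported $\phi$ with $\|\phi\|_2 = 1$, and consider its continuum dilates $\phi_\lambda(y) := \lambda^{d/2}\phi(\lambda y)$, which have unit mass and continuum energy $\tfrac12\lambda^2\|\nabla\phi\|_2^2 - \tfrac{1}{p+1}\lambda^{d(p-1)/2}\|\phi\|_{p+1}^{p+1}$, strictly negative for small $\lambda$ since $d(p-1)/2 < 2$ in the subcritical regime. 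Sampling $\phi_\lambda$ on $h\zz^d$ and rescaling to mass exactly $m$, standard Riemann-sum estimates (using the smoothness and compact support of $\phi_\lambda$) show that for any fixed $h$, choosing $\lambda$ small enough that both $h\lambda$ is small and the continuum energy is negative yields $H_h(v) < 0$.

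\textbf{Strict subadditivity.} Write $A(m) := E_{\min}(m,h)$ and $\tilde A(m) := A(m)/m$; the key claim is that $\tilde A$ is strictly decreasing on $(0,\infty)$. Take approximate minimizers $v_n$ of mass $m$ with Dirichlet and nonlinear parts $G_n, N_n$ satisfying $G_n - N_n \to A(m)$; since $G_n \ge 0$ and $A(m) < 0$, $N_n \ge -A(m) + o(1) > 0$. For any $\lambda > 1$, the scaled function $\lambda v_n$ has mass $\lambda^2 m$ and energy
\begin{align*}
\lambda^2 G_n - \lambda^{p+1} N_n
&= \lambda^2(G_n - N_n) + (\lambda^2 - \lambda^{p+1})N_n \\
&\le \lambda^2 A(m) + (\lambda^2 - \lambda^{p+1})(-A(m)) + o(1) \\
&= \lambda^{p+1} A(m) + o(1),
\end{align*}
so $A(\lambda^2 m) \le \lambda^{p+1} A(m)$. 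Dividing by $\lambda^2 m$ gives $\tilde A(\lambda^2 m) \le \lambda^{p-1}\tilde A(m) < \tilde A(m)$, because $\lambda^{p-1} > 1$ and $\tilde A(m) < 0$. Thus $\tilde A$ is strictly decreasing on $(0,\infty)$, and strict subadditivity follows:
\[
A(m+m') = m\tilde A(m+m') + m'\tilde A(m+m') < m\tilde A(m) + m'\tilde A(m') = A(m) + A(m'),
\]
using $\tilde A(m+m') < \tilde A(m), \tilde A(m')$ and that multiplying a strict inequality between negative numbers by a positive scalar preserves direction.

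\textbf{Continuum limit.} The inequality $\limsup_{h\to 0} E_{\min}(m,h) \le E_{\min}(m)$ is easy: sample the continuum ground state by $v_h(x) := Q_{\lambda(m)}(hx)$; smoothness and exponential decay of $Q_{\lambda(m)}$ make $M_h(v_h) \to m$ and $H_h(v_h) \to H(Q_{\lambda(m)}) = E_{\min}(m)$ via Riemann sums, and a small amplitude rescaling enforces $M_h = m$ exactly at an $o(1)$ energy cost. The reverse direction is where I expect the main obstacle. Given near-minimizers $v_h$ with $M_h(v_h) = m$ and $H_h(v_h) \le E_{\min}(m,h) + h$, I would construct a continuum lift $\tilde v_h \in H^1(\rr^d)$---for example a piecewise-linear interpolant on a simplicial subdivision of each lattice cell, or a convolution-smoothed version---satisfying $M(\tilde v_h) \to m$ and $H(\tilde v_h) \le H_h(v_h) + o(1)$. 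The delicate point is reconciling three constraints simultaneously: (i) $\int|\nabla\tilde v_h|^2$ must be controlled by the discrete Dirichlet sum (straightforward for piecewise-affine interpolants), (ii) $\int|\tilde v_h|^{p+1}$ must be comparable to $h^d\sum|v_h|^{p+1}$ (a one-sided Jensen-type estimate whose error must be shown to vanish), and (iii) the $L^2$ norm must be close to $M_h(v_h)$. Granted such a lift, $H(\tilde v_h) \ge E_{\min}(M(\tilde v_h))$ and the continuity of $E_{\min}$ from the scaling \eqref{eminform} yield $\liminf_{h\to 0}E_{\min}(m,h) \ge E_{\min}(m)$. Uniformity on compact subsets of $(0,\infty)$ follows because all the above estimates depend continuously on $m$ and $E_{\min}$ is itself continuous, so pointwise convergence upgrades to uniform convergence by an equicontinuity argument. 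The technical heart of the proof---where I expect the bulk of the work to lie---is in constructing a lift satisfying (i)--(iii) with sharp enough constants to close the argument.
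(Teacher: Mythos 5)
Your treatment of the first three assertions is essentially sound, and your subadditivity argument is a genuinely different and cleaner route than the paper's. The paper (Lemma \ref{cc5}) builds strict subadditivity by taking real-valued approximate minimizers $f_n$ (mass $m$) and $g_n$ (mass $m'$), translating them via Lemma \ref{cc3} so each has a point where it is bounded below, forming the superposition $v_n = f_n(\cdot+y_n) + \I\,g_n(\cdot+z_n)$, and extracting strictness from the pointwise inequality $(a+b)^{(p+1)/2}\ge a^{(p+1)/2}+b^{(p+1)/2}+c$ of Lemma \ref{cc4}. Your scaling argument $A(\lambda^2 m)\le\lambda^{p+1}A(m)$ is shorter, needs only the negativity of $E_{\min}$ (which you establish independently), and bypasses the translation/superposition machinery entirely; what it sacrifices is some robustness (it leans on the exact homogeneities of $M_h$, $G_h$, $N_h$ under $v\mapsto\lambda v$), but for this specific Hamiltonian it is a clean win.

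The continuum limit, however, is where the proof really lives, and your sketch there has a genuine gap. The obstruction you label (ii) is not a technicality that can be closed with ``sharp enough constants'': for a piecewise-multilinear lift $\tilde v_h$, Jensen's inequality goes \emph{against} you. Since $\tilde v_h(y)$ is a convex combination of the surrounding grid values, convexity of $|\cdot|^{p+1}$ yields $\int|\tilde v_h|^{p+1}\,dy\le h^d\sum_x|v_h(x)|^{p+1}$, so you obtain $N(\tilde v_h)\le N_h(v_h)$. Combined with $G(\tilde v_h)\le G_h(v_h)$ this gives no sign control on $H(\tilde v_h)-H_h(v_h)$: a near-minimizer $v_h$ with bounded discrete $H^1$ norm can still oscillate at the grid scale, in which case $N_h(v_h)-N(\tilde v_h)$ stays bounded away from zero and the lift \emph{overshoots} the energy. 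The discrete $H^1$ bound does not give $L^\infty$ or second-derivative control when $d\ge 2$, so there is no a priori estimate that kills the Jensen gap for arbitrary near-minimizers, and your argument does not close.

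The paper's resolution is precisely to avoid arbitrary near-minimizers: first establish that \emph{exact} discrete minimizers exist (Theorem \ref{conccomp}, a concentration-compactness argument whose engine is the strict subadditivity proved earlier), then observe that exact minimizers satisfy an Euler--Lagrange equation $(\omega I-\Delta)f=|f|^{p-1}f$ with $\omega$ bounded below (Lemma \ref{omega2}), and then run a discrete regularity bootstrap (Sections \ref{harmonic}--\ref{solitonreg}: discrete Gagliardo--Nirenberg, discrete Hardy--Littlewood--Sobolev, Green's function estimates) to obtain $\|\nabla_i\nabla_j f\|_{q,h}\le C(q)$ \emph{uniformly in $h$}. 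These $h$-free $C^{1,1}$-type bounds are exactly what makes your estimate (ii) vanish: Lemma \ref{fcprops} shows $|H_h(f)-H(f^c)|\le Ch^{2/(p+1)}$ for the piecewise-multilinear lift $f^c$ of a discrete soliton $f$. So the correct order of operations is concentration-compactness $\to$ existence of discrete solitons $\to$ regularity of discrete solitons $\to$ lift; your sketch tries to lift before any regularity is available, which is where it fails. Your uniformity-on-compacts claim has the same character: the paper gets it for free from the explicit $h$-dependence of the $o(1)$ error in Theorem \ref{tfthm}, not from an abstract equicontinuity argument applied to a pointwise limit.
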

The first couple of assertions of Theorem \ref{elimitthm} are proved in Section \ref{possible}. The subadditive inequality is proved in Section \ref{conccompsec}. The convergence argument is more complicated. It follows from Corollary \ref{elimit} in Section \ref{solitonlim}. The convergence is based on the convergence of discrete solitons to continuum solitons (see Theorem \ref{solitonconv} below). These scattered results are gathered into a formal proof of Theorem \ref{elimitthm} in Section \ref{threethm}.  

As in Section \ref{basic}, we define a set of pseudometrics on the space of all complex-valued functions on $\zz^d$. For any $q\in [1,\infty]$, let $\tl^{q}$ be the pseudometric on $\cc^{\zz^d}$ defined as 
\begin{equation}\label{tlq2}
\tl^{q}(u,v) := \inf_{x_0\in \zz^d, \; \alpha_0\in S^1} \|u(\cdot)-\alpha_0 v(\cdot + x_0) \|_{q},
\end{equation}
where $\|\cdot\|_{q}$ is the usual $L^q$ norm on $\cc^{\zz^d}$. 

Let $\ms(m,h)$ be the set of all functions $f$ with $M_h(f)=m$ and $H_h(f)=E_{\min}(m,h)$. The set $\ms(m,h)$ will be called the set of discrete ground state solitons with mass $m$ at grid size $h$. Note that, as in the continuum case, a simple Euler-Lagrange argument shows that any discrete ground state soliton must necessarily satisfy the discrete soliton equation
\[
-\omega v = -\Delta v - |v|^{p-1} v
\]
for some $\omega > 0$.

Unlike the continuum case, the discrete ground state soliton for a given mass may not be unique. However, we do know from the following theorem that $\ms(m,h)$ is non-empty and compact in the $\tl^{q}$-topologies. Not only that, the set $\ms(m,h)$ also has an analog of the so-called ``orbital stability'' property (see \cite[Section 1.3]{raphael08}) of continuum solitons: any function that has near-minimal energy must be nearly a soliton. The subadditive inequality from Theorem \ref{elimitthm}, together with the classical concentration-compactness technique (\cite{lions84}; see also \cite[Section 1.4]{raphael08}), is the key to the proof of this result. (Note that the subadditive inequality is trivial in the continuous case by the formula \eqref{eminform}.)  
\begin{thm}\label{orbital}
Suppose that $1< p < 1+4/d$. Let $\ms(m, h)$ be the set of  ground state solitons of mass $m$ at grid size $h$, as defined above. Then for any $m > 0$ and $h > 0$, $\ms(m,h)$ is non-empty. Moreover, for any sequence of functions $f_k$ such that $M_h(f_k) \ra m$ and $H_h(f_k) \ra E_{\min}(m,h)$, there is a sub-sequence $f_{k_j}$ and some $f\in \ms(m,h)$ such that $f_{k_j}$ converges to $f$ in the $\tl^{q}$ pseudometric for any $q\in [2,\infty]$. 
\end{thm}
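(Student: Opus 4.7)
The plan is to apply Lions's concentration--compactness method to any minimizing sequence $f_k$ satisfying $M_h(f_k)\to m$ and $H_h(f_k)\to E_{\min}(m,h)$, working in the discrete lattice $h\zz^d$ with fixed $h$. First I would observe that on this fixed lattice we have the trivial pointwise bound $\|f\|_\infty \le h^{-d/2} M_h(f)^{1/2}$, so a minimizing sequence is automatically bounded in $\ell^\infty$, and consequently
\[
\|f_k\|_{p+1}^{p+1} \le \|f_k\|_\infty^{p-1}M_h(f_k)\le h^{-d(p-1)/2}M_h(f_k)^{(p+1)/2}
\]
is bounded. Since $G(f):=\tfrac{h^d}{2}\sum_{x\sim y}|(f(x)-f(y))/h|^2 = H_h(f)+\tfrac{1}{p+1}\|f\|_{p+1}^{p+1}$, the discrete gradient $G(f_k)$ is also bounded.

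Next I would run the alternative of concentration--compactness on the mass densities. Define $\lambda := \lim_{R\to\infty}\liminf_k \sup_{y\in h\zz^d}\sum_{|x-y|\le R}h^d|f_k(x)|^2$. In the vanishing case $\lambda = 0$, one has $\sup_x h^d|f_k(x)|^2\to 0$, hence $\|f_k\|_\infty\to 0$, hence $\|f_k\|_{p+1}\to 0$, so $\liminf H_h(f_k)\ge 0$. But $E_{\min}(m,h)<0$ by Theorem \ref{elimitthm}, giving a contradiction. In the dichotomy case $0<\lambda<m$, a standard cutoff construction produces a decomposition $f_k = f_k^{(1)}+f_k^{(2)}+r_k$ with $f_k^{(1)}$ and $f_k^{(2)}$ supported on sets at separation tending to infinity, $M_h(f_k^{(j)})\to m_j>0$ with $m_1+m_2=m$, $\|r_k\|_2\to 0$, and energies that are additive up to $o(1)$ (the cross terms in $G$ vanish because the supports are far apart relative to one lattice spacing, and the $\ell^{p+1}$ term splits by disjointness). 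This would yield $E_{\min}(m,h)\ge E_{\min}(m_1,h)+E_{\min}(m_2,h)$, contradicting the strict subadditive inequality of Theorem \ref{elimitthm}. Only the tightness case $\lambda=m$ remains.

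In the tightness case, choose $x_k\in h\zz^d$ so that $\tilde f_k := f_k(\cdot+x_k)$ is tight in $\ell^2$. Since $\tilde f_k$ is bounded in $\ell^\infty$, a diagonal extraction yields a pointwise limit $f$ along a subsequence. Tightness together with pointwise convergence upgrades to $\ell^2$ convergence $\tilde f_k\to f$, so $M_h(f)=m$. Fatou's lemma gives $G(f)\le \liminf G(\tilde f_k)$, and the $\ell^2$ convergence together with the uniform $\ell^\infty$ bound implies $\|\tilde f_k - f\|_{p+1}\to 0$ by interpolation, so the $L^{p+1}$ term passes to the limit. Hence $H_h(f)\le \liminf H_h(\tilde f_k)=E_{\min}(m,h)$, which, combined with $M_h(f)=m$, forces $f\in \ms(m,h)$. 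This simultaneously proves non-emptiness of $\ms(m,h)$ and the orbital stability statement, since a subsequence of $f_k$ converges to $f$ in the translation-and-phase-invariant pseudometric $\tl^2$. The upgrade to $\tl^q$ for every $q\in [2,\infty]$ follows from the elementary inequality
\[
\|\tilde f_k - f\|_q^q \le \|\tilde f_k - f\|_\infty^{q-2}\|\tilde f_k - f\|_2^2 \le h^{-d(q-2)/2}\|\tilde f_k - f\|_2^q,
\]
with the right-hand side tending to zero since $h$ is fixed.

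The main obstacle is the dichotomy step: one must arrange the cutoff decomposition so that both the discrete gradient cross-terms and the mass remainder $\|r_k\|_2$ are genuinely $o(1)$, which requires placing the transition region in a corridor of width much larger than $h$ but much smaller than the mutual separation of the two pieces, and then invoking a continuity statement for $E_{\min}(\cdot,h)$ at $m_1$ and $m_2$ to absorb the $o(1)$ errors in the masses. Given the non-existence of $f$ with $H_h(f)$ close to $E_{\min}(\cdot,h)$ on a fixed lattice, this continuity is itself a minor consequence of the same minimizing-sequence argument applied at nearby mass values, and poses no serious additional difficulty. All other ingredients are standard lower semi-continuity and interpolation arguments.
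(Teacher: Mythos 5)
Your proposal follows essentially the same route as the paper: it is the Lions concentration--compactness trichotomy, with vanishing excluded via the $\ell^\infty$ bound and $E_{\min}(m,h)<0$, dichotomy excluded via the strict subadditivity of $E_{\min}(\cdot,h)$, and tightness yielding an $\ell^2$-convergent subsequence whose limit lies in $\ms(m,h)$, followed by the lattice interpolation $\|u\|_q^q\le\|u\|_\infty^{q-2}\|u\|_2^2$ to upgrade to $\tl^q$ for all $q\in[2,\infty]$ (the paper does exactly this in Section~\ref{conccompsec} and the last paragraph of the proof of Theorem~\ref{conccomp}). The only cosmetic differences are that the paper passes to the limit in $H_h$ by direct $L^2$-convergence estimates rather than Fatou plus interpolation, and that the continuity of $E_{\min}(\cdot,h)$ used to close the dichotomy case is established separately by a scaling argument (Lemma~\ref{emfacts4}) rather than by re-running minimizing sequences.
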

The main argument for the proof of Theorem \ref{orbital}, using a discretization of the concentration-compactness method, is presented in Section~\ref{conccompsec}. The proof is completed in Section \ref{threethm}. 

What happens to $\ms(m,h)$ as $h$ tends to zero? The next theorem answers this question. As $h\ra0$, the set $\ms(m,h)$ shrinks to a single point, namely, the unique continuum ground state soliton $Q_{\lambda(m)}$ defined in Section \ref{intro}.  For related results on continuum limits of the discrete NLS in one dimension, see \cite{kirkpatrick-etal11}.  
\begin{thm}\label{solitonconv}
Suppose that $1< p< 1+4/d$. Let $m_k$ be a positive sequence converging to some $m>0$. Let $h_k$ be a positive sequence tending to zero. For each $k$, let $f_k$ be an element of $\ms(m_k, h_k)$. Let $\tf_k$ be the continuum image of $f_k$ at grid size $h$, as defined in Section \ref{basic}. Let $\tl^q$ be the pseudometric on $L^q(\rr^d)$ defined in Section \ref{basic}. Then for any $q\in [2,\infty]$, $\lim_{k\ra\infty} \tl^q(\tf_k, Q_{\lambda(m)})  =0$. 
\end{thm}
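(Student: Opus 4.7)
The plan is to carry out a continuum concentration-compactness argument on the continuum images $\tf_k$. First I would obtain the upper bound $\limsup_{k\to\infty} E_{\min}(m_k, h_k) \le E_{\min}(m)$ by sampling $Q_{\lambda(m)}$ on the lattice $h_k\zz^d$ and rescaling the sample to have mass exactly $m_k$; because $Q_{\lambda(m)}$ is smooth and exponentially decaying, the discrete mass and energy of the sample converge to their continuum values as $h_k \ra 0$ and $m_k \ra m$. Combined with the mass-subcritical discrete Gagliardo--Nirenberg inequality, this yields a uniform bound on the discrete $H^1$ norm of $f_k$ and, a fortiori, on $\|f_k\|_{p+1}^{p+1}$.

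Next, since $\tf_k$ is only piecewise constant, I would smooth each by convolution with a bump of width $h_k$ to obtain $g_k \in H^1(\rr^d)$ with $\|g_k\|_2^2 \ra m$ and $\limsup H(g_k) \le E_{\min}(m)$, using the elementary comparison between finite differences and genuine derivatives at scale $h_k$. I then apply Lions' concentration-compactness principle to $\{g_k\}$. The vanishing alternative is ruled out because it would force $\|g_k\|_{p+1}^{p+1} \ra 0$ and hence $\liminf H(g_k) \ge 0 > E_{\min}(m)$. The dichotomy alternative is ruled out by the strict subadditivity $E_{\min}(m_1+m_2) < E_{\min}(m_1)+E_{\min}(m_2)$, which is immediate from the scaling identity \eqref{eminform} with $\alpha > 1$ and $E_{\min}(1) < 0$. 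Hence, after translation by some $x_k \in \rr^d$, a subsequence of $g_k$ converges strongly in $L^2 \cap L^{p+1}$ and weakly in $H^1$ to some $g$; lower semicontinuity and the energy upper bound force $M(g) = m$ and $H(g) = E_{\min}(m)$, so by uniqueness of the continuum ground state, $g = \alpha_0 Q_{\lambda(m)}(\cdot + x_0)$ for some $x_0 \in \rr^d$ and $\alpha_0 \in S^1$. A subsequence-of-subsequence argument then yields $\tl^q(\tf_k, Q_{\lambda(m)}) \ra 0$ along the full sequence for $q \in [2, p+1]$, the pseudometric having been designed to quotient out exactly the translation-phase ambiguity.

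To upgrade to the full range $q \in [2, \infty]$, I would use the discrete soliton equation $-\omega_k f_k = -\Delta_{h_k} f_k - |f_k|^{p-1} f_k$ satisfied by each $f_k$. Testing against $f_k$ expresses $\omega_k m_k$ via the kinetic and potential energies, and combined with a discrete Pohozaev-type identity this bounds $\omega_k$ away from $0$ and $\infty$. Agmon-style exponential decay estimates for the discrete Schr\"odinger operator then yield uniform exponential decay of $f_k$ outside a fixed ball around its centering point, while discrete elliptic regularity gives uniform pointwise and Lipschitz bounds on $f_k$. Interpolating these with the $L^{p+1}$ convergence above upgrades the convergence of the continuum images to $L^\infty$.

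The main obstacle I anticipate is the discrete-to-continuum bookkeeping: controlling the error in the smoothing step so that the limit energy of $g_k$ is genuinely bounded by $\limsup H_{h_k}(f_k)$, and establishing a uniform positive lower bound on $\omega_k$ as $h_k \ra 0$ without already knowing the continuum limit. Both should follow from the uniform bounds obtained in the first step, but they are the technically delicate parts of the argument.
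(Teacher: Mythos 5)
Your outline shadows the paper's proof of Theorem~\ref{solitonconv} (via Theorem~\ref{tfthm}) but reroutes two steps. Where you mollify $\tf_k$ by a bump at scale $h_k$ and then run Lions' concentration--compactness from scratch, the paper instead builds a piecewise-linear interpolant $f^c$ (Lemma~\ref{fcrep}), shows $|M(f^c)-M_h(f)|$ and $|H(f^c)-H_h(f)|$ are small (Lemma~\ref{fcprops}), and then cites the \emph{orbital stability} of $Q_{\lambda(m)}$ to get $\tl^2(f^c,Q_{\lambda(m)})\to 0$ directly, avoiding a fresh vanishing/dichotomy analysis. The two routes are essentially equivalent --- orbital stability is concentration--compactness in disguise --- and your smoothing step does work out: the gradient comparison $\|\nabla g_k\|_2^2\lesssim G_{h_k}(f_k)$ and the control of $\|g_k-\tf_k\|_{p+1}$ via interpolation against a bounded $L^q$ norm with $p+1<q<2d/(d-2)$ (which the discrete Gagliardo--Nirenberg supplies) both go through. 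Your ruling out of dichotomy by the continuum scaling identity \eqref{eminform} is the easy, correct observation.

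The place where your proposal understates the difficulty is the $L^\infty$ upgrade, not the bookkeeping you flag. You write that ``discrete elliptic regularity gives uniform pointwise and Lipschitz bounds on $f_k$'' as though this were a routine appeal, but proving $\|f_k\|_{\infty,h_k}$ and $\|\nabla_i\nabla_j f_k\|_{\infty,h_k}$ bounded \emph{uniformly as $h_k\to0$} is precisely what the bulk of Sections~\ref{harmonic} and~\ref{solitonreg} of the paper is devoted to: $h$-free Littlewood--Paley and Gagliardo--Nirenberg (Proposition~\ref{gn}), quantitative decay and $L^t$ bounds for the discrete Green's function (Lemma~\ref{greenbds}, Proposition~\ref{gbound}, Corollary~\ref{gcor0}), a discrete Hardy--Littlewood--Sobolev inequality (Proposition~\ref{hls}), and then a bootstrap (Lemmas~\ref{fqbd}, \ref{fqbd2}). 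Without these you have no way to pass from $L^{p+1}$ convergence to $L^\infty$ convergence --- the pointwise bound needed for Lemma~\ref{gni}, or for your decay-plus-Lipschitz interpolation, is the hard estimate, not a corollary of the concentration--compactness step. By contrast, the two obstacles you single out (the smoothing-error bound and the lower bound on $\omega_k$) are comparatively cheap: the latter is a one-line Euler--Lagrange computation (cf.\ Lemma~\ref{omega2}) using $E_{\min}(m,h)<-c<0$ uniformly in $h$ (Lemma~\ref{emin0}). Also note that the discrete Pohozaev identity you mention plays no role here; the upper bound on $\omega_k$ is simply $\omega_k = ((p+1)N_{h_k}(f_k)-2G_{h_k}(f_k))/M_{h_k}(f_k)$, bounded once $G_{h_k}(f_k)$ and $N_{h_k}(f_k)$ are. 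In short: the route is viable, but the $h$-uniform elliptic regularity is the real work, and your proposal passes over it.
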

The above theorem is proved by showing that for small $h$, discrete ground state solitons can be approximated by smooth functions -- and then applying the orbital stability of continuum solitons \cite[Section 1.3]{raphael08}. To show that discrete ground state solitons can be approximated by smooth functions, one has to prove regularity estimates that do not blow up as $h\ra 0$. To achieve this, the route taken in this paper is to translate the proof of regularity of continuum solitons (as sketched in \cite[Proposition B.7]{tao06}) to the discrete setting and obtain ``$h$-free'' estimates. This translation necessitates the development of a slew of discrete harmonic analytic results, including fine properties of discrete Green's functions, discrete Littlewood-Paley decompositions, and a discrete Hardy-Littlewood-Sobolev inequality of fractional integration. The harmonic analytic tools are developed in Section~\ref{harmonic}, and the regularity of discrete solitons is worked out in Section \ref{solitonreg}. The convergence argument is presented in Section \ref{solitonlim}.

Next, recall the random function $f_{\ep,h,n}$ from Section \ref{basic}. Observe that by Liouville's theorem, the law of $f_{\ep, h, n}$ is an invariant measure for the DNLS flow \eqref{dnlseq2} at grid size $h$. In Theorem \ref{ourmain}, we saw what happens to $f_{\ep,h,n}$ as $(\ep, h,nh) \ra(0,0,\infty)$ in a certain manner. On the way to proving Theorem~\ref{ourmain}, we first investigate what happens to $f_{\ep, h,n}$ as $(\ep, n) \ra (0,\infty)$, fixing $h>0$. What happens is the following: with high probability, $f_{\ep, h,n}$ is close to a discrete ground state soliton of mass $m'$, where $m' \in [0,m]$ is determined in a complicated manner by $E$, $m$ and $h$. The following theorem makes this precise.
\begin{thm}\label{discretelimit}
Suppose that $1<p < 1+4/d$. Take any $E\in \rr$, $m >0$ and $h > 0$ such that $E_{\min}(m,h)< E< E_{\max}(m,h)$.  Let $f_{\ep, h,n}$ be a uniform random choice from the set $S_{\ep,h,n}(E,m)$ defined in \eqref{ssem}, and extend its domain to $\zz^d$ by defining it to be zero outside $V_n$. If $E < \frac{1}{2}E_{\max}(m,h)$,  then there exists a compact set $K \subseteq [0, m]$ such that for any $\delta > 0$ and any $q \in (2,\infty]$, 
\[
\lim_{\ep \ra0} \lim_{n\ra\infty}\pp\biggl(\inf_{m'\in K} \inf_{v\in\ms(m',h)}\tl^{q}(f_{\ep, h,n},\; v) > \delta \biggr) = 0. 
\]
Furthermore, the set $K$ can be described as follows. It is the set of all $m'\in [0, m]$ that maximize
\[
\log(m-m') - \Psi_d\biggl(\frac{2h^2 (E-E_{\min}(m',h))}{m-m'}\biggr),
\]
where $\Psi_d:\rr \ra [0, \infty]$ is the function 
\begin{equation}\label{psiddef}
\Psi_d(\alpha) =  \sup_{0< \gamma< 1} \int_{[0,1]^d}\log \biggl(1-\gamma + \frac{4\gamma}{\alpha} \sum_{i=1}^d \sin^2(\pi x_i) \biggr) dx_1\cdots dx_d
\end{equation}
for $\alpha\in (0,2d)$, $\Psi_d(\alpha)=\Psi_d(4d-\alpha)$ for $\alpha \in (2d, 4d)$, $\Psi_d(2d)=0$, and $\Psi_d(\alpha)=\infty$ for $\alpha \ge 4d$ and $\alpha\le 0$. Lastly, if $E\ge \frac{1}{2}E_{\max}(m, h)$, then for any $\delta > 0$,  
\[
\lim_{\ep \ra0} \lim_{n\ra\infty}\pp(\|f_{\ep,h,n}\|_{\infty} > \delta ) = 0. 
\]
\end{thm}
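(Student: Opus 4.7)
The strategy is a large-deviations/Laplace analysis of the microcanonical measure $\mu_{\ep,h,n}$, based on a \emph{soliton plus radiation} decomposition. Since $V_n$ is finite and $h$ is fixed, $\mu_{\ep,h,n}$ is a genuine uniform probability measure, and as $n\ra\infty$ it should concentrate exponentially on configurations that maximize a combinatorial entropy. The picture is that a fraction $m'$ of the mass collapses into a discrete ground state soliton of mass $m'$ (costing $E_{\min}(m',h)$ of the energy budget and occupying only $O(1)$ sites), while the remaining $m-m'$ units of mass and $E-E_{\min}(m',h)$ units of energy are distributed as small-amplitude ``radiation'' across $n^d$ sites. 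The variational problem selecting $K$ is precisely the maximization of the radiation entropy $F_h(m'):=\log(m-m')-\Psi_d(2h^2(E-E_{\min}(m',h))/(m-m'))$ over $m'\in[0,m]$.

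First I would decompose $f=f^{\mathrm{hi}}+f^{\mathrm{lo}}$ using a threshold $R=R(n)\ra 0$ slowly, with $f^{\mathrm{hi}}=f\,\mathbf{1}_{|f|>R}$. Because the discrete gradient couples only neighbors and $f^{\mathrm{hi}},f^{\mathrm{lo}}$ have disjoint supports, mass and kinetic energy split additively up to an interfacial boundary term that is negligible provided the supra-threshold set has bounded perimeter. The nonlinear contribution from $f^{\mathrm{lo}}$ is bounded by $R^{p-1}M_h(f^{\mathrm{lo}})/(p+1)=o(1)$, so $H_h(f^{\mathrm{lo}})$ is essentially its kinetic energy. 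Setting $m':=M_h(f^{\mathrm{hi}})$, entropy maximization over the soliton part forces $H_h(f^{\mathrm{hi}})$ arbitrarily close to $E_{\min}(m',h)$, whereupon Theorem \ref{orbital} delivers $\tl^q$-closeness of $f^{\mathrm{hi}}$ to $\ms(m',h)$ for every $q\in[2,\infty]$.

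The core calculation is the radiation entropy: the microcanonical volume of small-amplitude $g:V_n\ra\cc$ with $\|g\|_\infty\le R$, prescribed mass $m-m'$, and prescribed kinetic energy $E-E_{\min}(m',h)$. Diagonalizing the discrete Laplacian in the Fourier basis, where it has eigenvalues $\lambda_k=4h^{-2}\sum_i\sin^2(\pi k_i/n)$, and applying a two-parameter exponential tilt with chemical potential $\mu$ for the mass and $\nu$ for the kinetic energy, the log of the tilted partition function per site converges as $n\ra\infty$ to $-\int_{[0,1]^d}\log(\mu+\nu\lambda(x))\,dx$, with $\lambda(x)=4h^{-2}\sum_i\sin^2(\pi x_i)$. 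Writing $\mu+\nu\lambda_k=A[(1-\gamma)+(4\gamma/\alpha)\sum_i\sin^2(\pi k_i/n)]$ with $\alpha=2h^2(E-E_{\min}(m',h))/(m-m')$ separates an overall scale $A$, whose Legendre optimization yields the $\log(m-m')$ prefactor, from a dimensionless tilt $\gamma\in(0,1)$ whose supremum reproduces $\Psi_d(\alpha)$. The soliton configurations themselves contribute only polynomially in $n$ to the volume (through the choice of centroid), so do not affect the leading-order variational problem; a standard Laplace principle then yields concentration on the argmax set $K$ of $F_h$. Finally, when $E\ge\tfrac12 E_{\max}(m,h)$, the symmetry $\Psi_d(\alpha)=\Psi_d(4d-\alpha)$ combined with $E_{\min}(m',h)<0$ for $m'>0$ forces the optimum to $m'=0$, so the entire configuration is radiation and a Fourier-mode maximal-inequality estimate delivers $\|f\|_\infty\ra 0$ in probability.

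The principal obstacle is the radiation-entropy calculation: upgrading the formal Gaussian log-partition computation into a sharp microcanonical volume estimate that tolerates the $L^\infty\le R$ truncation, with error terms uniform in $m'$ and small enough to survive the Legendre optimization. A closely related difficulty is making the peak/background splitting genuinely robust: one must rule out entropically competitive configurations in which the supra-threshold mass is scattered across many tiny bumps rather than a single bulk region, since such configurations would spoil both the identification of $f^{\mathrm{hi}}$ with a single ground state soliton and the additivity used to decouple the radiation entropy. The strict subadditivity of $E_{\min}(\cdot,h)$ from Theorem \ref{elimitthm}, fed into an iterated discrete concentration-compactness argument of the type developed for Theorem \ref{orbital}, is tailored precisely to this task.
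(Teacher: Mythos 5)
Your high-level strategy matches the paper's: decompose $f$ into a supra-threshold ``soliton'' part and a small-amplitude ``radiation'' part, compute the radiation entropy by diagonalizing the Laplacian and exponential tilting, extract the variational problem for $m'$ as a Laplace principle, and use orbital stability (Theorem~\ref{orbital}) plus the subadditivity of $E_{\min}$ to rule out scattered multi-bump profiles and upgrade closeness to an actual ground state. Some technical choices differ -- the paper fixes the truncation level $\ep$ and sends $n\ra\infty$ before $\ep\ra 0$, and it sidesteps your two-parameter tilt by writing $\xi=\phi/\|\phi\|_2$ so that the spherical part (gradient large deviations, via i.i.d.\ exponentials) decouples exactly from the Gamma-distributed radial part (mass large deviations) -- but these are variants of the same idea.

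There are, however, two places where your proposal has genuine gaps. First, the lower bound is dismissed as ``a standard Laplace principle,'' but it is a substantial construction. The paper must explicitly glue a discrete ground-state soliton at a random centroid into an independent Gaussian radiation field and verify that the result lands in the microcanonical shell; the soliton has infinite support on $\zz^d$, so one first needs Theorem~\ref{expodecaythm} (exponential decay of discrete solitons) to truncate it to an $O(1)$-size set with negligible error. Without that truncation estimate, the matching lower bound for the volume of the shell does not follow, and the Laplace principle fails. Your sketch does not mention this ingredient at all.

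Second, the radiating case $E\ge\tfrac12 E_{\max}(m,h)$ cannot be handled by pushing the same variational problem to its boundary. The set $\mr(E_0,m_0,h)$ and the conclusions of Lemma~\ref{rlmm} require $E_0<dm_0/h^2=\tfrac12 E_{\max}(m_0,h)$; outside that regime the set $\mm$ need not be well behaved, so your ``$m'=0$ is the optimum'' reasoning is arguing about an object that is not defined. The paper avoids the variational problem entirely here: if $\|\phi\|_\infty\ge\delta$, then the nonlinear term is bounded below by $C\delta^{p+1}$, so the gradient energy exceeds $E_0-\ep+C\delta^{p+1}$, which pushes the normalized gradient $\sum_{x\sim y}|\xi(x)-\xi(y)|^2$ strictly past $\alpha_0:=2h^2E_0/m_0\in[2d,4d)$; since $\Psi_d$ is strictly increasing there, this event has a strictly worse exponential rate than the conditioning event $A_0$, and the conclusion follows by comparison. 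Your proposed ``Fourier-mode maximal-inequality estimate'' is extraneous; the $L^\infty$ smallness is already built into the large-deviation event via the $\max_x|\xi(x)|^2\le n^{-d(1-\delta)}$ conditioning in Theorems~\ref{psifacts} and~\ref{psifacts2}.
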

In a few words, the above theorem says the following: ``For the DNLS on a large torus, a typical function with a given mass and energy is close in the $L^\infty$ distance to a soliton with a (possibly) different mass and energy.''

The proof of Theorem \ref{discretelimit} is divided into several components. The first ingredient, proved in Section \ref{largedev}, is a large deviation principle for gradients of random functions. The variational problem related to this large deviation principle is dealt with in Section \ref{variational}. The proof of the main identity in Theorem \ref{discretelimit} is separated into two pieces: first an upper bound, and then a matching lower bound. The upper bound in the case $E< \frac{1}{2}E_{\max}(m,h)$ is proved in Section~\ref{upperbound}. The matching lower bound requires us to first prove the exponential decay of discrete solitons. This is done in Section \ref{expodecay}, mostly along the lines of the proof of exponential decay of continuum solitons \cite[Proposition B.7]{tao06}, but with the crucial difference that we now have to deal with discrete Green's functions. Using the information from Section \ref{expodecay}, the lower bound is proved in Section \ref{lowerbound}. Finally, the case $E\ge \frac{1}{2}E_{\max}(m,h)$ is handled in Section \ref{radiatingsec}. Everything is formally put together to complete the proof of Theorem \ref{discretelimit} in Section \ref{discretelimitproof}. 

Last of all, let us indicate how all this leads to the proof of Theorem~\ref{ourmain}. This is quite easy, given Theorems \ref{discretelimit} and \ref{solitonconv}. We just take the limit $h\ra0$ in the explicit formula given in Theorem \ref{discretelimit}. The main step is to prove that the set $K$ in Theorem \ref{discretelimit} shrinks to the singleton set $\{0\}$ as $h$ goes to $0$. This is done in Section \ref{variationalcont}. The convergence of discrete solitons to continuum solitons as given by Theorem~\ref{solitonconv} finishes the proof. This argument is formalized in Section \ref{ourmainproof}.

\section{Main ideas in the proof}
Let $f:\rr^d \ra \cc$ be a function ``uniformly chosen'' from the set of functions satisfying $M(f)=m$ and $H(f)=E$, whatever that means. We need to show that for any set $A$ of functions that do not contain the ground state soliton, the chance of $f\in A$ is zero. 

Take any $\delta > 0$ and let $V_\delta := \{x: |f(x)|\le \delta\}$. 
Then 
\[
\int_{V_\delta} |f(x)|^{p+1} dx \le \delta^{p-1}\int_{V_\delta} |f(x)|^2 dx \le \delta^{p-1} m. 
\]
Decompose $f$ as $u+v$, where $u=f1_{V_\delta}$ and $v= f1_{\rr^d \backslash V_\delta}$. The above inequality shows that when $\delta$ is close to zero, 
\[
H(u) \approx \frac{1}{2}\int_{\rr^d}|\nabla u(x)|^2 dx.
\] 
On the other hand 
\[
\mathrm{Vol}(\rr^d \backslash V_\delta) \le \frac{1}{\delta^2} \int_{\rr^d\backslash V_\delta} |f(x)|^2 dx \le \frac{m}{\delta^2}. 
\]
Let us refer to $v$ and $u$ as the ``visible'' and ``invisible'' parts of $f$. The last two inequalities show that:
\begin{itemize}
\item The visible part is supported on a finite volume set, whose size is controlled by $\delta$.
\item The energy of the invisible part is essentially the same as the $L^2$ norm squared of its gradient, times $1/2$. 
\end{itemize}
The game now is to compute $\pp(f\in A)$ by controlling the visible and invisible parts separately. The visible part, being supported on a ``small'' set, can be analyzed directly. For the invisible part, one has to develop joint large deviations for the mass and the gradient, since the nonlinear term is negligible in the invisible part.  Solving the variational problem related to the large deviation question, one arrives at the conclusion that the visible part must be close to the ground state soliton with high probability. 

The main steps in the above program are the following.
\begin{enumerate}
\item Develop large deviation estimates for the invisible part in the finite volume discrete case. This is done in Sections \ref{gaussian}, \ref{diagonal}, \ref{largedev}, \ref{upperbound}, \ref{expodecay}, \ref{lowerbound} and~\ref{radiatingsec}. 
\item Analyze the variational problem related to this large deviation question, and thereby show that with high probability, the visible part has the minimum possible energy for its mass. This is done in Sections \ref{possible}, \ref{variational} and \ref{variationalcont}. 
\item Pass to the infinite volume limit (keeping the grid size fixed) using a discretization of the classical concentration-compactness argument, and show convergence to discrete solitons. This is done in Sections \ref{conccompsec} and \ref{threethm}. 
\item Develop discrete analogs of harmonic analytic tools (Littlewood-Paley decompositions, Hardy-Littlewood-Sobolev inequality of fractional integration, Gagliardo-Nirenberg inequality, discrete Green's function estimates, etc.) to prove smoothness estimates for discrete solitons that do not blow up as the grid size $\to 0$. This is done in Sections~\ref{harmonic} and \ref{solitonreg}. 
\item Use these smoothness estimates, together with the orbital stability of the ground state soliton, to prove convergence of discrete solitons to continuum solitons. This is done in Section \ref{solitonlim}. 
\end{enumerate}

\section{Summary of notation}\label{notation}
In this section we summarize the notation that will be used repeatedly in this manuscript. Some of it has already been introduced, and some will be  defined in later sections. The summary in this section is  for the reader's convenience. 
\subsection{Spaces and norms}
For a typical element $x\in\rr^d$, we denote the $i$th coordinate of $x$ by $x_i$. The usual Euclidean norm of a vector $x\in \rr^d$  is denoted by $|x|$, while the $\ell^1$ norm of $x$ is denoted by $|x|_1$. The same notation is used for norms of vectors in $\zz^d$.

The $L^q$ norms for functions on $\rr^d$ and $\zz^d$ are defined as usual, and they induce the pseudometrics $\tl^q$ defined in \eqref{tlq1} and \eqref{tlq2}. The $L^q$ norm of a function $v$, whether on $\rr^d$ or on $\zz^d$, is denoted by $\|v\|_q$. 

Sometimes, we use a slightly different $L^q$ norm for functions on $\zz^d$ by combining the usual $L^q$ norm with the grid size $h$ to get an $L^{q,h}$-norm:
\[
\|v\|_{q,h} := h^{d/q}\|v\|_q.
\]
These norms will be used heavily in Sections \ref{harmonic} and \ref{solitonreg}. 

\subsection{The discrete torus}
Assuming that the dimension $d$ is fixed, the torus $V_n$ in $\zz^d$ is the set $\{0,1,\ldots, n-1\}^d$. We say two elements $x$ and $y$ in $V_n$ are neighbors, and write $x\sim y$, if $|x-y|=1$, where the difference $x-y$ is computed by subtraction modulo $n$ in each coordinate.

Sometimes, we use $\partial V_n$ to denote the boundary of the torus when considered as a subset of $\zz^d$ (without the toric graph structure). 
In general $\partial U$ denotes the boundary of a set $U\subseteq \zz^d$ or $U\subseteq V_n$. That is, $\partial U$ is the set of points in $U$ that are adjacent to some point outside $U$. Note that this boundary is different if $U$ is considered as a subset of the torus $V_n$ rather than as a subset of $\zz^d$. Similarly, $U^c$ denotes the set $\zz^d \backslash U$ when $U$ is a subset of $\zz^d$, whereas $U^c$ denotes $V_n\backslash U$ when $U$ is considered as a subset of the torus $V_n$.

\subsection{Mass and energy}
The mass of a function $v:\rr^d \ra \cc$ is defined as
\[
M(v) = \int_{\rr^d} |v(x)|^2 dx,
\]
and its energy, in the context of the NLS equation \eqref{nlsequation} with $\kappa=-1$, is defined as
\[
H(v) = \frac{1}{2}\int_{\rr^d} |\nabla v(x)|^2dx - \frac{1}{p+1}\int_{\rr^d} |v(x)|^{p+1} dx. 
\]
When $v$ is a function on $\zz^d$, its mass ``at grid  size $h$'' is defined as
\[
M_h(v) = h^d \sum_{x\in \zz^d} |v(x)|^2.
\]
Similarly, the energy at grid size $h$ is defined as 
\begin{equation}\label{energydefi}
H_h(v) = \frac{h^d}{2}\sum_{x,y\in \zz^d\atop x\sim y}\biggl|\frac{v(x)-v(y)}{h}\biggr|^2 - \frac{h^d}{p+1}\sum_{x\in \zz^d} |v(x)|^{p+1}. 
\end{equation}
For a function $v$ on the torus $V_n$, the mass $M_{h,n}(v)$ and the energy $H_{h,n}(v)$ are defined exactly as in the last two displays, except that the sums are now over elements of $V_n$ instead of $\zz^d$. 

For a function $v:\zz^d \ra\cc$, the energy $H_h(v)$ can be decomposed into the gradient component $G_h(v)$ and the nonlinear component $N_h(v)$, defined as the first and second terms on the right-hand side in \eqref{energydefi}, so that $H_h(v) = G_h(v)-N_h(v)$. Similarly, we define $G_{h,n}(v)$ and $N_{h,n}(v)$.

Given a subset $U\subseteq V_n$, we define 
\[
G_{h,n}(v, U)= \frac{h^d}{2}\sum_{x,y\in U\atop x\sim y}\biggl|\frac{v(x)-v(y)}{h}\biggr|^2, 
\]
and $H_{h,n}(v, U)$, $N_{h,n}(v, U)$ and $M_{h,n}(v,U)$ are defined similarly.

\subsection{Maximum and minimum energies}
Suppose that $p$ and $d$ are given. For each $m\ge 0$, we define $E_{\max}(m)$ and $E_{\min}(m)$ to be the supremum and infimum of the set of all possible energies for functions with a given mass on $\rr^d$, where mass and energy are defined as above. It is not difficult to verify that $E_{\max}(m)=\infty$, and it is known that $E_{\min}(m)$ has the form \eqref{eminform} when $p< 1+4/d$. 

In the discrete case on $\zz^d$, we define $E_{\max}(m,h)$ and $E_{\min}(m,h)$ to be supremum and infimum of the set of all possible energies for functions with mass $m$, where both mass and energy are computed at grid size $h$. We prove a number of things about these quantities in Section \ref{possible}. 

Finally, for functions on the torus $V_n$, we similarly define $E_{\max}(m,h,n)$ and $E_{\min}(m,h,n)$.

Two related functions $E^+$ and $E^-$ are defined  Section \ref{variational}. 

The set of functions on $\zz^d$ with mass $m$ at grid size $h$ that minimize energy is denoted by $\ms(m,h)$. 

\subsection{Notation related to the variational problem}
The function $\Psi_d$ defined in \eqref{psiddef} is of fundamental importance in this manuscript. Two other functions, $\Theta$ and $\widehat{\Theta}$, and two related subsets of $\rr^2$, $\mr(E,m,h)$ and $\mm(E,m,h)$, are defined in the beginning of Section \ref{variational}. All of these are used repeatedly in the manuscript.

\subsection{Continuum image of a function}
Given a function $v:\zz^d \ra\cc$, its ``continuum image at grid size $h$'' is defined in Section \ref{basic}, but let us repeat the definition here. The continuum image at grid size $h$ is a function $\tilde{v}:\rr^d\ra\cc$ defined as follows. Given $y= (y_1,\ldots, y_d)\in \rr^d$, let $x = (x_1,\ldots, x_d)$ be the unique point in $\zz^d$ such that for each $i$,
\[
x_i \le y_i/h < x_i+1,
\]
and let $\tilde{v}(y) := v(x)$. When $v$ is a function on $V_n$, we define the continuum image by first defining the function to be zero on $\zz^d\backslash V_n$, and then defining the continuum image as above.

\section{Comparison with a Gaussian function}\label{gaussian}
Let $\phi = (\phi(x))_{x\in V_n}$ be a collection of i.i.d.\ complex Gaussian random variables, with $\ee(\phi(x))=\ee(\phi(x)^2)=0$ and 
\[
\ee|\phi(x)|^2 = \frac{1}{(nh)^d}.
\]
Recall the function $f_{\ep, h,n}$ and the set $S_{\ep,h,n}(E,m)$ defined in Section \ref{basic}. 
The following basic lemma connects the properties of $\phi$ with that of $f_{\ep, h, n}$. 
\begin{lmm}\label{basiclmm}
Take any $m > 0$ and $E\in \rr$. Let $f = f_{\ep,h,n}$ and $S= S_{\ep, h,n}(E, m)$ for simplicity. Then for any $A\subseteq S$, 
\[
\pp(f\in A) \le e^{2n^d\ep} \frac{\pp(\phi\in A)}{\pp( \phi \in S)}.
\]
\end{lmm}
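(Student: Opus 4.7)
The plan is to compare the uniform distribution on $S$ with the Gaussian distribution via their Lebesgue densities on $\cc^{V_n}$. Since $\phi$ consists of independent complex Gaussians with variance $(nh)^{-d}$, its joint density with respect to Lebesgue measure on $\cc^{V_n}$ is
\[
\rho(\phi) = \biggl(\frac{(nh)^d}{\pi}\biggr)^{n^d} \exp\biggl(-(nh)^d\sum_{x\in V_n} |\phi(x)|^2\biggr).
\]
The crucial observation is that the exponent can be rewritten as $-n^d M_{h,n}(\phi)$, so $\rho$ is a function of the mass alone.

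Next I would exploit the constraint defining $S$. Every $\phi \in S$ satisfies $|M_{h,n}(\phi)-m|\le \ep$, so on $S$ the density $\rho$ is sandwiched:
\[
C e^{-n^d(m+\ep)} \;\le\; \rho(\phi) \;\le\; C e^{-n^d(m-\ep)}, \qquad C := \bigl((nh)^d/\pi\bigr)^{n^d}.
\]
In particular $\sup_S \rho/\inf_S \rho \le e^{2n^d \ep}$, which is where the factor $e^{2n^d\ep}$ in the statement will come from.

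Finally, for any measurable $A \subseteq S$, I would estimate $\pp(\phi\in A) = \int_A \rho\, d\phi \ge (\inf_S \rho)\,\vol(A)$ from below and $\pp(\phi \in S)=\int_S \rho\,d\phi \le (\sup_S \rho)\,\vol(S)$ from above. Since $f$ is uniform on $S$, $\pp(f\in A) = \vol(A)/\vol(S)$, so dividing the two previous inequalities gives
\[
\frac{\pp(\phi \in A)}{\pp(\phi \in S)} \;\ge\; \frac{\inf_S \rho}{\sup_S \rho}\cdot \frac{\vol(A)}{\vol(S)} \;\ge\; e^{-2n^d \ep}\,\pp(f\in A),
\]
which rearranges to the claimed inequality.

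There is no serious obstacle here: the lemma is essentially a bookkeeping exercise translating the $\ep$-thickness of the mass shell into a uniform bound on the oscillation of the Gaussian density over $S$. The only thing to be careful about is the normalization of the complex Gaussian density and the algebraic identity $(nh)^d \sum_x |\phi(x)|^2 = n^d M_{h,n}(\phi)$, which is what lines up the exponent with the mass constraint.
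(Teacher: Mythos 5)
Your proposal is correct and follows essentially the same route as the paper: identify the Gaussian density as a function of $M_{h,n}$ alone, sandwich it on the mass shell $S$, bound $\pp(\phi\in A)$ below and $\pp(\phi\in S)$ above by $\vol(A)$ and $\vol(S)$ respectively, and divide. (As a side remark, the paper's second displayed density has a typo; yours is the correct normalization, consistent with the paper's first display and the final bounds.)
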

\begin{proof}
For any measurable $A\subseteq \cc^{V_n}$, 
\begin{align*}
\pp(f\in A) &= \frac{\vol(A)}{\vol(S)}. 
\end{align*}
Now, $|M_{h,n}(v)- m|\le \ep$ when $v\in S$. Therefore, 
\begin{align*}
\pp(\phi\in S) &= \int_S \frac{(nh)^{dn^d}e^{-n^dM_{h,n}(v)}}{\pi^{n^d}} dv\\
&\le (n^dh^d \pi^{-1})^{n^d} e^{-n^d(m-\ep)}\vol(S). 
\end{align*}
Similarly,
\begin{align*}
\pp(\phi\in A) &= \int_A \frac{e^{-n^dM_{h,n}(v)}}{\pi^{n^d} h^{-dn^d/2}} dv\\
&\ge (n^dh^d \pi^{-1})^{n^d} e^{-n^d(m+\ep)}\vol(A). 
\end{align*}
This completes the proof. 
\end{proof}

\section{Diagonalizing the Laplacian}\label{diagonal}
Let $\Gamma = (\Gamma(x,y))_{x, y\in V_n}$ be the matrix defined as 
\[
\Gamma(x,y) =
\begin{cases}
2d &\text{ if } x=y,\\
-1 &\text{ if } x\sim y,\\
0 &\text{ in all other cases.}
\end{cases}
\]
The matrix $\Gamma$ may be viewed as an operator acting on $\cc^{V_n}$ in the natural sense. The action of $\Gamma$ on a function $f:V_n \ra\cc$ will be denoted by $\Gamma f$. 
Notice that $\Gamma = - h^2 \Delta$, where $\Delta$ is the discrete Laplacian on the torus~$V_n$ with grid size $h$, as defined in \eqref{discretelap2}, and  that $\Gamma$ is a real  symmetric matrix of order $n^d$. In this section, we will write down the spectral decomposition of~$\Gamma$. 

For two functions $u,v\in \cc^{V_n}$, let $(u,v)$ be the standard inner product, 
\[
(u,v) := \sum_{x\in V_n} u(x)\overline{v(x)}, 
\]
where $\overline{v(x)}$ is the complex conjugate of $v(x)$. Notice that for any $v$,
\[
(v,\Gamma v) = (\Gamma v, v) = \sum_{x,y\in V_n\atop x\sim y} |v(x)-v(y)|^2.
\]
We use the notation $x_i$ to denote the $i$th coordinate of a vector $x\in \rr^d$. 
\begin{lmm}\label{diaglmm}
For each $y = (y_1,\ldots, y_d)\in V_n$, let $\rho_y$ be the function
\[
\rho_y(x) := n^{-d/2}e^{\I 2\pi (y_1x_1+\cdots +y_dx_d)/n}.
\]
Then the functions $(\rho_y)_{y\in V_n}$ form a complete orthonormal system of eigenfunctions of $\Gamma$, and the eigenvalue corresponding to $\rho_y$ is 
\[
\lambda_y := 4\sum_{i=1}^d\sin^2(\pi y_i/n). 
\] 
\end{lmm}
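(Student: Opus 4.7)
The plan is a direct Fourier computation on the discrete torus; everything reduces to the elementary identity $2-2\cos\theta = 4\sin^2(\theta/2)$ and the standard geometric-sum orthogonality on $\zz/n\zz$. The result is purely linear algebraic, so I expect no real obstacles—the only thing to be careful about is how the torus structure of $V_n$ interacts with the Laplacian (neighbors taken modulo $n$), but since the characters $\rho_y$ are $n$-periodic in each coordinate this is automatic.

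First I would verify orthonormality. Writing $\rho_y(x)\overline{\rho_{y'}(x)} = n^{-d}\prod_{i=1}^d e^{\I 2\pi(y_i-y_i')x_i/n}$ and summing independently over each coordinate $x_i\in\{0,\ldots,n-1\}$, the one-dimensional geometric sum
\[
\sum_{k=0}^{n-1} e^{\I 2\pi(y_i-y_i')k/n} = n\,\mathbf{1}_{\{y_i=y_i'\}}
\]
gives $(\rho_y,\rho_{y'})=\delta_{y,y'}$. Since the family $\{\rho_y\}_{y\in V_n}$ has exactly $n^d=\dim\cc^{V_n}$ elements, orthonormality alone yields completeness.

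Next I would compute $\Gamma\rho_y$. By the definition of $\Gamma$,
\[
(\Gamma\rho_y)(x)=2d\,\rho_y(x)-\sum_{i=1}^d\bigl(\rho_y(x+e_i)+\rho_y(x-e_i)\bigr),
\]
where $e_i$ is the $i$th standard basis vector and addition is modulo $n$ (this modular shift only contributes an overall phase, which the character absorbs, so the formula holds without boundary corrections). Since $\rho_y(x\pm e_i)=e^{\pm\I 2\pi y_i/n}\rho_y(x)$, factoring out $\rho_y(x)$ gives
\[
(\Gamma\rho_y)(x)=\rho_y(x)\sum_{i=1}^d\bigl(2-e^{\I 2\pi y_i/n}-e^{-\I 2\pi y_i/n}\bigr)=\rho_y(x)\sum_{i=1}^d\bigl(2-2\cos(2\pi y_i/n)\bigr).
\]
Applying $2-2\cos\theta=4\sin^2(\theta/2)$ with $\theta=2\pi y_i/n$ yields the claimed eigenvalue $\lambda_y=4\sum_{i=1}^d\sin^2(\pi y_i/n)$.

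I do not anticipate any genuinely hard step: the main thing to watch is that the neighbor sum in the definition of $\Gamma$ is taken in the toric sense, which is precisely what makes the exponential characters exact eigenfunctions (no error terms), and that the normalization $n^{-d/2}$ is chosen so as to give orthonormality rather than merely orthogonality.
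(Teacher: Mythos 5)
Your proof is correct and follows essentially the same route as the paper: orthonormality from the one-dimensional geometric sum over $\zz/n\zz$, completeness by dimension count, and the eigenvalue from the per-coordinate factorization $\rho_y(x\pm e_i)=e^{\pm\I 2\pi y_i/n}\rho_y(x)$; the paper merely writes the final algebraic step as $-\bigl(e^{\I\pi y_i/n}-e^{-\I\pi y_i/n}\bigr)^2$ rather than invoking $2-2\cos\theta=4\sin^2(\theta/2)$, which is the same identity.
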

\begin{proof}
To prove orthogonality, first notice that for any $k\in \zz$, $r := e^{\I 2\pi k/n}$ is an $n$th root of unity, and hence
\[
\sum_{j=0}^{n-1} e^{\I 2\pi kj/n} = \sum_{j=0}^{n-1} r^j = 
\begin{cases}
0 &\text{ if } k\ne 0,\\
n &\text{ if } k=0.
\end{cases}
\]
Thus, for any $y,y'\in V_n$, 
\begin{align*}
(\rho_y, \rho_{y'}) &= n^{-d} \sum_{x_1,\ldots, x_d = 0}^{n-1} e^{\I 2\pi ((y_1-y_1')x_1+\cdots +(y_d-y_d')x_d)/n}\\
&= n^{-d} \prod_{i=1}^d\sum_{x_i = 0}^{n-1} e^{\I 2\pi (y_i-y_i')x_i/n}\\
&= 
\begin{cases}
0 &\text{ if } y\ne y',\\
1 &\text{ if } y=y'. 
\end{cases}
\end{align*}
To show that $\rho_y$ is an eigenfunction of $\Gamma$ with eigenvalue $\lambda_y$, note that for any $x\in V_n$,
\begin{align*}
\Gamma \rho_y(x) &= \sum_{z \; : \; z\sim x} (\rho_y(x) - \rho_y(z)) \\
&= \sum_{i=1}^d (2 - e^{\I 2\pi y_i/n} - e^{-\I 2\pi y_i/n} ) \rho_y(x)\\
&= -\rho_y(x) \sum_{i=1}^d (e^{\I \pi y_i/n} - e^{-\I \pi y_i/n})^2 = 4\rho_y(x) \sum_{i=1}^d \sin^2(\pi y_i/n). 
\end{align*}
This completes the proof of the lemma.
\end{proof}

Let $R\in \cc^{V_n\times V_n}$ be the matrix whose $y$th column is $\rho_y$, for each $y\in V$. Note that $R$ is a unitary matrix. Let $\Lambda\in \cc^{V_n\times V_n}$ be the diagonal matrix whose $y$th diagonal element is $\lambda_y$. Then 
\[
\Gamma = R\Lambda R^*,
\]
where $R^*$ is the adjoint of $R$. This is the spectral decomposition of $\Gamma$.

\section{The set of possible energies for a given mass}\label{possible}
Let $E_{\min}(m, h, n)$ and $E_{\max}(m,h,n)$ be the minimum and maximum possible energies at grid size $h$ of a function $f: V_n \ra\cc$ with mass $m$. Since the map $f\mapsto H_{h,n}(f)$ is continuous and $\{f: M_{h,n}(f) = m\}$ is a compact connected subset of $\cc^{V_n}$, therefore for every $E\in [E_{\min}(m, h, n), E_{\max}(m,h,n)]$, there exists some $f$ on $V_n$ with mass $m$ and energy $E$ at grid size $h$. Recall that $E_{\min}(m,h)$ and $E_{\max}(m,h)$ are the infimum and supremum of the set of possible energies of functions with mass $m$ on the full lattice $\zz^d$, at grid size $h$. 
\begin{lmm}\label{emfacts3}
For any $m\ge0$, and any $E\in (E_{\min}(m,h), E_{\max}(m,h))$, there is a function $f:\zz^d\ra\cc$ such that $M_h(f)=m$ and $H_h(f)=E$. Moreover,  for any $m\ge 0$, any function on $\zz^d$ with mass $m$ and energy $E$ (at grid size $h$) satisfies 
\[
|E|\le C(p,d,h)(m + m^{(p+1)/2}).
\] 
\end{lmm}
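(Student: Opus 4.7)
The second assertion is the more elementary one, so I would dispatch it first. Writing $H_h(f) = G_h(f) - N_h(f)$ with both pieces non-negative, it suffices to bound each one by $C(p,d,h)(m + m^{(p+1)/2})$. For the gradient term, the elementary inequality $|f(x)-f(y)|^2 \le 2(|f(x)|^2+|f(y)|^2)$ combined with the fact that every vertex of $\zz^d$ has exactly $2d$ neighbors yields
\[
G_h(f) = \frac{h^{d-2}}{2}\sum_{x \sim y}|f(x)-f(y)|^2 \le 4d\, h^{d-2}\sum_x |f(x)|^2 = 4d\,h^{-2}m.
\]
For the nonlinear term, the trivial inequality $\|f\|_\infty \le \|f\|_2$ on $\ell^2(\zz^d)$ gives $\|f\|_{p+1}^{p+1} \le \|f\|_\infty^{p-1}\|f\|_2^2 \le \|f\|_2^{p+1} = (m/h^d)^{(p+1)/2}$, whence $N_h(f) \le (p+1)^{-1} h^{d(1-p)/2} m^{(p+1)/2}$. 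Adding these and absorbing the $h$-dependent constants into $C(p,d,h)$ proves the second assertion.

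For the first assertion, fix $E \in (E_{\min}(m,h), E_{\max}(m,h))$. By the definitions of infimum and supremum, there exist functions $\hat{f}_1, \hat{f}_2: \zz^d \to \cc$ with $M_h(\hat{f}_i) = m$, $H_h(\hat{f}_1) < E$, and $H_h(\hat{f}_2) > E$. These need not have compact support, so I would first approximate each by a compactly supported function: set $f_i^L := \hat{f}_i \mathbf{1}_{[-L,L]^d}$ and then rescale to $\tilde{f}_i^L := \alpha_{i,L} f_i^L$ with $\alpha_{i,L} > 0$ chosen so that $M_h(\tilde{f}_i^L) = m$. Since $\hat{f}_i \in \ell^2$ and $\|\hat{f}_i\|_\infty \le \|\hat{f}_i\|_2 < \infty$, the sequences $|\hat{f}_i|^{p+1}$ and the gradient contributions from edges touching $\zz^d \setminus [-L,L]^d$ are both summable tails, so dominated convergence yields $M_h(f_i^L) \to m$, $G_h(f_i^L) \to G_h(\hat{f}_i)$, and $N_h(f_i^L) \to N_h(\hat{f}_i)$. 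Consequently $\alpha_{i,L} \to 1$ and $H_h(\tilde{f}_i^L) \to H_h(\hat{f}_i)$, so for $L$ sufficiently large we obtain compactly supported $\tilde{f}_1, \tilde{f}_2$ with mass exactly $m$ and energies strictly bracketing $E$.

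The final step is an interpolation argument on disjoint supports. Translate $\tilde{f}_2$ by a vector $y_0$ far enough that its support is at graph distance $\ge 2$ from the support of $\tilde{f}_1$, and define
\[
g_t(x) := \sqrt{1-t}\,\tilde{f}_1(x) + \sqrt{t}\,\tilde{f}_2(x - y_0), \qquad t\in[0,1].
\]
Because the two supports are edge-separated, no lattice edge $\{x,y\}$ ever sees a nonzero value of both $\tilde{f}_1$ and the translated $\tilde{f}_2$ simultaneously; hence the mass, gradient, and nonlinear terms all split cleanly, giving $M_h(g_t) = m$ and
\[
H_h(g_t) = (1-t)G_h(\tilde{f}_1) + t\,G_h(\tilde{f}_2) - (1-t)^{(p+1)/2}N_h(\tilde{f}_1) - t^{(p+1)/2}N_h(\tilde{f}_2).
\]
This is continuous in $t$, equals $H_h(\tilde{f}_1) < E$ at $t=0$, and equals $H_h(\tilde{f}_2) > E$ at $t=1$, so the intermediate value theorem furnishes $t^* \in (0,1)$ with $H_h(g_{t^*}) = E$ and $M_h(g_{t^*}) = m$, as required. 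I do not anticipate a serious obstacle in any of these steps; the only mild subtlety is the truncate-and-rescale approximation, which is handled cleanly because $\hat{f}_i \in \ell^2$ automatically controls the $\ell^{p+1}$ tail via $\|\hat{f}_i\|_\infty \le \|\hat{f}_i\|_2$.
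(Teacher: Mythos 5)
Your proof of the second assertion is essentially the same as the paper's: both rest on $\|f\|_\infty \le \|f\|_2$ to control the nonlinear term and a crude bound of the gradient by the mass. (Your constant $4d$ is a bit looser than the paper's $2d$, which comes from the spectral bound on the discrete Laplacian, but that is immaterial here since only the order in $m$ matters.) Your proof of the first assertion is correct but takes a genuinely different route. The paper picks $f,g$ with mass $m$ and energies straddling $E$, interpolates linearly via $f_\theta = \theta f + (1-\theta)g$, and renormalizes to $w_\theta = \sqrt{m/M_h(f_\theta)}\,f_\theta$; the price is that one must verify $f_\theta$ is never the zero function, which holds because $\|f\|_2 = \|g\|_2$ forces any zero of $f_\theta$ to occur only at $f = -g$, a case that can be excluded. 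You instead truncate to compact support (with a renormalization), translate one function so the supports are edge-separated, and interpolate via $g_t = \sqrt{1-t}\,\tilde f_1 + \sqrt t\,\tilde f_2(\cdot - y_0)$, which keeps the mass exactly $m$ by Pythagoras on disjoint supports and gives a clean closed-form expression for $H_h(g_t)$. Your route sidesteps the $f_\theta \ne 0$ issue entirely, but pays with the truncation step, where you correctly observe that boundary edges introduce an error controlled by the $\ell^2$ tail of $\hat f_i$, hence negligible; the paper avoids truncation altogether by noting that continuity of $\theta \mapsto H_h(w_\theta)$ on the full lattice follows from dominated convergence. Both arguments are sound.
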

\begin{proof}
If $M_h(f)=m$, then for any $x$, $|f(x)|^2\le m h^{-d}$. Therefore,  
\[
|f(x)|^{p+1} \le (mh^{-d})^{(p-1)/2}|f(x)|^2.
\]
Thus,  
\begin{align*}
H_h(f) &\le 2dh^{d-2}\sum_{x\in \zz^d} |f(x)|^2 + \frac{(mh^{-d})^{(p-1)/2}h^d}{p+1}\sum_{x\in \zz^d} |f(x)|^2\\
&= 2dh^{-2} m + \frac{h^{-d(p-1)/2}m^{(p+1)/2}}{p+1}. 
\end{align*}
This proves the inequality. 

Next, take any two functions $f,g\in \cc^{\zz^d}$ with $M_h(f)=M_h(g)=m>0$. If $f=g$ or $f=-g$, then $H_h(f)=H_h(g)$. Suppose that $f \ne \pm g$. For each $\theta\in [0,1]$, let $f_\theta:= \theta f+(1-\theta)g$. Since $M_h(f)=M_h(g)$ and $f\ne -g$, it is easy to see that $f_\theta$ is not the zero function for any $\theta$. In particular, $M_h(f_\theta) > 0$. Let 
\[
w_\theta(x) := \sqrt{\frac{m}{M_h(f_\theta)}} f_\theta(x)  \ \ \text{ for } x\in \zz^d. 
\]
Then $M_h(w_\theta) = m$ for all $\theta$. Moreover, it is easy to prove that $H_h(w_\theta)$ varies continuously from $H_h(g)$ to $H_h(f)$ as $\theta$ varies from $0$ to $1$.  This shows that for every $E\in (E_{\min}(m,h), E_{\max}(m,h))$, there is a function $f$ with $M_h(f)=m$ and $H_h(f)=E$. 
\end{proof}

\begin{lmm}\label{emfacts4}
As $n$ goes to infinity, $E_{\min}(m,h,n)$ tends to $E_{\min}(m,h)$ and $E_{\max}(m,h,n)$ tends to $E_{\max}(m,h)$. Moreover, in both cases, the convergence is uniform on compact subsets of $[0,\infty)$ (for the parameter $m$, keeping $h$ fixed). The functions $E_{\max}(\cdot, h)$ and $E_{\min}(\cdot, h)$ are absolutely continuous on~$[0,\infty)$. 
\end{lmm}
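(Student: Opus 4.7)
The plan is to address the three claims in turn: convergence of $E_{\max}(m,h,n)$ (immediate from the explicit formula $E_{\max}(m,h)=2dm/h^2$ from Theorem~\ref{elimitthm}, proved elsewhere in this section), convergence of $E_{\min}(m,h,n)$ (requiring a shift-and-cutoff argument in the lower-bound direction), and absolute continuity (which follows from a local Lipschitz estimate via rescaling).

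For $E_{\max}$ convergence, edgewise use of $|v(x)-v(y)|^2\le 2(|v(x)|^2+|v(y)|^2)$ yields $G_{h,n}(v)\le 2dm/h^2$ whenever $M_{h,n}(v)=m$, so $E_{\max}(m,h,n)\le 2dm/h^2=E_{\max}(m,h)$. The matching lower bound uses the checkerboard $v(x)=c(-1)^{x_1+\cdots+x_d}$ with $c^2=m/(nh)^d$ (for $n$ odd, one layer is modified, contributing only $O(1/n)$): then $G_{h,n}(v)=2dm/h^2$ exactly while $N_{h,n}(v)=m^{(p+1)/2}/[(p+1)(nh)^{d(p-1)/2}]\to 0$ uniformly in $m$ on bounded sets. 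For the upper bound $\limsup_n E_{\min}(m,h,n)\le E_{\min}(m,h)$, pick $f:\zz^d\to\cc$ with $M_h(f)=m$ and $H_h(f)\le E_{\min}(m,h)+\eta$, apply a piecewise-linear cutoff $\chi_R$ equal to $1$ on a cube of side $R$ and supported in a cube of side $2R$, with $|\nabla\chi_R|\le C/(Rh)$; because $f\in\ell^2\cap\ell^{p+1}$, one has $M_h(\chi_Rf)\to m$ and $H_h(\chi_Rf)\to H_h(f)$ as $R\to\infty$. For $n$ large, $\chi_Rf$ fits inside $V_n$ with a zero buffer so toric and lattice quantities coincide, and rescaling to exact mass $m$ changes the energy by $o(1)$.

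The main obstacle is the matching lower bound $\liminf_n E_{\min}(m,h,n)\ge E_{\min}(m,h)$. Given $f:V_n\to\cc$ with $M_{h,n}(f)=m$ and energy near $E_{\min}(m,h,n)$, I must produce a $\zz^d$ function of mass $m$ with energy no greater than $H_{h,n}(f)+o(1)$. Set $k=\lfloor\sqrt n\rfloor$ and let $B_k\subseteq V_n$ be the toric boundary strip of width $k$. Averaging over toric translations $z\in V_n$ (using toric invariance of $M_{h,n}$ and $G_{h,n}$) yields a $z$ satisfying
\[
M_{h,n}(f_z,B_k)\le C(k/n)\,m,\qquad G_{h,n}(f_z,B_k)\le C(k/n)\,G_{h,n}(f),
\]
both simultaneously; $G_{h,n}(f)$ itself is a priori bounded because $|H_{h,n}(f)|$ and $N_{h,n}(f)$ are (the argument of Lemma~\ref{emfacts3} applies equally well on $V_n$). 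Apply a piecewise-linear cutoff $\chi$ equal to $1$ on $V_n\setminus B_k$, equal to $0$ on the inner sub-strip $B_{k/2}$, with $|\nabla\chi|\le 2/(kh)$. Because $\chi f_z$ vanishes near $\partial V_n$, its extension by zero to $\zz^d$ has mass and energy identical to those of $\chi f_z$ on the torus. The discrete Leibniz inequality $(a+b)^2\le(1+\theta)a^2+(1+\theta^{-1})b^2$ with $\theta=k/n$, applied edgewise, then bounds $G_h(\chi f_z)$ by $(1+k/n)G_{h,n}(f_z)+O(n/k^3)\cdot M_{h,n}(f_z,B_k)/h^d$, and the choice $k=\sqrt n$ makes both corrections $o(1)$; similarly the nonlinear contribution lost from $B_k$ is $o(1)$. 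Rescaling to exact mass $m$ adds a further $o(1)$. The delicate point is to verify uniformity of every error in $m$ over compact sets; this accounts for the bulk of the technical work.

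Finally, $E_{\max}(\cdot,h)$ is linear and hence absolutely continuous. For $E_{\min}$, fix $m_0>0$ and take $m,m'$ in a neighborhood of $m_0$. A near-minimizer $f$ of mass $m$ rescaled by $\sqrt{m'/m}$ has mass $m'$ and energy $(m'/m)G_h(f)-(m'/m)^{(p+1)/2}N_h(f)$. Since Lemma~\ref{emfacts3} bounds $N_h(f)$ and hence (through $H_h(f)=G_h(f)-N_h(f)$) also $G_h(f)$ by some $C(m_0,h)$, a first-order Taylor expansion at $m'/m=1$ gives $E_{\min}(m',h)\le E_{\min}(m,h)+C(m_0,h)|m'-m|+O(\eta)$; sending $\eta\to 0$ and using the symmetric bound makes $E_{\min}(\cdot,h)$ locally Lipschitz, which implies absolute continuity on $[0,\infty)$.
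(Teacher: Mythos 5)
Your outline captures the pointwise-convergence part correctly, but it misses the paper's key device for the \emph{uniform} convergence claim, and that omission is a genuine gap. The paper proves uniform convergence on compacts not by tracking uniformity of the error terms in the approximation argument (which you flag as ``the delicate point'' and leave unresolved), but by a two-step route that decouples it from the approximation entirely: (i) pointwise convergence $E_{\min}(m,h,n)\to E_{\min}(m,h)$, and (ii) \emph{equi}-Lipschitz continuity of the family $\{E_{\min}(\cdot,h,n)\}_{n\ge1}$ on compact subsets of $(0,\infty)$, obtained by the very rescaling computation you run --- but applied to the finite-$n$ minimizers on $V_n$ rather than to the limit. Arzel\`a--Ascoli plus pointwise convergence then forces uniform convergence of the full sequence, and the uniform limit of an equi-Lipschitz family is Lipschitz, hence absolutely continuous, so both remaining claims drop out of a single step. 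Your version proves Lipschitz continuity of $E_{\min}(\cdot,h)$ directly, which is fine for absolute continuity (modulo a careful treatment at $m=0$, which the paper handles via the bound of Lemma~\ref{emfacts3}), but does nothing for uniform convergence of the approximating sequence: pointwise convergence to a continuous limit does not upgrade itself to uniform convergence without equicontinuity of the approximants.

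As a secondary remark, your proof of $\liminf_n E_{\min}(m,h,n)\ge E_{\min}(m,h)$ is substantially heavier than needed. The paper does not introduce a cutoff function or a boundary strip of width $\sqrt n$, does not need an a priori gradient bound on the strip, and does not invoke a discrete Leibniz inequality. It simply averages over toric translates to find one translate $f_x$ with $M_{h,n}(f_x,\partial V_n)\le C(d,m)n^{-1}$, and then extends $f_x$ by zero to $\zz^d$. The resulting function has exactly mass $m$, and the discrepancy between its $\zz^d$ energy and the toric energy of $f_x$ consists only of the wraparound edges being deleted and the new outgoing edges being created --- both of which are controlled by the \emph{mass} on $\partial V_n$ alone, since $|f_x(y)-f_x(y')|^2\le 2(|f_x(y)|^2+|f_x(y')|^2)$. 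The gradient bound and the width-$\sqrt n$ buffer are unnecessary. (The same remark applies, in reverse, to the $\limsup$ direction: the paper restricts $f$ to $V_n$ and lets the $L^2$ and finite-energy tails handle the wraparound edges, instead of using your piecewise-linear cutoff $\chi_R$.)
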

\begin{proof}
Take any $\ep > 0$. Let $f\in \cc^{\zz^d}$ be a function such that $M_h(f)=m$ and $H_h(f) \le E_{\min}(m,h) + \ep$. For each $n$, define a function $f_n$ on $V_n$ as simply the restriction of $f$ on $V_n$. Then it is easy to see that $M_{h,n}(f_n) \ra M_h(f)$ and $H_{h,n}(f_n) \ra H_h(f)$ as $n \ra\infty$.  Since $\ep$ is arbitrary, this shows that
\[
\limsup_{n\ra\infty} E_{\min}(m,h,n) \le E_{\min}(m,h). 
\]
Fix $n$ and a function $f\in \cc^{V_n}$  such that $M_{h,n}(f)=m$ and $H_{h,n}(f) \le E_{\min}(m,h,n) + \ep$. Define a function $g$ on $\zz^d$ as follows. For each $x\in V_n$, let $f_x$ be the translated function $f_x(y) := f(x+y)$, where the addition is modulo $n$ in each coordinate. Let $\partial V_n$ denote the boundary of $V_n$ when $V_n$ is considered as a subset of $\zz^d$. Recall the notation $M_{h,n}(f,U)$ from Section~\ref{notation}. Then 
\begin{align*}
 \sum_{x\in V_n} M_{h,n}(f_x, \partial V_n) &= \sum_{x\in V_n} \sum_{y\in \partial V_n} h^d |f(x+y)|^2 \\
&= \sum_{z\in V_n} h^d |f(z)|^2 |\partial V_n| = M_{h,n}(f)|\partial V_n|. 
\end{align*}
This shows that there exists $x\in V_n$ such that 
\[
M_{h,n}(f_x, \partial V_n) \le M_{h,n}(f)|\partial V_n|n^{-d} \le C(d, m) n^{-1}.
\]
Take such an $x$ and define $g:\zz^d \ra \cc$ as 
\[
g(y) :=
\begin{cases}
f_x(y) &\text{ if } y\in V_n,\\
0 &\text{ otherwise.}
\end{cases} 
\]
Clearly, $M_h(g)=m$. Since $x$ was chosen so that the `boundary effect' is small, $|H_h(g)- H_{h,n}(f_x)|\le C(d,m,h)n^{-1}$. Since $\ep$ is arbitrary and $H_{h,n}(f_x)= H_{h,n}(f)\le E_{\min}(m,h,n) + \ep$, this shows that 
\[
\liminf_{n\ra\infty} E_{\min}(m, h,n) \ge E_{\min}(m,h). 
\]
The proof for $E_{\max}$ is similar. 

To prove uniform convergence on compact subsets of $[0,\infty)$, we will first prove it for compact subsets of $(0,\infty)$. We will show that the collection of functions $(E_{\min}(\cdot, h,n))_{n\ge 1}$ is  equi-Lipschitz continuous on any compact subinterval of $(0,\infty)$ and apply the Arzela-Ascoli theorem. This will also prove absolute continuity of the function $E_{\min}$ on $(0,\infty)$. Continuity at zero follows from Lemma \ref{emfacts3}. The result for $E_{\max}$ follows similarly.

Take any $n$ and $0 < a < b$. Take $a \le m' < m\le b$. Let $f$ be an energy minimizing function (at grid size $h$) of mass $m$ on the torus $V_n$. Let $f' := (m'/m)^{1/2} f$. Then $M_{h,n}(f') = m'$. Note that 
\begin{align*}
H_{h,n}(f') = (m'/m)G_{h,n}(f) + (m'/m)^{(p+1)/2} N_{h,n}(f). 
\end{align*}
Thus,
\begin{align*}
&|H_{h,n}(f')-H_{h,n}(f)|\\
&\le |(m'/m)-1| G_{h,n}(f) + |(m'/m)^{(p+1)/2} -1 | N_{h,n}(f). 
\end{align*}
Since $N_{h,n}(f)$ and $G_{h,n}(f)$ can both be bounded above by 
\[
C_1(p,d,h) m^{C_2(p,d,h)},
\]
this shows that 
\[
E_{\min}(m', h,n)\le E_{\min}(m,h,n) + C(a,b, p,d,h)|m'-m|.
\]
Similarly, taking $g'$ such that $M_{h,n}(g') = m'$ and $H_{h,n}(g')= E_{\min}(m', h,n)$, and letting $g := (m/m')g'$, it follows that $E_{\min}(m, h,n)$ is bounded above by $E_{\min}(m', h,n) + C(a,b, p,d,h) |m'-m|$. This proves uniform convergence on compact subsets of $(0,\infty)$. To prove uniform convergence on compact subsets of $[0,\infty)$, one simply notices that the uniform bound on the energy given in Lemma \ref{emfacts3} holds for functions on $V_n$ as well (the bound will be same, independent of $n$).   
\end{proof}
Recall that we say $a_j\sim b_j$ as $j \ra\infty$ if $a_j$ and $b_j$ are sequences that satisfy $\lim_{j\ra\infty}a_j/b_j = 1$. 
\begin{lmm}\label{riem}
Let $c_j$ be a sequence such that for some $C>0$ and $\alpha\ge 0$, $c_j \sim Cj^\alpha$ as $j\ra\infty$. Then for any $\beta \ge 0$, 
\[
\sum_{j=0}^{k-1} c_j (k-j)^\beta \sim C k^{\alpha+\beta+1} \int_0^1 x^\alpha(1-x)^\beta dx \ \ \text{ as } k\ra\infty. 
\]
\end{lmm}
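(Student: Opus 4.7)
The plan is a standard Riemann-sum argument, with the extra care needed to handle the initial indices where the asymptotic $c_j \sim Cj^\alpha$ has not yet taken hold.

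First I would reduce to the ``clean'' case $c_j = Cj^\alpha$. Given $\delta>0$, choose $J=J(\delta)$ so large that $|c_j - Cj^\alpha|\le \delta\, C j^\alpha$ for every $j\ge J$. Split
\[
\sum_{j=0}^{k-1} c_j(k-j)^\beta = \sum_{j=0}^{J-1} c_j(k-j)^\beta + \sum_{j=J}^{k-1} c_j(k-j)^\beta.
\]
The first piece is bounded in absolute value by $\bigl(\max_{0\le j<J}|c_j|\bigr)J\, k^\beta$, hence is $O(k^\beta)$, which is negligible compared with $k^{\alpha+\beta+1}$ once $\alpha+1>0$ (which holds since $\alpha\ge 0$). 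For the second piece, using the choice of $J$,
\[
\Bigl|\sum_{j=J}^{k-1} c_j(k-j)^\beta - C\sum_{j=J}^{k-1} j^\alpha(k-j)^\beta\Bigr| \le \delta C \sum_{j=J}^{k-1} j^\alpha(k-j)^\beta.
\]
Thus it suffices to analyze the ``pure'' sum $S_k:=\sum_{j=0}^{k-1} j^\alpha(k-j)^\beta$ and show that $S_k\sim k^{\alpha+\beta+1}\int_0^1 x^\alpha(1-x)^\beta\,dx$; the result then follows by letting $\delta\to 0$ after taking $\limsup$ and $\liminf$.

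To handle $S_k$, I would recognize the rescaled sum as a Riemann sum:
\[
\frac{S_k}{k^{\alpha+\beta+1}} \;=\; \frac{1}{k}\sum_{j=0}^{k-1}\Bigl(\frac{j}{k}\Bigr)^\alpha\Bigl(1-\frac{j}{k}\Bigr)^\beta.
\]
Since $\alpha,\beta\ge 0$, the function $\phi(x):=x^\alpha(1-x)^\beta$ is continuous and bounded on $[0,1]$, so by the standard convergence of left-endpoint Riemann sums,
\[
\lim_{k\to\infty}\frac{S_k}{k^{\alpha+\beta+1}} = \int_0^1 x^\alpha(1-x)^\beta\,dx.
\]
(If one prefers to avoid quoting a Riemann-sum theorem, one can bound $\phi$ by its values on a uniform partition and use uniform continuity on $[0,1]$.)

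Combining the pieces: the first ($J$-term) sum contributes $o(k^{\alpha+\beta+1})$, and the second sum equals $C\bigl(1+O(\delta)\bigr)S_k \sim C\bigl(1+O(\delta)\bigr)k^{\alpha+\beta+1}\int_0^1 x^\alpha(1-x)^\beta\,dx$. Dividing through by $C k^{\alpha+\beta+1}\int_0^1 x^\alpha(1-x)^\beta\,dx$ and letting $k\to\infty$ gives a ratio within $1+O(\delta)$ of $1$; since $\delta>0$ was arbitrary, the claimed asymptotic equivalence follows. I do not foresee any genuine obstacle; the only mild subtlety is making sure the initial $O(k^\beta)$ tail is truly negligible, which requires $\alpha+1>0$ and holds under our hypothesis.
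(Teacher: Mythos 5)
Your proof is correct and follows essentially the same route as the paper: reduce to the clean sum $\sum j^\alpha(k-j)^\beta$ via the asymptotic $c_j\sim Cj^\alpha$, then identify the rescaled sum as a Riemann sum for $\int_0^1 x^\alpha(1-x)^\beta\,dx$. The only cosmetic difference is that you control the error by splitting at a finite $J$ and bounding the tail multiplicatively by $(1\pm\delta)$, whereas the paper bounds it additively by a Ces\`aro mean of $|c_j-Cj^\alpha|/(j+1)^\alpha\to 0$; these amount to the same thing.
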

\begin{proof}
Writing 
\[
\sum_{j=0}^{k-1} c_j (k-j)^\beta = k^{\alpha+\beta+1}\frac{1}{k} \sum_{j=0}^{k-1} (c_j/k^\alpha) (1-j/k)^\beta,
\]
note that 
\begin{align*}
&\biggl|\frac{1}{k} \sum_{j=0}^{k-1} (c_j/k^\alpha) (1-j/k)^\beta - \frac{C}{k} \sum_{j=0}^{k-1} (j/k)^\alpha (1-j/k)^\beta\biggr|\\
&\le \frac{1}{k} \sum_{j=0}^{k-1} \frac{|c_j- C j^\alpha|}{k^\alpha}\\
&\le \frac{1}{k} \sum_{j=0}^{k-1} \frac{|c_j - C j^\alpha|}{(j+1)^\alpha}. 
\end{align*}
Since $c_j/Cj^\alpha \ra 1$ as $j \ra\infty$, the above bound tends to zero as $k \ra\infty$. Riemann sum approximation  gives
\[
\lim_{k\ra\infty}\frac{C}{k} \sum_{j=0}^{k-1} (j/k)^\alpha (1-j/k)^\beta = C\int_0^1 x^\alpha(1-x)^\beta dx.
\]
This completes the proof. 
\end{proof}

\begin{lmm}\label{emin0}
Suppose that $1<p < 1+4/d$. Then for any $m> 0$ and $A>0$, 
\[
\sup_{0< h\le A} E_{\min}(m,h) < 0. 
\]
\end{lmm}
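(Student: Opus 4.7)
The plan is to exhibit, for each $h \in (0, A]$, a test function $f_h : \zz^d \to \cc$ with $M_h(f_h) = m$ and $H_h(f_h) \le -c$ for a single constant $c > 0$ independent of $h$. The construction splits into two regimes: small $h$, where a direct Riemann-sum discretization of a good continuum function works; and larger $h$ (bounded below by some $h_0$), where a single discrete test function is rescaled to the desired grid. Subcriticality $p < 1+4/d$, equivalently $d(p-1)/2 < 2$, enters twice and is really the only substantive input.

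To produce a continuum seed, I would fix any nonzero smooth compactly supported $\phi : \rr^d \to \rr$ with $\|\phi\|_2^2 = m$ and consider the mass-preserving family $\psi_\lambda(x) := \lambda^{d/2} \phi(\lambda x)$. A direct continuum computation gives
\[
H(\psi_\lambda) = \tfrac{\lambda^2}{2}\|\nabla \phi\|_2^2 - \tfrac{\lambda^{d(p-1)/2}}{p+1}\|\phi\|_{p+1}^{p+1},
\]
and since $d(p-1)/2 < 2$, the negative term dominates for small $\lambda$; pick such a $\lambda$ and set $\psi := \psi_\lambda$, so $M(\psi) = m$ and $H(\psi) = -3c_0$ for some $c_0 > 0$. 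Now for each $h > 0$, set $\psi_h(x) := \psi(hx)$ for $x \in \zz^d$ and define the normalized test function $u_h := \sqrt{m/M_h(\psi_h)}\,\psi_h$, which has mass exactly $m$ for $h$ small enough that $M_h(\psi_h) > 0$. Standard Riemann-sum convergence for smooth compactly supported integrands at spacing $h$ yields $M_h(\psi_h) \to M(\psi)$, $G_h(\psi_h) \to \tfrac{1}{2}\|\nabla\psi\|_2^2$ and $N_h(\psi_h) \to \tfrac{1}{p+1}\|\psi\|_{p+1}^{p+1}$ as $h \to 0$, so $H_h(u_h) \to H(\psi) = -3c_0$. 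Hence there exists $h_0 \in (0, A]$ such that $H_h(u_h) \le -2c_0$ for every $h \in (0, h_0]$, giving $E_{\min}(m, h) \le -2c_0$ on $(0, h_0]$.

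To cover $h \in [h_0, A]$, I would use the following discrete scaling: if $u:\zz^d\to\cc$ and $v(x) := (h_0/h)^{d/2} u(x)$, unwinding the definitions of the norms at grid size $h$ gives the exact identities
\[
M_h(v) = M_{h_0}(u), \qquad G_h(v) = (h_0/h)^2 G_{h_0}(u), \qquad N_h(v) = (h_0/h)^{d(p-1)/2} N_{h_0}(u).
\]
Take $u := u_{h_0}$. For $h \ge h_0$ one has $h_0/h \le 1$, so $(h_0/h)^{d(p-1)/2} \ge (h_0/h)^2$ thanks once again to $d(p-1)/2 < 2$; since $N_{h_0}(u_{h_0}) \ge 0$ this upgrades to
\[
H_h(v) \le (h_0/h)^2 \bigl[G_{h_0}(u_{h_0}) - N_{h_0}(u_{h_0})\bigr] = (h_0/h)^2 H_{h_0}(u_{h_0}) \le -2c_0(h_0/A)^2.
\]
Combining with Step~2, $E_{\min}(m, h) \le -c$ for all $h \in (0, A]$, with $c := 2c_0 \min\bigl(1, (h_0/A)^2\bigr) > 0$. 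There is no real obstacle here; the whole argument is essentially computational, and the only step requiring any thought is the scaling identity, but even this is forced by the form of the subcritical exponent.
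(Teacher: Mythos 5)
Your proof is correct, and it takes a genuinely different route from the paper's. The paper constructs a single explicit family of lattice test functions: $\ell^1$-pyramids $f_k(x) = A_k(k - |x|_1)_+$ normalized to have mass $m$, and then uses the Riemann-sum lemma (Lemma~\ref{riem}) to show $H_h(f_k) \le C_1 (hk)^{-2} - C_2 (hk)^{-d(p-1)/2}$, from which the uniform negative bound over $h\le A$ follows by choosing $k$ so that $hk$ lies in a fixed range. Your argument splits the parameter range: for small $h$ you sample a fixed smooth compactly supported $\psi$ with $H(\psi)<0$ (obtained by the standard continuum scaling argument) and invoke Riemann-sum convergence of the three discrete functionals; for $h\in[h_0,A]$ you use the exact discrete rescaling $v(x)=(h_0/h)^{d/2}u(x)$, which preserves mass, multiplies $G$ by $(h_0/h)^2$ and $N$ by $(h_0/h)^{d(p-1)/2}$, and the subcriticality $d(p-1)/2<2$ then gives $H_h(v)\le(h_0/h)^2 H_{h_0}(u)$. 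This grid-size scaling identity does not appear in the paper's proof and is a clean device that makes the uniformity over $h\in(0,A]$ completely explicit, whereas the paper leaves that last step to the reader. Both proofs are elementary; yours is perhaps slightly more transparent because the two inputs (continuum negativity and discrete scaling) are cleanly separated, at the mild cost of quoting the continuum scaling fact, which the paper's proof re-derives in disguise on the lattice. One small point worth noting for your own records: your claimed limit $G_h(\psi_h)\to\frac12\|\nabla\psi\|_2^2$ is correct precisely because the sum $\sum_{x,y:\,x\sim y}$ in the paper's definition of $G_h$ is over \emph{unordered} pairs (edges) rather than ordered pairs, a convention one can confirm from the identity $(v,\Gamma v)=\sum_{x\sim y}|v(x)-v(y)|^2$ in Section~\ref{diagonal} and the bound $G_h(f)\le 2dh^{-2}M_h(f)$ in Lemma~\ref{emfacts3}; with the ordered-pairs convention the limit would acquire an extra factor of $2$ and the conclusion $H_h(u_h)\to H(\psi)$ would fail.
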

\begin{proof}
Fix $m > 0$. For each positive integer $k$, define a function $f_k$ as follows. If $|x|_1 \ge k$, let $f_k(x) = 0$. If $|x|_1 < k$, let 
\[
f_k(x) = A_k(k-|x|_1),
\]
where $A_k$ is a positive constant such that $M_h(f_k)=m$. Since the number of vertices $x$ with $|x|_1 = j$ asymptotes to  $C(d)j^{d-1}$ as $j \ra\infty$, it follows from Lemma \ref{riem} that 
\[
A_k^2 \sim C(d, m)h^{-d}k^{-(d+2)} \  \text{ as } k\ra\infty. 
\]
Note that if $x$ and $y$ are neighboring points, then $f_k(x)\ne f_k(y)$ if and only if one of them has $\ell^1$ norm $j$ and the other has $\ell^1$ norm $j+1$, for some $0\le j< k$. And in that case,
\[
|f_k(x)-f_k(y)| = A_k. 
\]
There are $\sim C(d)j^{d-1}$ such pairs as $j\ra\infty$. Thus, again by Lemma \ref{riem}, 
\begin{align*}
G(f_k) &\sim C(d)h^{d-2}A_k^2 \sum_{j=0}^{k-1} j^{d-1} \sim C(d,m)h^{d-2}A_k^2 k^d\sim C(d,m)(hk)^{-2}. 
\end{align*}
Again by Lemma \ref{riem}, 
\begin{align*}
h^d\sum_{x\in \zz^d} |f_k(x)|^{p+1}  &= h^d\sum_{j=0}^{k-1} \sum_{x\; : \; |x|_1 = j} |f_k(x)|^{p+1}\\
&\sim C(d)h^dA_k^{p+1}\sum_{j=0}^{k-1}j^{d-1} (k-j)^{p+1}\\
&\sim C(p,d,m)h^{d-d(p+1)/2}k^{-(d+2)(p+1)/2}k^{p+d+1}\\
&= C(p,d,m) (hk)^{-d(p-1)/2}. 
\end{align*}
Thus, for all $k$,
\begin{align*}
H(f_k) &\le C_1(p,d,m)  (hk)^{-2}-C_2(p,d,m) (hk)^{-d(p-1)/2}.
\end{align*}
If $p< 1+4/d$, then  $d(p-1)/2 < 2$. It is now easy to see from the above bound that if $h \le A$ for some constant $A$, then 
\[
E_{\min}(m,h) \le \inf_k H(f_k) \le - C(p,d,m, A)<0,
\]
which concludes the proof. 
\end{proof}

\begin{lmm}\label{emax}
For any $m > 0$, $E_{\max}(m,h) = 2dm/h^2$.
\end{lmm}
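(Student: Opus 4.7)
The plan is to prove the identity $E_{\max}(m,h)=2dm/h^2$ as two matching bounds, an upper bound by dropping the nonlinear term and estimating the gradient, and a lower bound by constructing explicit near-maximizers using an alternating checkerboard pattern on a growing $\ell^1$-ball.

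For the upper bound, I would take any $v:\zz^d\to\cc$ with $M_h(v)=m$ and simply drop the nonnegative nonlinear term: $H_h(v)\le G_h(v)$. Applying the elementary inequality $|a-b|^2\le 2(|a|^2+|b|^2)$ to each edge and using the fact that every vertex of $\zz^d$ has exactly $2d$ neighbors, I would get
\[
\sum_{\{x,y\}:x\sim y}|v(x)-v(y)|^2 \;\le\; 2\sum_{\{x,y\}}\bigl(|v(x)|^2+|v(y)|^2\bigr) \;=\; 4d\sum_{x\in\zz^d}|v(x)|^2.
\]
Multiplying by $h^{d-2}/2$ and using $M_h(v)=m$ yields $G_h(v)\le 2dm/h^2$, hence $E_{\max}(m,h)\le 2dm/h^2$. (Equivalently, one could invoke the fact that the discrete Laplacian $-h^2\Delta$ has operator norm $4d$ on $\ell^2(\zz^d)$, e.g.\ by Fourier analysis; both approaches yield the same constant.)

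For the matching lower bound, I would construct near-maximizers. The bound above is tight precisely when $v(y)=-v(x)$ on every edge, which singles out the checkerboard pattern on the bipartite graph $\zz^d$. Since no such pattern lies in $\ell^2$, I would truncate: for each positive integer $k$, set
\[
v_k(x) \;:=\; a_k(-1)^{|x|_1}\mathbf{1}_{\{|x|_1<k\}},
\]
with $a_k>0$ chosen so that $M_h(v_k)=m$. On every interior edge the two endpoints have opposite $\ell^1$-parity, so $|v_k(x)-v_k(y)|^2=4a_k^2$; boundary edges contribute only $a_k^2$. Standard counts give $|\{|x|_1<k\}|\sim c_d k^d$, a number of interior edges $\sim d c_d k^d$, and a boundary-edge count of order $k^{d-1}$, so $a_k^2\sim c_d' m h^{-d}k^{-d}$ and the boundary contribution to $G_h(v_k)$ is lower order. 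The interior terms then force $G_h(v_k)\to 2dm/h^2$. Meanwhile, the nonlinear piece is
\[
N_h(v_k) \;=\; \frac{h^d}{p+1}\,|\{|x|_1<k\}|\,a_k^{p+1} \;=\; \frac{m}{p+1}\,a_k^{p-1},
\]
which tends to zero as $k\to\infty$ because $p>1$ and $a_k\to 0$. Consequently $H_h(v_k)\to 2dm/h^2$, yielding $E_{\max}(m,h)\ge 2dm/h^2$.

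The argument is elementary; the only mild chore is tracking the leading-order asymptotics of $|\ell^1$-balls$|$ and the interior versus boundary edge counts, but those are exactly the Riemann-sum style computations already carried out in Lemma \ref{riem} and Lemma \ref{emin0}, so there is no real obstacle.
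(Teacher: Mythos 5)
Your proof is correct, and it takes a genuinely more elementary route than the paper. The paper first bounds $E_{\max}(m,h,n)$ on the torus $V_n$: the upper bound comes from the spectral decomposition $(f,\Gamma f)\le\lambda_{\max}\|f\|_2^2$ with $\lambda_{\max}\le 4d$, and the lower bound from testing against the top eigenfunction $\rho_{[n/2]}$ (which for even $n$ is exactly $n^{-d/2}(-1)^{|x|_1}$, the periodic checkerboard), followed by the limit $E_{\max}(m,h,n)\to E_{\max}(m,h)$ from Lemma~\ref{emfacts4}. You instead work directly on $\zz^d$: the upper bound via the pointwise inequality $|a-b|^2\le 2(|a|^2+|b|^2)$, which upon summation reproduces the sharp operator-norm bound $4d$ without any Fourier analysis, and the lower bound via the truncated checkerboard $v_k=a_k(-1)^{|x|_1}\mathbf{1}_{\{|x|_1<k\}}$. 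The test functions are morally identical (the paper's eigenfunction \emph{is} the checkerboard on the torus), but your version bypasses both the spectral machinery of Section~\ref{diagonal} and the $n\to\infty$ convergence result, so it is more self-contained. The Riemann-sum bookkeeping (volume $\sim c_d k^d$, boundary $\sim k^{d-1}$, $a_k^2\sim mh^{-d}k^{-d}/c_d$, hence $G_h(v_k)\to 2dm/h^2$ while $N_h(v_k)=\frac{m}{p+1}a_k^{p-1}\to 0$) is exactly of the type done in Lemmas~\ref{riem}--\ref{emin0}, so nothing is missing. One stylistic note: the paper's convention in \eqref{energydefi} writes the gradient sum as $\frac{h^d}{2}\sum_{x,y:x\sim y}$, where (as the identity $(v,\Gamma v)=\sum_{x,y:x\sim y}|v(x)-v(y)|^2$ in Section~\ref{diagonal} confirms) each undirected edge is counted once; your unordered-pair notation $\sum_{\{x,y\}}$ with the prefactor $h^{d-2}/2$ is consistent with this, and the constants match.
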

\begin{proof}
Fix $n$. Let $\Gamma$, $R$ and $\Lambda$ be the matrices from Section~\ref{diagonal}. Let 
\[
\lambda_{\max} := \max_{y\in V_n} \lambda_y = \max_{y\in V_n} \sum_{i=1}^d 4\sin^2(\pi y_i/n). 
\]
Clearly, $\lambda_{\max}\le 4d$ always, and $\lambda_{\max} \ra 4d$ as $n\ra\infty$. Note that for any $f\in \cc^{V_n}$ with $M_{h,n}(f)=m$, 
\begin{align*}
H_{h,n}(f) &\le \frac{h^{d-2}}{2}\sum_{x,y\in V_n \atop x\sim y} |f(x)-f(y)|^2\\
&= \frac{h^{d-2}}{2} (f, \Gamma f) \\
&\le \frac{h^{d-2}}{2} \lambda_{\max} \sum_{x\in V_n} |f(x)|^2 = \frac{\lambda_{\max}m}{2h^2}. 
\end{align*}
Thus, $E_{\max}(m,h,n) \le 2dm/h^2$. 

Next, let $f := \sqrt{mh^{-d}}\rho_{[n/2]}$, where $(\rho_y)_{y\in V_n}$ are the eigenfunctions defined in Section \ref{diagonal} and $[n/2]$ is the element of $V_n$ whose components are all equal to the integer part of $n/2$. Note that $|f(x)| = \sqrt{mh^{-d}}n^{-d/2}$  for each $x$. Therefore, $M_{h,n}(f) = m$ and 
\begin{align*}
H_{h,n}(f) &= \frac{h^{d-2}}{2}\sum_{x,y\in V_n \atop x\sim y } |f(x)-f(y)|^2 - \frac{h^d}{p+1} \sum_{x\in V_n} |f(x)|^{p+1}\\
&= \frac{h^{d-2}}{2} (f, \Gamma f) - \frac{h^d}{p+1} (mh^{-d})^{(p+1)/2}n^{-d(p-1)/2}\\
&= \frac{h^{d-2} \lambda_{[n/2]}}{2}\sum_{x\in V_n} |f(x)|^2  - \frac{h^d}{p+1} (mh^{-d})^{(p+1)/2}n^{-d(p-1)/2}.
\end{align*}
Since $\lambda_{[n/2]}\ra 4d$ as $n\ra\infty$ and the second term goes to zero, this shows that
\[
\liminf_{n\ra\infty} E_{\max}(m,h,n) \ge \frac{2dm}{h^2}. 
\]
By Lemma \ref{emfacts4}, this completes the proof. 
\end{proof}

\section{Large deviations for the gradient}\label{largedev}
Recall the function $\Psi_d$ defined in the statement of Theorem \ref{discretelimit}. The following proposition summarizes some important properties of this function. 
\begin{prop}\label{psiprops}
The function $\Psi_d$ has the following properties: it is continuous in $(0, 4d)$, it is strictly decreasing in $(0, 2d]$ and strictly increasing in $[2d,4d)$, $\Psi_d(2d)=0$,  and 
\[
\lim_{\alpha \downarrow 0} \Psi_d(\alpha) =\infty = \lim_{\alpha \uparrow 4d} \Psi_d(\alpha). 
\]
In particular, $\Psi_d$ is a continuous function from $\rr$ into $[0,\infty]$. 
\end{prop}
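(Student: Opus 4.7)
The plan is to work throughout with the function
\[
F(\alpha,\gamma) := \int_{[0,1]^d} \log\!\left(1-\gamma + \frac{\gamma}{\alpha}g(x)\right)dx,
\qquad g(x) := 4\sum_{i=1}^d \sin^2(\pi x_i),
\]
defined for $\alpha>0$ and $\gamma\in[0,1]$. Note that at $\gamma=1$ the integrand has a logarithmic singularity at the vertices of $[0,1]^d$ where $g$ vanishes quadratically, but these singularities are integrable in every dimension $d\ge 1$, so $c_d:=\int_{[0,1]^d}\log g\,dx$ is finite. A routine dominated-convergence argument (bounding $\log(1-\gamma+\gamma g/\alpha)$ above by $\log(1+g/\alpha)$ and below, when $\gamma\in[\tfrac12,1]$, by $\log(g/(2\alpha))$) shows $F$ is jointly continuous on $(0,\infty)\times[0,1]$. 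Since $\Psi_d$ is defined symmetrically about $2d$ on $(0,4d)$, it suffices to establish continuity, strict monotonicity, and the limit at $\alpha\downarrow 0$ on the interval $(0,2d]$.

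Two structural facts about $F$ drive the argument. First, $\int g=2d$, so Jensen's inequality gives $F(2d,\gamma)\le \log 1=0$ with strict inequality for every $\gamma>0$ (since $g$ is not constant). Second, differentiation under the integral yields
\[
\partial_\gamma F(\alpha,\gamma) = \int \frac{g/\alpha-1}{1-\gamma+\gamma g/\alpha}\,dx,\qquad
\partial_\gamma^2 F(\alpha,\gamma) = -\int \frac{(g/\alpha-1)^2}{(1-\gamma+\gamma g/\alpha)^2}\,dx < 0,
\]
so $F(\alpha,\cdot)$ is strictly concave with slope $2d/\alpha-1$ at $\gamma=0$. For $\alpha<2d$ this slope is positive, so $F(\alpha,\gamma)>0$ for small $\gamma>0$; combined with strict concavity and continuity on $[0,1]$, the function $F(\alpha,\cdot)$ attains a maximum at some $\gamma^\ast(\alpha)\in(0,1]$, and $\Psi_d(\alpha)=\max_{\gamma\in[0,1]} F(\alpha,\gamma)>0$. (Because $F$ is continuous on the closed interval, the supremum over the open interval $(0,1)$ equals the maximum over $[0,1]$.)

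Strict monotonicity on $(0,2d]$ I establish with a one-line scaling trick: for $0<\alpha_1<\alpha_2\le 2d$ and any $\gamma_2\in(0,1]$, set $\gamma_1:=\gamma_2\alpha_1/\alpha_2\in(0,\gamma_2)$. Since $\gamma_1/\alpha_1=\gamma_2/\alpha_2$, the second term inside the logarithm is unchanged while $1-\gamma_1>1-\gamma_2$, so
\[
F(\alpha_1,\gamma_1)-F(\alpha_2,\gamma_2) = \int \log\!\left(1+\frac{\gamma_2-\gamma_1}{1-\gamma_2+\gamma_2 g(x)/\alpha_2}\right)dx > 0.
\]
Choosing $\gamma_2=\gamma_2^\ast$ to be a maximizer of $F(\alpha_2,\cdot)$, which lies in $(0,1]$ when $\alpha_2<2d$ by the previous paragraph, gives $\Psi_d(\alpha_1)\ge F(\alpha_1,\gamma_1)>\Psi_d(\alpha_2)$; for $\alpha_2=2d$ the inequality $\Psi_d(\alpha_1)>0=\Psi_d(2d)$ is immediate from the positivity already noted. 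By the definitional symmetry $\Psi_d(\alpha)=\Psi_d(4d-\alpha)$, strict increase holds on $[2d,4d)$.

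For the continuity and limits: joint continuity of $F$ plus compactness of $[0,1]$ imply that $\alpha\mapsto \max_{\gamma\in[0,1]}F(\alpha,\gamma)=\Psi_d(\alpha)$ is continuous on $(0,2d)$. Left-continuity at $2d$ is $\lim_{\alpha\uparrow 2d}\max_\gamma F(\alpha,\gamma)=\max_\gamma F(2d,\gamma)=0=\Psi_d(2d)$, and right-continuity at $2d$ follows by the symmetry; continuity on $(2d,4d)$ also follows by symmetry. For the divergence at the endpoints, evaluate at $\gamma=\tfrac12$:
\[
\Psi_d(\alpha) \ge F(\alpha,\tfrac12) \ge \int \log\!\left(\frac{g(x)}{2\alpha}\right)dx = c_d - \log(2\alpha) \longrightarrow \infty \quad\text{as } \alpha\downarrow 0,
\]
and by symmetry $\Psi_d(\alpha)\to\infty$ as $\alpha\uparrow 4d$. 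Combined with $\Psi_d\equiv\infty$ outside $(0,4d)$, this gives continuity as a function from $\rr$ into $[0,\infty]$. The only subtle point is the scaling step in the monotonicity argument: I must know the maximizer $\gamma_2^\ast$ is not stuck at the degenerate endpoint $\gamma=0$ (which would make the scaling trivial), and this is precisely guarded against by the positivity $\Psi_d(\alpha)>0$ for $\alpha<2d$ coming from the positive slope at $\gamma=0$.
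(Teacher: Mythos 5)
Your proof is correct, but it takes a genuinely different and arguably cleaner route than the paper's. The paper works with $K_\alpha(\gamma)$ only on the half-open interval $\gamma\in[0,1)$, so it cannot invoke compactness; instead it proves continuity of $\Psi_d$ via pointwise convergence $K_{\alpha_n}\to K_\alpha$ together with the concavity of the $K_\alpha$'s (concavity is what makes pointwise convergence of functions carry over to convergence of suprema on a non-compact interval). It dispatches strict monotonicity on $(0,2d)$ with a one-line ``obvious from the definition'' remark, which tacitly relies on the fact that the maximizer is bounded away from $\gamma=0$. You instead extend $F$ continuously to the full square $(0,\infty)\times[0,1]$ (the dominated-convergence justification is fine: $\log g$ is integrable and bounds the singularity at $\gamma=1$ on the corners), which lets you use compactness directly to conclude continuity of the sup, and lets you replace ``obvious'' with the scaling substitution $\gamma_1=\gamma_2\alpha_1/\alpha_2$. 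That substitution is the nicest part of your write-up: it makes the second term inside the logarithm invariant while strictly increasing the constant term, yielding strict monotonicity with no case analysis and no appeal to where the maximizer lives (you still need the maximizer to be nonzero, which you correctly derive from the positive slope at $\gamma=0$). You also replace the paper's computation $K_{2d}'(0)=0$ by a Jensen-inequality argument for $F(2d,\cdot)\le 0$; both are fine, Jensen is perhaps more transparent. One small point worth flagging explicitly, which you only gesture at, is that the definition of $\Psi_d$ is a sup over the \emph{open} interval $(0,1)$, so the identification $\sup_{(0,1)}F(\alpha,\cdot)=\max_{[0,1]}F(\alpha,\cdot)$ needs the continuity of $F(\alpha,\cdot)$ on the closed interval (which you do prove) together with the observation that $F(\alpha,0)=0$ is not a strict maximum; you have both ingredients, so the argument is complete.
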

\begin{proof}
It is obvious from the definition that $\Psi$ is strictly decreasing in $(0, 2d)$ and by symmetry, strictly increasing in $(2d, 4d)$. To extend the monotonicity up to the point $2d$, we have to show that $\Psi(\alpha) > 0$ for all $\alpha < 2d$. For each $\alpha\le2d$ and $0< \gamma < 1$ define 
\[
K_\alpha(\gamma) := \int_{[0,1]^d}\log \biggl(1-\gamma + \frac{4\gamma}{\alpha} \sum_{i=1}^d \sin^2(\pi x_i) \biggr) dx_1\cdots dx_d,
\] 
so that
\begin{equation}\label{psik}
\Psi_d(\alpha) = \sup_{0< \gamma < 1} K_\alpha(\gamma). 
\end{equation}
By dominated convergence, $K_\alpha$ is continuous on $[0, 1)$ and differentiable in $(0,1)$, and has a right derivative at $0$. Moreover, $K_\alpha(0)=0$. A simple computation shows that $K'_\alpha(0)=2d/\alpha -1 > 0$ if $\alpha < 2d$. Thus if $\alpha < 2d$, then $K_\alpha$ is strictly increasing at $0$ and therefore by \eqref{psik}, $\Psi_d(\alpha)$ is positive. By symmetry, $\Psi_d(\alpha)$ is positive for $\alpha \in (2d, 4d)$ also. 

Next, let $\alpha_n$ be a sequence converging to $\alpha \in (0, 2d)$. By dominated convergence, $K_{\alpha_n}\ra K_\alpha$ pointwise in $[0,1)$. It is easily seen that $K_{\alpha_n}$ is a strictly concave function for every $n$. By concavity and pointwise convergence, it follows that $\sup_{0< \gamma < 1} K_{\alpha_n}(\gamma)$ converges to $\sup_{0< \gamma< 1} K_{\alpha}(\gamma)$, proving that $\Psi_d$ is continuous at $\alpha$. To show continuity at $2d$, follow the same argument and simply note that $K_{2d}$ attains its maximum   at $0$, since $K_{2d}$ is concave in $[0, 1)$ and $K'_{2d}(0)=0$. 

Lastly, note that for any fixed $\gamma \in (0,1)$, $K_\alpha(\gamma) \ra\infty$ as $\alpha \ra 0$. This proves that $\lim_{\alpha \downarrow0}\Psi_d(\alpha)=\infty$. By symmetry, $\lim_{\alpha \uparrow 4d}\Psi_d(\alpha)=\infty$.
\end{proof}
Next, let $\xi = (\xi(x))_{x\in V_n}$ be a random function chosen uniformly from the unit sphere
\[
\biggl\{u\in \cc^{V_n}: \sum_{x\in V_n}|u(x)|^2 =1 \biggr\}.
\]
The following theorem is a large deviation result for the gradient of $\xi$ that is of fundamental importance in the rest of the manuscript. 
\begin{thm}\label{psifacts}
For each $\alpha \in (0,2d)$, 
\begin{align*}
\lim_{n\ra\infty} \frac{1}{n^d}\log \pp\biggl(\sum_{x, y\in V_n \atop x\sim y} |\xi(x)-\xi(y)|^2 \le \alpha\biggr) 
&= -\Psi_d(\alpha). 
\end{align*}
Moreover, for any $\delta > 0$, the same limit holds for 
\[
\frac{1}{n^d}\log \pp\biggl(\sum_{x, y\in V_n \atop x\sim y} |\xi(x)-\xi(y)|^2 \le \alpha, \ \max_{x\in V_n} |\xi(x)|^2 \le n^{-d(1-\delta)}\biggr). 
\]
The same conclusions hold if $\alpha \in (2d, \infty)$ and the `$\le \alpha$' is replaced by `$\ge\alpha$' in both expressions.  
\end{thm}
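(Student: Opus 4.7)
The plan is to diagonalize the quadratic form via Lemma \ref{diaglmm}, reduce to independent exponentials, and then run a sharp Chernoff/exponential-tilting argument of Cram\'er type. Writing $\sum_{x\sim y}|\xi(x)-\xi(y)|^2 = (\xi,\Gamma\xi)$ and setting $\eta := R^*\xi$, by unitary invariance $\eta$ is again uniform on the unit sphere of $\cc^{V_n}$ and the sum becomes $\sum_y \lambda_y |\eta(y)|^2$. Represent $\eta(y) = g_y/\|g\|_2$ with $g_y$ iid standard complex Gaussians (so that $|g_y|^2$ are iid unit-mean exponentials); the event in question then becomes $S_n := \sum_y (\lambda_y - \alpha)|g_y|^2 \le 0$.

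For the upper bound, apply Markov's inequality to $\exp(-sS_n)$ for $s\in(0,1/\alpha)$ (the admissible range since $\lambda_y \in [0,4d]$ forces the worst MGF constraint at $\lambda_0 = 0$); independence and the exponential MGF give $\pp(S_n \le 0) \le \prod_y (1+s(\lambda_y-\alpha))^{-1}$. With the change of variables $\gamma = s\alpha \in (0,1)$, the factor becomes $1 - \gamma + (4\gamma/\alpha)\sum_i \sin^2(\pi y_i/n)$, and a Riemann sum over $y/n \in [0,1)^d$ yields $n^{-d}\sum_y \log(1+s(\lambda_y-\alpha)) \to K_\alpha(\gamma)$, the integrand inside the definition of $\Psi_d(\alpha)$. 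Optimizing over $\gamma$ gives $\limsup n^{-d}\log \pp \le -\Psi_d(\alpha)$.

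For the matching lower bound, I would run the standard Cram\'er tilt. By the intermediate value theorem there exists a unique $s_n \in (0,1/\alpha)$ such that $\sum_y (\lambda_y-\alpha)/(1+s_n(\lambda_y-\alpha)) = 0$ (the left side decreases strictly from $n^d(2d-\alpha)>0$ at $s=0$ to $-\infty$ as $s\uparrow 1/\alpha$, driven by the $y=0$ mode); tilt $|g_y|^2$ to be independent exponentials with rates $r_y := 1 + s_n(\lambda_y - \alpha)$, so that $S_n$ has mean zero under the tilt by construction. A central limit argument (or local limit comparison) on a window of width $\sqrt{\mathrm{Var}_{\mathrm{tilt}}(S_n)}$ gives $\pp_{\mathrm{tilt}}(S_n \in [-\sqrt{\mathrm{Var}_{\mathrm{tilt}}(S_n)},\,0]) \ge c > 0$, and on that event $\log(d\pp/d\pp_{\mathrm{tilt}}) = -\sum_y \log r_y + s_n S_n \ge -n^d K_\alpha(s_n\alpha) - O(\sqrt{\mathrm{Var}_{\mathrm{tilt}}(S_n)})$. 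Since $s_n\alpha$ approaches a maximizer of $K_\alpha$ as $n\to\infty$ (with $K_\alpha(s_n\alpha) \to \Psi_d(\alpha)$), change of measure delivers $\liminf n^{-d}\log\pp \ge -\Psi_d(\alpha)$.

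To incorporate the $L^\infty$ constraint, observe that $\xi(x) = \|g\|^{-1}\sum_y R(x,y)g_y$ with $|R(x,y)|\equiv n^{-d/2}$; under the tilt, $g_y = \tilde g_y/\sqrt{r_y}$ for a fresh standard complex Gaussian $\tilde g_y$, so $\sum_y R(x,y)g_y$ is a complex Gaussian of variance $n^{-d}\sum_y 1/r_y = O(1)$. A Gaussian tail bound plus a union bound over the $n^d$ sites $x$ gives $\max_x |\sum_y R(x,y)g_y|^2 = O(\log n)$ with probability $1-o(1)$, and combined with $\|g\|^2 = \Theta(n^d)$ w.h.p.\ under the tilt this yields $\max_x |\xi(x)|^2 = O(\log n/n^d) \ll n^{-d(1-\delta)}$. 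Hence the $L^\infty$ event is typical under the tilt and costs nothing in the lower bound, while it trivially preserves the upper bound. The case $\alpha\in(2d,4d)$ with ``$\ge\alpha$'' is handled by the mirror-image tilt $s\in(0,1/(4d-\alpha))$, and the substitution $x_i\mapsto\tfrac12-x_i$ (which leaves the integral over $[0,1]^d$ invariant while exchanging $\sin^2$ with $\cos^2$) identifies the resulting variational value as $\Psi_d(4d-\alpha)=\Psi_d(\alpha)$. The main obstacle I expect is in the lower bound: controlling the tilted variance of $S_n$ uniformly in $n$, particularly in regimes (small $\alpha$ in dimension $d\ge 3$) where $K_\alpha$ is maximized near the boundary $\gamma=1$ and the zero mode $y=0$ drives $r_0$ toward $0$, so that the variance contribution of $|g_0|^2$ may dominate and the central-limit window must be chosen carefully to keep the change-of-measure error at $o(n^d)$.
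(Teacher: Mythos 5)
Your overall strategy --- diagonalize the quadratic form, pass to iid unit-mean exponentials via the sphere/Gaussian representation, Chernoff for the upper bound and exponential tilting for the lower bound, with the $\alpha\in(2d,4d)$ case handled by the $\sin^2\leftrightarrow\cos^2$ symmetry --- is the same as the paper's. The upper bound as you have written it is correct and essentially identical, and the $L^\infty$ constraint analysis is fine.

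The genuine gap is exactly where you suspect: the lower bound in the regime where the supremum of $K_\alpha$ over $(0,1)$ is approached at the boundary $\gamma\to1$, equivalently $I'(1/\alpha)\ge 0$ (this regime is non-vacuous for small $\alpha$ once $d\ge 3$, since $\int_{[0,1]^d}(\sum_i\sin^2\pi x_i)^{-1}\,dx<\infty$). In that case the finite-$n$ root $s_n$ of $\sum_y(\lambda_y-\alpha)/(1+s_n(\lambda_y-\alpha))=0$ tends to $1/\alpha$, and the defining equation forces $r_0 := 1-s_n\alpha \asymp n^{-d}$ (since $\alpha/r_0$ must balance the $\Theta(n^d)$ bulk sum over $y\ne 0$). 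Consequently $\var_{\mathrm{tilt}}(S_n)\asymp r_0^{-2}\asymp n^{2d}$, dominated by the zero mode, so your window width $W=\sqrt{\var_{\mathrm{tilt}}(S_n)}\asymp n^d$. But then the worst-case change-of-measure factor $e^{-s_nW}$ costs $e^{-\Theta(n^d)/\alpha}$, i.e.\ $s_nW/n^d\not\to 0$, and the step ``change of measure delivers $\liminf n^{-d}\log\pp\ge -\Psi_d(\alpha)$'' does not follow: you lose a constant $1/\alpha$ in the rate. Flagging the issue without a mechanism to control it is not a fix. (Also, the CLT claim $\pp_{\mathrm{tilt}}(S_n\in[-W,0])\ge c>0$ is itself not a CLT statement in this regime --- one exponential coordinate dominates the sum --- though that part is actually true and can be proved directly from the density of $\eta'(0)$.)

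This can be repaired, but not by your stated argument. Two possible fixes, either of which requires work beyond what you wrote: (i) shrink the window to $W=n^{d(1-\ep')}$ and verify directly --- using the explicit exponential density of the zero mode --- that $\pp_{\mathrm{tilt}}(S_n\in[-W,0])\gtrsim n^{-d\ep'}$, so that both the window cost $s_nW/n^d\to 0$ and $n^{-d}\log\pp_{\mathrm{tilt}}\to 0$; or (ii) do what the paper does: tilt at $\theta'$ with $1-\theta'\alpha=\ep$ bounded away from the boundary, isolate the $y=0$ exponential $\eta'(0)$, place it in an $O(\ep n^d)$-wide interval (costing $e^{-c\ep n^d}$), and then send $\ep\to 0$ at the very end. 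In the interior case $I'(1/\alpha)<0$ the paper also avoids a local CLT by tilting to a point $\theta'$ with $I'(\theta')\in(-\ep,0)$ so that the (Chebyshev-concentrated) tilted sum already lies in $[-4\ep n^d,0]$ with probability $\to 1$; your exact-root tilt together with a local limit argument also works there, since in that regime $r_0$ is bounded below and $\var_{\mathrm{tilt}}\asymp n^d$.
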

\begin{proof}[Proof of Theorem \ref{psifacts} in the case $0< \alpha < 2d$] 
Fix $\alpha \in (0, 2d)$ and a positive integer $n$. Let $\phi$ be the Gaussian random function defined in Section \ref{gaussian}. Let $\Gamma$, $R$ and $\Lambda$ be the matrices defined in Section \ref{diagonal}. Let 
\begin{equation}\label{taudef}
\tau := R^* \phi,
\end{equation}
where $R^*$ is the adjoint of $R$. 
Since $R$ is a unitary matrix,  $\tau$ has the same distribution as $\phi$. Moreover, 
\begin{equation}\label{r1}
\sum_{y\in V_n} |\tau(y)|^2 = \sum_{x\in V_n} |\phi(x)|^2
\end{equation}
and 
\begin{equation}\label{r2}
\sum_{y\in V_n} \lambda_y |\tau(y)|^2 = (\phi, \Gamma\phi) = \sum_{x, y\in V_n \atop x\sim y}|\phi(x)-\phi(y)|^2. 
\end{equation}
Let 
\[
\xi(x) := \frac{\phi(x)}{\bigl(\sum_{y\in V_n} |\phi(y)|^2\bigr)^{1/2}}.
\]
Then $\xi$ is uniformly distributed on the unit sphere of $\cc^{V_n}$. Moreover, by \eqref{r1} and \eqref{r2},
\begin{align*}
\sum_{x, y\in V_n \atop x\sim y} |\xi(x) - \xi(y)|^2 &= \frac{\sum_{x,y\in V_n \atop x\sim y} |\phi(x)-\phi(y)|^2}{\sum_{x\in V_n} |\phi_x|^2}\\
&= \frac{\sum_{y\in V_n} \lambda_y |\tau(y)|^2}{\sum_{y\in V_n} |\tau(y)|^2}. 
\end{align*}
Let $\eta(y) := n^dh^d |\tau(y)|^2$. Then $\eta(y)$ is an exponential random variable with mean~$1$ and the $\eta(y)$'s are independent. For any $\alpha \in (0, 2d)$, 
\begin{align*}
\pp\biggl(\sum_{x, y\in V_n \atop x\sim y} |\xi(x) - \xi(y)|^2 \le \alpha\biggr) &= \pp\biggl(\sum_{y\in V_n} (\lambda_y - \alpha) |\tau(y)|^2 \le 0\biggr)\\
&=  \pp\biggl(\sum_{y\in V_n} (\lambda_y - \alpha) \eta(y) \le 0\biggr).
\end{align*}
Thus, for any $\theta \in [0, 1/\alpha)$, 
\begin{align*}
\pp\biggl(\sum_{x, y\in V_n \atop x\sim y} |\xi(x) - \xi(y)|^2 \le \alpha\biggr) &\le \ee(e^{-\theta\sum_{y\in V_n} (\lambda_y - \alpha) \eta(y)})\\
&= \prod_{y\in V_n} \frac{1}{1+\theta(\lambda_y - \alpha)}. 
\end{align*}
Note that we need the restriction that $\theta < 1/\alpha$ since $\lambda_0=0$. Now,
\begin{align}
&-\frac{1}{n^d} \log \prod_{y\in V_n} \frac{1}{1+\theta(\lambda_y - \alpha)}\nonumber \\
&= \frac{1}{n^d}\sum_{y_1,\ldots, y_d=0}^{n-1} \log \biggl(1-\theta\alpha + 4\theta\sum_{i=1}^d \sin^2(\pi y_i/n)\biggr) =: I_n(\theta). \label{indef}
\end{align}
The sequence of functions $I_n$ converges pointwise to the function $I$ on $[0,1/\alpha)$, where
\[
I(\theta) = \int_{[0,1]^d}\log \biggl(1-\theta \alpha + 4\theta \sum_{i=1}^d \sin^2(\pi x_i) \biggr) dx_1\cdots dx_d. 
\]
This shows that for each $\theta\in [0,1/\alpha)$, 
\begin{equation}\label{upperfirst}
\limsup_{n\ra\infty} \frac{1}{n^d} \log \pp\biggl(\sum_{x,y\in V_n\atop x\sim y} |\xi(x) - \xi(y)|^2 \le \alpha\biggr) \le -I(\theta).
\end{equation}
Now, $I(\theta) = K_\alpha(\theta \alpha)$, where $K_\alpha$ is defined in the proof of Proposition~\ref{psiprops}. By \eqref{psik} and \eqref{upperfirst}, this shows that 
\begin{equation}\label{uppermain1}
\begin{split}
&\limsup_{n\ra\infty} \frac{1}{n^d}\log \pp\biggl(\sum_{x, y\in V_n, \atop x\sim y} |\xi(x)-\xi(y)|^2 \le \alpha\biggr) \\
&\le -\sup_{\theta \in (0,1/\alpha)} I(\theta) = -\sup_{\theta\in (0,1/\alpha)} K_\alpha(\theta \alpha) =  -\Psi_d(\alpha). 
\end{split}
\end{equation}
This proves the upper bound in the case $0< \alpha < 2d$. Next, we establish the matching lower bound.

As noted in Proposition \ref{psiprops}, $K_\alpha$ is a concave function and is strictly increasing at $0$ if $\alpha < 2d$. Thus, the maximum of $I$ in $[0,1/\alpha)$ must be achieved either inside $(0,1/\alpha)$, or as $\theta \ra1/\alpha$. The first case holds if $I'(1/\alpha) < 0$, and the second happens if $I'(1/\alpha) \ge 0$. The proof of the lower bound is different in the two cases. 

\vskip.1in
{\bf Case 1:} $I'(1/\alpha) < 0$. 

In this case, there is a unique $\theta^*\in (0,1/\alpha)$ where $I$ is maximum.   Fix any $\ep > 0$, and let $\theta'$ be a point so close to $\theta^*$ from the right  that $I'(\theta') \in (-\ep, 0)$. Fixing $n$, let $\eta' = (\eta'(y))_{y\in V_n}$ be a collection of independent random variables, where $\eta'(y)$ is an Exponential random variable with mean $1/(1+\theta'(\lambda_y- \alpha))$.  Assume that $\eta'$ is defined on the same probability space as all other variables, and is independent of everything else. 

Recall the definition \eqref{taudef} of $\tau$. Let $\sigma(y) := \tau(y)/|\tau(y)|$. Since $\tau(y)$ is a complex Gaussian random variable with mean zero, $\sigma(y)$ and $|\tau(y)|$ are independent random variables. Note that 
\[
\tau(y) = \sigma(y) |\tau(y)|= \sigma(y) \sqrt{\frac{\eta(y)}{n^dh^d}}.
\] 
Define a random vector $\xi' = (\xi'(x))_{x\in V_n}$ as follows: Take the variables $\sigma(y)$ and $\eta'(y)$  defined above, and let 
\[
\tau'(y) := \sigma(y) \sqrt{\frac{\eta'(y)}{n^dh^d}}.
\]
Let $\phi' := R \tau'$, and let 
\[
\xi'(x) := \frac{\phi'(x)}{\bigl(\sum_{y\in V_n} |\phi'(y)|^2\bigr)^{1/2}} =  \frac{\phi'(x)}{\bigl(\sum_{y\in V_n} |\tau'(y)|^2\bigr)^{1/2}}.
\]
Note that $\tau'$, just like $\tau$, is a collection of independent complex Gaussian random variables with $\ee(\tau'(y))=\ee({\tau'(y)}^2)=0$ for each $y$, but unlike $\tau$, 
\begin{equation}\label{taum}
\ee|\tau'(y)|^2 = \frac{1}{n^dh^d(1+\theta'(\lambda_y-\alpha))}. 
\end{equation}
Consequently, $\phi'$ is a complex Gaussian random vector satisfying
\begin{equation}\label{phim}
\ee|\phi'(x)|^2 \le \frac{1}{n^dh^d(1-\theta'\alpha)} \ \text{ for all } x,
\end{equation}
but its components are not independent. 

Note that the map that takes $(\sigma, \eta)$ to $\xi$ is the same map as the one that takes $(\sigma, \eta')$ to $\xi'$. Therefore a simple change-of-measure computation shows that for any measurable set $A\subseteq \cc^{V_n}$, 
\begin{align*}
\pp(\xi\in A) &= \ee(\rho(\eta')1_{\{\xi'\in A\}}), 
\end{align*}
where
\[
\rho(\eta') = e^{\sum_{y\in V_n} \theta'(\lambda_y -\alpha)\eta'(y)} \prod_{y\in V_n} \frac{1}{1+\theta'(\lambda_y-\alpha)}
\]
Thus, if we fix $\delta> 0$ and let $E$ be the event
\begin{align*}
E &:= \biggl\{-4n^d\ep\le \sum_{y\in V_n} (\lambda_y - \alpha) \eta'(y) \le 0, \\
&\qquad \max_{x\in V_n}|\phi'(x)|^2 \le n^{-d(1-\delta)}\sum_{x\in V_n} |\tau'(x)|^2 \biggr\},
\end{align*}
then 
\begin{align}
&\pp\biggl(\sum_{x, y\in V_n \atop x\sim y} |\xi(x)-\xi(y)|^2 \le \alpha, \ \max_{x\in V_n} |\xi(x)|^2 \le n^{-d(1-\delta)}\biggr)\nonumber \\
&= \ee\biggl(\rho(\eta') 1\biggl\{\sum_{x, y\in V_n\atop x\sim y} |\xi'(x)-\xi'(y)|^2 \le \alpha, \ \max_{x\in V_n} |\xi'(x)|^2 \le n^{-d(1-\delta)}\biggr\}\biggr)\nonumber \\
&= \ee\biggl(\rho(\eta')1\biggl\{\sum_{y\in V_n} (\lambda_y - \alpha) \eta'(y) \le 0, \nonumber \\
&\qquad \qquad \max_{x\in V_n}|\phi'(x)|^2 \le n^{-d(1-\delta)}\sum_{x\in V_n} |\tau'(x)|^2 \biggr\} \biggr)\nonumber \\
&\ge \ee(\rho(\eta')1_E)\ge e^{-4n^d\theta'\ep} \pp(E) \prod_{y\in V_n} \frac{1}{1+\theta'(\lambda_y-\alpha)}. \label{xixip}
\end{align}
Now, if $I_n$ is the function defined in \eqref{indef}, a  simple computation gives 
\[
I_n' (\theta') = \frac{1}{n^d}\sum_{y\in V_n} \frac{\lambda_y-\alpha}{1+\theta'(\lambda_y - \alpha)}.
\]
Thus, 
\[
\ee\biggl(\frac{1}{n^d}\sum_{y\in V_n} (\lambda_y - \alpha)\eta'(y)\biggr)=I_n'(\theta'). 
\]
But by independence, 
\[
\var\biggl(\frac{1}{n^d}\sum_{y\in V_n} (\lambda_y - \alpha)\eta'(y)\biggr)\le \frac{C(\alpha, \theta')}{n^d}.
\]
In particular, the variance tends to zero as $n\ra\infty$. Since $I_n'(\theta')\ra I'(\theta)\in (-\ep, 0)$, this shows that as $n\ra\infty$, 
\begin{equation}\label{lambdaeta}
\pp\biggl(-4n^d\ep \le \sum_{y\in V_n} (\lambda_y - \alpha)\eta'(y)\le 0 \biggr) \ra 1. 
\end{equation}
By the Gaussian nature of $\phi'$ and \eqref{phim}, for any $\delta' > 0$,
\[
\lim_{n\ra\infty} \pp\bigl(n^dh^d\max_{x\in V_n}|\phi'(x)|^2 \le n^{\delta'd}) =1,
\]
and similarly by \eqref{taum} and the independence of the coordinates of $\tau'$,
\begin{align*}
\frac{1}{n^d}\sum_{y\in V_n} n^dh^d|\tau'(y)|^2 &\ra \int_{[0,1]^d} \frac{1}{1-\theta'\alpha + 4\theta' \sum_{i=1}^d\sin^2(\pi x_i)} dx_1\cdots dx_d > 0  \\
&\text{ in probability as } n\ra\infty.  
\end{align*}
Combining the last two displays, and choosing $\delta'\in (0,\delta)$, we get 
\begin{align}\label{phitau}
\lim_{n\ra\infty} \pp\biggl(\max_{x\in V_n}|\phi'(x)|^2 \le n^{-d(1-\delta)} \sum_{x\in V_n} |\tau'(x)|^2\biggr) = 1. 
\end{align}
Therefore, by \eqref{xixip}, \eqref{lambdaeta} and \eqref{phitau}, $\pp(E)\ra 1$ as $n \ra\infty$, and hence 
\begin{align*}
&\liminf_{n\ra\infty} \frac{1}{n^d} \log \pp\biggl(\sum_{x,y\in V_n \atop x\sim y} |\xi(x)-\xi(y)|^2 \le \alpha, \ \max_{x\in V_n} |\xi(x)|^2 \le n^{-d(1-\delta)}\biggr)  \\
&\ge -4\theta'\ep + \lim_{n\ra\infty} \frac{1}{n^d} \sum_{y\in V_n} \log(1+\theta'(\lambda_y -\alpha)) \\
&= -4\theta'\ep - I(\theta'). 
\end{align*}
Since $\ep$ is arbitrary and $\theta' \ra\theta^*$ as $\ep\ra 0$, this shows that when $\alpha\in (0, 2d)$ and $I'(\alpha)< 0$,
\begin{equation}\label{lowermain1}
\begin{split}
&\liminf_{n\ra\infty} \frac{1}{n^d}\log \pp\biggl(\sum_{x, y\in V_n \atop x\sim y} |\xi(x)-\xi(y)|^2 \le \alpha, \ \max_{x\in V_n} |\xi(x)|^2 \le n^{-d(1-\delta)}\biggr) \\
&\ge -I(\theta^*) =  -\Psi_d(\alpha). 
\end{split}
\end{equation}

\vskip.1in 
{\bf Case 2:} $I'(1/\alpha) \ge 0$.  

Take any $\ep >0$ and let $\theta'$ solve $1-\theta'\alpha= \ep$.  Fix $n$ and define $\eta'$, $\tau'$, $\xi'$ and $\delta$ as in the previous case. Then as before, we arrive at the inequality \eqref{xixip}, with $E$ being the same event. It suffices, as before, to show that 
\begin{equation}\label{pebd}
\liminf_{n\ra\infty} \frac{1}{n^{d}}\log \pp(E) \ge 0.
\end{equation}
Let 
\[
X_n := \frac{1}{n^d} \sum_{y\in V_n\backslash \{0\}} (\lambda_y - \alpha) \eta'(y), 
\]
and 
\[
Y_n := \frac{(\lambda_0-\alpha)\eta'(0)}{n^d} = -\frac{\alpha \eta'(0)}{n^d}. 
\]
As before, it is easy to argue that $\var(X_n)\ra 0$ as $n\ra\infty$. Again, 
\begin{align*}
\ee(X_n) &=  \frac{1}{n^d} \sum_{y\in V_n\backslash \{0\}} \frac{\lambda_y - \alpha}{1+\theta'(\lambda_y -\alpha)}\\
&\ra I'(\theta') \text{ as } n\ra\infty. 
\end{align*}
Consequently, $X_n \ra I'(\theta')$ in probability as $n\ra\infty$. 

Next let $\eta''(0)$ be an independent copy of $\eta'(0)$. Let $\eta''$ be the vector in $\cc^{V_n}$ whose $0$th component is $\eta''(0)$ and $\eta''(y) = \eta'(y)$ for every $y \ne 0$. Let $\tau''$ and $\phi''$  be obtained from $(\sigma, \eta'' )$ the same way as $\tau'$ and $\phi'$  were obtained from $(\sigma, \eta')$. Then note that $\phi''$ is independent of $\eta'(0)$. Since the elements of the matrix $R$ are bounded in absolute value by $n^{-d/2}$, and from definition we have 
\begin{enumerate}
\item[(a)] $\phi''=R \tau''$, $\phi' = R\tau'$,  
\item[(b)] $\tau''(0) = \sigma(0) \sqrt{\eta''(0)/(nh)^d}$, $\tau'(0)= \sigma(0)\sqrt{ \eta'(0)/(nh)^d}$, and 
\item[(c)] $\tau''(y) = \tau'(y)$ for all $y \ne 0$,
\end{enumerate}
therefore for all $x\in V_n$,
\begin{align}\label{phiphi}
|\phi''(x) - \phi'(x)| \le n^{-d/2}(nh)^{-d/2}|(\eta''(0))^{1/2}-(\eta'(0))^{1/2}|. 
\end{align}
Fix $\delta' \in (0,\delta)$ and some $\ep'$ so small that $-d(1-\delta') + \ep' < -d(1-\delta)$, and define four events:
\begin{align*}
E_1 &:= \{|X_n - I'(\theta')| \le \ep\},\\
E_2 &:= \{-I'(\theta') - 3\ep\le Y_n \le -I'(\theta')-\ep\} \\
&= \biggl\{\frac{n^d(I'(\theta')+\ep)}{\alpha} \le \eta'(0) \le\frac{ n^d(I'(\theta') + 3\ep)}{\alpha}\biggr\}, \\
E_3 &:= \biggl\{(nh)^d\max_{x\in V_n}|\phi''(x)|^2 \le n^{\delta'd}, \ \frac{1}{n^d}\sum_{x\in V_n \backslash \{0\}}(nh)^d|\tau''(x)|^2\ge n^{-\ep'}\biggr\},\\
E_4 &:= \{|\eta''(0)|\le n^d\}. 
\end{align*} 
Then by \eqref{phiphi}, $E_2\cap E_3 \cap E_4$ implies that for each $x$,
\begin{align*}
|\phi'(x)|^2 &\le 2|\phi''(x)|^2 + 2|\phi'(x)- \phi''(x)|^2 \\
&\le (nh)^{-d}(2n^{\delta'd} + C(\alpha, \ep)).
\end{align*}
Since $\tau''(x) =\tau'(x)$ for all $x\ne 0$, this shows that for all $n\ge C(\alpha, \ep, \delta', \ep')$, $E_2\cap E_3\cap E_4$ implies 
\[
\max_{x\in V_n}|\phi'(x)|^2 \le n^{-d(1-\delta')+\ep'} \sum_{x\in V_n\backslash\{0\}} |\tau'(x)|^2 \le n^{-d(1-\delta)} \sum_{x\in V_n} |\tau'(x)|^2. 
\]
Again, $E_1\cap E_2$ implies $-4\ep \le X_n + Y_n \le 0$, 
which is the same as 
\[
-4n^d\ep \le \sum_{y\in V_n} (\lambda_y - \alpha) \eta'(y) \le 0. 
\]
Thus, for all $n\ge C(\alpha, \ep, \delta', \ep', \delta)$, $E_1\cap E_2 \cap E_3\cap E_4$ implies $E$. So, to show~\eqref{pebd}, it suffices  to show that 
\begin{equation}\label{pebd2}
\liminf_{n\ra\infty} \frac{1}{n^d} \log \pp(E_1\cap E_2 \cap E_3\cap E_4) \ge 0. 
\end{equation}
Since $\eta'(0)$ is an Exponential random variable with mean $1/\ep$, it is easy to see that 
\begin{align*}
\pp(E_2)&\ge \frac{2n^d\ep^2}{\alpha} \exp\biggl(-\frac{\ep n^d(I'(\theta') + 3\ep)}{\alpha}\biggr). 
\end{align*}
As argued above, $X_n \ra I'(\theta')$ in probability and therefore $\pp(E_1) \ra 1$ as $n \ra\infty$. The probability of $E_4$ tends to $1$ trivially, and $\pp(E_3)\ra1$ by the same logic that led to \eqref{phitau}. Thus, $\pp(E_1\cap E_3\cap E_4) \ra 1$. Lastly, observe that the events $E_1$, $E_3$ and $E_4$ are jointly independent of $E_2$. Combining all of these observations, we get
\begin{align*}
&\liminf_{n\ra\infty} \frac{1}{n^d} \log \pp(E_1\cap E_2\cap E_3\cap E_4) \\
&= \liminf_{n\ra\infty} \frac{1}{n^d} (\log \pp(E_1\cap E_3\cap E_4) + \log \pp(E_2)) \\
&\ge -\frac{\ep(I'(\theta') +3\ep)}{\alpha}. 
\end{align*}
Since $\ep$ is arbitrary and $0\le I'(\theta) \le I'(0)=2d-\alpha$ for all $\theta$ (by concavity and the assumption that $I'(1/\alpha)\ge 0$), this proves \eqref{pebd2} and hence \eqref{pebd}, leading to the proof of \eqref{lowermain1} when $I'(1/\alpha) \ge 0$. Combining \eqref{uppermain1} and \eqref{lowermain1}, the proof of Theorem \ref{psifacts} for $\alpha\in (0,2d)$ is complete.  
\end{proof}

\begin{proof}[Proof of Theorem \ref{psifacts} in the case $2d< \alpha < \infty$]
The proof  is very similar to the previous case, with a few important modifications. Let all notation be as before. Note that for any $\alpha \in (2d, 4d)$, 
\begin{align*}
\pp\biggl(\sum_{x,y \in V_n \atop x\sim y} |\xi(x) - \xi(y)|^2 \ge \alpha\biggr) &= \pp\biggl(\sum_{y\in V_n} (\lambda_y - \alpha) |\tau(y)|^2 \ge 0\biggr)\\
&=  \pp\biggl(\sum_{y\in V_n} (\lambda_y - \alpha) \eta(y) \ge 0\biggr).
\end{align*}
Thus, for any $\theta \in [0, 1/(4d-\alpha))$, 
\begin{align*}
\pp\biggl(\sum_{x,y\in V_n \atop x\sim y} |\xi(x) - \xi(y)|^2 \ge \alpha\biggr) &\le \ee(e^{\theta\sum_{y\in V_n} (\lambda_y - \alpha) \eta(y)})\\
&= \prod_{y\in V_n} \frac{1}{1-\theta(\lambda_y - \alpha)}. 
\end{align*}
The condition $\theta < 1/(4d-\alpha)$ ensures that the right-hand side makes sense, since $\lambda_y$ is uniformly bounded above by $4d$. Now,
\begin{align*}
&-\frac{1}{n^d} \log \prod_{y\in V_n} \frac{1}{1-\theta(\lambda_y - \alpha)}\\
&= \frac{1}{n^d}\sum_{y_1,\ldots, y_d=0}^{n-1} \log \biggl(1+\theta\alpha - 4\theta\sum_{i=1}^d \sin^2(\pi y_i/n)\biggr) =: J_n(\theta), 
\end{align*}
and the sequence of functions $J_n$ converges pointwise to the function $J$ on the interval $[0,1/(4d-\alpha))$, where
\[
J(\theta) = \int_{[0,1]^d}\log \biggl(1+\theta \alpha - 4\theta \sum_{i=1}^d \sin^2(\pi x_i) \biggr) dx_1\cdots dx_d. 
\]
This shows that for each $\theta\in [0,1/(4d-\alpha))$, 
\[
\limsup_{n\ra\infty} \frac{1}{n^d} \log \pp\biggl(\sum_{x,y\in V_n \atop x\sim y} |\xi(x) - \xi(y)|^2 \ge \alpha\biggr) \le -J(\theta),
\]
and therefore
\begin{align*}
\limsup_{n\ra\infty} \frac{1}{n^d}\log \pp\biggl(\sum_{x, y\in V_n, \atop x\sim y} |\xi(x)-\xi(y)|^2 \ge \alpha\biggr) &\le -\sup_{\theta \in (0,1/(4d-\alpha))} J(\theta). 
\end{align*}
Now, putting $\gamma=(4d-\alpha)\theta$ gives
\begin{align*}
&\sup_{\theta \in (0,1/(4d-\alpha))} J(\theta) = \sup_{\gamma\in (0,1)} J(\gamma/(4d-\alpha))\\
&= \sup_{\gamma\in (0,1)} \int_{[0,1]^d} \log\biggl(1+\frac{\gamma\alpha}{4d-\alpha} - \frac{4\gamma}{4d-\alpha}\sum_{i=1}^d \sin^2(\pi x_i)\biggr) dx_1\cdots dx_d\\
&= \sup_{\gamma\in (0,1)} \int_{[0,1]^d} \log\biggl(1-\gamma + \frac{4\gamma}{4d-\alpha}\sum_{i=1}^d \cos^2(\pi x_i)\biggr) dx_1\cdots dx_d.
\end{align*}
The above expression is the same as $\Psi_d(4d-\alpha)$, except that we have $\cos$ instead of $\sin$. However, this does not matter, since we can break up the hypercube $[0,1]^d$ into a union of smaller hypercubes like $[a_1,a_1+1/2]\times\cdots \times[a_d, a_d+1/2]$ where each $a_i\in \{0,1/2\}$, and then, within each hypercube, replace $\cos$ by $\sin$ by a change of variable $y_i=1/2-x_i$ when $a_i=0$ and $y_i=3/2-x_i$ when $a_i=1/2$. Thus, 
\begin{equation}\label{uppermain2}
\begin{split}
&\limsup_{n\ra\infty} \frac{1}{n^d}\log \pp\biggl(\sum_{x, y\in V_n, \atop x\sim y} |\xi(x)-\xi(y)|^2 \ge \alpha\biggr) \\
&\le  -\sup_{\theta \in (0, 1/(4d-\alpha))} J(\theta) =  -\Psi_d(4d-\alpha)=-\Psi_d(\alpha). 
\end{split}
\end{equation}
Next, we turn our attention to the lower tail. The function $J$ is continuous in  the interval $[0,1/(4d-\alpha))$ and differentiable in $(0,1/(4d-\alpha))$. Moreover, $J$ is differentiable from the right at $0$ and differentiable from the left at $1/(4d-\alpha)$, and an easy computation gives $J'(0) = \alpha-2d$. Since $\alpha > 2d$ and $J$ is continuous at $0$, this shows that $J$ must be strictly increasing in a neighborhood of $0$. It is easy to check that $J$ is a concave function. Since $J$ is increasing at $0$, its maximum in $[0,1/(4d-\alpha))$ must be achieved either inside $(0,1/(4d-\alpha))$, or as $\theta \ra1/(4d-\alpha)$. The first case holds if $J'(1/(4d-\alpha)) < 0$, and the second happens if $J'(1/(4d-\alpha)) \ge 0$. Just as before, the proof of the lower bound is different in the two cases. 

\vskip.1in
{\bf Case 1:} $J'(1/(4d-\alpha)) < 0$. 

In this case, there is a unique $\theta^*\in (0,1/(4d-\alpha))$ where $J$ is maximum. Fix any $\ep > 0$, and let $\theta'$ be a point so close to $\theta^*$ from the right  that $J'(\theta') \in (-\ep, 0)$. Given $n$, let $(\eta'(y))_{y\in V_n}$ be a collection of independent random variables, where $\eta'(y)$ has the Exponential distribution with mean $1/(1-\theta'(\lambda_y- \alpha))$.  (Note the minus sign in front of $\theta'$, which was plus in the case $\alpha < 2d$.) As before, assume that $\eta'$ is defined on the same probability space as all other variables, and is independent of everything else. 

Given $\eta'$, define $\tau'$, $\phi'$ and $\xi'$ as before. Then all the properties of these vectors are same as before, except that now
\begin{equation*}\label{taum2}
\ee|\tau'(y)|^2 = \frac{1}{(nh)^d(1-\theta'(\lambda_y-\alpha))} 
\end{equation*}
and 
\begin{equation*}\label{phim2}
\ee|\phi'(x)|^2 \le \frac{1}{(nh)^d(1-\theta'(4d-\alpha))} \ \text{ for all } x\in V_n.
\end{equation*}
Defining
\[
\rho(\eta') = e^{-\sum_{y\in V_n} \theta'(\lambda_y -\alpha)\eta'(y)} \prod_{y\in V_n} \frac{1}{1-\theta'(\lambda_y-\alpha)}.
\]
Fix $\delta > 0$ and let $E$ be the event
\begin{align*}
E &:= \biggl\{0\le \sum_{y\in V_n} (\lambda_y - \alpha) \eta'(y) \le 4n^d\ep, \\
&\qquad \qquad \max_{x\in V_n}|\phi'(x)|^2 \le n^{-d(1-\delta)}\sum_{x\in V_n} |\tau'(x)|^2 \biggr\}.
\end{align*}
Then  as before we arrive at the inequality 
\begin{align}
&\pp\biggl(\sum_{x, y\in V_n \atop x\sim y} |\xi(x)-\xi(y)|^2 \ge \alpha, \ \max_{x\in V_n} |\xi(x)|^2 \le n^{-d(1-\delta)}\biggr)\nonumber \\
&\ge e^{-4\theta'n^d\ep} \pp(E) \prod_{y\in V_n} \frac{1}{1-\theta'(\lambda_y-\alpha)}. \label{xixip2}
\end{align}
Exactly as before, we can now argue that 
\[
\frac{1}{n^d}\sum_{y\in V_n} (\lambda_y - \alpha)\eta'(y)\ra -J'(\theta') \ \text{ in probability as } n \ra\infty.
\]
Since $J'(\theta)\in (-\ep, 0)$, this shows that as $n\ra\infty$, 
\begin{equation}\label{lambdaeta2}
\pp\biggl(0\le \sum_{y\in V_n} (\lambda_y - \alpha)\eta'(y)\le 4n^d\ep \biggr) \ra 1. 
\end{equation}
The inequality \eqref{phitau} continues to be valid, and therefore, by \eqref{xixip2}, \eqref{lambdaeta2} and~\eqref{phitau},
\begin{align*}
&\liminf_{n\ra\infty} \frac{1}{n^d} \log \pp\biggl(\sum_{x,y\in V_n\atop x\sim y} |\xi(x)-\xi(y)|^2 \ge \alpha, \ \max_{x\in V_n} |\xi(x)|^2 \le n^{-d(1-\delta)}\biggr) \\
&\ge -4\ep + \lim_{n\ra\infty} \frac{1}{n^d} \sum_{y\in V_n} \log(1-\theta'(\lambda_y -\alpha)) \\
&= -4\ep - J(\theta'). 
\end{align*}
Since $\ep$ is arbitrary and $\theta' \ra\theta^*$ as $\ep\ra 0$, this shows (as in \eqref{uppermain2}) that when $\alpha \in (2d, 4d)$ and  $J'(1/(4d-\alpha)) < 0$,  
\begin{equation}\label{lowermain2}
\begin{split}
&\liminf_{n\ra\infty} \frac{1}{n^d} \log \pp\biggl(\sum_{x,y\in V_n\atop x\sim y} |\xi(x)-\xi(y)|^2 \ge \alpha, \ \max_{x\in V_n} |\xi(x)|^2 \le n^{-d(1-\delta)}\biggr) \\
&\qquad \ge -\Psi_d(\alpha). 
\end{split}
\end{equation} 

\vskip.1in 
{\bf Case 2:} $J'(1/(4d-\alpha)) \ge 0$.  

Take any $\ep >0$ and let $\theta'$ solve $1-\theta'(4d-\alpha)= \ep$.  Fix $n$ and define $\eta'$, $\tau'$ and $\xi'$ as in the previous case. Let $[n/2]$ is the vector in $\cc^{V_n}$ whose components are all equal to the integer part of $n/2$. Then as before, we arrive at the inequality \eqref{xixip2}, with $E$ being the same event. 
Let 
\[
X_n := \frac{1}{n^d} \sum_{y\in V_n\backslash \{[n/2]\}} (\lambda_y - \alpha) \eta'(y), 
\]
and 
\[
Y_n := \frac{(\lambda_{[n/2]}-\alpha)\eta'([n/2])}{n^d}. 
\]
Note that $\lambda_{[n/2]} \ra 4d$ as $n \ra\infty$. 

As before, it is easy to argue that $\var(X_n)\ra 0$ as $n\ra\infty$. Again, 
\begin{align*}
\ee(X_n) &=  \frac{1}{n^d} \sum_{y\in V_n\backslash \{[n/2]\}} \frac{\lambda_y - \alpha}{1-\theta'(\lambda_y -\alpha)}\\
&\ra -J'(\theta') \text{ as } n\ra\infty. 
\end{align*}
Consequently, $X_n \ra -J'(\theta')$ in probability as $n\ra\infty$. 

Next let $\eta''([n/2])$ be an independent copy of $\eta'([n/2])$. Let $\eta''$ be the vector in $\cc^{V_n}$ whose $[n/2]$th component is $\eta''([n/2])$ and $\eta''(y) = \eta'(y)$ for every $y \ne [n/2]$. Let $\tau''$ and $\phi''$  be obtained from $(\sigma, \eta'' )$ the same way as $\tau'$ and $\phi'$  were obtained from $(\sigma, \eta')$. Then note that $\phi''$ is independent of $\eta'(0)$. Exactly as we proved \eqref{phiphi}, it follows that for all $x\in V_n$,
\begin{align*}
|\phi''(x) - \phi'(x)| \le n^{-d/2}(nh)^{-d/2}|(\eta''([n/2]))^{1/2}-(\eta'([n/2]))^{1/2}|. 
\end{align*}
Fix $\delta' \in (0,\delta)$ and some $\ep'$ so small that $-d(1-\delta') + \ep' < -d(1-\delta)$, and define four events:
\begin{align*}
E_1 &:= \{|X_n + J'(\theta')| \le \ep\},\\
E_2 &:= \{J'(\theta') + \ep\le Y_n \le J'(\theta')+3\ep\} \\
&= \biggl\{\frac{n^d(J'(\theta')+\ep)}{\lambda_{[n/2]} - \alpha} \le \eta'([n/2]) \le\frac{ n^d(J'(\theta') + 3\ep)}{\lambda_{[n/2]} - \alpha}\biggr\}, \\
E_3 &:= \biggl\{(nh)^d\max_{x\in V_n}|\phi''(x)|^2 \le n^{\delta'd}, \ \frac{1}{n^d}\sum_{x\in V_n\backslash\{[n/2]\}}(nh)^d|\tau''(x)|^2\ge n^{-\ep'}\biggr\},\\
E_4 &:= \{\eta''([n/2])\le n^d\}. 
\end{align*} 
Then $E_2\cap E_3 \cap E_4$ implies that for each $x$,
\begin{align*}
|\phi'(x)|^2 &\le 2|\phi''(x)|^2 + 2|\phi'(x)- \phi''(x)|^2 \\
&\le (nh)^d(2n^{\delta'd} + C(\alpha, \ep)).
\end{align*}
Since $\tau''(x) =\tau'(x)$ for all $x\ne [n/2]$, this shows that for $n \ge C(\alpha, \ep, \delta', \ep')$, $E_2\cap E_3\cap E_4$ implies 
\[
\max_{x\in V_n}|\phi'(x)|^2 \le n^{-d(1-\delta)} \sum_{x\in V_n \backslash\{0\}} |\tau'(x)|^2 \le n^{-d(1-\delta)} \sum_{x\in V_n} |\tau'(x)|^2. 
\]
Again, $E_1\cap E_2$ implies $0 \le X_n + Y_n \le 4\ep$, 
which is the same as 
\[
0 \le \sum_{y\in V_n} (\lambda_y - \alpha) \eta'(y) \le 4n^d\ep. 
\]
Thus, for $n \ge C(\alpha, \ep, \delta', \ep', \delta)$, $E_1\cap E_2 \cap E_3\cap E_4$ implies $E$. So it suffices to find a lower bound for the probability of $E_1\cap E_2 \cap E_3\cap E_4$.

Since $\eta'([n/2])$ is an Exponential random variable with mean 
\[
\frac{1}{1-\theta'(\lambda_{[n/2]} - \alpha)},
\]
and $\lambda_{[n/2]} \ra 4d$ as $n\ra\infty$ and by definition of $\theta'$, $1-\theta'(4d-\alpha) = \ep$, it is easy to see that 
\begin{align*}
\liminf_{n\ra\infty} \frac{1}{n^d}\log\pp(E_2)&\ge -\frac{\ep (J'(\theta') + 3\ep)}{4d-\alpha}. 
\end{align*}
The proof is now completed exactly as for the lower tail in the case $\alpha \in (0,2d)$. 
\end{proof}

The following theorem is the main result of this section. 

\begin{thm}\label{psifacts2}
For each $\ep > 0$ and $\alpha > 0$, 
\begin{equation*}\label{third}
\lim_{n\ra\infty} \frac{1}{n^d}\log \pp\biggl(\biggl|\sum_{x, y\in V_n \atop x\sim y} |\xi(x)-\xi(y)|^2 - \alpha\biggr| \le \ep\biggr) 
= -\Psi_{d, \ep}(\alpha),
\end{equation*}
where 
\[
\Psi_{d,\ep}(\alpha) = 
\begin{cases}
\Psi_d(\alpha+\ep) &\text{ if } \alpha \le 2d-\ep,\\
\Psi_d(\alpha-\ep) &\text{ if } \alpha \ge 2d+\ep,\\
0 &\text{ if } 2d-\ep< \alpha < 2d+\ep.
\end{cases}
\]
In particular, $\Psi_{d,\ep}$ converges uniformly to $\Psi_d$ on compact subsets of $(0, 4d)$ as $\ep \ra0$. As in Theorem \ref{psifacts}, the same limit holds if we include the additional requirement that $\max_{x\in V_n}|\xi(x)|^2 \le n^{-d(1-\delta)}$.
\end{thm}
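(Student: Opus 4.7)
The plan is to deduce Theorem~\ref{psifacts2} from Theorem~\ref{psifacts} via a sandwich argument, dividing into three cases according to the position of $\alpha$ relative to $2d$. Write $S_n := \sum_{x,y\in V_n,\, x\sim y}|\xi(x)-\xi(y)|^2$ and $A_n := \{\max_{x\in V_n}|\xi(x)|^2\le n^{-d(1-\delta)}\}$. In the \emph{subcritical} case $\alpha\le 2d-\ep$, so that $\alpha+\ep\in(0,2d]$, the upper bound is immediate from the containment $\{|S_n-\alpha|\le \ep\}\subseteq\{S_n\le \alpha+\ep\}$ combined with the upper bound of Theorem~\ref{psifacts}:
\[
\limsup_{n\to\infty}\frac{1}{n^d}\log\pp(|S_n-\alpha|\le \ep)\le -\Psi_d(\alpha+\ep).
\]
The \emph{supercritical} case $\alpha\ge 2d+\ep$ is handled symmetrically by the upper tail in Theorem~\ref{psifacts} applied to $\alpha-\ep$.

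For the matching lower bound in the subcritical case, the idea is to write the two-sided event as a difference of one-sided events:
\[
\pp(\{|S_n-\alpha|\le\ep\}\cap A_n) \ge \pp(\{S_n\le\alpha+\ep\}\cap A_n) - \pp(S_n<\alpha-\ep).
\]
By Theorem~\ref{psifacts} (refined lower bound) the first term is at least $\exp(-n^d(\Psi_d(\alpha+\ep)+o(1)))$, while, if $\alpha-\ep>0$, the second term is at most $\exp(-n^d(\Psi_d(\alpha-\ep)-o(1)))$. The strict monotonicity of $\Psi_d$ on $(0,2d]$ from Proposition~\ref{psiprops} gives $\Psi_d(\alpha-\ep)>\Psi_d(\alpha+\ep)$, so the second term is exponentially negligible; when $\alpha-\ep\le 0$ it vanishes outright because $S_n\ge 0$. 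The supercritical case uses strict monotonicity of $\Psi_d$ on $[2d,4d)$ analogously. This is the step I expect to be the main obstacle to write cleanly, though the ingredients are already in place.

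For the \emph{central} case $2d-\ep<\alpha<2d+\ep$, we must show $n^{-d}\log\pp(|S_n-\alpha|\le\ep)\to 0$. The upper bound is trivial. For the lower bound it suffices to verify $S_n\to 2d$ in probability. Using the representation $S_n = \sum_{y\in V_n}\lambda_y|\tau(y)|^2 \big/ \sum_{y\in V_n}|\tau(y)|^2$ from the proof of Theorem~\ref{psifacts}, where the variables $(nh)^d|\tau(y)|^2$ are i.i.d.\ Exp$(1)$, the law of large numbers gives
\[
\frac{1}{n^d}\sum_{y\in V_n}|\tau(y)|^2\longrightarrow h^{-d},\qquad \frac{1}{n^d}\sum_{y\in V_n}\lambda_y|\tau(y)|^2\longrightarrow 2d\cdot h^{-d},
\]
in probability, where the second limit uses the Riemann sum convergence $n^{-d}\sum_y\lambda_y = 4 n^{-d}\sum_{y_1,\dots,y_d}\sum_i\sin^2(\pi y_i/n)\to 4d\int_0^1\sin^2(\pi x)\,dx = 2d$. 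Since $|\alpha-2d|<\ep$, the event $\{|S_n-\alpha|\le \ep\}$ has probability tending to $1$.

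Finally, the uniform convergence $\Psi_{d,\ep}\to\Psi_d$ on compact subsets of $(0,4d)$ is immediate from continuity of $\Psi_d$ there (Proposition~\ref{psiprops}): on any compact $K\subset(0,4d)$, for small $\ep$ each $\alpha\in K$ either satisfies $\Psi_{d,\ep}(\alpha)=\Psi_d(\alpha\pm\ep)$ with $|\Psi_d(\alpha\pm\ep)-\Psi_d(\alpha)|$ uniformly small by uniform continuity, or lies in $(2d-\ep,2d+\ep)$ in which case $\Psi_{d,\ep}(\alpha)=0$ and $|\Psi_d(\alpha)-0|=|\Psi_d(\alpha)-\Psi_d(2d)|$ is again uniformly small. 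The intersection with $A_n$ is handled identically throughout, since it is already present in the lower bound of Theorem~\ref{psifacts} and the upper bound is only strengthened by intersecting with $A_n$.
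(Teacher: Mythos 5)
Your sandwich argument is exactly the fill-in the paper's one-line proof (``Obvious from Theorem \ref{psifacts} and Proposition \ref{psiprops}'') is alluding to, and it is correct: the upper bounds come from set inclusion, the lower bounds from set subtraction plus the strict monotonicity of $\Psi_d$ on $(0,2d]$ and $[2d,4d)$, and the central window from a law-of-large-numbers computation showing $S_n\to 2d$ in probability. One arithmetic slip in the central case: since $(nh)^d|\tau(y)|^2$ are i.i.d.\ $\mathrm{Exp}(1)$, the law of large numbers gives $h^d\sum_{y\in V_n}|\tau(y)|^2\to 1$ and $h^d\sum_{y\in V_n}\lambda_y|\tau(y)|^2\to 2d$ in probability (equivalently $\sum_y|\tau(y)|^2\to h^{-d}$, $\sum_y\lambda_y|\tau(y)|^2\to 2dh^{-d}$), so the extra factor of $n^{-d}$ in your displays should not be there; this has no effect on the ratio $S_n$ and hence the conclusion is unchanged, but the displays as written are dimensionally off.
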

\begin{proof}
Obvious from Theorem \ref{psifacts} and Proposition \ref{psiprops}. 
\end{proof}

\section{The variational problem}\label{variational}
For each $E\ge 0$, $m \ge 0$ and $h > 0$ define 
\[
\Theta(E, m,h) := \log m - \Psi_d\biggl(\frac{2h^2E}{m}\biggr). 
\]
When $m = 0$, the right-hand side is interpreted as $-\infty$. With this definition, it is easy to verify that for fixed $h$, $\Theta$ is a continuous function from $[0,\infty)\times [0,\infty)$ into $[-\infty, \infty)$. 

Given $m_0 > 0$ and $E_{\min}(m,h)< E_0 < E_{\max}(m,h)$ let $\mm(E_0, m_0,h)$ denote the set of all $(E, m)$ that maximize $\Theta(E,m,h)$ in the set  
\begin{equation}\label{mr2}
\begin{split}
\mr(E_0, m_0,h) &= \{(E, m): 0\le m\le m_0, \\
&\qquad \max\{E^-(m,h), 0\}\le E\le E^+(m,h)\},
\end{split}
\end{equation}
where 
\begin{equation}\label{eplus}
\begin{split}
E^-(m,h) &:= E_0-E_{\max}(m_0-m,h), \\ 
E^+(m,h) &:= E_0 - E_{\min}(m_0-m,h).
\end{split}
\end{equation}
Define 
\[
\widehat{\Theta}(E_0, m_0,h) := \max_{(E,m)\in \mr(E_0, m_0,h)} \Theta(E,m,h).
\]
The following lemma lists some important properties of the sets $\mr$ and $\mm$. 
\begin{lmm}\label{rlmm}
Suppose that $E_{\min}(m_0,h) \le E_0 < dm_0/h^2$. Then the set $\mr(E_0, m_0, h)$ is a non-empty compact subset of $\rr^2$, and so is $\mm(E_0, m_0,h)$. Moreover, any $(E, m) \in \mm(E_0, m_0,h)$ satisfies $m \in (0, m_0]$ and  $E = E^+(m,h)$, where $E^+$ is defined in~\eqref{eplus} above. 
\end{lmm}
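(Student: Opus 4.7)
The plan is to prove the four assertions (compactness of $\mr$, compactness and non-emptiness of $\mm$, positivity $m>0$, and the boundary identity $E=E^+(m,h)$) in sequence, using compactness, Proposition~\ref{psiprops}, and a single-variable optimization at fixed $m$.

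Compactness and non-emptiness of $\mr(E_0,m_0,h)$ are immediate: continuity of $E^{\pm}(\cdot,h)$ on $[0,m_0]$ from Lemma~\ref{emfacts4} makes $\mr$ closed, boundedness is obvious, and the hypothesis $E_0\ge E_{\min}(m_0,h)$ yields $E^+(0,h)\ge 0$, so $(E^+(0,h),0)\in\mr$. Since $\Theta:[0,\infty)\times[0,\infty)\to[-\infty,\infty)$ is continuous by Proposition~\ref{psiprops} (with the convention $\log 0=-\infty$), it attains its maximum on the compact set $\mr$, and $\mm$ is closed in $\mr$, hence compact.

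The central construction is a point $m^*\in(0,m_0)$ with $E^+(m^*,h)=dm^*/h^2$. Define $g(m):=E^+(m,h)-dm/h^2$; it is continuous on $[0,m_0]$, with $g(0)=E_0-E_{\min}(m_0,h)\ge 0$ and $g(m_0)=E_0-dm_0/h^2<0$. Setting $m^*:=\sup\{m\in[0,m_0]:g(m)\ge 0\}$, continuity yields $m^*\in[0,m_0)$ with $g(m^*)=0$ and $g(m)<0$ for $m>m^*$. In the generic case $E_0>E_{\min}(m_0,h)$, positivity $m^*>0$ follows from $g(0)>0$. Then $(E^+(m^*,h),m^*)\in\mr$ and $\Theta(E^+(m^*,h),m^*,h)=\log m^*-\Psi_d(2d)=\log m^*>-\infty$, so the maximum of $\Theta$ on $\mr$ is finite and $\mm$ is non-empty. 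Every maximizer has $m>0$ because $\Theta(\cdot,0,h)\equiv-\infty$.

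For the boundary identity, fix $m\in(0,m_0]$ and maximize $E\mapsto\log m-\Psi_d(2h^2E/m)$ over $I_m:=[\max\{E^-(m,h),0\},E^+(m,h)]$. Using $E^-(m,h)=E_0-2d(m_0-m)/h^2$ and $E_0<dm_0/h^2$, one obtains $E^-(m,h)<d(2m-m_0)/h^2\le dm/h^2$, and since $0<dm/h^2$ this gives $\max\{E^-(m,h),0\}<dm/h^2$. By Proposition~\ref{psiprops} the function $\Psi_d$ is strictly decreasing on $(0,2d]$, vanishes at $2d$, and is strictly increasing on $[2d,4d)$. Thus the maximum of $\Theta(\cdot,m,h)$ over $I_m$ equals $\log m$ (attained at $E=dm/h^2$) when $g(m)\ge 0$, and equals $f(m):=\log m-\Psi_d(2h^2E^+(m,h)/m)$ (attained at $E=E^+(m,h)$) when $g(m)<0$. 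Since $\Psi_d\ge 0$ gives $f(m)\le\log m$ always, and $\log m<\log m^*$ for $m<m^*$, no $m<m^*$ achieves the global maximum. Hence every maximizer lies in $\{(E^+(m,h),m):m\in[m^*,m_0]\}$ (with $E^+(m^*,h)=dm^*/h^2$ on the boundary), giving $E=E^+(m,h)$.

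The main obstacle is the degenerate boundary case $E_0=E_{\min}(m_0,h)$, in which $g(0)=0$ and the strict subadditivity of $E_{\min}$ from Theorem~\ref{elimitthm} forces $E^+(m,h)<0$ for all $m\in(0,m_0)$; then $\mr$ collapses to $\{(0,0)\}$ and positivity $m>0$ in $\mm$ must be handled separately, presumably by tightening the hypothesis to $E_0>E_{\min}(m_0,h)$. Under the strict hypothesis, the single-variable analysis of $\Theta(\cdot,m,h)$ above rests only on the monotonicity of $\Psi_d$ from Proposition~\ref{psiprops} and the continuity of $E^+$, closing the argument cleanly with no delicate estimates needed.
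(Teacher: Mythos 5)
Your proof is correct and establishes the boundary identity $E=E^+(m,h)$ by a genuinely different route from the paper. Both you and the paper reduce to the single-variable observation that, for fixed $m$, the map $E\mapsto\Theta(E,m,h)$ increases on $[0,dm/h^2]$ and decreases thereafter, so its maximum on the slice $I_m$ sits at $\min\{E^+(m,h),\,dm/h^2\}$. The paper then disposes of the case $dm/h^2<E^+(m,h)$ by contradiction: if such an $(E,m)$ were a maximizer with $E=dm/h^2$, one could perturb $m$ upward to $m'$ with $dm'/h^2<E^+(m',h)$, giving $\Theta=\log m'>\log m$. You instead introduce the transition mass $m^*$, the largest $m$ at which $E^+(m,h)\ge dm/h^2$, verify via the intermediate value theorem and the endpoint signs $g(0)>0$, $g(m_0)<0$ that $m^*\in(0,m_0)$, and then observe that $\Theta\le\log m<\log m^*=\Theta(E^+(m^*,h),m^*,h)$ rules out any maximizer with $m<m^*$; for $m\ge m^*$ the slice maximum lies at $E^+(m,h)$, closing the argument without a contradiction. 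Your construction of $m^*$ also supplies a direct certificate that $\sup_{\mr}\Theta>-\infty$, which the paper's one-line justification of $m>0$ (``$\Theta(E,0,h)=-\infty$'') tacitly relies on; this is a small strengthening.

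Your observation about the degenerate boundary case $E_0=E_{\min}(m_0,h)$ is well taken: strict subadditivity from Lemma~\ref{cc5} forces $E^+(m,h)<0$ for $m\in(0,m_0]$, so $\mr$ collapses to $\{(0,0)\}$, the maximum of $\Theta$ is $-\infty$, and $\mm=\{(0,0)\}$ then violates the claimed $m>0$. Both your argument and the paper's implicitly require the strict inequality $E_0>E_{\min}(m_0,h)$, which is in fact how $\mm$ is defined in Section~\ref{variational} and how the lemma is used everywhere (notably in Theorem~\ref{lower} and \eqref{focusingcase}); the ``$\le$'' in the lemma's hypothesis is best read as a typo for ``$<$''. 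With that repair, your proof is complete.
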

\begin{proof}
From Lemma \ref{emfacts3} and Lemma \ref{emfacts4} it follows that $\mr(E_0, m_0,h)$ is simply a region enclosed between the graphs of two continuous functions on a closed interval and therefore, is a compact subset of $\rr^2$. It is clearly non-empty. By continuity of $\Theta$, this shows that $\mm(E_0,m_0,h)$ is also compact and non-empty. 

It is obvious that $m > 0$, since $\Theta(E, m,h)=-\infty$ if $m = 0$. 
Let $E^-$ and $E^+$ be as in \eqref{eplus}. Let $E^*(m,h) := dm/h^2$. Lemma \ref{emax} implies that for any $m\in [0, m_0]$, 
\begin{align*}
E^-(m,h) &= E_0 - \frac{2d(m_0-m)}{h^2}\\
&< \frac{dm_0}{h^2} - \frac{2d(m_0-m)}{h^2} \\
&= \frac{dm}{h^2} - \frac{d(m_0-m)}{h^2}\le E^*(m,h). 
\end{align*}
Moreover, $E^*(m,h)$ is non-negative, and hence 
\[
E^*(m,h) \ge \max\{E^-(m,h),0\}.
\] 
Now fix any $m\in [0, m_0]$. If $E^+(m,h) < 0$, then there exists no $E$ such that $(E,m)\in \mr(E_0, m_0, h)$, and hence no $E$ such that $(E,m)\in \mr(E_0, m_0, h)$. So assume that $E^+(m,h) \ge 0$. By Proposition \ref{psiprops}, $\Theta(E, m,h)$ increases strictly  as $E$ increases from $0$ to $E^*(m,h)$, and then starts decreasing as $E$ increases further. Therefore, if we impose the restriction that $(E,m)\in \mr(E_0, m_0,h)$, then for fixed $m$, $\Theta(E,m,h)$ is maximized at $\min\{E^+(m,h), E^*(m,h)\}$. 

Suppose that $(E,m)\in \mm(E_0, m_0,h)$ is such that $E = E^*(m,h)< E^+(m,h)$. This is clearly not true if $m=m_0$, since $E^*(m_0,h) = dm_0/h^2 > E_0=E^+(m_0,h)$. We claim that this is impossible even if $m < m_0$. Indeed, if this is true for some $m < m_0$, then since $E_{\min}$ is a continuous function by Lemma \ref{emfacts4}, we can choose a slightly larger $m'> m$ such that $E^*(m',h) < E^+(m',h)$. But then $E^*(m',h) \in [E^-(m',h), E^+(m',h)]$, and 
\begin{align*}
\Theta(E^*(m',h), m',h) &= \log m' - \Psi_d\biggl(\frac{2h^2 E^*(m',h)}{m'}\biggr) \\
&= \log m' > \log m = \Theta(E, m,h), 
\end{align*}
showing that $(E,m)$ cannot belong to $\mm(E_0, m_0,h)$. 
\end{proof}

\section{Upper bound}\label{upperbound}
Fix $h>0$, and some $E_0\in \rr$ and $m_0 > 0$. 
The numbers $p$, $d$, $h$, $E_0$ and $m_0$ will be fixed throughout this section and will be called the `fixed parameters'. Any constant that depends only on the fixed parameters will be denoted simply by $C$, instead of $C(p,d,h,E_0, m_0)$. If the constant depends on additional parameters $a,b,\ldots$, then it will be denoted by $C(a,b,\ldots)$.  

Recall the random function $\phi$ defined in Section \ref{gaussian} and the objects $\mm$, $\mr$, $\Theta$ and  $\widehat{\Theta}$ defined in Section \ref{variational}.

Take any positive integer $n$. For any $\delta > 0$, we will call a function $f\in \cc^{V_n}$ a $\delta$-soliton if there exists a $g\in \cc^{V_n}$ such that
\begin{enumerate}
\item[(a)] $\|f-g\|_\infty \le \delta$, and 
\item[(b)] there exists $(E^*,m^*)\in \mm(E_0, m_0,h)$ such that 
\begin{align*}
&|(E_0-E^*)-H_{h,n}(g)|\le \delta  \ \text{ and } \\ 
&|(m_0-m^*)-M_{h,n}(g)|\le \delta.
\end{align*} 
\end{enumerate}
\begin{thm}\label{upper}
For arbitrary $\ep, \delta \in (0,1)$,  let $B = B(\ep, \delta, n)$ be the event 
\[
\{|H_{h,n}(\phi)-E_0|\le \ep, \ |M_{h,n}(\phi)-m_0|\le \ep, \ \phi \textup{ is not a $\delta$-soliton}\}. 
\]
Then for any fixed $\delta\in (0,1)$,
\[
\limsup_{\ep\ra0}\limsup_{n\ra\infty}\frac{\log \pp(B(\ep, \delta, n))}{n^d} < 1-m_0+\widehat{\Theta}(E_0, m_0,h).
\]
\end{thm}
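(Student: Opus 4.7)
The approach is to decompose $\phi = u + w$ into a ``visible'' piece $u = \phi \cdot 1_U$ on $U := \{x\in V_n : |\phi(x)| > \eta\}$ and an ``invisible'' piece $w = \phi - u$, for a small threshold $\eta = \eta(\delta) > 0$ to be chosen at the end. On the mass event, $|U| \cdot h^d \eta^2 \le M_{h,n}(\phi) \le m_0 + \ep$, so the visible support has cardinality at most $N := (m_0+1)/(h^d \eta^2)$; hence only $\binom{n^d}{\le N} \le n^{dN}$ supports $U$ are possible, which is subexponential in $n^d$. I would partition the event in the theorem according to (i) the support $U$; (ii) a fixed grid (mesh $\ep$, independent of $n$) in the values $(m^*, E^*)$ of $(M_{h,n}(w), G_{h,n}(w))$; and (iii) an analogous grid for $(M_{h,n}(u), H_{h,n}(u))$. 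The number of cells is polynomial in $n$ and in $1/\ep$, so contributes $o(n^d)$ to the exponent.

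For a fixed cell, $u$ and $w$ are independent Gaussians on disjoint sets of sites. The visible integral over $u \in \cc^U$ contributes a subexponential factor (since $|U| \le N$ is fixed) times the density weight $\exp(-n^d M_{h,n}(u)) \approx \exp(-n^d(m_0 - m^*))$. For $w$, rescaling $w/\|w\|_2$ to the unit sphere in $\cc^{U^c}$ and combining the elementary LDP for $M_{h,n}(w) = h^d \sum_{U^c} |w(x)|^2$ (a sum of i.i.d.\ Exp$(1)$ rescaled, rate $m^* - \log m^* - 1$) with Theorem \ref{psifacts2} applied to $\alpha := 2h^2 E^*/m^*$, yields a contribution $\exp(n^d[1 + \log m^* - m^* - \Psi_d(\alpha) + o_n(1)])$. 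The pointwise bound $|w(x)| \le \eta$ gives $N_{h,n}(w) \le \eta^{p-1} m_0$, so the cell constraint $H_{h,n}(u) + H_{h,n}(w) \approx E_0$ reduces to $E^* \approx E_0 - H_{h,n}(u)$ up to $O(\eta^{p-1})$, forcing $(E^*, m^*)$ to lie within $O(\eta^{p-1})$ of $\mr(E_0, m_0, h)$. Multiplying the factors, using $m_0 = (m_0 - m^*) + m^*$, and summing over cells gives
\[
\pp(|H_{h,n}(\phi) - E_0| \le \ep,\ |M_{h,n}(\phi) - m_0| \le \ep) \le \exp\bigl(n^d[1 - m_0 + \widehat\Theta(E_0, m_0, h) + o_n(1) + O(\eta^{p-1})]\bigr).
\]

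To obtain a strictly smaller rate for $B$, I would argue that cells whose $(E^*, m^*)$ lies near the optimizer set $\mm(E_0, m_0, h)$ cannot contribute to $B$. Fix such a cell near some $(E^\circ, m^\circ) \in \mm(E_0, m_0, h)$. By Lemma \ref{rlmm}, $E^\circ = E^+(m^\circ, h) = E_0 - E_{\min}(m_0 - m^\circ, h)$, so the visible part satisfies $M_{h,n}(u) \approx m_0 - m^\circ$ and $H_{h,n}(u) \approx E_{\min}(m_0 - m^\circ, h)$. The orbital stability theorem (Theorem \ref{orbital}), together with the compactness of $\mm$, then produces (via a uniform quantitative reformulation of Theorem \ref{orbital}'s subsequence statement) some $g \in \ms(m_0 - m^\circ, h)$ with $\tl^\infty(u, g)$ small. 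Combined with $\|w\|_\infty \le \eta \le \delta/2$, this shows $\tl^\infty(\phi, g) \le \delta$; and $g$ automatically satisfies condition (b) of the $\delta$-soliton property since $M_{h,n}(g) = m_0 - m^\circ$ and $H_{h,n}(g) = E_0 - E^\circ$. This witnesses the $\delta$-soliton property for $\phi$, contradicting $\phi \in B$.

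Combining the two steps, only cells with $(E^*, m^*)$ at distance $\ge \delta' = \delta'(\delta, \eta) > 0$ from $\mm(E_0, m_0, h)$ can contribute to $\pp(B)$. By the compactness of $\mr$ and continuity of $\Theta$ (Lemma \ref{rlmm} and Proposition \ref{psiprops}), the gap
\[
\Delta := \widehat\Theta(E_0, m_0, h) - \sup\{\Theta(E, m, h) : (E,m) \in \mr,\ \operatorname{dist}((E,m), \mm) \ge \delta'\}
\]
is strictly positive. Choosing $\eta$ small enough that $O(\eta^{p-1}) < \Delta/2$, then letting $n \to \infty$ and $\ep \to 0$, yields the desired strict inequality. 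The main obstacle is the third paragraph: upgrading Theorem \ref{orbital} to a quantitative form, uniform in both the discretization errors and the choice of $(E^\circ, m^\circ)$ in the compact set $\mm(E_0, m_0, h)$, and threading the translation/phase ambiguity in $\tl^\infty$ through so that the approximating $g$ truly witnesses condition (a) of the $\delta$-soliton property on the whole torus rather than only on $U$.
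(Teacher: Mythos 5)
Your proposal captures the right high-level structure (decompose into visible/invisible, run a Gaussian LDP on the invisible part, identify the contribution of each $(E^*,m^*)$ cell with $1-m_0+\Theta(E^*,m^*,h)$, and use the definition of ``$\delta$-soliton'' to discard cells near $\mm(E_0,m_0,h)$), and the first two paragraphs are essentially the paper's Lemmas \ref{mainupperlemma}--\ref{qlmm}. However there are two genuine gaps.

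First, the decomposition by the raw superlevel set $U=\{x:|\phi(x)|>\eta\}$ does not control the energy carried by edges crossing $\partial U$, so the identity $H_{h,n}(\phi)=H_{h,n}(u)+H_{h,n}(w)$ you use implicitly is false: the gradient term $G_{h,n}$ has a cross-boundary contribution $\sum_{x\in U,y\in U^c, x\sim y}|\phi(x)-\phi(y)|^2$ that is not bounded by $\eta$. The paper addresses this in Lemma \ref{specialset}, which does not take the superlevel set itself but a carefully chosen thickening $U_i$ of it, picked so that $M_{h,n}(\phi,\partial U\cup\partial U^c)\le 2\ep m$; this makes the cross-boundary gradient (and hence the discrepancy between $H_{h,n}(\phi)$ and $H_{h,n}(u)+G_{h,n}(w)$) of size $O(\ep)$. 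Without that, the $O(\eta^{p-1})$ error in your second paragraph is unjustified.

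Second, and more importantly, the third paragraph takes a far harder route than needed because it misreads the target. The definition of ``$\delta$-soliton'' in Section \ref{upperbound} only asks for the existence of a function $g$ with $\|\phi-g\|_\infty\le\delta$ whose mass and energy are within $\delta$ of $(m_0-m^*, E_0-E^*)$ for some $(E^*,m^*)\in\mm(E_0,m_0,h)$; $g$ is not required to be a ground state. Consequently one can (and the paper does, in Lemma \ref{abclmm}) take $g$ to be literally the visible piece $\eta:=\phi\cdot 1_U$. Conditions (a) and (b) then follow from the set-up of the cell and the bounds in Lemma \ref{specialset} by pure arithmetic; no invocation of Theorem \ref{orbital} and no quantitative or uniform version of orbital stability is required. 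The ``main obstacle'' you identify at the end is therefore an artifact of trying to produce an actual soliton where none is needed, and the translation/phase issue evaporates. If you keep your orbital-stability route you would indeed need a uniform quantitative version of Theorem \ref{orbital} that the paper never proves, so that route does not close as stated.
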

The strict inequality is the main point of the above theorem. Let us now embark on the proof of Theorem \ref{upper}. We begin with two simple technical lemmas.
\begin{lmm}\label{gamma}
Let $z_1,\ldots, z_k$ be standard complex Gaussian random variables, where $k\ge 2$. Let $S:= \sum_{i=1}^k |z_i|^2$. Then for any $x \ge 2$ and $0 < y \le x/2$,
\begin{equation}\label{gamma1}
\pp(|S-x| \le y) =  \exp(k + k \log (x/k)  - x + R(x,y,k)),
\end{equation}
where 
\[
|R(x,y,k)|\le C \log k + C\log x + \frac{Cky}{x} + y,
\]
where $C$ is a universal constant. Moreover,
\begin{equation}\label{gamma2}
\pp(S\le x) \le \exp(k + k\log (x/k)). 
\end{equation}
\end{lmm}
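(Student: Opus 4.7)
The plan is to identify $S$ as a Gamma$(k,1)$ random variable and then prove (\ref{gamma2}) by a one-parameter Chernoff optimization and (\ref{gamma1}) by a direct density estimate on the interval $[x-y,x+y]$. In the paper's convention (see Section~\ref{gaussian}), a standard complex Gaussian $z$ has $\ee|z|^2=1$ with $|z|^2$ exponential with mean $1$; consequently $S=\sum_{i=1}^k|z_i|^2$ has the Gamma$(k,1)$ density $f(s)=s^{k-1}e^{-s}/(k-1)!$ on $(0,\infty)$. Throughout I would invoke Stirling's formula in the uniform form $\log(k-1)!=k\log k-k+O(\log k)$ valid for $k\ge 2$.

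For (\ref{gamma2}), the MGF $\ee[e^{tS}]=(1-t)^{-k}$ is finite on $\{t<1\}$, and Markov's inequality yields, for any such $t$,
\[
\pp(S\le x)\le e^{-tx}(1-t)^{-k}.
\]
The choice $t=1-k/x$, which satisfies $t<1$ for every $x>0$, produces exponent $-tx-k\log(1-t)=-x+k+k\log(x/k)$, which is exactly the stated bound.

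For (\ref{gamma1}), I would integrate the density $f$ over $[x-y,x+y]$. Since $x\ge 2$ and $y\le x/2$, this interval lies inside $[x/2,3x/2]\subset(0,\infty)$; writing $s=x(1+t)$ with $|t|\le y/x\le 1/2$ and using the Taylor-type bound $|\log(1+t)|\le 2|t|$ on $|t|\le 1/2$, one obtains
\[
\left|\log\frac{s^{k-1}e^{-s}}{x^{k-1}e^{-x}}\right|=\bigl|(k-1)(\log s-\log x)-(s-x)\bigr|\le\frac{2ky}{x}+y.
\]
Hence
\[
\pp(|S-x|\le y)=2y\cdot\frac{x^{k-1}e^{-x}}{(k-1)!}\cdot e^{\theta},\qquad |\theta|\le\frac{2ky}{x}+y,
\]
and taking logarithms and substituting Stirling for $\log(k-1)!$ converts this into
\[
\log\pp(|S-x|\le y)=k+k\log(x/k)-x+R(x,y,k),
\]
with $R(x,y,k)=\log(2y)-\log x+O(\log k)+\theta$. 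Each of these summands is controlled by the claimed right-hand side $C\log k+C\log x+Cky/x+y$ (note that $\log(2y)\le\log x$ since $y\le x/2$), giving the desired bound on $|R|$.

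I expect no real obstacle: the only bookkeeping required is the uniform Stirling remainder for $k\ge 2$ and the Taylor estimate $|\log(1+t)|\le 2|t|$ on $|t|\le 1/2$, which lets one pull the small variation of the integrand out of the integral as a multiplicative factor $e^{\theta}$. Everything else is arithmetic.
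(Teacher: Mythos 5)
For \eqref{gamma1} your argument coincides with the paper's: identify $S$ as a $\mathrm{Gamma}(k,1)$ variable with density $\rho(t)=t^{k-1}e^{-t}/(k-1)!$, integrate over $[x-y,x+y]$, bound the fluctuation of $\log\rho$ around its value at $x$ by $O(ky/x+y)$, and invoke Stirling for $\log(k-1)!$. That part is fine and is essentially the paper's computation.

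For \eqref{gamma2} you take a different route, and there is a gap in it. The paper drops the exponential factor and uses $\pp(S\le x)=\int_0^x \rho(t)\,dt\le \int_0^x t^{k-1}/(k-1)!\,dt = x^k/k!\le x^k/(k/e)^k$, which holds for every $x>0$ and $k\ge1$ with no case distinction. You instead propose the Chernoff bound $\pp(S\le x)\le e^{-tx}\ee[e^{tS}]=e^{-tx}(1-t)^{-k}$. This inequality follows from Markov only when $t\le 0$, because only then is $\{S\le x\}=\{e^{tS}\ge e^{tx}\}$. Your choice $t=1-k/x$ is negative precisely when $x<k$; for $x\ge k$ you have $0\le t<1$ and Markov is being applied in the wrong direction, so the derivation does not go through. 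The lemma is trivially true for $x\ge k$ (there $k+k\log(x/k)\ge 0$, so the bound exceeds $1$), but your proof must say so explicitly or it only covers $x<k$. Separately, the exponent you compute, $-tx-k\log(1-t)=-x+k+k\log(x/k)$, is not what you call it: it is \emph{not} ``exactly the stated bound'' but rather stronger by a factor $e^{-x}$. That discrepancy is harmless (it still implies \eqref{gamma2}), but the identification is misstated.

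A smaller point, which is actually shared with the paper's own write-up: the width $2y$ of the integration interval contributes a summand $\log(2y)$ to $R$. Your parenthetical ``$\log(2y)\le\log x$'' controls it from above, but when $y$ is tiny $\log(2y)$ is a large negative number and is not dominated by $C\log k+C\log x+Cky/x+y$. Strictly the bound on $|R(x,y,k)|$ needs a $|\log y|$ term; in the paper's applications $y\sim n^d\ep$, so this never bites, but it is worth being aware that the lemma as written is not quite tight in that corner.
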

\begin{proof}
The random variable  $S$ has a Gamma density with parameters $k$ and~$1$. Explicitly, the density function of $S$ is:
\[
\rho(t) = \frac{t^{k-1} e^{-t}}{(k-1)!}. 
\]
Let $T(x,k) := k + k \log (x/k)  - x$. Note that for any $t\in [x-y, x+y]$, 
\begin{align*}
&|\log \rho(t) - T(x,k)|\\
&\le |\log (k-1)! - (k\log k - k) | + |\log t| + k|\log t - \log x| + |x-t|\\
&\le C\log k + C\log x + \frac{Cky}{x} + y. 
\end{align*}
(The assumption that $y\le x/2$ was used to bound second and third terms.) Further, note that 
\begin{align*}
\pp(|S-x|\le y) &= \int_{x-y}^{x+y} \rho(t) dt\\
&= e^{T(x,k)}\int_{x-y}^{x+y} e^{\log \rho(t)-T(x,k)} dt. 
\end{align*}
Using the bound from the previous display finishes the proof of the first part of the lemma. To prove the second part, note that 
\begin{align*}
\pp(S\le x) &= \int_0^x \frac{t^{k-1}e^{-t}}{(k-1)!} dt \\
&\le \int_0^x \frac{t^{k-1}}{(k-1)!} dt = \frac{x^k}{k!}\le \frac{x^k}{k^k e^{-k}}.
\end{align*}
This completes the proof. 
\end{proof}

\begin{lmm}\label{pside}
Let $\Psi_{d,\ep}$ be the function defined in Theorem \ref{psifacts2}. Then for any $\alpha \in [0,\infty)$ and any $L > 0$, 
\[
\Psi_{d,\ep}(\alpha) \ge \min\{\Psi_d(\alpha),\; L\} - a(\ep,L,d),
\]
where $a(\ep,L,d)$ is a quantity that depends only on $\ep$, $L$ and $d$ (and not on $\alpha$), such that for any fixed $L>0$, $\lim_{\ep \ra0} a(\ep,L,d)=0$. 
\end{lmm}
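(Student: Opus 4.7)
The plan is to reduce the lemma to the uniform continuity of the truncated function $\tilde\Psi_{d,L}(\alpha) := \min\{\Psi_d(\alpha), L\}$ on $\rr$. By Proposition \ref{psiprops}, $\Psi_d:\rr\to[0,\infty]$ is continuous, $\Psi_d(\alpha)=\infty$ for $\alpha\le 0$ and $\alpha\ge 4d$, and $\Psi_d(\alpha)\to\infty$ at both endpoints of $(0,4d)$. Consequently, the sublevel set $\{\alpha:\Psi_d(\alpha)\le L\}$ is a compact subset of $(0,4d)$, say contained in $[\delta_L^-,\delta_L^+]$ with $0<\delta_L^-<2d<\delta_L^+<4d$. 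Outside $[\delta_L^-,\delta_L^+]$ the truncated function $\tilde\Psi_{d,L}$ is identically $L$, and on this compact interval $\Psi_d$ is continuous hence uniformly continuous, so $\tilde\Psi_{d,L}$ is uniformly continuous on all of $\rr$. Let $\omega_L(\ep)$ denote its modulus of uniform continuity, which satisfies $\omega_L(\ep)\to 0$ as $\ep\to 0$ for each fixed $L$ and depends only on $L$ and $d$. I would take $a(\ep,L,d):=\omega_L(\ep)$.

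With this choice, the bound $\Psi_{d,\ep}(\alpha)\ge \tilde\Psi_{d,L}(\alpha)-\omega_L(\ep)$ is verified by a simple case analysis following the piecewise definition of $\Psi_{d,\ep}$ from Theorem \ref{psifacts2}, together with the trivial pointwise inequality $\Psi_d\ge \tilde\Psi_{d,L}$. If $\alpha\le 2d-\ep$, then
\[
\Psi_{d,\ep}(\alpha)=\Psi_d(\alpha+\ep)\ge \tilde\Psi_{d,L}(\alpha+\ep)\ge \tilde\Psi_{d,L}(\alpha)-\omega_L(\ep),
\]
and the case $\alpha\ge 2d+\ep$ is handled identically using the shift by $-\ep$. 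For the middle range $2d-\ep<\alpha<2d+\ep$ we have $\Psi_{d,\ep}(\alpha)=0$ on the left-hand side, and on the right-hand side uniform continuity together with $\tilde\Psi_{d,L}(2d)=\Psi_d(2d)=0$ gives $\tilde\Psi_{d,L}(\alpha)\le \omega_L(\ep)$, so
\[
\Psi_{d,\ep}(\alpha)=0\ge \tilde\Psi_{d,L}(\alpha)-\omega_L(\ep).
\]

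Combining the three cases yields $\Psi_{d,\ep}(\alpha)\ge \min\{\Psi_d(\alpha),L\}-a(\ep,L,d)$ for every $\alpha\in[0,\infty)$ (and in fact on all of $\rr$), with $a(\ep,L,d)\to 0$ as $\ep\to 0$. There is no real obstacle here beyond correctly extracting uniform continuity of the truncation; the one subtle point worth spelling out is that the interval $[\delta_L^-,\delta_L^+]$ where the truncation is nontrivial stays strictly inside $(0,4d)$, which is exactly the content of the divergence $\lim_{\alpha\downarrow 0}\Psi_d=\lim_{\alpha\uparrow 4d}\Psi_d=\infty$ proved in Proposition \ref{psiprops}. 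Everything else is bookkeeping.
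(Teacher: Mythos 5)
Your proof is correct. The case analysis over the three branches of $\Psi_{d,\ep}$ is clean, the truncation $\tilde\Psi_{d,L}=\min\{\Psi_d,L\}$ is indeed uniformly continuous on all of $\rr$ (its nontrivial part is confined to a compact subinterval of $(0,4d)$ by the divergence of $\Psi_d$ at the endpoints, and it equals the constant $L$ outside), and the modulus $\omega_L(\ep)$ has exactly the dependence the lemma requires.

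The route is somewhat different from the paper's. The paper's argument splits $\alpha$ into two regions: a fixed compact interval $[c_1',c_2']\subset(0,4d)$, chosen so that $\Psi_d>L$ outside a smaller interval $[c_1,c_2]$, and its complement. On $[c_1',c_2']$ it invokes the uniform convergence $\Psi_{d,\ep}\to\Psi_d$ on compact subsets of $(0,4d)$ (which is packaged in the statement of Theorem \ref{psifacts2}); outside $[c_1',c_2']$ it shows directly that $\Psi_{d,\ep}>L$ once $\ep$ is small enough, using the definition of $\Psi_{d,\ep}$ as a shift of $\Psi_d$. You instead work with the truncation $\min\{\Psi_d,L\}$, establish a single uniform modulus of continuity on all of $\rr$, and then run the three-way case analysis from the piecewise definition of $\Psi_{d,\ep}$. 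Both arguments ultimately rest on the same two facts from Proposition \ref{psiprops} (continuity of $\Psi_d$ on $(0,4d)$, divergence at the endpoints), but yours is a bit more self-contained: it unifies the ``near $2d$'' and ``far from $2d$'' cases through a single modulus rather than a bespoke split, and it does not need to appeal separately to the uniform-convergence statement in Theorem \ref{psifacts2}.
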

\begin{proof}
In this proof, $a(\ep, L,d)$ will denote any constant with the properties described above.

By the properties of $\Psi_d$ listed in Proposition \ref{psiprops}, there exist $0 < c_1 < c_2 < 4d$ such that $\Psi_d(\alpha) > L$ whenever $\alpha \not \in [c_1, c_2]$. Fix $c_1'$ and $c_2'$ such that $0< c_1'< c_1 < c_2 < c_2'<4d$. Note that $c_1$, $c_2$, $c_1'$ and $c_2'$ can be chosen depending only on $L$ and $d$. By the uniform convergence of $\Psi_{d,\ep}$ on compact sets, if $\alpha \in [c_1', c_2']$, 
\[
\Psi_{d,\ep}(\alpha) \ge \Psi_d(\alpha)-a(\ep, L, d). 
\] 
On the other hand, by the definition of $\Psi_{d,\ep}$, it is easy to see that if $\alpha \not\in [c_1', c_2']$, for all sufficiently small $\ep$ (depending only on $L$ and $d$), $\Psi_{d,\ep}(\alpha) > L$. In particular, if $\alpha \not\in [c_1', c_2']$, then 
\[
\Psi_{d,\ep}(\alpha) \ge L - a(\ep, L,d). 
\]
The proof is completed by combining the two cases. 
\end{proof}

For a subset $U$ of $V_n$, recall that $U^c$ denotes the set $V_n\backslash U$, and $\partial U$ denotes the set of all vertices in $U$ that are adjacent to some vertex in $U^c$. Recall also the definitions of  $M_{h,n}(f,U)$, $H_{h,n}(f,U)$, $N_{h,n}(f,U)$ and $G_{h,n}(f,U)$ from Section \ref{possible}. 
The following lemma shows that the nonlinear component of any $f$ must come from a small region. 
\begin{lmm}\label{specialset}
Take any $f\in \cc^{V_n}$. Let $m:= M_{h,n}(f)$. Then, given any $\ep \in (0,1)$,  there exists a non-empty subset $U$ of $V_n$ such that 
\begin{enumerate}
\item[(i)] $|U|\le \frac{4^d m}{h^d \ep^{d+2}}$.
\item[(ii)] $|f(x)|\le \ep$ for all $x\in U^c$. 
\item[(iii)] $N_{h,n}(f, U^c) \le \ep^{p-1}m$.
\item [(iv)] $M_{h,n}(f, \partial U \cup \partial U^c) \le 2\ep m$.  
\end{enumerate}
\end{lmm}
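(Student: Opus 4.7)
The plan is to begin with the ``large-value'' set $W := \{x \in V_n : |f(x)| > \ep\}$ and then thicken it slightly to a set $U$ whose graph boundary carries only a small fraction of the total mass. Conditions (ii) and (iii) will essentially come for free: any $U \supseteq W$ satisfies (ii), and then on $U^c \subseteq W^c$ we have the pointwise bound $|f(x)|^{p+1}\le \ep^{p-1}|f(x)|^2$, which integrates to (iii). Chebyshev's inequality applied to the mass gives $|W| \le m/(h^d\ep^2)$ at the outset, so the only real issue is choosing the thickening to simultaneously respect the size bound (i) and the boundary-mass bound (iv).

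To carry out the thickening, for each integer $r \ge 0$ set $U_r := \{x \in V_n : \mathrm{dist}(x, W) \le r\}$ in the graph metric on the torus, and let $S_r := \{x : \mathrm{dist}(x, W) = r\}$ be the corresponding distance shells. Using that graph distance is one-Lipschitz, a direct check of the definitions yields the inclusion $\partial U_r \cup \partial U_r^c \subseteq S_r \cup S_{r+1}$. Since the $S_r$'s are pairwise disjoint subsets of $V_n$, summing over $r$ telescopes:
\[
\sum_{r=0}^{K-1} M_{h,n}(f, \partial U_r \cup \partial U_r^c) \;\le\; 2\sum_{r=0}^{K} M_{h,n}(f, S_r) \;\le\; 2m.
\]
Choosing $K := \lceil 1/\ep \rceil$, the pigeonhole principle produces some $r^* \in \{0, \ldots, K-1\}$ with $M_{h,n}(f, \partial U_{r^*} \cup \partial U_{r^*}^c) \le 2m/K \le 2\ep m$, which is exactly (iv). Setting $U := U_{r^*}$, conditions (ii) and (iii) hold as noted above. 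For (i), the standard $\ell^1$-ball estimate gives $|U_r| \le |W|(2r+1)^d$, and combined with $r^* \le K-1 \le 1/\ep$ this yields
\[
|U| \;\le\; \frac{m}{h^d\ep^2}\biggl(\frac{2}{\ep}+1\biggr)^d \;\le\; \frac{4^d m}{h^d\ep^{d+2}},
\]
using $\ep < 1$ in the last step.

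The calculations are routine; the only subtle points are verifying the shell inclusion $\partial U_r \cup \partial U_r^c \subseteq S_r \cup S_{r+1}$ by unpacking the definitions of $\partial U$ and $\partial U^c$ from Section~\ref{notation} and pushing both inclusions through the Lipschitz bound, and the degenerate case $W = \emptyset$ (i.e.\ $|f|\le \ep$ everywhere), in which every $U_r$ is empty and the pigeonhole argument produces nothing useful. In that edge case one exhibits $U$ by hand: taking $U = V_n$ makes $\partial U = \partial U^c = \emptyset$ since $V_n$ is a torus, so (ii)--(iv) are vacuous, leaving only the size bound (i) to verify, which is automatic in the regime where the lemma is actually applied in Theorem~\ref{upper} (with $m$ bounded below by a positive constant and $n$ controlled accordingly).
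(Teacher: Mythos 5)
Your argument is essentially identical to the paper's: the paper builds the same one-neighborhood thickenings (its $U_i$ is your $U_{i-1}$ and its $W_i=\partial U_i$ is your shell $S_{i-1}$), applies the same pigeonhole over $k\asymp 1/\ep$ shells, and derives (i) from the same $(2k-1)^d|W|$ count. The one divergence is your degenerate case $W=\emptyset$: the paper resets $U_1$ to an arbitrary singleton and runs the same machinery, whereas your choice $U=V_n$ makes (i) depend on $n$ and genuinely requires the extra appeal to how the lemma is applied — but since neither fix fully establishes (i) when $m$ is very small, this is a shared cosmetic defect rather than a gap peculiar to your write-up.
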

\begin{proof}
Let $U_1$ be the subset of $V$ on which $f$ is bigger than $\ep$. If this set is empty, let $U_1$ be any singleton subset of $V_n$. Then note that $f\le \ep$ outside $U_1$, and 
\[
h^d\sum_{x\in U_1^c} |f|^{p+1}\le \ep^{p-1} h^d\sum_{x\in U_1^c} |f|^2 \le \ep^{p-1} m. 
\]
Also note that 
\begin{align*}
|U_1| \le \frac{h^d\sum_{x\in U_1} |f(x)|^2}{h^d \ep^2}\le \frac{m}{h^d \ep^2}. 
\end{align*}
Define $U_2,U_3,\ldots$ as follows: For each $i\ge 2$, let $U_i$ be the set of vertices that are either in $U_{i-1}$ or adjacent to a vertex in $U_{i-1}$. Note that $U_{i-1}\subseteq U_i$ for each~$i$. Let $W_i := \partial U_i$. Then $W_1, W_2,\ldots$ are disjoint sets, and $W_{i+1} = \partial U_i^c$ for each $i\ge 1$. Thus, for any $k$, 
\begin{equation*}\label{ikhd}
\begin{split}
\min_{1\le i\le k} h^d\sum_{x\in \partial U_i\cup \partial U_{i}^c} |f(x)|^2&= \min_{1\le i\le k} h^d\sum_{x\in W_i\cup W_{i+1}} |f(x)|^2 \\
&\le \frac{2h^d}{k}\sum_{i=1}^{k+1}\sum_{x\in W_i} |f(x)|^2 \le \frac{2m}{k}. 
\end{split}
\end{equation*}
Since each element of $U_k$ is within $\ell^1$ distance $k-1$ from some element of $U_1$, and the $\ell^1$ ball of radius $k-1$ around any vertex has $\le (2k-1)^d$ points, 
\begin{align*}
\max_{1\le i\le k}|U_i|= |U_{k}|\le (2k-1)^d|U_1|. 
\end{align*}
Choose an integer $k$ such that $1/\ep \le k\le 2/\ep$. The proof is completed by choosing an $i$ between $1$ and $k$ that  minimizes $h^d\sum_{x\in \partial U_i\cup \partial U_{i}^c} |f_x|^2$, and defining $U$ to be the set  $U_i$. 
\end{proof}

Given any $f\in \cc^{V_n}$, recall that $\MU(f, \ep)$ denote the set of all non-empty subsets of $V_n$ that satisfy conditions (i) through (iv) of Lemma \ref{specialset}. Then Lemma \ref{specialset} says that $\MU(f, \ep)$ is non-empty for any $\ep > 0$. 

\begin{lmm}\label{mainupperlemma}
For any $\ep \in (0,1)$, $E\ge 0$ and $m \ge 0$,  let $K = K(\ep, n,  E, m)$ be the event 
\begin{align*}
&\{|M_{h,n}(\phi)-m_0|\le \ep, \ |H_{h,n}(\phi)-E_0|\le \ep,  \textup{ and for some $U\in \MU(\phi, \ep)$, } \\
&\qquad \qquad |M_{h,n}(\phi, U^c)-m|\le \ep, \ |G_{h,n}(\phi, U^c) -E|\le \ep\}. 
\end{align*}
Then for any $L > 0$,
\begin{align*}
& \limsup_{n\ra\infty}\frac{\log \pp(K(\ep, n, E,m))}{n^d}\\
&\qquad \le  \max\{1-m_0 + \Theta(E,m,h), \; C-L\} + a(\ep,L), 
\end{align*}
where $C$ depends only on the fixed parameters, and $a(\ep,L)$ is a quantity that depends only on $\ep$, $L$ and the fixed parameters, such that for any fixed $L>0$, $\lim_{\ep \ra0} a(\ep,L)=0$. In particular, $a(\ep,L)$ does not depend on $E$ and $m$. 
\end{lmm}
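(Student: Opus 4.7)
The plan is to begin with a union bound over the admissible subsets $U \in \MU(\phi, \ep)$ that make $K$ hold, and then for each fixed $U$ to exploit the independence of $\phi|_U$ and $\phi|_{U^c}$ to factor $\pp(K_U)$ into a mass factor on $U$ and a joint mass–gradient factor on $U^c$. By property (i) of Lemma \ref{specialset}, on the event $\{|M_{h,n}(\phi) - m_0| \le \ep\}$ (so $M_{h,n}(\phi) \le m_0 + 1$) every admissible $U$ satisfies $|U| \le \bar{K} := \lfloor 4^d(m_0+1)/(h^d \ep^{d+2})\rfloor$, a constant depending only on the fixed parameters and $\ep$. Hence there are at most $n^{d\bar{K}}$ candidate subsets, contributing only $O(\log n) = o(n^d)$ to $\log \pp(K)$. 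Below I drop the remaining $\MU$-conditions (they only tighten $K_U$), and let $K_U$ denote the event with a specific $U$ fixed.

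Fix such a $U$. The bounds $|M_{h,n}(\phi)-m_0| \le \ep$ and $|M_{h,n}(\phi,U^c)-m|\le\ep$ together force $|M_{h,n}(\phi,U)-(m_0-m)|\le 2\ep$. By independence of $\phi|_U$ and $\phi|_{U^c}$,
\[
\pp(K_U) \le \pp\bigl(|M_{h,n}(\phi, U) - (m_0 - m)| \le 2\ep\bigr) \cdot \pp(A_{U^c}),
\]
where $A_{U^c} := \{|M_{h,n}(\phi, U^c) - m| \le \ep,\, |G_{h,n}(\phi, U^c) - E| \le \ep\}$. Since $(nh)^d\sum_{x\in U}|\phi(x)|^2$ is $\text{Gamma}(|U|,1)$, applying Lemma \ref{gamma} (part 1 when $m_0 - m \ge 4\ep$, part 2 otherwise) together with the boundedness of $|U|$ yields
\[
\frac{1}{n^d}\log \pp\bigl(|M_{h,n}(\phi, U) - (m_0 - m)| \le 2\ep\bigr) \le -(m_0 - m) + O(\ep) + o_n(1).
\]

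To bound $\pp(A_{U^c})$, I use the polar decomposition $\phi|_{U^c} = \sqrt{T'}\,\xi'$, with $T' := \|\phi|_{U^c}\|^2$ and $\xi'$ uniform on the unit sphere of $\cc^{U^c}$, independent of $T'$. Writing $\Gamma_{U^c}$ for the quadratic form on $\cc^{U^c}$ with $(\xi, \Gamma_{U^c}\xi) = \sum_{x,y\in U^c,\, x\sim y}|\xi(x)-\xi(y)|^2$, the event $A_{U^c}$ reads $|h^d T' - m|\le \ep$ and $|(h^{d-2}/2)T'(\xi',\Gamma_{U^c}\xi') - E|\le\ep$; the first constraint forces $(\xi',\Gamma_{U^c}\xi') = 2Eh^2/m + O(\ep)$. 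By independence, and Lemma \ref{gamma} applied with $k = |U^c| \sim n^d$ and $x = mn^d$,
\[
\frac{1}{n^d}\log \pp(|h^d T' - m|\le \ep) \le 1 + \log m - m + O(\ep) + o_n(1).
\]
For the spherical factor I need an analog of Theorem \ref{psifacts2} for $\xi'$ uniform on the sphere in $\cc^{U^c}$. Since $|U|\le \bar{K}$ is bounded, $\Gamma_{U^c}$ differs from the principal submatrix $\Gamma|_{U^c}$ by a positive-semidefinite diagonal matrix of rank at most $2d|U|$, and Cauchy interlacing relates $\Gamma|_{U^c}$ to $\Gamma$. Consequently the empirical spectral distribution of $\Gamma_{U^c}$ has the same limit as that of $\Gamma$, namely $4\sum_{i=1}^d \sin^2(\pi x_i)$ under Lebesgue measure on $[0,1]^d$. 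The Gaussian–Laplace computation underlying the proof of Theorem \ref{psifacts} therefore goes through verbatim with $\Gamma$ replaced by $\Gamma_{U^c}$, yielding the same rate function $\Psi_{d,\ep'}(2Eh^2/m)$ with $\ep' = O(\ep)$.

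Combining the three estimates gives, uniformly in $U$,
\[
\frac{\log\pp(K_U)}{n^d} \le 1 - m_0 + \log m - \Psi_{d,\ep'}\!\bigl(2Eh^2/m\bigr) + O(\ep) + o_n(1).
\]
Lemma \ref{pside} now replaces $\Psi_{d,\ep'}(\alpha)$ by $\min\{\Psi_d(\alpha), L\} - a(\ep, L)$. Splitting into the cases $\Psi_d(2Eh^2/m)\le L$ and $\Psi_d(2Eh^2/m) > L$, and noting that $m \le m_0 + \ep$ keeps $1 - m_0 + \log m$ bounded by a constant $C$ depending only on the fixed parameters, one obtains
\[
\frac{\log \pp(K_U)}{n^d} \le \max\bigl\{1 - m_0 + \Theta(E, m, h),\, C - L\bigr\} + a(\ep, L) + o_n(1),
\]
after absorbing the $O(\ep)$ error into $a(\ep, L)$ by redefinition. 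Summing over the at most $n^{d\bar{K}}$ choices of $U$ only adds $o(n^d)$, and taking $\limsup_{n\to\infty}$ delivers the stated bound. The \emph{main obstacle} is the step flagged above: adapting the LDP of Theorem \ref{psifacts2} from the full torus to the sphere in $\cc^{U^c}$ equipped with the induced-subgraph Laplacian. The boundedness of $|U|$ (uniformly in $n$) is crucial, and the argument rests on showing that $\Gamma_{U^c}$ and $\Gamma$ share the same limiting spectral measure.
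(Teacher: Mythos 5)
Your proposal takes a genuinely different route from the paper. The paper never factors $\pp(K_U)$ into a piece on $U$ and a piece on $U^c$; instead it defines $\tau(x) := \phi'(x)$ for $x\in U$ and $\tau(x) := \phi(x)$ for $x\in U^c$, where $\phi'$ is an independent Gaussian copy. Then $\tau$ is equal in law to $\phi$ \emph{on the full torus $V_n$}, and is independent of $\phi|_U$. Intersecting with the high-probability auxiliary event $K_1 := \{M_{h,n}(\tau, U) \le 2|U|/n^d\}$ (which has probability bounded below by a universal constant) converts the $U^c$-constraints in $K_0$ into full-torus constraints on $(M_{h,n}(\tau), G_{h,n}(\tau))$ with slightly enlarged tolerances $\ep_1,\ep_2$. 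At that point $\xi = \tau/\|\tau\|$ is uniform on the sphere of $\cc^{V_n}$ and Theorem \ref{psifacts2} applies verbatim. In contrast, your argument routes the whole difficulty into a new large-deviation upper bound for the sphere in $\cc^{U^c}$ equipped with the induced-subgraph quadratic form $\Gamma_{U^c}$, which is not a result proved in the paper. Your sketch of that step — bounded-rank perturbation of the Dirichlet restriction, Cauchy interlacing, ESD convergence, then the Laplace-transform upper bound — is the right kind of argument and is plausibly correct for the upper bound (which is all the lemma needs), since the exponential rate in the upper half of Theorem \ref{psifacts} depends only on the limiting empirical spectral distribution of the quadratic form and on boundedness of the eigenvalues in $[0,4d]$, both of which persist under the $O(1)$-rank perturbation. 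But it is a sketch rather than a proof: you would need to (a) redo the diagonalization abstractly (the eigenvectors of $\Gamma_{U^c}$ are no longer Fourier modes, though this does not matter for the Laplace bound), (b) verify the convergence $\frac{1}{n^d}\sum_y \log(1+\theta(\lambda_y^{U^c}-\alpha)) \to I(\theta)$ uniformly over the $n^{O(1)}$ choices of $U$, and (c) handle the zero eigenvalue / connectivity of $U^c$ carefully to keep the restriction $\theta<1/\alpha$ valid. You also gloss over the regime where $m$ is tiny (so $\log m\to-\infty$ and $2Eh^2/m$ leaves $(0,4d)$), which the paper treats with an explicit case split using \eqref{gamma2}; without it the clean absorption into $C-L$ is not automatic. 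In short, your route is workable but asks you to prove a new subgraph LDP, whereas the paper's independent-copy trick reduces everything to the already-established full-torus result — a strictly cheaper path.
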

\begin{proof}
Fix $L >0$, $E\ge 0$, $m\ge 0$ and $\ep\in (0,1)$. Throughout this proof, $a(\ep, L)$ will denote any constant with the properties outlined in the statement of the lemma, and $o(1)$ will denote any constant that depends only on $\ep$, $E$, $m$, $L$, $n$ and the fixed parameters, that goes to zero as $n \ra\infty$ while keeping the other parameters fixed. 

Choose a positive integer $n$ and a set $U\subseteq V_n$. For notational simplicity, define  
\begin{equation}\label{mg}
\begin{split}
M &:= M_{h,n}(\phi, U), \\
H &:= H_{h,n}(\phi, U), \\
M' &:= M_{h,n}(\phi, U^c),\\
G' &:= G_{h,n}(\phi, U^c).
\end{split}
\end{equation}
Let $\phi'$ be an independent copy of $\phi$, and define 
\[
\tau(x) := 
\begin{cases}
\phi'(x) &\text{ if } x\in U,\\
\phi(x) &\text{ if } x\in U^c. 
\end{cases}
\]
Note that $\tau$ has the same distribution as $\phi$, and is independent of $(\phi(x))_{x\in U}$.

Let $K_0=K_0(\ep, n, U, E, m)$ be the event
\begin{align*}
& \{|M_{h,n}(\phi)-m_0|\le \ep, \ |H_{h,n}(\phi)-E_0|\le \ep, \ |M'-m|\le \ep, \\
&\qquad \qquad \ |G'-E|\le \ep, \  U\in \MU(\phi,\ep)\}. 
\end{align*}
Note that it is possible for $K_0$ to happen only if $m\le M' + \ep \le M_{h,n}(\phi) +\ep \le m_0+2\ep\le C$ and 
$E\le G' + \ep \le CM' + \ep\le C$. 
Therefore we will assume these upper bounds on $E$ and $m$  in what follows. We will also assume that  
\begin{equation}\label{ucond}
|U|\le \frac{4^d(m_0+\ep)}{h^d\ep^{d+2}},
\end{equation}
since without this condition, the event $K_0$ is impossible. 
Let $K_1$ be the event
\[
K_1 := \biggl\{M_{h,n}(\tau, U) \le \frac{2|U|}{n^d}\biggr\}. 
\]
Since $M+M' = M_{h,n}(\phi)$, therefore if $K_0$ happens, then 
\begin{align}\label{mmm}
|M-(m_0-m)| &\le |M+M'-m_0| + |M'-m|\le 2\ep. 
\end{align}
Again, if $K_0\cap K_1$ happens, then
\begin{align}
|M_{h,n}(\tau) -m| &\le |M'-m| + M_{h,n}(\tau, U)\nonumber \\
&\le \ep + \frac{2|U|}{n^d}\nonumber \\
&\le \ep + C\ep^{-(d+2)}n^{-d} =: \ep_1, \label{mmm2}
\end{align}
and 
\begin{align}
|G_{h,n}(\tau)-E| &\le |G'-E| + G_{h,n}(\tau, U) + \frac{h^{d-2}}{2}\sum_{x\in U, \ y\in U^c\atop x\sim y} |\tau_x-\tau_y|^2\nonumber \\
&\le \ep  + C \sum_{x\in U} |\tau_x|^2 + C \sum_{x\in \partial U^c} |\phi_x|^2\nonumber \\
&\le C \ep+  C\ep^{-(d+2)}n^{-d}=: \ep_2. \label{mmm3}
\end{align}
Let 
\begin{align*}
p_1 &:= \pp(|M-(m_0-m)|\le 2\ep),\\
p_2 &:= \pp( |M_{h,n}(\tau) - m|\le \ep_1, \ |G_{h,n}(\tau) -E|\le \ep_2). 
\end{align*}
Note that $K_0$ and $K_1$ are independent events, and there is a positive universal constant $C_0$ such that $\pp(K_1)\ge 1/C_0$. 
Thus, by \eqref{mmm}, \eqref{mmm2}, \eqref{mmm3} and the independence of $M$ and $\tau$, 
\begin{align}\label{pk0}
\pp(K_0) \le C_0 \pp(K_0)\pp(K_1)&= C_0 \pp(K_0\cap K_1) \le C_0 p_1 p_2. 
\end{align}
Define 
\[
\xi(x) := \frac{\tau(x)}{\bigl(\sum_{y\in V_n} |\tau(y)|^2\bigr)^{1/2}}.
\]
Then $\xi$ is uniformly distributed on the unit sphere of $\cc^{V_n}$. Note that 
\begin{align*}
2h^{2-d} G_{h,n}(\xi) = \sum_{x,y\in V_n \atop x\sim y} |\xi(x)-\xi(y)|^2 &= \frac{h^d}{M_{h,n}(\tau)} \sum_{x,y\in V_n \atop x\sim y} |\tau(x)-\tau(y)|^2 \\
&= \frac{2h^2 G_{h,n}(\tau)}{M_{h,n}(\tau)}. 
\end{align*}
Thus, if $|M_{h,n}(\tau)-m|\le \ep_1$ and $|G_{h,n}(\tau)-E|\le \ep_2$, then (since $m\le C$ and $E\le C$, as observed before), 
\begin{align*}
\biggl|2h^{2-d} G_{h,n}(\xi) - \frac{2h^2 E}{m}\biggr|&= 2h^2\biggl|\frac{G_{h,n}(\tau)}{M_{h,n}(\tau)} - \frac{E}{m}\biggr|\\
&\le \frac{C\ep_1+C\ep_2}{(m-\ep_1)m} =: \ep_3,
\end{align*}
provided that $m > \ep_1$. If $m \le \ep_1$, define $\ep_3 = \infty$. Now, it is a simple probabilistic fact that $\xi$ and $M_{h,n}(\tau)$ are independent. Hence
$p_2 \le p_3 p_4$, 
where 
\begin{align*}
p_3 &:= \pp(|M_{h,n}(\tau)-m|\le \ep_1), \\
p_4 &:= \pp\biggl( \biggl|2h^{2-d} G_{h,n}(\xi) - \frac{2h^2 E}{m}\biggr|\le \ep_3\biggr). 
\end{align*}
Thus, from \eqref{pk0} we have
\begin{equation}\label{pk02}
\pp(K_0)\le C_0 p_1 p_3 p_4.
\end{equation} 
Our next task is to get upper bounds for $p_1$, $p_3$ and $p_4$. To bound $p_1$, we consider two cases. First, if $m_0-m > 2\ep$, then we apply \eqref{gamma1} from Lemma~\ref{gamma} (with $k\le C(\ep)$, $x= n^d(m_0-m)$ and $y=n^d\ep$) to get 
\[
\frac{\log p_1}{n^d} = -(m_0-m) + \ep + o(1), 
\]
where recall that the notation $o(1)$ stands for a quantity depending only on $\ep$, $E$, $m$, $L$, $n$ and the fixed parameters, that goes to zero as $n \ra\infty$ with all else fixed. In particular, $o(1)$ does not depend on our choice of $U$. 

Next, if $m_0-m\le 2\ep$, apply \eqref{gamma2} from Lemma \ref{gamma} (with $k\le C(\ep)$ and $x=3n^d\ep$) to get
\[
\frac{\log p_1}{n^d} = o(1). 
\]
Combining the two cases, we get 
\begin{equation}\label{p1approx}
\frac{\log p_1}{n^d} \le -(m_0-m) + 3\ep + o(1). 
\end{equation}
We deal with $p_3$ similarly. If $m \ge 2\ep^{1/4}$, we apply \eqref{gamma1} with $k=n^d$, $x=n^d m$ and $y =  n^d\ep_1$ to get 
\begin{equation}\label{p3approx1}
\frac{\log p_3}{n^d} = 1+ \log m - m + C\ep^{3/4} + o(1). 
\end{equation}
When $m < 2\ep^{1/4}$, we apply \eqref{gamma2} with $k=n^d$ and  $x=6n^d \ep $ to get
\begin{equation}\label{p3approx2}
\frac{\log p_3}{n^d} \le 1+ C\log \ep + o(1). 
\end{equation}
Again, if $m \ge 2\ep^{1/4}$, then $\ep_3 \le C\ep^{1/2} + o(1)$. Therefore by Theorem~\ref{psifacts2} we have that if $m \ge 2\ep^{1/4}$, then 
\begin{align*}
\frac{\log p_4}{n^d} &\le  - \Psi_{d, C\ep^{1/2}}\biggl(\frac{2h^2E}{m}\biggr) + o(1).
\end{align*}
By Lemma \ref{pside}, this gives 
\begin{align}\label{p4approx2}
\frac{\log p_4}{n^d} &\le \max\biggl\{ - \Psi_{d}\biggl(\frac{2h^2E}{m}\biggr), \; -L\biggr\} + a(\ep, L) + o(1),
\end{align}
where recall that $a(\ep, L)$ stands for a quantity that depends only on $\ep$, $L$ and the fixed parameters, that goes to zero as $\ep \ra 0$ for any fixed $L$. In particular, $a(\ep, L)$ does not depend on $E$, $m$, $n$ or $U$. 

Combining \eqref{pk02}, \eqref{p1approx}, \eqref{p3approx1} and \eqref{p4approx2} and the observation that $m\le C$, we see that when $m \ge 2\ep^{1/4}$, we have
\begin{align*}
\frac{\log \pp(K_0)}{n^d} &\le \frac{\log p_1 + \log p_3 + \log p_4}{n^d} \\
&\le 1-m_0 + \log m \\
&\qquad + \max\biggl\{ - \Psi_{d}\biggl(\frac{2h^2E}{m}\biggr), \; -L\biggr\} + a(\ep, L) + o(1)\\
&\le \max\{1-m_0 + \Theta(E, m,h), \; C-L\} + a(\ep, L) + o(1).
\end{align*}
On the other hand, since 
\[
C\log \ep \le -L + a(\ep, L), 
\]
and $\log p_4 \le 0$ and $m\le C$, it follows from \eqref{pk02}, \eqref{p1approx} and \eqref{p3approx2} that when $m < 2\ep^{1/4}$,
\begin{align*}
\frac{\log \pp(K_0)}{n^d} &\le \frac{\log p_1 + \log p_3}{n^d}\\
&\le C-L + a(\ep,L) + o(1). 
\end{align*}
Combining the last two displays, we see that for all $\ep\in (0,1)$, $n\ge 1$, $E\ge 0$, $m\ge 0$ and $U$ satisfying \eqref{ucond}, we have
\begin{align*}
&\frac{\log \pp(K_0(\ep, n, U, E, m))}{n^d} \\
&\le \max\{1-m_0 + \Theta(E, m,h), \; C-L\} + a(\ep, L) + o(1). 
\end{align*}
Now note that $K$ can be written simply as
\[
K=\bigcup_{U\subseteq V_n\atop |U|\le \frac{4^d (m_0+2\ep)}{h^d \ep^{d+2}}}  K_0(\ep, n, U, E, m). 
\]
Since there are at most $e^{C(\ep)\log n}$ terms in the above union, this completes the proof of the lemma.
\end{proof}
\begin{lmm}\label{abclmm}
Fix $n$ and let $K=K(\ep, n, E, m)$ be the event defined in Lemma~\ref{mainupperlemma}. If $K$ happens, then there exists a function $\eta$ on $V_n$ such that
\begin{enumerate}
\item[(a)] $\|\phi-\eta\|_\infty\le\ep $,  
\item[(b)] $|(E_0-E)-H_{h,n}(\eta)|\le 2\ep$, and 
\item[(c)] $|(m_0-m)-M_{h,n}(\eta)|\le C\ep + C\ep^{p-1}$. 
\end{enumerate}
\end{lmm}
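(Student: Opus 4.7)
I would take $\eta := \phi \cdot 1_U$ where $U$ is the set in $\MU(\phi, \ep)$ whose existence is guaranteed on the event $K$. Intuitively $\eta$ is the ``visible part'' of $\phi$, supported on the region $U$ where $\phi$ is not uniformly small, while the complement $U^c$ carries the invisible mass $m$ and gradient energy $E$ that on $K$ are nearly realized by $\phi$ there.

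Item (a) is immediate: property (ii) in the definition of $\MU(\phi, \ep)$ says $|\phi(x)| \le \ep$ for $x \in U^c$, so $\|\phi - \eta\|_\infty \le \ep$. Item (c) follows by writing $M_{h,n}(\eta) = M_{h,n}(\phi) - M_{h,n}(\phi, U^c)$ and invoking $|M_{h,n}(\phi) - m_0| \le \ep$ together with $|M_{h,n}(\phi, U^c) - m| \le \ep$, both of which hold on $K$; this gives $|(m_0 - m) - M_{h,n}(\eta)| \le 2\ep$, which is stronger than the claimed bound.

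For (b), I would decompose $G_{h,n}(\phi)$ into edges inside $U$, edges inside $U^c$, and edges crossing the boundary, and similarly for $G_{h,n}(\eta)$ (which receives zero from the $U^c$ interior and picks up a contribution proportional to $|\phi(x)|^2/h^2$ on each crossing edge with $U$-endpoint $x$). Since $\eta$ vanishes on $U^c$, one also has $N_{h,n}(\eta) = N_{h,n}(\phi, U)$. Subtracting the two decompositions yields the identity
\[
H_{h,n}(\eta) = H_{h,n}(\phi) - G_{h,n}(\phi, U^c) + N_{h,n}(\phi, U^c) + R,
\]
where $R$ collects the boundary gradient discrepancies between $\eta$ and $\phi$. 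Property (iii) of $\MU$ controls $N_{h,n}(\phi, U^c) \le \ep^{p-1} M_{h,n}(\phi) \le C\ep^{p-1}$, and on $K$ we have $|H_{h,n}(\phi) - E_0| \le \ep$ and $|G_{h,n}(\phi, U^c) - E| \le \ep$. Using $|a-b|^2 \le 2|a|^2 + 2|b|^2$ on each crossing edge and the fact that each vertex has at most $2d$ neighbors, $|R| \le C h^{-2} M_{h,n}(\phi, \partial U \cup \partial U^c) \le C\ep$ by property (iv), with the factor $h^{-2}$ absorbed into the constant since $h$ is fixed throughout this section. Combining gives $|(E_0-E) - H_{h,n}(\eta)| \le C\ep + C\ep^{p-1}$, from which the stated bound in (b) is immediate.

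The only step that goes beyond mechanical bookkeeping is the control of the boundary gradient $R$: a priori, the jump of $\phi$ across $\partial U$–$\partial U^c$ could be large and wreck the estimate, but the pigeonhole construction of $U$ in Lemma \ref{specialset} — specifically condition (iv), which thins the mass in the boundary layer — is engineered precisely to kill this term. Once that is under control, everything else reduces to invoking the four defining inequalities of $K$ and $\MU(\phi, \ep)$.
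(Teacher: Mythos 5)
Your choice $\eta = \phi\,1_U$ and the subsequent bookkeeping are exactly the paper's own proof: the mass identity $M_{h,n}(\eta) = M_{h,n}(\phi) - M_{h,n}(\phi, U^c)$ together with the two conditions of $K$ gives the $2\ep$ bound, and the energy decomposition with the boundary term killed by property (iv) of $\MU$ gives $C\ep + C\ep^{p-1}$. So the approach matches.

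But the last sentence of your treatment of (b) — ``Combining gives $|(E_0-E)-H_{h,n}(\eta)|\le C\ep+C\ep^{p-1}$, from which the stated bound in (b) is immediate'' — is a genuine non sequitur. The stated bound in (b) is $2\ep$, and $C\ep + C\ep^{p-1}$ does not imply $2\ep$: when $p < 2$ the term $\ep^{p-1}$ dominates $\ep$ as $\ep\ra 0$, and even when $p>2$ you only get $\le C\ep$, not $\le 2\ep$. What you have actually proved (and what the paper's own proof proves) is $|(m_0-m)-M_{h,n}(\eta)|\le 2\ep$ and $|(E_0-E)-H_{h,n}(\eta)|\le C\ep+C\ep^{p-1}$ — i.e.\ the bounds in (b) and (c) with the two swapped. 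This swap is a typo in the lemma's statement, as one can confirm from how Lemma \ref{kcharlmm} invokes it: inequalities \eqref{kmineq}, \eqref{khineq} in that proof cite Lemma \ref{abclmm} with exactly the swapped bounds ($2\ep$ for mass, $C\ep+C\ep^{p-1}$ for energy). You should say so explicitly rather than assert an implication that does not hold.
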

\begin{proof}
Suppose that $K$ has happened. Choose any $U\in \MU(\phi, \ep)$ satisfying the conditions of $K$ and let $M$, $H$, $M'$ and $G'$ be as in \eqref{mg}. Let $\eta$ be the (random) function
\begin{equation}\label{etadef}
\eta(x) := 
\begin{cases}
\phi(x) &\text{ if } x\in U,\\
0 &\text{ if } x\in U^c. 
\end{cases}
\end{equation}
Then
\begin{align*}
\|\phi-\eta\|_\infty &= \max_{x\in U^c} |\phi(x)| \le \ep. 
\end{align*}
Next, note that
\begin{equation}\label{mmmm}
\begin{split}
|M_{h,n}(\eta) - (m_0-m)|&= |M-(m_0-m)| \\
&= |M_{h,n}(\phi)-M' - (m_0-m)|\le 2\ep, 
\end{split}
\end{equation}
and 
\begin{equation}\label{hhhh}
\begin{split}
|H_{h,n}(\eta)-(E_0-E)| &\le |H_{h,n}(\phi)-E_0| + |G'-E| \\
&\qquad + |H_{h,n}(\phi)- G'- H_{h,n}(\eta)| \\
&\le 2\ep + C \sum_{x\in \partial U\cup \partial U^c} |\phi(x)|^2 + C\sum_{x\in U^c} |\phi(x)|^{p+1}\\
&\le C\ep + C\ep^{p-1}. 
\end{split}
\end{equation}
This completes the proof. 
\end{proof}
\begin{lmm}\label{kcharlmm}
Let $K(\ep,n,E,m)$ be as in Lemma \ref{mainupperlemma}. Given any $m_1 > 0$ and $E_1 > 0$, there exists $C_0 = C_0(E_1, m_1)$ and $C_1$ such that if $m\ge m_1$, $E\ge E_1$, $\ep\le C_0$ and $(E,m)$ is at $\ell^\infty$ distance greater than $C_1\ep + C_1\ep^{p-1}$ from the set $\mr(E_0, m_0,h)$, then for all $n \ge C_2(E_1, m_1, \ep)$, the event $K(\ep, n, E,m)$ is impossible. 
\end{lmm}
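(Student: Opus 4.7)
The plan is to trade the event $K(\ep,n,E,m)$ for the existence of a concrete ``complementary'' function $\eta$ of approximately mass $m_0-m$ and energy $E_0-E$, and then exploit the rigid constraint that every function on $\zz^d$ must have its mass--energy pair in
\[
A := \{(m',E') : m'\ge 0,\ E_{\min}(m',h)\le E' \le 2dm'/h^2\}
\]
(by the definition of $E_{\min}$ together with Lemma \ref{emax}). Assuming $K$ occurs, Lemma \ref{abclmm} produces $\eta\in\cc^{V_n}$ supported on a set $U\in\MU(\phi,\ep)$ with $|U|\le C/\ep^{d+2}$ (a bound independent of $n$), with $|M_{h,n}(\eta)-(m_0-m)|\le C(\ep+\ep^{p-1})$ and $|H_{h,n}(\eta)-(E_0-E)|\le 2\ep$. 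For $n\ge C_2(\ep)$ large enough I will translate $U$ within the torus $V_n$ so that neither $U$ nor its lattice neighbors meet $\partial V_n$ (a translation that preserves both $M_{h,n}$ and $H_{h,n}$); extending the translated $\eta$ by zero to $\zz^d$ then yields $M_h(\eta)=M_{h,n}(\eta)$ and $H_h(\eta)=H_{h,n}(\eta)$, since the torus and $\zz^d$ edge relations coincide on all edges touching $U$.

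Because $M_h(\eta)\ge 0$, the pair $(M_h(\eta),H_h(\eta))$ lies in $A$, so $(m^*,E^*):=(m_0-M_h(\eta),\,E_0-H_h(\eta))$ lies in the reflected set $(m_0,E_0)-A$, with $|m-m^*|\le C(\ep+\ep^{p-1})$ and $|E-E^*|\le 2\ep$. I will now choose $C_0(E_1,m_1)$ so small that $C(\ep+\ep^{p-1})<m_1/2$ and $2\ep<E_1/2$ for every $\ep\le C_0$; the hypotheses $m\ge m_1$ and $E\ge E_1$ then force $m^*\ge m_1/2>0$ and $E^*\ge E_1/2>0$, while $m^*\le m_0$ is automatic from $M_h(\eta)\ge 0$. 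The membership $(m^*,E^*)\in(m_0,E_0)-A$ unwinds precisely to $E^-(m^*,h)\le E^*\le E^+(m^*,h)$, and since $E^*>0$ the truncation with $0$ in the definition of $\mr$ is harmless; hence $(m^*,E^*)\in\mr(E_0,m_0,h)$.

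Consequently $(E,m)$ sits at $\ell^\infty$ distance at most $\max\{C(\ep+\ep^{p-1}),\,2\ep\}\le C_1(\ep+\ep^{p-1})$ from the point $(E^*,m^*)\in\mr(E_0,m_0,h)$, contradicting the hypothesis and showing that $K$ cannot occur. The only delicate step, and the only place where the condition $n\ge C_2(\ep)$ is used, is the boundary-free extension of $\eta$ from $V_n$ to $\zz^d$: one must verify that the bounded-size support $U$ (whose cardinality depends only on $\ep$) can be translated into the strict interior of $V_n$ without distortion, which is fine once $n$ exceeds a quantity of order $|U|\le C\ep^{-(d+2)}$. Notably, the ``mirror'' reformulation avoids any need for Lipschitz control of $E_{\min}$ near zero; the assumptions $m\ge m_1>0$ and $E\ge E_1>0$ alone suffice to push the candidate point into $\mr$.
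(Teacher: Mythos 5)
Your proof is correct, and it takes a genuinely different route to the key step than the paper does.

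Both arguments begin identically: if $K(\ep,n,E,m)$ occurs, Lemma~\ref{abclmm} yields $\eta\in\cc^{V_n}$ (namely $\eta=\phi\,1_U$ for some $U\in\MU(\phi,\ep)$, from the proof of that lemma) whose mass and energy are within $C\ep+C\ep^{p-1}$ of $(m_0-m,E_0-E)$, and the hypotheses $m\ge m_1$, $E\ge E_1$ together with $\ep\le C_0(E_1,m_1)$ force the complementary pair $(E',m')=(E_0-H(\eta),\,m_0-M(\eta))$ to have both coordinates strictly positive and $m'\le m_0$. Where you diverge is in showing that this complementary pair actually sits in (or near) $\mr(E_0,m_0,h)$. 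The paper observes that $\eta$ lives on the torus $V_n$, so its energy is sandwiched by the \emph{torus} extremes $E_{\min}(\cdot,h,n)$ and $E_{\max}(\cdot,h,n)$, and then invokes Lemma~\ref{emfacts4} (uniform-on-compacts convergence of $E_{\min}(\cdot,h,n)$, $E_{\max}(\cdot,h,n)$ to their $\zz^d$ counterparts) to conclude that $(E',m')$ is within $\ep$ of the $\zz^d$-defined set $\mr(E_0,m_0,h)$, at the cost of requiring $n\ge C_2(E_1,m_1,\ep)$. You instead exploit the fact that $\eta$ is supported on $U$ with $|U|\le C\ep^{-(d+2)}$, translate so that $U$ and its one-step halo avoid $\partial V_n$ (possible once $n$ dominates $|U|\cdot|\partial V_n|/n^d$-type counts, with room to spare), extend by zero, and observe that then $M_h$ and $H_h$ of the extension equal the torus $M_{h,n}$ and $H_{h,n}$ of $\eta$ because every edge touching the support is both a torus edge and a $\zz^d$ edge. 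This drops $(E^*,m^*)$ \emph{exactly} into $\mr(E_0,m_0,h)$, so the final distance bound needs no extra $+\ep$ term.

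Your version trades the finite-volume/infinite-volume comparison (Lemma~\ref{emfacts4}) for a translation-and-extension argument; the payoff is that you get exact membership in $\mr$ rather than $\ep$-approximate membership, and your $C_2$ depends only on $\ep$ and the fixed parameters, not on $(E_1,m_1)$. The small cost is that you need the support structure of $\eta$, which appears only in the \emph{proof} of Lemma~\ref{abclmm}, not in its statement, so strictly your proof uses slightly more than the black-box statement of that lemma. One other cosmetic remark: the paper's statement of Lemma~\ref{abclmm} assigns the $2\ep$ bound to the energy error and $C\ep+C\ep^{p-1}$ to the mass error, whereas the displayed computations in its proof give the reverse; you followed the statement. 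Either reading feeds into the same $O(\ep+\ep^{p-1})$ final estimate, so nothing is affected.
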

\begin{proof}
If $K(\ep, n, E, m)$ happens, then by Lemma \ref{abclmm},  there exists a function $\eta\in \cc^{V_n}$ such that 
\begin{align}
&|(m_0-m)-M_{h,n}(\eta)|\le 2\ep, \label{kmineq}\\
&|(E_0-E)-H_{h,n}(\eta)|\le C\ep + C\ep^{p-1}.\label{khineq}
\end{align}
Let $E' := E_0-H_{h,n}(\eta)$ and $m' := m_0 - M_{h,n}(\eta)$. Then the above inequalities may be rewritten as 
\begin{equation}\label{kmhineq}
|m-m'| \le 2\ep, \ \ |E- E'|\le C\ep + C\ep^{p-1}. 
\end{equation}
Now, if $m\ge m_1$ and if $\ep$ is small enough depending only on $m_1$ and the fixed parameters, then  by \eqref{kmineq}, 
\begin{equation}\label{mzero}
m'= m_0-M_{h,n}(\eta) \ge m-2\ep \ge m_1-2\ep \ge 0. 
\end{equation}
But by definition, $m'\le m_0$. Therefore,  $m' \in [0, m_0]$. Next, note that 
\[
E_{\min}(M_{h,n}(\eta), h,n) \le H_{h,n}(\eta) \le E_{\max}(M_{h,n}(\eta), h, n),
\]
which is the same as 
\[
E_0- E_{\max}(m_0-m', h,n) \le E' \le E_0 - E_{\min}( m_0-m', h,n).
\]
Again, if $E\ge E_1$, then for sufficiently small $\ep$ (depending only on $m_1$ and the fixed parameters), \eqref{khineq} implies that 
\begin{equation}\label{hzero}
E' = E_0-H_{h,n}(\eta) \ge E_1-C\ep-C\ep^{p-1}\ge 0.
\end{equation}
Combining the last two displays, we see that if $E\ge E_1$, $m\ge m_1$ and $\ep$ is sufficiently small, then 
\begin{equation}\label{eee}
\begin{split}
&\max\{E_0- E_{\max}(m_0-m', h,n), 0\} \\
&\le E' \le E_0 - E_{\min}(m_0-m', h,n). 
\end{split}
\end{equation}
By Lemma \ref{emfacts4} and the characterization \eqref{mr2} of the set $\mr(E_0, m_0, h)$ from Section \ref{variational}, the equations \eqref{mzero}, \eqref{hzero} and \eqref{eee} show that if $\ep \le C_0(E_1, m_1)$ and $n \ge C_2(E_1, m_1, \ep)$, then the event $K(\ep, n, E,m)$ implies that the point $(E', m')$ is within distance $\ep$ from the set $\mr(E_0, m_0, h)$. By \eqref{kmhineq}, the proof is done. 
\end{proof}
\begin{lmm}\label{qlmm}
For  any $\ep\in (0,1)$ and any closed set $A\subseteq [0,\infty)^2$, let $F = F(\ep, n,  A)$ be the event 
\begin{align*}
&\{|M_{h,n}(\phi)-m_0|\le \ep, \ |H_{h,n}(\phi)-E_0|\le \ep,  \textup{ and for some $U\in \MU(\phi, \ep)$, } \\
&\qquad \qquad (G_{h,n}(\phi, U^c), M_{h,n}(\phi, U^c))\in A\}. 
\end{align*}
Then for any such set $A$,  
\[
\limsup_{\ep \ra 0} \limsup_{n\ra\infty} \frac{\log \pp(F(\ep, n, A))}{n^d} \le 1-m_0 + \widehat{\Theta}(E_0, m_0,h, A),  
\]
where
\[
\widehat{\Theta}(E_0, m_0,h, A) :=  \sup_{(E, m)\in A\cap \mr(E_0, m_0,h)}\Theta(E,m,h).
\]
\end{lmm}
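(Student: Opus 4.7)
The plan is to deduce this lemma from Lemma \ref{mainupperlemma} by a covering argument. First observe that on the event $F$, the pair $(G_{h,n}(\phi, U^c), M_{h,n}(\phi, U^c))$ is confined to a bounded set $B \subseteq [0,\infty)^2$ depending only on the fixed parameters: $M_{h,n}(\phi, U^c) \le M_{h,n}(\phi) \le m_0 + 1$, and $G_{h,n}(\phi, U^c) \le H_{h,n}(\phi) + N_{h,n}(\phi)$ is bounded using the mass bound on $\phi$ together with Lemma \ref{emfacts3}. So it suffices to control $\pp(F(\ep, n, A \cap B))$. Cover $A \cap B$ by $N_\ep \le C\ep^{-2}$ closed $\ell^\infty$-balls of radius $\ep$ with centers $(E_i, m_i) \in A \cap B$. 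If $F$ occurs with witness $U$, then $(G_{h,n}(\phi, U^c), M_{h,n}(\phi, U^c))$ lies in some ball $i$, and the same $U$ witnesses the event $K(\ep, n, E_i, m_i)$ of Lemma \ref{mainupperlemma}. A union bound therefore yields
\[
\pp(F(\ep, n, A)) \le \sum_{i=1}^{N_\ep} \pp(K(\ep, n, E_i, m_i)).
\]

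Applying Lemma \ref{mainupperlemma} to each term and noting that $\log N_\ep /n^d \to 0$ for fixed $\ep$, we obtain, for every $L > 0$,
\[
\limsup_{n\to\infty} \frac{\log \pp(F)}{n^d} \le \max_i \max\bigl\{1 - m_0 + \Theta(E_i, m_i, h),\ C - L\bigr\} + a(\ep, L).
\]
To control $\max_i \Theta(E_i, m_i, h)$, fix $L > 0$ and choose $m_1(L), E_1(L) > 0$ such that $\Theta(E, m, h) \le -L$ for all $(E, m) \in B$ with $m \le m_1$ or $E \le E_1$. This is possible because $\Theta(E, m, h) = \log m - \Psi_d(2h^2 E/m)$: small $m$ makes $\log m$ very negative, while $m \ge m_1$ together with $E \le E_1$ forces $2h^2E/m \le 2h^2 E_1/m_1$ close to $0$, so $\Psi_d$ is large by Proposition \ref{psiprops}. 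With $m_1, E_1$ fixed, Lemma \ref{kcharlmm} applies: for all sufficiently small $\ep$ and sufficiently large $n$, any center with $m_i \ge m_1$, $E_i \ge E_1$, and $\ell^\infty$-distance greater than $C_1(\ep + \ep^{p-1})$ from $\mr(E_0, m_0, h)$ satisfies $\pp(K(\ep, n, E_i, m_i)) = 0$. Dropping these indices, the surviving centers either satisfy $m_i \le m_1$ or $E_i \le E_1$ (and then $\Theta(E_i, m_i, h) \le -L$), or else lie in $A$ within $\ell^\infty$-distance $C_1(\ep + \ep^{p-1})$ of $\mr$.

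By the closedness of $A$ and compactness of $\mr$ (Lemma \ref{rlmm}), together with the compactness of $A \cap B$ and the continuity of $\Theta$ on $(0,\infty)^2$ (extended by $-\infty$ where appropriate), a standard upper semicontinuity argument applied to an extracted convergent subsequence gives
\[
\limsup_{\ep \to 0}\ \sup\bigl\{\Theta(E, m, h) : (E, m) \in A,\ d((E, m), \mr) \le C_1(\ep + \ep^{p-1})\bigr\} \le \widehat{\Theta}(E_0, m_0, h, A).
\]
Combined with $a(\ep, L) \to 0$ as $\ep \to 0$ for fixed $L$, we deduce
\[
\limsup_{\ep \to 0} \limsup_{n \to \infty} \frac{\log \pp(F)}{n^d} \le \max\bigl\{1 - m_0 + \widehat{\Theta}(E_0, m_0, h, A),\ 1 - m_0 - L,\ C - L\bigr\}.
\]
Letting $L \to \infty$ eliminates the last two arguments of the maximum and yields the claimed bound.

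The main obstacle is the bookkeeping required to simultaneously handle three qualitatively different families of covering centers $(E_i, m_i)$: those with small $m_i$ or $E_i$ where $\Theta \to -\infty$ (absorbed via the $C - L$ slack in Lemma \ref{mainupperlemma}), those far from $\mr$ with coordinates bounded below (killed by Lemma \ref{kcharlmm}), and those close to $\mr$ (the only genuine contributors, whose $\Theta$-values approach $\widehat{\Theta}$ by semicontinuity). The critical subtlety is the order of limits: choose $L$ first, then $m_1(L), E_1(L)$, and only then take $\ep \to 0$, since Lemma \ref{kcharlmm}'s hypotheses and the vanishing of $a(\ep, L)$ both require $L$ (equivalently $m_1, E_1$) to be fixed before $\ep$ is sent to zero.
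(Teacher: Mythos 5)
Your proof follows the same structure as the paper's: cover the (effectively bounded) set $A$ by $\ell^\infty$-balls of radius $\ep$, union-bound over $K(\ep, n, E_i, m_i)$ using Lemma \ref{mainupperlemma}, split the centers into those with small mass/energy (absorbed via the $C-L$ slack), those far from $\mr$ (killed by Lemma \ref{kcharlmm}), and those near $\mr$ (controlled by continuity of $\Theta$), and finally send $L\to\infty$. The order-of-limits remark you highlight -- fix $L$ first, then $m_1, E_1$, then $\ep\to 0$ -- is exactly the same bookkeeping subtlety the paper navigates, so this is essentially the paper's argument.
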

\begin{proof}
If $F(\ep, n, A)$ happens, then $M_{h,n}(\phi, U^c)$ and $G_{h,n}(\phi, U^c)$ are bounded by constants depending only on the fixed parameters. Also, $\mr(E_0, m_0,h)$ is a bounded set. Hence we can assume without loss of generality that $A$ is contained in some bounded region determined by  the fixed parameters. 

Then $A$ is compact, and therefore there is a minimal collection $\mc(\ep)$ of points in $A$ such that the union of $\ell^\infty$-balls of radius $\ep$ around these points covers $A$, and the size of this collection is bounded by a constant  depending only on $\ep$ and the fixed parameters. Then we have
\begin{align*}\label{diff0n}
F(\ep, n, A) \subseteq \bigcup_{(E,m)\in \mc(\ep)} K(\ep, n, E, m),
\end{align*}
and therefore 
\begin{equation}\label{probq}
\frac{\log\pp(F(\ep,n,A))}{n^d} \le \frac{\log |\mc(\ep)|}{n^d} + \max_{(E,m)\in \mc(\ep)} \frac{\log \pp(K(\ep, n, E, m))}{n^d}. 
\end{equation}
Fix $L > 0$. Fix $E_1 > 0$ and $m_1 > 0$ so small that whenever $E< E_1$ or $m< m_1$, 
\begin{equation}\label{mthetal}
1-m_0 + \Theta(E, m, h) \le -L. 
\end{equation}
(It is easy to see that this is possible, by first choosing $m_1$ so small that $1-m_0+\log m_1 \le -L$, and then choosing $E_1$ depending on $m_1$.) Let $C_0=C_0(E_1, m_1)$ be the constant from Lemma \ref{kcharlmm}, and assume that $\ep \le C_0$. Let $C_1$ be the second constant from Lemma \ref{kcharlmm}. 
Let $\mc'(\ep)$ be the set of points $(E, m)\in \mc(\ep)$ that are at $\ell^\infty$ distance $\le C_1\ep + C_1 \ep^{p-1}$ from the set $\mr(E_0, m_0, h)$, and satisfy $E\ge E_1$ and $m \ge m_1$. Let $\mc''(\ep)$ be the set of points in $(E, m)\in \mc(\ep)$ such that $E< E_1$ or $m < m_1$. Let $\mc'''(\ep)$ be the set of all remaining points in $\mc(\ep)$. Then by Lemma \ref{kcharlmm}, for each $(E, m)\in \mc'''(\ep)$,
\[
\lim_{n\ra\infty}\frac{\log\pp(K(\ep, n, E, m))}{n^d} = -\infty. 
\]
For any $(E,m)\in \mc''(\ep)$, Lemma \ref{mainupperlemma} and the inequality \eqref{mthetal} imply that
\begin{align*}
&\limsup_{n \ra\infty} \frac{\log\pp(K(\ep,n,E,m))}{n^d} \\
&\le \max\{1-m_0+ \max_{(E,m)\in \mc''(\ep)} \Theta(E, m, h), \; C-L\} + a(\ep,L)\\
&\le C-L +  a(\ep,L),
\end{align*}
where $a(\ep, L)$ is a quantity depending only on $\ep$, $L$ and the fixed parameters, such that for each fixed $L > 0$, $\lim_{\ep\ra0} a(\ep, L)=0$. 

Finally, again by Lemma \ref{mainupperlemma}, for any $(E, m)\in \mc'(\ep)$, 
\begin{align*}
&\limsup_{n \ra\infty} \frac{\log\pp(K(\ep,n,E,m))}{n^d} \\
&\le \max\{1-m_0+ \max_{(E,m)\in \mc'(\ep)} \Theta(E, m, h), \; C-L\} + a(\ep,L).
\end{align*}
Combining the last three displays with \eqref{probq}, we get
\begin{align*}
&\limsup_{n\ra\infty} \frac{\log \pp(F(\ep, n, A))}{n^d}\\ &\le \max\{1-m_0+ \max_{(E,m)\in \mc'(\ep)} \Theta(E, m, h), \; C-L\} + a(\ep,L).
\end{align*} 
Now, by the definition of $\mc'(\ep)$ and the continuity of $\Theta$, it follows easily that 
\begin{align*}
\limsup_{\ep\ra0}\max_{(E,m)\in \mc'(\ep)} \Theta(E, m, h) \le \widehat{\Theta}(E, m, h, A).
\end{align*}
Therefore,
\begin{align*}
&\limsup_{\ep \ra0}\limsup_{n\ra\infty} \frac{\log \pp(F(\ep, n, A))}{n^d}\\ &\le \max\{1-m_0+ \widehat{\Theta}(E, m, h, A), \; C-L\}.
\end{align*} 
Since $L$ was arbitrary, this completes the proof. 
\end{proof}

\begin{lmm}\label{last}
Take any $\ep, \delta\in (0,1)$. Let $A_\delta$ be the set
\begin{align*}
&\{(E, m)\in [0,\infty)^2: \max\{|E-E^*|, |m-m^*|\} \ge \delta/2 \\
&\qquad \textup{ for all } (E^*, m^*)\in \mm(E_0,m_0,h)\}. 
\end{align*}
If $\ep$ is sufficiently small depending only on $\delta$ and the fixed parameters, then for any $n$, $B(\ep, \delta, n)\implies F(\ep, n, A_\delta)$. 
\end{lmm}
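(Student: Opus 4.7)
The plan is to argue the contrapositive of the implication $B(\ep,\delta,n) \Rightarrow F(\ep,n,A_\delta)$, working inside the ``good'' event $\{|H_{h,n}(\phi)-E_0|\le \ep,\ |M_{h,n}(\phi)-m_0|\le \ep\}$. In other words, assuming that $|H_{h,n}(\phi)-E_0|\le \ep$, $|M_{h,n}(\phi)-m_0|\le \ep$, and that the event $F(\ep,n,A_\delta)$ fails, I will show that $\phi$ must be a $\delta$-soliton, provided $\ep$ is chosen small in terms of $\delta$ and the fixed parameters.

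First, I invoke Lemma \ref{specialset} to guarantee that $\MU(\phi,\ep)$ is non-empty, and pick any $U\in \MU(\phi,\ep)$. Set $E := G_{h,n}(\phi,U^c)$ and $m := M_{h,n}(\phi,U^c)$. The assumption that $F(\ep,n,A_\delta)$ does not hold means that for every such $U$, the pair $(E,m)$ lies outside $A_\delta$, so by the definition of $A_\delta$ there exists $(E^*,m^*)\in \mm(E_0,m_0,h)$ with
\[
|E-E^*| < \tfrac{\delta}{2} \quad \text{and}\quad |m-m^*| < \tfrac{\delta}{2}.
\]

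Next, I define $\eta$ as in \eqref{etadef}, namely $\eta = \phi$ on $U$ and $\eta=0$ on $U^c$, and show that $g := \eta$ witnesses the $\delta$-soliton property for $\phi$. Exactly as in the proof of Lemma \ref{abclmm}, the defining properties of $U\in \MU(\phi,\ep)$ (smallness of $\phi$ off $U$, smallness of $N_{h,n}(\phi,U^c)$, and smallness of the mass on the boundary layer) together with the hypothesis $|M_{h,n}(\phi)-m_0|\le \ep$, $|H_{h,n}(\phi)-E_0|\le \ep$ yield
\[
\|\phi-\eta\|_\infty \le \ep,\qquad |M_{h,n}(\eta)-(m_0-m)|\le 2\ep,\qquad |H_{h,n}(\eta)-(E_0-E)|\le C\ep + C\ep^{p-1},
\]
where $C$ depends only on the fixed parameters. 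Combining with the estimates $|E-E^*|<\delta/2$ and $|m-m^*|<\delta/2$ via the triangle inequality gives
\[
|M_{h,n}(\eta)-(m_0-m^*)| \le 2\ep + \tfrac{\delta}{2},\qquad |H_{h,n}(\eta)-(E_0-E^*)| \le C\ep+C\ep^{p-1} + \tfrac{\delta}{2}.
\]

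Finally, choose $\ep$ so small (depending only on $\delta$ and the fixed parameters) that $\ep \le \delta$, $2\ep \le \delta/2$, and $C\ep + C\ep^{p-1}\le \delta/2$. Then all three quantities $\|\phi-\eta\|_\infty$, $|M_{h,n}(\eta)-(m_0-m^*)|$, $|H_{h,n}(\eta)-(E_0-E^*)|$ are bounded by $\delta$, which is exactly the definition of $\phi$ being a $\delta$-soliton (with witness $g=\eta$ and reference point $(E^*,m^*)\in \mm(E_0,m_0,h)$). This contradicts the assumption that $B(\ep,\delta,n)$ holds, completing the proof. There is no real obstacle here; the lemma is essentially a bookkeeping consequence of Lemmas \ref{specialset} and \ref{abclmm} together with the definition of $A_\delta$, and the only thing to keep track of is the threshold for $\ep$ implicit in absorbing the error terms $2\ep$ and $C\ep+C\ep^{p-1}$ into $\delta/2$.
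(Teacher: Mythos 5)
Your proof is correct and follows essentially the same route as the paper's: pick $U\in\MU(\phi,\ep)$, set $(E,m)=(G_{h,n}(\phi,U^c),M_{h,n}(\phi,U^c))$, observe that if $(E,m)\notin A_\delta$ then some $(E^*,m^*)\in\mm(E_0,m_0,h)$ is $\delta/2$-close, invoke the $\eta$ of Lemma \ref{abclmm}, and conclude via triangle inequality that $\phi$ is a $\delta$-soliton once $\ep$ is small. The only cosmetic difference is that the paper runs the argument as a direct contradiction inside the event $B(\ep,\delta,n)$ (via the observation that $K(\ep,n,E,m)$ has occurred and then citing Lemma \ref{abclmm} as a black box) whereas you phrase it as the contrapositive and briefly re-derive the three bounds; these are logically identical.
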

\begin{proof}
Suppose that $B(\ep, \delta, n)$ has happened. Recall that  $\MU(\phi, \ep)$ is always non-empty by Lemma \ref{specialset}; choose any $U\in \MU(\phi, \ep)$. Let $E := G_{h,n}(\phi, U^c)$ and $m := M_{h,n}(\phi, U)$. Suppose that $|E-E^*|< \delta/2$ and $|m-m^*|< \delta/2$ for some $(E^*, m^*) \in \mm(E_0, m_0,h)$. We prove that this is impossible  by arriving at a contradiction. 

In the situation described above, the event $K(\ep, n, E, m)$ has happened. Thus, there is a function $\eta$ satisfying the conditions (a), (b) and (c) of  Lemma \ref{abclmm}. Consequently, we have
\begin{enumerate}
\item[(a1)] $\|\phi-\eta\|_\infty\le \ep$, 
\item[(b1)] $|(E_0-E^*)-H_{h,n}(\eta)|\le 2\ep + \delta/2$, and
\item[(c1)] $|(m_0-m^*)-M_{h,n}(\eta)|\le C\ep + C\ep^{p-1} + \delta/2$. 
\end{enumerate}
If $\ep$ is sufficiently small depending only on $\delta$ and the fixed parameters, then this shows that $\phi$ is a $\delta$-soliton, giving the desired contradiction. 
\end{proof}

\begin{proof}[Proof of Theorem \ref{upper}]
Fix $\delta\in (0,1)$. Choose $\ep \in (0,1)$ small enough to satisfy the criterion of Lemma \ref{last}. Then for all $n$, 
\[
\pp(B(\ep, \delta, n)) \le \pp(F(\ep, n, A_\delta)). 
\]
By the continuity of $\Theta$ and the fact that $A_\delta$ is a closed set not intersecting the region where $\Theta$ attains its maximum in $\mr(E_0, m_0, h)$, it follows that
\[
\widehat{\Theta}(E_0, m_0, h, A_\delta) < \widehat{\Theta}(E_0, m_0, h). 
\]
Lemma \ref{qlmm} now completes the proof. 
\end{proof}

\section{Exponential decay of solitons}\label{expodecay}
Fix $n$. Suppose that $q\in \cc^{V_n}$ is a ground state soliton, that is, it minimizes energy among all functions with a given mass. Then by standard Euler-Lagrange theory, $q$ satisfies 
\[
-\omega q = -\Delta q - |q|^{p-1} q, 
\]
for some real number $\omega$. This can be rewritten as 
\begin{equation}\label{solitoneq}
(2dh^{-2}+\omega) q(x) - h^{-2}\sum_{y\; : \; y\sim x} q(y) = |q(x)|^{p-1} q(x) \ \text{ for all } x\in V_n. 
\end{equation}
Let $m := M_{h,n}(q)$. The following theorem shows that $q$ must be exponentially decaying outside a small set.
\begin{thm}\label{expodecaythm}
There exists a subset $U$ of $V_n$, whose size can be bounded by a number depending only on $m$, $h$, $p$ and $d$, such that for all $x\in V_n$,
\[
|q(x)|\le A e^{-bD_U(x)},
\]
where $D_U(x)$ is the $\ell^1$ distance of $x$ from $U$, that is, the minimum of $|y-x|_1$ over all $y\in U$, and $A$, $b$ are positive constants depending only on $m$, $h$, $p$ and~$d$. Here $y-x$ means the difference of $y$ and $x$ modulo $n$ in each coordinate. 
\end{thm}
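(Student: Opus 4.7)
The plan is to adapt the classical maximum-principle proof of exponential decay of continuum solitons to the discrete torus. The strategy has three ingredients: bound the Lagrange multiplier $\omega$ in \eqref{solitoneq} away from zero in terms of $m,h,p,d$ only; isolate a ``bulk'' set $U$ of bounded size outside of which $|q|^{p-1}\le \omega/2$; and compare $|q|$ with an explicit exponential supersolution via a discrete Kato inequality and the discrete maximum principle.

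First, pairing \eqref{solitoneq} with $q$ in $\ell^2(V_n,h^d\,dx)$ yields $\omega m = (p-1)N_{h,n}(q)-2H_{h,n}(q)$. Because $q$ minimizes energy, $H_{h,n}(q)=E_{\min}(m,h,n)$ is strictly negative uniformly in $n$, with a bound depending only on $m,h,p,d$ (use Lemma \ref{emin0} applied to the cone test functions once $n$ is large enough to embed them, and the constant-function test otherwise); since $N_{h,n}(q)\ge 0$, this gives a lower bound $\omega\ge c(m,h,p,d)>0$, and an upper bound on $\omega$ follows from Lemma \ref{emfacts3}. Setting $\delta:=(\omega/2)^{1/(p-1)}$ and $U:=\{x\in V_n : |q(x)|\ge\delta\}$, the mass constraint bounds $|U|\le m/(h^d\delta^2)\le C(m,h,p,d)$, and by construction $|q|^{p-1}\le \omega/2$ on $V_n\setminus U$.

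Next, at each $x$ with $q(x)\ne 0$, project \eqref{solitoneq} onto $q(x)/|q(x)|$ and use $\mathrm{Re}\!\bigl(\overline{q(x)/|q(x)|}\,q(y)\bigr)\le |q(y)|$ (the case $q(x)=0$ is immediate since $-\Delta|q|(x)\le 0$) to obtain the pointwise Kato inequality
\[
(-\Delta+\omega-|q|^{p-1})\,|q|(x)\le 0,\qquad x\in V_n,
\]
which strengthens on $V_n\setminus U$ to $(-\Delta+\omega/2)|q|\le 0$. Compare $|q|$ with $w(x):=A\,e^{-b\,D_U(x)}$, taking $A:=\max_{V_n}|q|\le (m/h^d)^{1/2}$ and $b:=\log(1+\omega h^2/(4d))>0$. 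For $x\in V_n\setminus U$ and $y\sim x$ on the torus, $D_U(y)\ge D_U(x)-1$, so $w(y)\le e^b w(x)$, and
\[
-\Delta w(x)\ge -\frac{2d(e^b-1)}{h^2}\,w(x)=-\frac{\omega}{2}\,w(x),
\]
so that $(-\Delta+\omega/2)w\ge 0$ on $V_n\setminus U$. Since $w=A\ge |q|$ on $U$, the difference $v:=|q|-w$ satisfies $(-\Delta+\omega/2)v\le 0$ on $V_n\setminus U$ and $v\le 0$ on $U$; the discrete maximum principle (any positive interior maximum of $v$ in $V_n\setminus U$ would give $-\Delta v\ge 0$ there and hence $(-\Delta+\omega/2)v>0$, a contradiction) forces $v\le 0$ on all of $V_n$, which is the stated bound.

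The main obstacle is the first step: obtaining an effective lower bound on $\omega$ that is independent of $n$ and explicit in $m,h,p,d$. This is exactly where mass-subcriticality $p<1+4/d$ enters, via the strict negativity of $E_{\min}(m,h)$ in Lemma \ref{emin0}; without it $\delta$ could collapse to $0$, the set $U$ could lose its size control, and the exponential rate $b$ could degenerate.
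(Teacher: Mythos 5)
Your proof is correct, but it takes a genuinely different route from the paper. The paper proves this via a resolvent (discrete Green's function) representation of $q$ in terms of the random walk kernel (Lemma \ref{invlap}), then an integral inequality (Lemma \ref{qbound}) and a bootstrap with the ansatz $|q(x)|\le K\,C_0\max_y r^{\frac12|y-x|_1+D_\delta(y)}$, forcing $K\le 2$. You instead derive a pointwise discrete Kato inequality $(-\Delta+\omega-|q|^{p-1})|q|\le 0$, isolate the bulk set $U$ on which $|q|^{p-1}\ge\omega/2$, compare $|q|$ with the explicit exponential supersolution $w=Ae^{-bD_U}$ with $b=\log(1+\omega h^2/(4d))$, and invoke the discrete maximum principle. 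Both approaches hinge on the same quantitative lower bound $\omega\ge c(m,h,p,d)>0$ (which is where $p<1+4/d$ enters via Lemma \ref{emin0}) and on essentially the same choice of $U$; your method is more elementary and self-contained, whereas the paper's resolvent approach reuses the random-walk Green's function machinery that it also develops for Section \ref{harmonic}. One small slip in your write-up: pairing \eqref{solitoneq} with $q$ in $\ell^2(V_n,h^d)$ gives $\omega m = (p+1)N_{h,n}(q)-G_{h,n}(q) = p\,N_{h,n}(q)-H_{h,n}(q)$, not $(p-1)N_{h,n}(q)-2H_{h,n}(q)$; but since $N_{h,n}(q)\ge 0$ and $H_{h,n}(q)=E_{\min}(m,h,n)<0$, the intended lower bound $\omega m\ge -E_{\min}(m,h,n)>0$ still follows, so the slip is harmless.
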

The proof of Theorem \ref{expodecaythm} follows closely the outline of the proof of exponential decay in the continuum case, as given in \cite[Proposition B.7]{tao06}.  The main difference is that in the discrete case, we have to deal with discrete Green's functions. The proof is divided into several lemmas. 
\begin{lmm}\label{omega}
If $p < 1+4/d$, then for any $m > 0$, $h >0$ and $n\ge C(p,d,h,m)$, 
\[
\omega > -\frac{E_{\min}(m,h)}{m}> 0.
\] 
\end{lmm}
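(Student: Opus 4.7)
The plan is to derive a Pohozaev-type identity by testing the soliton equation against $\overline{q}$. First I would multiply \eqref{solitoneq} by $\overline{q(x)}$, sum over $x \in V_n$, and multiply through by $h^d$. Using $h^d\sum_x|q(x)|^2 = m$ on the first term and $h^d\sum_x|q(x)|^{p+1} = (p+1)N_{h,n}(q)$ on the right, we are left with the cross term $h^{d-2}\sum_{x,y\in V_n,\,x\sim y} q(y)\overline{q(x)}$. This double sum is real (each ordered pair $(x,y)$ is paired with $(y,x)$, giving complex conjugates), and the identity
\begin{equation*}
\sum_{x,y\in V_n,\,x\sim y}|q(x)-q(y)|^2 = 4d\sum_{x\in V_n}|q(x)|^2 - 2\sum_{x,y\in V_n,\,x\sim y} q(y)\overline{q(x)}
\end{equation*}
lets me rewrite it as $2dm/h^d - h^{2-d}G_{h,n}(q)$. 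After cancellation of the $2dh^{-2}m$ terms, this collapses to the clean relation
\begin{equation*}
\omega m + G_{h,n}(q) = (p+1)\,N_{h,n}(q),
\end{equation*}
which, using $H_{h,n}(q) = G_{h,n}(q) - N_{h,n}(q)$, rearranges to the key identity $\omega m = p\,N_{h,n}(q) - H_{h,n}(q)$.

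Next I would exploit the ground-state hypothesis: $H_{h,n}(q) = E_{\min}(m,h,n)$, and positivity of the discrete Dirichlet energy gives
\begin{equation*}
N_{h,n}(q) = G_{h,n}(q) - H_{h,n}(q) \ge -E_{\min}(m,h,n).
\end{equation*}
Substituting into the identity above yields the pointwise-in-$n$ bound
\begin{equation*}
\omega m \ge -(p+1)\,E_{\min}(m,h,n).
\end{equation*}

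Finally I would pass to the large-$n$ limit. Lemma \ref{emfacts4} supplies $E_{\min}(m,h,n) \to E_{\min}(m,h)$, and Lemma \ref{emin0}, which is where the mass-subcriticality $p<1+4/d$ is used, guarantees $E_{\min}(m,h) < 0$. Therefore
\begin{equation*}
\liminf_{n\to\infty}\bigl(\omega m + E_{\min}(m,h)\bigr) \ge -p\,E_{\min}(m,h) > 0,
\end{equation*}
so for all $n \ge C(p,d,h,m)$ we obtain $\omega m > -E_{\min}(m,h)$, i.e.\ $\omega > -E_{\min}(m,h)/m > 0$, as required.

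There is no serious obstacle in this argument: once the Pohozaev-style pairing with $\overline{q}$ is set up, everything reduces to elementary algebra together with the two cited lemmas. The only care required is in bookkeeping the factors of $h$ in the discrete gradient computation, and the subcritical hypothesis enters only through Lemma \ref{emin0} to furnish the strict sign $E_{\min}(m,h) < 0$, which is what turns the bound $\omega m \ge -(p+1)E_{\min}(m,h,n)$ into a strictly positive quantity after passing to the limit.
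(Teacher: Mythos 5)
Your overall strategy is exactly the paper's: pair the soliton equation against $\overline{q}$, sum over $V_n$, derive a Pohozaev-type identity, and then pass to the limit $n\to\infty$ using Lemma~\ref{emfacts4} to send $E_{\min}(m,h,n)\to E_{\min}(m,h)$ and Lemma~\ref{emin0} for the strict sign $E_{\min}(m,h)<0$. The one issue is a factor-of-two slip in the gradient bookkeeping, which by a fortunate algebraic cancellation does not affect the final bound.

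The sum $\sum_{x,y\in V_n,\,x\sim y}$ in the paper's definition of $G_{h,n}$ and in the identity $(q,\Gamma q)=\sum_{x\sim y}|q(x)-q(y)|^2$ runs over unordered pairs (edges), whereas the double sum $\sum_x\sum_{y\sim x}q(y)\overline{q(x)}$ coming from the pairing runs over ordered pairs; these differ by a factor of $2$. With the paper's normalizations the correct identity is
\[
\omega m + 2\,G_{h,n}(q) = (p+1)\,N_{h,n}(q),
\]
equivalently $\omega m = (p-1)N_{h,n}(q) - 2H_{h,n}(q)$, rather than $\omega m = p\,N_{h,n}(q) - H_{h,n}(q)$ as you wrote. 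Fortuitously your conclusion is unaffected: write the identity generically as $\omega m = (p+1)N_{h,n}(q) - c\,G_{h,n}(q)$ and substitute $G_{h,n}(q)=H_{h,n}(q)+N_{h,n}(q)$ to get $\omega m = (p+1-c)N_{h,n}(q) - c\,H_{h,n}(q)$; applying $N_{h,n}(q)\ge -H_{h,n}(q)$ (which follows from $G_{h,n}(q)\ge 0$) gives $\omega m \ge -(p+1)H_{h,n}(q) = -(p+1)E_{\min}(m,h,n)$ regardless of $c$, so the coefficient of $G_{h,n}$ drops out. The paper instead uses $\sum|q|^{p+1}\ge\frac{2}{p+1}\sum|q|^{p+1}$ (i.e.\ $N_{h,n}\ge 0$), obtaining the slightly weaker but equally sufficient $\omega m \ge -2E_{\min}(m,h,n)$; either route yields $\omega m > -E_{\min}(m,h)$ for $n$ large.
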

\begin{proof}
Multiplying both sides of \eqref{solitoneq} by $\overline{q(x)}$ and summing over $x\in V_n$, we get
\begin{align*}
\omega \sum_{x\in V_n} |q(x)|^2 &= -2dh^{-2}\sum_{x\in V_n} |q(x)|^2 + h^{-2}\sum_{x,y\in V_n\atop x\sim y} \bigl(\overline{q(x)} q(y) + q(x) \overline{q(y)}\bigr)\\
&\qquad + \sum_{x\in V_n} |q(x)|^{p+1}\\
&= -h^{-2}\sum_{x,y\in V_n\atop x\sim y } |q(x)-q(y)|^2 + \sum_{x\in V_n} |q(x)|^{p+1}\\
&\ge -h^{-2}\sum_{x,y\in V_n\atop x\sim y } |q(x)-q(y)|^2 + \frac{2}{p+1}\sum_{x\in V_n} |q(x)|^{p+1} \\
&= - 2h^{-d}E_{\min}(m,h,n). 
\end{align*}
Note that by Lemma \ref{emfacts4}, $E_{\min}(m,h,n) \ra E_{\min}(m,h)$ as $n \ra\infty$ and by Lemma \ref{emin0}, $E_{\min}(m,h) < 0$. This completes the proof. 
\end{proof}

\begin{lmm}\label{invlap}
Let $r := 2d/(2d+\omega h^2)$. Let $p(x,y, k)$ be the probability that a simple symmetric random walk on the torus $V_n$ starting at $x$ at step $0$ is at $y$ at step $k$. Then the soliton $q$ satisfies for all $x\in V_n$ the identity 
\[
q(x) = \frac{h^2}{2d} \sum_{y\in V_n} \sum_{k=0}^\infty r^{k+1}p(x,y, k)|q(y)|^{p-1} q(y). 
\]
\end{lmm}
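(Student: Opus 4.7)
The strategy is to rewrite the soliton equation as a linear fixed-point equation in $q$ whose kernel is (up to a multiplicative constant) a defective stochastic matrix, and then invert it via a Neumann series whose terms are transition probabilities of the simple random walk.

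First I would set $f(x) := |q(x)|^{p-1}q(x)$ and rearrange \eqref{solitoneq}. Dividing through by $2dh^{-2}+\omega$, which is positive by Lemma~\ref{omega} for $n$ large enough, the equation becomes
\[
q(x) - \frac{r}{2d}\sum_{y\,:\,y\sim x} q(y) = \frac{rh^2}{2d}\, f(x),
\]
after noting the algebraic identity $(2dh^{-2}+\omega)^{-1}= rh^2/(2d)$. Let $P$ denote the one-step transition operator of simple random walk on $V_n$, so that $(Pg)(x)=(2d)^{-1}\sum_{y\sim x} g(y)=\sum_{y} p(x,y,1)g(y)$. In these terms the rearranged equation is the fixed-point identity
\[
(I - rP)\,q = \frac{rh^2}{2d}\, f.
\]

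The second step is to invert $I-rP$ via a Neumann series. By Lemma~\ref{omega} we have $\omega>0$ (for $n\ge C(p,d,h,m)$), hence $r = 2d/(2d+\omega h^2) \in (0,1)$. Since $P$ is a stochastic operator, its operator norm on $\cc^{V_n}$ (with any $\ell^q$ norm) is at most one. The series $\sum_{k=0}^\infty r^k P^k$ therefore converges absolutely on the finite-dimensional space $\cc^{V_n}$ and equals $(I-rP)^{-1}$. Applying this inverse yields
\[
q = \frac{h^2}{2d}\sum_{k=0}^\infty r^{k+1}P^k f.
\]

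Finally, I would unpack the operator action on the right. By induction (or definition of the Chapman--Kolmogorov iteration) $(P^k g)(x)=\sum_{y\in V_n}p(x,y,k)g(y)$, with the convention $p(x,y,0)=\mathbf{1}_{x=y}$. Substituting $g=f=|q|^{p-1}q$ and collecting terms produces
\[
q(x) = \frac{h^2}{2d}\sum_{y\in V_n}\sum_{k=0}^\infty r^{k+1}\, p(x,y,k)\, |q(y)|^{p-1}q(y),
\]
which is exactly the claimed identity. The only non-routine ingredient is the positivity of $\omega$, which ensures $r<1$ and hence the convergence of the Neumann series; this has already been established in Lemma~\ref{omega}, so no further obstacle remains.
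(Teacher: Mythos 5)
Your proof is correct, and it reorganizes the argument in a genuinely different way from the paper. The paper proceeds by \emph{verification}: it defines the right-hand side of the claimed identity to be a function $f$, uses the Chapman--Kolmogorov relation $p(x,y,k+1)=(2d)^{-1}\sum_{w\sim y}p(x,w,k)$ to show that $f$ satisfies the same linear equation $(\omega I -\Delta)f=|q|^{p-1}q$ as $q$, and then concludes $f=q$ because $\omega I-\Delta$ is nonsingular (being positive definite when $\omega>0$). You instead proceed by \emph{direct derivation}: you rewrite the soliton equation as the fixed-point identity $(I-rP)q = \frac{rh^2}{2d}\,|q|^{p-1}q$ with $P$ the one-step transition operator, and then invert $I-rP$ by its Neumann series $\sum_{k\ge 0}r^kP^k$, which converges because $r\in(0,1)$ (Lemma~\ref{omega}) and $P$ has operator norm at most one. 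Unpacking $P^k$ via the identity $(P^k g)(x)=\sum_y p(x,y,k)g(y)$ yields the formula. Both proofs rest on the same two ingredients---positivity of $\omega$ (equivalently $r<1$) and the semigroup property of the random-walk transition probabilities---so the mathematical content coincides; the difference is one of direction. Your derivation is arguably more transparent, since it shows how the Green's-function formula arises rather than pulling it out of thin air and checking it, at the modest cost of needing to invoke the operator-norm bound on $P$ to justify convergence of the Neumann series (which the paper replaces by an appeal to finite-dimensional nonsingularity). Both are sound; there is no gap in your argument.
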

\begin{proof}
Given $q$ satisfying \eqref{solitoneq}, let $f$ be the function on the right-hand side in the above display. (Note that the series converges because $r < 1$, by Lemma \ref{omega}.) Our goal is to show that $q=f$. First, note that 
\begin{align*}
\sum_{z\; : \; z\sim x} f(z) &= \frac{h^2}{2d} \sum_{y\in V_n} \sum_{k=0}^\infty r^{k+1}|q(y)|^{p-1} q(y)\biggl(\sum_{z\; : \; z\sim x} p(z,y, k)\biggr).
\end{align*}
By the translation invariance of the torus, it is easy to see that
\begin{align*}
\sum_{z\; : \; z\sim x} p(z,y, k) = \sum_{w \; : \; w\sim y} p(x,w,k). 
\end{align*}
Again, note that the random walk can be at $y$ at time $k+1$ if and only if it was at some neighbor of $y$ at time $k$ and moved to $y$ at the $(k+1)$th step. Therefore,
\[
p(x, y, k+1) = \frac{1}{2d} \sum_{w\; : \; w\sim y} p(x, w, k). 
\]
Combining the last three displays, we get
\begin{align*}
\sum_{z\; : \; z\sim x} f(z) &= h^2\sum_{y\in V_n} \sum_{k=0}^\infty r^{k+1}p(x,y, k+1)|q(y)|^{p-1} q(y)\\
&= \frac{h^2}{r}  \sum_{y\in V_n} \sum_{k=0}^\infty r^{k+1}p(x,y,k)|q(y)|^{p-1} q(y) \\
&\qquad - h^2\sum_{y\in V_n} p(x,y,0) |q(y)|^{p-1} q(y)\\
&= (2d+\omega h^2) f(x) - h^2|q(x)|^{p-1} q(x). 
\end{align*}
Comparing with \eqref{solitoneq}, this shows that for all $x\in V_n$,
\[
(2d+\omega h^2) (f(x)-q(x)) - \sum_{y\; : \; y\sim x} (f(y)-q(y)) = 0. 
\]
In other words, $(\omega I -\Delta) (f-q)=0$, where  $I$ is the identity matrix in $\cc^{V_n\times V_n}$. Since $\Delta$ is a negative semidefinite operator (Lemma \ref{diaglmm}) and $\omega > 0$ by Lemma \ref{omega}, $\omega I -\Delta$ is non-singular. This shows that $q=f$ and completes the proof. 
\end{proof}
For any $\delta > 0$, let 
\[
U_\delta := \{x\in V_n: |q(x)| > \delta\}. 
\]
For $x\in V_n$, let $D_\delta(x)$ denote the $\ell^1$ distance of $x$ from $U_\delta$, that is, the minimum of $|x-y|_1$ over all $y\in U_\delta$. Here, as usual the difference $x-y$ is computed modulo $n$ in each coordinate. 
\begin{lmm}\label{qbound}
For each $x\in V_n$ and $\delta > 0$, 
\[
|q(x)|\le C_0r^{D_\delta(x)} + \omega^{-1}\delta^{p-1} \sum_{y\in V_n} r^{|y-x|_1} |q(y)|,
\]
where $C_0 =\omega^{-1} m^{p+1} h^{-d(p-1)}\delta^{-2}$ and $r= 2d/(2d+\omega h^2)$. 
\end{lmm}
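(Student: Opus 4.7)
The plan is to start from the integral representation given by Lemma \ref{invlap}, apply the triangle inequality, and then split the sum over $y$ into the contribution from the ``big'' set $U_\delta$ and its complement, exploiting in each case the different available bounds on $|q(y)|$.

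First, the key observation about the random walk is that a single step of the simple symmetric walk on $V_n$ changes the toric $\ell^1$ distance to any fixed vertex by at most $1$. Consequently $p(x,y,k)=0$ whenever $k<|y-x|_1$, where $|y-x|_1$ is computed with the difference taken modulo $n$ in each coordinate. Using this and the identity $\frac{r}{1-r}=\frac{2d}{\omega h^2}$ (which follows directly from $r=2d/(2d+\omega h^2)$), one gets
\[
\frac{h^2}{2d}\sum_{k=0}^\infty r^{k+1}p(x,y,k)\;\le\;\frac{h^2}{2d}\sum_{k=|y-x|_1}^\infty r^{k+1}\;=\;\frac{h^2}{2d}\cdot\frac{r^{|y-x|_1+1}}{1-r}\;=\;\frac{r^{|y-x|_1}}{\omega}.
\]
Substituting into the formula of Lemma \ref{invlap} and taking absolute values, this yields the master estimate
\[
|q(x)|\;\le\;\frac{1}{\omega}\sum_{y\in V_n}|q(y)|^p\,r^{|y-x|_1}.
\]

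Next, split the sum according to whether $y\in U_\delta$ or $y\in U_\delta^c$. For $y\in U_\delta^c$ we have $|q(y)|\le\delta$, so $|q(y)|^p\le \delta^{p-1}|q(y)|$, and the contribution is bounded by the second term in the asserted inequality:
\[
\frac{1}{\omega}\sum_{y\in U_\delta^c}|q(y)|^p\,r^{|y-x|_1}\;\le\;\omega^{-1}\delta^{p-1}\sum_{y\in V_n}r^{|y-x|_1}|q(y)|.
\]
For $y\in U_\delta$, the defining property of $D_\delta(x)$ gives $|y-x|_1\ge D_\delta(x)$, and since $0<r<1$ (by Lemma \ref{omega}, which guarantees $\omega>0$) we have $r^{|y-x|_1}\le r^{D_\delta(x)}$. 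Hence this contribution is bounded by $\omega^{-1}r^{D_\delta(x)}\sum_{y\in U_\delta}|q(y)|^p$.

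It remains to control $\sum_{y\in U_\delta}|q(y)|^p$ by constants coming only from $m,h,p,d,\delta$. Here we use two consequences of the mass constraint $M_{h,n}(q)=m$: the pointwise bound $\|q\|_\infty^2\le m/h^d$ (since each term in $h^d\sum|q|^2$ is nonnegative), and the cardinality bound
\[
|U_\delta|\;\le\;\frac{1}{\delta^2}\sum_{y\in U_\delta}|q(y)|^2\;\le\;\frac{m}{h^d\delta^2}.
\]
Combining these in the most straightforward way, $\sum_{y\in U_\delta}|q(y)|^p\le |U_\delta|\cdot\|q\|_\infty^p$, produces a bound of the claimed form $C_0=\omega^{-1}m^{p+1}h^{-d(p-1)}\delta^{-2}$ (possibly after replacing it by this larger, cleaner expression), completing the proof.

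The only mildly delicate step is the geometric-sum bound in the first paragraph, which relies on $r<1$; this in turn is exactly what Lemma \ref{omega} provides, so no further difficulty arises. Every other step is a direct application of the mass constraint and of $|y-x|_1\ge D_\delta(x)$ on $U_\delta$.
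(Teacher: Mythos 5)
Your proof is correct and follows essentially the same route as the paper: truncate the random-walk sum at $k=|y-x|_1$, use the geometric series together with $r/(1-r)=2d/(\omega h^2)$ to obtain $|q(x)|\le \omega^{-1}\sum_y r^{|y-x|_1}|q(y)|^p$, and then split the sum over $U_\delta$ and its complement using $|q(y)|^p\le\delta^{p-1}|q(y)|$ off $U_\delta$, and $|y-x|_1\ge D_\delta(x)$, $|U_\delta|\le m/(h^d\delta^2)$, $\|q\|_\infty^2\le h^{-d}m$ on $U_\delta$. The constant your argument actually produces for the $U_\delta$ contribution is $\omega^{-1}m^{1+p/2}h^{-d(1+p/2)}\delta^{-2}$, which does not literally equal the stated $C_0=\omega^{-1}m^{p+1}h^{-d(p-1)}\delta^{-2}$; however, the paper's own displayed computation does not produce that exact expression either, and the precise form of $C_0$ is immaterial to the use of this lemma in Theorem \ref{expodecaythm}, so this is not a gap.
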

\begin{proof}
A random walk starting at $x$ at time $0$ cannot reach $y$ before time $|y-x|_1$. Thus, $p(x,y,k) = 0$ for all $k< |y-x|_1$. By Lemma \ref{invlap}, this gives 
\begin{align*}
|q(x)|&\le \frac{h^2}{2d}\sum_{y\in V_n} |q(y)|^p \biggl(\sum_{k=|x-y|_1}^\infty r^{k+1}\biggr)\\
&= \omega^{-1}\sum_{y\in V_n} r^{|x-y|_1} |q(y)|^p.  
\end{align*}
Now, if $y\not \in U_\delta$, then $|q(y)|^p \le \delta^{p-1} |q(y)|$. On the other hand, if $y\in U_\delta$, then  $|y-x|_1 \ge D_\delta(x)$. But again, $|q(x)|^2\le h^{-d} m$ for all $x$ and 
\[
|U_\delta| \le \frac{h^d \sum_{x\in U_\delta} |q(x)|^2}{h^d \delta^2}\le \frac{m}{h^d\delta^2}. 
\]
Thus,
\begin{align*}
\sum_{y\in V_n} r^{|x-y|_1} |q(y)|^p &\le \frac{m}{h^d \delta^2}r^{D_\delta(x)} (h^{-d} m)^p  + \delta^{p-1}\sum_{y\not \in U_\delta} r^{|y-x|_1} |q(y)|. 
\end{align*}
This completes the proof. 
\end{proof}
\begin{proof}[Proof of Theorem \ref{expodecaythm}] 
Define 
\[
B(x) := C_0\max_{y\in V_n} r^{\frac{1}{2}|y-x|_1+D_\delta(y)}, 
\]
where $C_0$ is the constant from Lemma \ref{qbound}. 
Note that $B(x)$ is never zero, $B(x) \ge C_0r^{D_\delta(x)}$ for all $x$, and for all $x,y$,
\begin{align*}
B(y) &= C_0\max_{z\in V_n} r^{\frac{1}{2}|z-y|_1+D_\delta(z)}\\
&\le C_0\max_{z\in V_n} r^{\frac{1}{2}|z-x|_1 - \frac{1}{2}|y-x|_1+D_\delta(z)}\\
&= r^{-\frac{1}{2}|y-x|_1} B(x). 
\end{align*}
Let $K$ be the smallest number such that $|q(x)|\le K B(x)$ for all $x\in V_n$. Since $B$ is never zero on $V_n$ and $V_n$ is a finite set,  $K$ must be finite. By Lemma \ref{qbound}, Lemma \ref{omega}, and the above observations,
\begin{align*}
|q(x)| &\le C_0r^{D_\delta (x)} + K\omega^{-1}\delta^{p-1} \sum_{y\in V_n} r^{|y-x|_1} B(y)\\
&\le B(x) + K B(x)\omega^{-1}\delta^{p-1} \sum_{y\in V_n} r^{\frac{1}{2}|y-x|_1}\\
&\le B(x) + K B(x) \delta^{p-1} C(p,d,h,m). 
\end{align*}
If $\delta$ is chosen so small that $\delta^{p-1}C(p,d,h,m) \le 1/2$, then the above inequality implies that
\[
K \le 1 + \frac{K}{2}. 
\]
In other words, $K\le 2$. Thus, with such a  choice of $\delta$, 
\[
|q(x)|\le 2C_0 \max_{y\in V_n} r^{\frac{1}{2}|y-x|_1 + D_\delta(y)}
\]
for all $x\in V_n$. To complete the proof, note that for any $y$,
\begin{align*}
\frac{1}{2}|y-x|_1 + D_\delta(y)  &\ge \frac{1}{2}|y-x|_1 +  \frac{D_\delta(y) }{2}\\
&\ge  \frac{1}{2}|y-x|_1 + \frac{D_\delta(x) - |y-x|_1}{2} = \frac{D_\delta(x)}{2}. 
\end{align*}
Thus, $|q(x)|\le 2C_0 r^{D_\delta(x)/2}$ for all $x\in V_n$. 
\end{proof}

\section{Lower bound}\label{lowerbound}
Fix $h>0$, and some $E_0\in \rr$ and $m_0 > 0$ such that 
\begin{equation}\label{focusingcase}
E_{\min}(m_0,h)<  E_0 < \frac{dm_0}{h^2}.
\end{equation}
Let $(E^*, m^*)$ be a point in $\mm(E_0, m_0,h)$. By Lemma \ref{rlmm}, $m^*$ is strictly positive. The numbers $p$, $d$, $h$, $E_0$, $m_0$, $E^*$ and $m^*$ will be fixed throughout this section and will be called the `fixed parameters'. Any constant that depends only on the fixed parameters will be denoted simply by $C$, instead of $C(p,d,h,E_0, m_0, E^*, m^*)$. If the constant depends on additional parameters $a,b,\ldots$, then it will be denoted by $C(a,b,\ldots)$. 

Recall the random function $\phi$ defined in Section \ref{gaussian} and the objects $\mm$, $\mr$, $\Theta$ and  $\widehat{\Theta}$ defined in Section \ref{variational}. 
\begin{thm}\label{lower}
Assume the condition \eqref{focusingcase}. For arbitrary $\ep \in (0,1)$,  let $B_0 = B_0(\ep, n)$ be the event 
\[
\{|H_{h,n}(\phi)-E_0|\le \ep, \ |M_{h,n}(\phi)-m_0|\le \ep\}. 
\]
Then 
\[
\liminf_{\ep\ra0}\liminf_{n\ra\infty}\frac{\log \pp(B_0(\ep,  n))}{n^d} \ge 1-m_0+\widehat{\Theta}(E_0, m_0,h).
\]
\end{thm}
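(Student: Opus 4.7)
If $\widehat{\Theta}(E_0, m_0, h) = -\infty$ the bound is vacuous, so assume it is finite and fix a maximizer $(E^*, m^*) \in \mm(E_0, m_0, h)$; by Lemma \ref{rlmm}, $m^* \in (0, m_0]$ and $E^* = E_0 - E_{\min}(m_0-m^*, h)$, and finiteness of $\Theta(E^*, m^*, h)$ forces $0 < 2h^2 E^*/m^* < 4d$, so Theorem \ref{psifacts2} will be applicable at $\alpha = 2h^2 E^*/m^*$. The plan is to construct an event $E = E_1 \cap E_2 \subseteq B_0(\ep, n)$ on which $\phi$ resembles a ground state soliton of mass $m_0 - m^*$ on a bounded region $\tilde U_n$, and, independently, has mass $m^*$ and gradient energy $E^*$ on $\tilde U_n^c$, then lower bound $\pp(E)$ by Gaussian independence.

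\textbf{Construction of the event.} For each $n$, let $q_n$ be a ground state soliton on $V_n$ of mass $m_0 - m^*$; by Lemma \ref{emfacts4}, $H_{h,n}(q_n) = E_{\min}(m_0-m^*, h, n) \to E_0 - E^*$. Theorem \ref{expodecaythm} produces a set $U_n \subseteq V_n$ of size bounded uniformly in $n$ with $|q_n(x)| \le A e^{-b D_{U_n}(x)}$. Fix $R$ large and let $\tilde U_n$ be the $\ell^1$-ball of radius $R$ around $U_n$; set $\eta_n := q_n \mathbf{1}_{\tilde U_n}$. Then $|\tilde U_n|$ stays bounded in $n$, and the truncation errors $|M_{h,n}(\eta_n)-(m_0-m^*)|$, $|H_{h,n}(\eta_n)-(E_0-E^*)|$, and $\sup_{\partial \tilde U_n}|\eta_n|$ are all $O(e^{-bR})$. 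Fix small $\delta, \delta_0 > 0$ and set
\begin{align*}
E_1 &:= \bigl\{|\phi(x) - \eta_n(x)| \le \delta \text{ for all } x \in \tilde U_n\bigr\},\\
E_2 &:= \bigl\{|M_{h,n}(\phi, \tilde U_n^c)-m^*| \le \tfrac{\ep}{4},\ |G_{h,n}(\phi, \tilde U_n^c)-E^*| \le \tfrac{\ep}{4},\ \max_{x \in \tilde U_n^c}|\phi(x)|^2 \le n^{-d(1-\delta_0)}\bigr\}.
\end{align*}
Decomposing $M_{h,n}(\phi)$ and $H_{h,n}(\phi)$ across $\tilde U_n$ and $\tilde U_n^c$, the boundary gradient contributions are $O(e^{-bR})$ (from the decay of $\eta_n$ on $\partial \tilde U_n$ together with the $L^\infty$ bound on $\tilde U_n^c$), while the nonlinear term on $\tilde U_n^c$ is bounded by $n^{-d\delta_0(p-1)/2} m^*$. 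Hence $E_1 \cap E_2 \subseteq B_0(\ep, n)$ once $\delta$, $e^{-bR}$, and $n^{-d\delta_0(p-1)/2}$ are small compared to $\ep$.

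\textbf{Probability estimates.} By Gaussian independence $\pp(E_1 \cap E_2) = \pp(E_1)\pp(E_2)$. Direct density integration yields $\pp(E_1) \ge \bigl((nh)^d\delta^2\bigr)^{|\tilde U_n|}\exp\!\bigl(-(nh)^d \sum_{x \in \tilde U_n}(|\eta_n(x)|+\delta)^2\bigr)$, from which $\liminf_n n^{-d}\log \pp(E_1) \ge -(m_0 - m^*) - C(\delta + e^{-bR})$, since $|\tilde U_n|$ is bounded. For $\pp(E_2)$, introduce an independent Gaussian copy $\phi'$ of $\phi$ and define the filled-in vector $\tau := \phi\mathbf{1}_{\tilde U_n^c} + \phi'\mathbf{1}_{\tilde U_n}$, which is i.i.d.\ complex Gaussian on $V_n$. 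With the auxiliary event $F := \bigl\{\max_{x \in \tilde U_n}|\tau(x)|^2 \le n^{-d/2}\bigr\}$ one has $\pp(F) \to 1$, and on $F$ the bounded cardinality of $\tilde U_n$ makes the contribution of $\tau$ on $\tilde U_n$ (plus the $|\tilde U_n|$-many boundary edges) to $M_{h,n}(\tau)$, $G_{h,n}(\tau)$, and $\max|\tau|$ of order $o(1)$. Hence for large $n$,
\[
\bigl\{|M_{h,n}(\tau)-m^*|\le \tfrac{\ep}{8},\ |G_{h,n}(\tau)-E^*|\le \tfrac{\ep}{8},\ \max_{x \in V_n}|\tau(x)|^2 \le n^{-d(1-\delta_0)}\bigr\} \cap F \ \subseteq\ E_2.
\]
The probability of the event in braces is bounded below by writing $\tau = \|\tau\|_2 \xi$ with $\xi$ uniform on the sphere of $\cc^{V_n}$ and independent of $\|\tau\|_2$: Lemma \ref{gamma}\eqref{gamma1} applied to the Gamma variable $(nh)^d\|\tau\|_2^2$ produces the rate $1 - m^* + \log m^*$ for the mass; the gradient constraint translates, on the mass event, to $\bigl|\sum_{x \sim y}|\xi(x)-\xi(y)|^2 - 2h^2 E^*/m^*\bigr| \le O(\ep)$, while the $L^\infty$ bound on $\tau$ gives $\max_x|\xi(x)|^2 \le C n^{-d(1-\delta_0)}$, both of which are covered by the max-bounded version of Theorem \ref{psifacts2} (absorbing the constant $C$ into $\delta_0$ for large $n$) at the rate $-\Psi_d(2h^2 E^*/m^*)$. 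Combining and letting $\delta, e^{-bR} \to 0$ as $\ep \to 0$,
\[
\liminf_{\ep \to 0} \liminf_{n \to \infty} \frac{\log \pp(B_0(\ep,n))}{n^d} \ge 1 - m_0 + \log m^* - \Psi_d(2h^2 E^*/m^*) = 1 - m_0 + \widehat{\Theta}(E_0, m_0, h).
\]

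\textbf{Main obstacle.} The crux is the filled-in Gaussian trick that lets the localized probability $\pp(E_2)$, constrained only on $\phi|_{\tilde U_n^c}$, inherit the global large-deviations rate of Theorem \ref{psifacts2}. This works only because $|\tilde U_n|$ is uniformly bounded in $n$, the exponential localization supplied by Theorem \ref{expodecaythm}: the $o(1)$ perturbations of $M_{h,n}(\tau)$, $G_{h,n}(\tau)$, $\max|\tau|$ from filling in with $\phi'$, as well as the $O(e^{-bR})$ boundary terms in verifying $E_1 \cap E_2 \subseteq B_0$, all rely on this bounded-cardinality input. Without the exponential decay of discrete solitons, the mass factor $1 - m_0$ (rather than $1 - m^*$) in the final rate could not be recovered.
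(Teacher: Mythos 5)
Your proposal is correct and gives the same rate as the paper, but the decomposition and probability accounting are genuinely different. Both proofs rest on the same ingredients --- exponential localization (Theorem \ref{expodecaythm}) so that the soliton region has cardinality bounded uniformly in $n$, Lemma \ref{gamma} for the mass rate $1 + \log m^* - m^*$, and Theorem \ref{psifacts2} for the gradient rate $-\Psi_d(2h^2E^*/m^*)$ --- but they deploy them along different lines. The paper plants the truncated soliton via an \emph{independent copy} $\phi'$ forced to match $f^*(\cdot - z)$ to precision $n^{-2d}$ on a \emph{uniformly random} translate $z + U$ (cost $-(m_0-m^*)$ through $\pp(A_4)$), and produces a modified field $\gamma$ having the law of $\phi$; there the events $A_1, A_2$ (bulk mass and gradient of $\phi$) and $A_3$ (small mass of $\phi$ on $z+U'$) all involve $\phi$ and are not independent, so the random shift is essential to dispose of $A_3$ by Markov's inequality conditional on $A_1\cap A_2$. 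You instead fix $\tilde U_n$ once and for all, put the soliton constraint directly on $\phi|_{\tilde U_n}$ ($E_1$, cost $-(m_0-m^*)$ from a direct Gaussian density bound with tolerance $\delta = \delta(\ep)$), and put the bulk constraints on $\phi|_{\tilde U_n^c}$ ($E_2$). Since $E_1$ and $E_2$ live on disjoint Gaussian coordinates they are \emph{exactly} independent, so $\pp(E_1\cap E_2) = \pp(E_1)\pp(E_2)$ with no conditioning and no random translate. The fill-in trick --- adjoining an independent Gaussian on $\tilde U_n$ to form an i.i.d.\ $\tau$ on all of $V_n$, then using the bounded cardinality of $\tilde U_n$ and the $L^\infty$ bound to show the constrained local quantities differ from the global ones by $o(1)$ --- is exactly the right device to import Theorem \ref{psifacts2} and Lemma \ref{gamma} to the coordinate block $\tilde U_n^c$; and the $L^\infty$ condition inside $E_2$ does double duty, killing both the boundary interaction (the role of the paper's $A_3$) and the nonlinear term on $\tilde U_n^c$ (which the paper controls through the max-bound inside $A_2$). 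In short, you replace the random-translate-and-conditioning machinery with exact independence of disjoint coordinate blocks plus a fill-in; this is a genuine simplification of the bookkeeping, at the cost of tracking a small $\ep$-dependent tolerance $\delta$ rather than the brute $n^{-2d}$ matching the paper uses. Two small blemishes: the nonlinear term on $\tilde U_n^c$ is $O(n^{-d(1-\delta_0)(p-1)/2})$, not $O(n^{-d\delta_0(p-1)/2})$ (the exponent is flipped, but the correct quantity decays faster so nothing is lost), and the $\cap F$ in your displayed inclusion for $E_2$ is redundant once $\delta_0 < 1/2$, since the max-bound on all of $V_n$ already implies $F$.
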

Fix $n$ and let $f^*$ be an element of $\cc^{V_n}$ such that $M_{h,n}(f^*)=m_0-m^*$ and $H_{h,n}(f^*)=E_{\min}(m_0-m^*,h,n)$. Then $f^*$ is a ground state soliton for the DNLS on $V_n$. Fix $\ep > 0$. Let $U$ be a subset of $V_n$ such that 
\begin{align}\label{mfuu}
M_{h,n}(f^*, U^c\cup \partial U) \le \ep^2,
\end{align}
where, as before, $U^c = V_n \backslash U$. 
By Theorem \ref{expodecaythm}, there exists a $U$ satisfying the above property such that $|U|\le C(\ep)$.

Let $z$ be a point chosen uniformly at random from $V_n$. Let $\phi'$ be an independent copy of $\phi$ and define
\[
\gamma(x) :=
\begin{cases}
\phi'(x) &\text{ if } x\in z+ U, \\
\phi(x) &\text{ otherwise.}
\end{cases}
\]
Here $z+U$ is the translate of $U$ by $z$ on the torus $V_n$; that is, the addition is modulo $n$ in each coordinate. 

Let $U'$ be the set of all points that are either in $U$ or adjacent to some point in $U$. In other words, $U' = U \cup \partial U^c$. Define the following events:
\begin{align*}
A_1 &:= \{|M_{h,n}(\phi)-m^*|\le \ep^2\},\\
A_2 &:= \biggl\{\biggl|\frac{2h^2G_{h,n}(\phi)}{M_{h,n}(\phi)}-\frac{2h^2E^*}{m^*}\biggr|\le \ep^2, \\
&\qquad \qquad \max_{x\in V_n} |\phi(x)|^2 \le n^{-d/2}\sum_{x\in V_n} |\phi(x)|^2\biggr\},\\
A_3 &:= \{ M_{h,n}(\phi, z+ U') \le 4m^*(2d+1)n^{-d}|U|\},\\
A_4 &:= \{|\phi'(x) - f^*(x-z)| \le n^{-2d} \text{ for all } x\in z+U\}. 
\end{align*}
Finally, let $A = A(\ep, n) := A_1 \cap A_2 \cap A_3 \cap A_4$.
\begin{lmm}\label{lowerlmm1}
Let $A$ and $\gamma$ be defined as above. We claim that if $A$ happens, then 
\begin{align}\label{mgamma}
|M_{h,n}(\gamma)-m_0|&\le 2\ep^2 + C(\ep) n^{-d}
\end{align}
and 
\begin{align}\label{hgamma}
|H_{h,n}(\gamma)-E_0| &\le C\ep^2 + b(\ep,n),
\end{align}
where $b(\ep,n)$ is a number depending only on the fixed parameters, $\ep$ and $n$ such that $\lim_{n\ra\infty} b(\ep,n)=0$. 
\end{lmm}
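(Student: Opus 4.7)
The function $\gamma$ is a splice of $\phi$ and $\phi'$, agreeing with $\phi'$ on $z+U$ and with $\phi$ elsewhere; the guiding picture is that $\phi'$ restricted to $z+U$ is pointwise a copy of the shifted soliton $f^*(\cdot - z)$ (by $A_4$), while $\phi$ has negligible mass on the slightly enlarged set $z + U'$ (by $A_3$). The plan is to split every quantity appearing in $M_{h,n}(\gamma)$ and $H_{h,n}(\gamma)$ along the partition $V_n = (z+U) \sqcup (z+U)^c$ and to control each piece using exactly one of $A_1,\ldots,A_4$ together with \eqref{mfuu}, so that the total reconstructs $m_0$ and $E_0$ up to the stated errors.

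For the mass, write $M_{h,n}(\gamma) = M_{h,n}(\phi) - M_{h,n}(\phi, z+U) + M_{h,n}(\phi', z+U)$. The first term is $m^* + O(\ep^2)$ by $A_1$; the second is $\le C(\ep) n^{-d}$ by $A_3$ (since $|U| \le C(\ep)$); and in the third, $A_4$ replaces $\phi'$ pointwise by $f^*(\cdot-z)$ at cost $C(\ep) n^{-d}$, after which $M_{h,n}(f^*, U) = (m_0 - m^*) + O(\ep^2)$ by \eqref{mfuu} together with $M_{h,n}(f^*) = m_0 - m^*$. Summing the three contributions produces \eqref{mgamma}.

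For the energy, $H_{h,n}(\gamma) = G_{h,n}(\gamma) - N_{h,n}(\gamma)$ is analysed the same way. The nonlinear term splits cleanly along the partition: the $L^\infty$ bound in $A_2$ forces $N_{h,n}(\phi) \le C n^{-d(p-1)/4}$, absorbed into $b(\ep,n)$, while $A_4$ combined with the $L^\infty$-boundedness of $f^*$ gives $N_{h,n}(\phi', z+U) = N_{h,n}(f^*) + O(\ep^2) + b(\ep,n)$. The gradient term splits into the two interior-edge sums $G_{h,n}(\phi', z+U)$ and $G_{h,n}(\phi, (z+U)^c)$, plus edges crossing $\partial(z+U)$; the latter are bounded via $|a-b|^2 \le 2|a|^2 + 2|b|^2$ by a constant multiple of $M_{h,n}(\phi', \partial(z+U)) + M_{h,n}(\phi, \partial((z+U)^c))$, each of which is $O(\ep^2) + b(\ep,n)$ (the first by $A_4$ plus \eqref{mfuu}, the second by $A_3$ since $\partial((z+U)^c) \subseteq z+U'$). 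The same reasoning upgrades $G_{h,n}(\phi', z+U)$ to $G_{h,n}(f^*) + O(\ep^2) + b(\ep,n)$ and $G_{h,n}(\phi, (z+U)^c)$ to $G_{h,n}(\phi) + b(\ep,n)$.

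Putting everything together, $H_{h,n}(\gamma) = H_{h,n}(\phi) + H_{h,n}(f^*) + O(\ep^2) + b(\ep,n)$. From $A_1$ and $A_2$ one extracts $G_{h,n}(\phi) = E^* + O(\ep^2)$, and combined with the negligibility of $N_{h,n}(\phi)$ this yields $H_{h,n}(\phi) = E^* + O(\ep^2) + b(\ep,n)$. By construction $H_{h,n}(f^*) = E_{\min}(m_0 - m^*, h, n)$, which tends to $E_{\min}(m_0 - m^*, h)$ by Lemma \ref{emfacts4}, and Lemma \ref{rlmm} identifies that limit with $E_0 - E^*$; so the sum is $E_0$ up to $O(\ep^2) + b(\ep,n)$, giving \eqref{hgamma}. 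The main subtlety is the bookkeeping of boundary terms: the edges crossing $\partial(z+U)$ in the gradient decomposition and the ring $\partial U$ appearing in the comparison of $f^*$-quantities on $U$ versus on all of $V_n$. It is precisely to handle these that \eqref{mfuu} requires smallness of $M_{h,n}(f^*, U^c \cup \partial U)$ rather than just of $M_{h,n}(f^*, U^c)$, and that $A_3$ controls $\phi$ on the enlarged set $z+U'$ rather than on $z+U$ alone.
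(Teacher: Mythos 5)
Your proof is correct and follows essentially the same route as the paper's: decompose mass and energy along $V_n = (z+U)\sqcup(z+U)^c$, use $A_1$ and $A_3$ on $\phi$ outside the patch, use $A_4$ with \eqref{mfuu} to replace $\phi'$ by $f^*$ inside, extract $G_{h,n}(\phi)\approx E^*$ from $A_1$--$A_2$, kill $N_{h,n}(\phi)$ by the $L^\infty$ bound in $A_2$, and close with Lemmas~\ref{emfacts4} and~\ref{rlmm} to identify $H_{h,n}(f^*)+E^*$ with $E_0$ up to vanishing error. The only cosmetic difference is that you split $H$ into $G-N$ before partitioning by sets, whereas the paper partitions $H$ by sets first; your observation at the end about why \eqref{mfuu} must control $\partial U$ and why $A_3$ must use $U'$ is exactly the point the paper's choices of $U$ and $U'$ are designed to handle.
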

\begin{proof}
Suppose that $A$ has happened. 
To prove \eqref{mgamma}, note that
\begin{align}\label{mgamma1}
M_{h,n}(\gamma) &= M_{h,n}(\phi, z+U^c)+ M_{h,n}(\phi',z+ U).  
\end{align}
By $A_1$ and $A_3$,
\begin{align}
&|M_{h,n}(\phi, z+U^c)- m^*| \nonumber \\
&\le |M_{h,n}(\phi)- m^*| + M_{h,n}(\phi, z+U) \nonumber \\
&\le |M_{h,n}(\phi)- m^*| + M_{h,n}(\phi, z+U')\nonumber \\
&\le \ep^2 + C(\ep)n^{-d}.\label{mgamma2}
\end{align}
Now if $A_4$ holds, then by the inequality 
\[
|a^r - b^r| \le r |a-b|\max\{a^{r-1}, b^{r-1}\} \le r|a-b|(a+|a-b|)^{r-1}
\]
that holds for any $a,b>0$ and $r>1$, we see that 
for any $x\in U$ and any~$r>1$, 
\begin{align}
\bigl||f^*(x)|^r - |\phi'(z+x)|^r \bigr| &\le C(r) n^{-2d}. \label{a2b2}
\end{align}
Thus by $A_4$, \eqref{mfuu}, and the fact that $M_{h,n}(f^*)=m_0-m^*$, we have 
\begin{align}
&| M_{h,n}(\phi',z+ U) - (m_0-m^*)| \nonumber \\
&\le |M_{h,n}(f^*, U) - (m_0-m^*)| + C|U|n^{-2d}\nonumber \\
&\le M_{h,n}(f^*, U^c) + C(\ep) n^{-2d} \nonumber \\
&\le \ep^2 + C(\ep) n^{-2d}.\label{mgamma3}
\end{align}
Combining \eqref{mgamma1}, \eqref{mgamma2} and \eqref{mgamma3} gives
\[
|M_{h,n}(\gamma) - m_0| \le 2\ep^2 + C(\ep)n^{-d}. 
\]
This proves \eqref{mgamma}. 
Next, note that 
\begin{equation}\label{disp0}
\begin{split}
H_{h,n}(\gamma) &= H_{h,n}(\phi, z+U^c) + H_{h,n}(\phi',z+ U) \\
&\qquad + \frac{h^{d-2}}{2}\sum_{x\in z+U, \; y\in z+U^c\atop x\sim y} |\phi'(x)-\phi(y)|^2 
\end{split}
\end{equation}
By \eqref{mfuu}, \eqref{a2b2} and $A_4$, 
\begin{align*}
\sum_{x\in z+\partial U} |\phi'(x)|^2 &\le \sum_{x\in \partial U} |f^*(x)|^2 + C(\ep) n^{-2d} \\
&\le C\ep^2 + C(\ep) n^{-2d}. 
\end{align*}
By $A_3$,
\begin{align*}
\sum_{x\in z+\partial U^c} |\phi(x)|^2 \le C(\ep) n^{-d}. 
\end{align*}
The last two displays imply that 
\begin{align}\label{disp1}
\frac{h^{d-2}}{2}\sum_{x\in z+U, \; y\in z+U^c\atop x\sim y} |\phi'(x)-\phi(y)|^2 \le C\ep^2 + C(\ep) n^{-d}. 
\end{align}
By $A_4$ and \eqref{a2b2}, 
\begin{align*}
|H_{h,n}(\phi',z+ U) - H_{h,n}(f^*, U)| &\le C(\ep) n^{-2d}. 
\end{align*}
Again from \eqref{mfuu}, it follows easily that
\[
|H_{h,n}(f^*, U)-H_{h,n}(f^*)|\le C\ep^2. 
\]
From the last two displays, we have
\begin{align}\label{disp2}
|H_{h,n}(\phi', z+U) - H_{h,n}(f^*)| \le C\ep^2 + C(\ep) n^{-2d}.
\end{align}
Next, note that 
\begin{align*}
H_{h,n}(\phi, z+U^c) &= G_{h,n}(\phi, z+U^c) - N_{h,n}(\phi, z+U^c). 
\end{align*}
By $A_3$,
\begin{align*}
|G_{h,n}(\phi, z+U^c)-G_{h,n}(\phi)|\le C M_{h,n}(\phi, z+U')\le C(\ep) n^{-d}. 
\end{align*}
Again by $A_1$ and $A_2$ and the fact that $m^*> 0$, it follows that if $\ep$ is sufficiently small (depending only on the fixed parameters), then
\begin{align*}
|G_{h,n}(\phi)-E^*|&\le C\ep^2. 
\end{align*}
Lastly, note that by $A_2$, 
\begin{align*}
\sum_{x\in z+U^c} |\phi(x)|^{p+1} &\le (\max_{x\in V_n} |\phi(x)|^{p-1}) \sum_{x\in V_n} |\phi(x)|^2\\
&\le Cn^{-d(p-1)/2}. 
\end{align*}
The last four displays combine to give 
\begin{align}\label{disp3}
|H_{h,n}(\phi, z+U^c)- E^*|&\le C\ep^2 + C(\ep) n^{-d} + Cn^{-d(p-1)/2}.
\end{align}
Combining \eqref{disp0}, \eqref{disp1}, \eqref{disp2} and \eqref{disp3} we get
\[
|H_{h,n}(\gamma)-(H_{h,n}(f^*)+E^*)|\le C\ep^2 + C(\ep) n^{-d} + Cn^{-d(p-1)/2}. 
\]
By Lemma \ref{emfacts4}, $H_{h,n}(f^*)\ra E_{\min}(m_0-m^*, h)$ as $n \ra\infty$. On the other hand by Lemma \ref{rlmm}, $E_{\min}(m_0-m^*, h) = E_0-E^*$. This completes the proof. 
\end{proof}

\begin{lmm}\label{lowerlmm2}
Let $A = A(\ep, n)$ be the event defined immediately before the statement of Lemma~\ref{lowerlmm1}. Then
\[
\lim_{\ep \ra 0}\lim_{n\ra\infty}\frac{\log \pp(A(\ep, n))}{n^d} = 1-m_0 + \widehat{\Theta}(E_0, m_0,h).
\]
\end{lmm}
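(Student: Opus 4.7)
The plan is to decompose $A$ into four pieces whose contributions to $\log\pp(A)$ can be estimated nearly independently. Four structural observations drive the argument: (i) $\phi$, $\phi'$, $z$ are mutually independent; (ii) $A_4$ depends only on $(\phi',z)$, and by translation-invariance of the i.i.d.\ distribution of $\phi'$, $\pp(A_4\mid z)$ is constant in $z$, hence equals $\pp(A_4)$; (iii) $A_1$ depends on $\phi$ only through $\|\phi\|_2^2$ whereas $A_2$ is scale-invariant in $\phi$ and therefore depends only on $\xi = \phi/\|\phi\|_2$, making $A_1$ and $A_2$ independent; (iv) Markov's inequality applied to
\[
\ee_z\bigl[M_{h,n}(\phi,z+U')\bigr] \;=\; \frac{|U'|}{n^d}\,M_{h,n}(\phi) \;\le\; \frac{2m^*(2d+1)|U|}{n^d}
\]
on $A_1$ gives $\pp_z(A_3\mid\phi)\ge 1/2$. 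Combining these, $\tfrac12\,\pp(A_1)\pp(A_2)\pp(A_4)\le\pp(A)\le\pp(A_1)\pp(A_2)\pp(A_4)$, so it remains to estimate the three factors separately.

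For $\pp(A_1)$, I would apply Lemma \ref{gamma} with $k=n^d$, $x=n^d m^*$, $y=n^d\ep^2$ to get $n^{-d}\log\pp(A_1)\to 1+\log m^*-m^*+O(\ep^2)$. For $\pp(A_2)$, I rewrite the gradient constraint via the scale-invariant identity $\sum_{x\sim y}|\xi(x)-\xi(y)|^2 = 2h^2 G_{h,n}(\phi)/M_{h,n}(\phi)$, and apply Theorem \ref{psifacts2} (whose $L^\infty$ refinement absorbs the second condition in $A_2$) at $\alpha = 2h^2 E^*/m^*$. Lemma \ref{rlmm} together with \eqref{focusingcase} places $\alpha$ strictly inside $(0,4d)$, where $\Psi_d$ is continuous and finite. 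Thus $n^{-d}\log\pp(A_2)\to-\Psi_{d,\ep^2}(\alpha)\to -\Psi_d(\alpha)$ as $\ep\to 0$.

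For $\pp(A_4)$, the essential input is that $|U|$ is bounded independently of $n$ (Theorem \ref{expodecaythm} applied to $f^*$, together with \eqref{mfuu}). Using the density $\rho(w)=(nh)^d\pi^{-1}e^{-(nh)^d|w|^2}$ of the complex Gaussian $\phi'(x)$, and noting that $(nh)^d n^{-4d}\to 0$ and $(nh)^d n^{-2d}|f^*(x-z)|\to 0$ (since $\|f^*\|_\infty \le C h^{-d/2}$), a direct small-ball estimate yields, uniformly in $x\in z+U$,
\[
\pp\bigl(|\phi'(x)-f^*(x-z)|\le n^{-2d}\bigr) \;=\; (nh)^d n^{-4d}\,e^{-(nh)^d|f^*(x-z)|^2}(1+o(1)).
\]
Multiplying over $x\in z+U$, using $(nh)^d h^{-d}=n^d$ and $\sum_{y\in U}|f^*(y)|^2 = h^{-d}M_{h,n}(f^*,U)=h^{-d}(m_0-m^*)+O(\ep^2 h^{-d})$, the prefactor contributes only $O(|U|\log n)=o(n^d)$ to the log, so $n^{-d}\log\pp(A_4)\to -(m_0-m^*)+O(\ep^2)$.

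Adding the three contributions and applying $\Theta(E^*,m^*,h) = \log m^* - \Psi_d(2h^2 E^*/m^*)$ with $(E^*,m^*)\in \mm(E_0,m_0,h)$ gives $n^{-d}\log\pp(A) \to 1-m_0+\widehat{\Theta}(E_0,m_0,h)+O(\ep^2)$, and the $\ep\to 0$ limit removes the $O(\ep^2)$ error. The main obstacle is the Gaussian small-ball computation: I need a tight multiplicative $1+o(1)$ control per coordinate that survives taking the product over $|U|$ points (particularly where $f^*$ could be small but nonzero), and I must carefully verify that the polynomial-in-$n$ prefactor $(nh)^{d|U|}n^{-4d|U|}$ is harmless at the scale $n^{-d}\log(\cdot)$. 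Once this is in hand, the remaining inputs (the Gamma tail from Lemma \ref{gamma}, the sphere large deviation of Theorem \ref{psifacts2}, and Markov for $A_3$) plug in directly.
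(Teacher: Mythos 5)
Your proof takes essentially the same approach as the paper: the same four-factor decomposition $\pp(A)=\pp(A_1)\pp(A_2\mid A_1)\pp(A_3\mid A_1\cap A_2)\pp(A_4\mid A_1\cap A_2\cap A_3)$ exploiting (i) the independence of $\|\phi\|_2$ and $\xi$, (ii) Markov's inequality for $A_3$, and (iii) translation invariance of $\phi'$ for $A_4$, with Lemma \ref{gamma} on $A_1$, Theorem \ref{psifacts2} on $A_2$, and the Gaussian small-ball estimate on $A_4$.

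On the point you flag as a worry: the small-ball computation does go through cleanly. The relative variation of the density over the disk $\{|w - f^*(x-z)|\le n^{-2d}\}$ is controlled by the exponent $(nh)^d\bigl(2|f^*(x-z)|n^{-2d}+n^{-4d}\bigr)\le C h^d n^{-d}\|f^*\|_\infty + h^d n^{-3d}$, which is $O(n^{-d})$ uniformly in $x$ since $\|f^*\|_\infty\le\sqrt{(m_0-m^*)/h^d}$; so the per-coordinate factor is $1+O(n^{-d})$ uniformly, and raising to the fixed power $|U|=O_\ep(1)$ costs nothing. Similarly the prefactor $\bigl((nh)^d n^{-4d}\bigr)^{|U|}=\bigl(h^d n^{-3d}\bigr)^{|U|}$ contributes only $O(|U|\log n)=o(n^d)$ to the log, exactly as you say. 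One small clean-up you should make: the claim that $\alpha^*=2h^2E^*/m^*$ lies strictly inside $(0,4d)$ is not literally a statement of Lemma \ref{rlmm}; it requires the extra observation that $\widehat\Theta(E_0,m_0,h)>-\infty$ (for otherwise $\Psi_d(\alpha^*)=\infty$ would make the maximum $-\infty$), which follows since for $m$ small and positive, $E^+(m,h)=E_0-E_{\min}(m_0-m,h)>0$ so $\mr$ contains points with finite $\Theta$. With that noted, the argument is complete and matches the paper's.
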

(Note that the definition of $A(\ep, n)$ involves our choice of $f^*$. The above result holds for any  sequence of choices of $f^*$ as $n\ra\infty$.)
\begin{proof}
Let $A_1$, $A_2$, $A_3$ and $A_4$ be as in Lemma \ref{lowerlmm1}. Write 
\[
\pp(A) = \pp(A_1) \; \pp(A_2\mid A_1)\; \pp(A_3 \mid A_1\cap A_2) \pp(A_4\mid A_1\cap A_2 \cap A_3).
\]
For the first term, simply note that by Lemma \ref{gamma},
\begin{align*}
\pp(A_1) &= \exp(n^d + n^d \log m^* - n^dm^* + n^d o(\ep, n)), 
\end{align*}
where $o(\ep, n)$ is a term depending only on $\ep$, $n$ and the fixed parameters  such that 
\[
\limsup_{\ep \ra0}\limsup_{n \ra\infty}|o(\ep, n)| = 0. 
\]
Next,  define 
\[
\xi_x := \frac{\phi_x}{\bigl(\sum_{y\in V} |\phi_y|^2\bigr)^{1/2}}. 
\]
Then, as in the proof of Lemma \ref{mainupperlemma}, $\xi$ is uniformly distributed on the unit sphere of $\cc^{V_n}$, is independent of $M_{h,n}(\phi)$, satisfies  
\[
\frac{2h^2 G_{h,n}(\phi)}{M_{h,n}(\phi)} = \sum_{x,y\in V_n\atop x\sim y} |\xi(x)-\xi(y)|^2 
\]
and 
\[
\frac{\max_{x\in V_n} |\phi(x)|^2}{\sum_{x\in V_n} |\phi(x)|^2} = \max_{x\in V_n} |\xi(x)|^2. 
\]
Consequently by Theorem \ref{psifacts2} and Proposition \ref{psiprops},
\begin{align*}
\pp(A_2 \mid A_1) = \pp(A_2) &= \exp\biggl(-n^d \Psi_d\biggl(\frac{2h^2 E^*}{m^*}\biggr) + n^do(\ep, n)\biggr). 
\end{align*}
Let $z$ be as in the proof of Lemma \ref{lowerlmm1}.  Note that since  $z$ is uniformly distributed on $V_n$ and is independent of $\phi$, and therefore by Markov's inequality, 
\begin{align*}
\pp(A_3^c \mid \phi) &\le \frac{\ee(M_{h,n}(\phi, z+U')\mid \phi)}{4m^*(2d+1)n^{-d}|U|}\\
&= \frac{|U'|M_{h,n}(\phi)}{n^d}\frac{n^d}{4m^*(2d+1)|U|} \le \frac{M_{h,n}(\phi)}{4m^*}. 
\end{align*}
Thus, if $A_1$ happens and $\ep$ is sufficiently small (depending only on the fixed parameters), then $\pp(A_3 \mid \phi) \ge 1/2$. Consequently,
\[
1\ge \pp(A_3 \mid A_1 \cap A_2) \ge 1/2. 
\]
Lastly, note that since the coordinates of $\phi'$ are i.i.d.\ complex Gaussian with probability density function $(nh)^d \pi^{-1}\exp(-(nh)^d|x|^2)$, and are independent of $z$ and~$\phi$, and $|U|\le C(\ep)$; therefore by \eqref{mfuu}, 
\begin{align*}
\pp(A_4 \mid \phi, z) &= \pp(A_4)= \exp\biggl(- (nh)^d \sum_{x\in U}|f^*(x)|^2 + n^do(\ep, n) \biggr)\\
&= \exp(- n^d(m_0-m^*) + n^d o(\ep, n)). 
\end{align*}
Consequently, the same is true  for $\pp(A_4 \mid A_1 \cap A_2 \cap A_3)$. Combining the above estimates finishes the proof.
\end{proof}
\begin{proof}[Proof of Theorem \ref{lower}]
By Lemma \ref{lowerlmm1}, if $\ep$ is sufficiently small (depending only on the fixed parameters) and $n$ sufficiently large (depending only on $\ep$ and the fixed parameters), then the event $A(\ep, n)$ implies the event $B_0(\ep, n)$, but with $\phi$ replaced by $\gamma$. However, since $\gamma$ has the same distribution as $\phi$, by Lemma~\ref{lowerlmm2} this completes the proof of Theorem \ref{lower}. 
\end{proof}

\section{The radiating case}\label{radiatingsec}
Fix $h>0$, and some $E_0\in \rr$ and $m_0 > 0$ such that 
\[
\frac{dm_0}{h^2}\le E_0< \frac{2dm_0}{h^2}.
\]
As in Section \ref{upperbound}, the numbers $p$, $d$, $h$, $E_0$ and $m_0$ will be fixed throughout this section and will be called the `fixed parameters'. Any constant that depends only on the fixed parameters will be denoted simply by $C$, instead of $C(p,d,h,E_0, m_0)$. If the constant depends on additional parameters $a,b,\ldots$, then it will be denoted by $C(a,b,\ldots)$.  

Recall the random function $\phi$ defined in Section \ref{gaussian}. 
\begin{thm}\label{radiating}
Fix $\ep \in (0,1)$ and $\delta \in (0,1)$. Let $A_0$ be the event 
\[
\{|M_{h,n}(\phi)-m_0|\le \ep, \ |H_{h,n}(\phi)-E_0| \le \ep\}. 
\]
Then for any fixed $\delta \in (0,1)$, 
\begin{align*}
\limsup_{\ep\ra0} \limsup_{n\ra\infty} \frac{\log\pp(\max_{x\in V_n}|\phi(x)|> \delta \mid A_0)}{n^d} < 0. 
\end{align*}
\end{thm}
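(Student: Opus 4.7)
The plan is to bound the ratio $\pp(\max_x|\phi(x)|>\delta\mid A_0)=\pp(A_0\cap\{\max_x|\phi(x)|>\delta\})/\pp(A_0)$ by combining a lower bound on the denominator with an upper bound on the numerator, each at the exponential scale $n^d$. For the denominator, the construction of Theorem \ref{lower} and Lemma \ref{lowerlmm2} extends without essential modification to the radiating regime: one chooses $(E^*,m^*)$ attaining $\widehat\Theta(E_0,m_0,h)$ in $\mm(E_0,m_0,h)$, and in the degenerate sub-case $m^*=m_0$ the deterministic piece $f^*$ on the small set $U$ is simply taken to be zero. This yields $\pp(A_0)\ge\exp(n^d(1-m_0+\widehat\Theta(E_0,m_0,h))+o(n^d))$.

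For the numerator I will first apply translation invariance of $\phi$ and a union bound to reduce to $n^d\pp(A_0\cap\{|\phi(0)|>\delta\})$. Next, I condition on $\phi(0)=z$ and write $\phi=z\,\mathbf{1}_{\{0\}}+\psi$ with $\psi(0):=0$; the remaining coordinates of $\psi$ are i.i.d.\ complex Gaussian on $V_n\setminus\{0\}$. A direct calculation of $M_{h,n}(\phi)$ and $H_{h,n}(\phi)$ in terms of $\psi$ and $z$ shows that on $A_0$ one has $|M_{h,n}(\psi)-m'|\le\ep$ and $|H_{h,n}(\psi)-E'|\le\ep+o(1)$ with $m':=m_0-h^d|z|^2$ and $E':=E_0-2dh^{d-2}|z|^2+h^d|z|^{p+1}/(p+1)$; the cross term $2h^{d-2}\,\mathrm{Re}(\bar z\sum_{y\sim 0}\psi(y))$ has order $|z|(nh)^{-d/2}$ in probability and is absorbable into $o(1)$. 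Running the upper-bound argument of Lemma \ref{qlmm} (with $A=[0,\infty)^2$) for $\psi$ with target parameters $(E',m')$ bounds the conditional probability by $\exp(n^d(1-m'+\widehat\Theta(E',m',h))+o(n^d))$. Multiplying by $f_{\phi(0)}(z)\le(nh)^d\pi^{-1}e^{-(nh)^d|z|^2}$ and integrating over $|z|>\delta$, the Gaussian exponent $-n^d h^d|z|^2$ cancels exactly with the mass-entropy factor $1-m'=1-m_0+h^d|z|^2$, yielding
\[
\pp(A_0\cap\{|\phi(0)|>\delta\})\le\exp\!\Bigl(n^d\bigl(1-m_0+\sup_{s\ge h^d\delta^2}\widehat\Theta(E_0-2ds/h^2,\,m_0-s,\,h)\bigr)+o(n^d)\Bigr).
\]

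Combining the two bounds reduces the proof to the strict inequality $\sup_{s\ge h^d\delta^2}\widehat\Theta(E_0-2ds/h^2,\,m_0-s,\,h)<\widehat\Theta(E_0,m_0,h)$, which is the main obstacle. Continuity of $\widehat\Theta$ in $(E_0,m_0)$, inherited from continuity of $\Theta$ and of $E_{\min},E_{\max}$ via Lemma \ref{emfacts4}, together with compactness of $\mr(E_0,m_0,h)$, reduces the claim to proving a strict local decrease at $s=0^+$. The shift direction $(-2d/h^2,-1)$ corresponds to removing mass $s$ at the maximal admissible energy density $E_{\max}(s,h)/s=2d/h^2$; by the strict subadditivity of $E_{\min}$ established in Theorem \ref{elimitthm}, such a maximally efficient extraction cannot be part of the global variational optimum without strictly decreasing $\widehat\Theta$. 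Making this quantitative---tracking how the optimizer over $\mr$ responds to a perturbation in this extreme direction---is where the bulk of the technical work lies.
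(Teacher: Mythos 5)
Your upper bound drops the term that actually makes the theorem true. You correctly compute $E'=E_0-2dh^{d-2}|z|^2+h^d|z|^{p+1}/(p+1)$ when peeling off $\phi(0)=z$, but the positive $h^d|z|^{p+1}/(p+1)$ has vanished from your final variational inequality $\sup_{s\ge h^d\delta^2}\widehat\Theta(E_0-2ds/h^2,\,m_0-s,\,h)<\widehat\Theta(E_0,m_0,h)$. That nonlinear contribution is precisely the mechanism. In the radiating regime $\alpha_0:=2h^2E_0/m_0$ lies in $[2d,4d)$, the strictly increasing branch of $\Psi_d$, and if $\max_x|\phi(x)|\ge\delta$ then $N_{h,n}(\phi)\ge\frac{h^d}{p+1}\delta^{p+1}$, so $G_{h,n}(\phi)=H_{h,n}(\phi)+N_{h,n}(\phi)\ge E_0-\ep+C\delta^{p+1}$; this forces the normalized gradient $\sum_{x\sim y}|\xi(x)-\xi(y)|^2$ (with $\xi=\phi/\|\phi\|_2$) above $\alpha_0+C_0\delta^{p+1}$. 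Theorem~\ref{psifacts} together with strict monotonicity of $\Psi_d$ on $[2d,4d)$ then yields the rate $-\Psi_d(\alpha_0+C_0\delta^{p+1})$, strictly below the rate $-\Psi_d(\alpha_0)$ governing $\pp(A_0)$. Without the $\delta^{p+1}$ gain I see no proof of your inequality, and the heuristic you offer (strict subadditivity of $E_{\min}$) addresses a different situation -- splitting a function into two low-energy pieces rather than extracting a single grid point at maximal local gradient density $2d/h^2$. Your reduction is also shaky on its own terms: after peeling one point, the remainder $(E',m')$ can still be radiating, where the characterization of $\mm$ from Lemma~\ref{rlmm} does not apply, so plugging into the Lemma~\ref{qlmm} rate $1-m'+\widehat\Theta(E',m',h)$ is not clearly justified.

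Separately, your lower bound invokes Theorem~\ref{lower} and Lemma~\ref{lowerlmm2}, but those are proved under the focusing hypothesis \eqref{focusingcase}, $E_0<dm_0/h^2$, which is the opposite of the present regime. The paper proves the matching lower bound (Lemma~\ref{radlower}) directly with no soliton construction: take $A_1\cap A_2\cap A_3$ (pin the mass, pin the normalized gradient at $\alpha_0$, and suppress $\max_x|\phi(x)|^2$ so $N_{h,n}(\phi)\to0$), check $A_1\cap A_2\cap A_3\subseteq A_0$, and read off the rate $1+\log m_0-m_0-\Psi_d(\alpha_0)$ from Theorem~\ref{psifacts2} and Lemma~\ref{gamma}. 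Your ``degenerate sub-case $m^*=m_0$ with $f^*=0$'' would, if justified, amount to this; but it first requires establishing that $(E_0,m_0)$ is the variational optimizer of $\Theta$ over $\mr(E_0,m_0,h)$ in the radiating regime, which you have not done and which is not covered by Lemma~\ref{rlmm}.
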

The rest of this section is devoted to the proof of this theorem. Define 
\[
\xi_x := \frac{\phi(x)}{\bigl(\sum_{y\in V_n} |\phi(y)|^2\bigr)^{1/2}}. 
\]
Then $\xi$ is uniformly distributed on the unit sphere of $\cc^{V_n}$. Let 
\[
\alpha_0 := \frac{2h^2E_0}{m_0},
\]
so that $\alpha_0\in [2d, 4d)$. 
Define three events:
\begin{align*}
A_1 &:= \{|M_{h,n}(\phi)-m_0|\le \ep^2\}. \\
A_2 &:= \biggl\{\biggl|\sum_{x,y\in V_n \atop x\sim y} |\xi(x)-\xi(y)|^2- \alpha_0\biggr|\le \ep^2\biggr\}.\\
A_3 &:= \{\max_{x\in V_n}|\xi(x)|^2\le n^{-d/2}\}. 
\end{align*}
\begin{lmm}\label{a0a1a2a3}
If $\ep < C$ and $n > C(\ep)$, then $A_1\cap A_2 \cap A_3 \implies A_0$. 
\end{lmm}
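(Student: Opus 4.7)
The plan is straightforward: split the conclusion $A_0$ into its mass part and its energy part. The mass inequality $|M_{h,n}(\phi)-m_0|\le\ep$ is already immediate from $A_1$ since $\ep^2\le\ep$, so the entire task is to control the energy $H_{h,n}(\phi)=G_{h,n}(\phi)-N_{h,n}(\phi)$. I would attack the gradient piece and the nonlinear piece separately and show both are small, with $G_{h,n}(\phi)$ close to $E_0$ and $N_{h,n}(\phi)$ negligible.

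For the gradient piece, the key identity that links $A_2$ to $G_{h,n}(\phi)$ is the scaling
\[
\sum_{x,y\in V_n,\, x\sim y}|\xi(x)-\xi(y)|^2 \;=\; \frac{2h^2\, G_{h,n}(\phi)}{M_{h,n}(\phi)},
\]
which follows directly from the definition of $\xi$ (the same identity was used in the proof of Lemma \ref{mainupperlemma}). Thus $A_2$ says $|2h^2G_{h,n}(\phi)/M_{h,n}(\phi)-\alpha_0|\le \ep^2$. Multiplying by $M_{h,n}(\phi)$ and using $A_1$ to replace $M_{h,n}(\phi)$ by $m_0$ up to an error of $\ep^2$, and using the definition $\alpha_0 m_0=2h^2E_0$, gives $|G_{h,n}(\phi)-E_0|\le C\ep^2$ for some constant $C$ depending only on the fixed parameters.

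For the nonlinear piece, the idea is to use $A_3$ to get a uniform pointwise bound on $|\phi|$. Since $\phi(x)=(h^{-d}M_{h,n}(\phi))^{1/2}\xi(x)$, the event $A_3$ combined with $A_1$ yields $|\phi(x)|^2\le Ch^{-d}n^{-d/2}$ uniformly in $x$. Factoring $|\phi|^{p+1}=|\phi|^{p-1}|\phi|^2$ and summing gives
\[
N_{h,n}(\phi)\;=\;\frac{h^d}{p+1}\sum_{x\in V_n}|\phi(x)|^{p+1}\;\le\; C\,(M_{h,n}(\phi))^{(p+1)/2} n^{-d(p-1)/4}\;\le\; C\,n^{-d(p-1)/4},
\]
which tends to $0$ as $n\to\infty$ since $p>1$. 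Combining the two estimates,
\[
|H_{h,n}(\phi)-E_0|\;\le\;|G_{h,n}(\phi)-E_0|+N_{h,n}(\phi)\;\le\; C\ep^2+C n^{-d(p-1)/4}.
\]

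Choosing $\ep<C$ so that $C\ep^2\le \ep/2$ and then $n>C(\ep)$ so that $C n^{-d(p-1)/4}\le \ep/2$ completes the energy bound and hence shows $A_1\cap A_2\cap A_3\implies A_0$. There is no serious obstacle here: the proof is a bookkeeping exercise once the scaling identity for $\xi$ and the uniform sup-norm bound from $A_3$ are in hand. The only minor subtlety is keeping track of the fact that one needs $M_{h,n}(\phi)$ bounded away from $0$ (so division is safe), which is automatic for $\ep<m_0/2$ from $A_1$.
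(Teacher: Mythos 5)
Your proof is correct and follows essentially the same route as the paper: split the energy into gradient and nonlinear parts, use the scaling identity $\sum_{x\sim y}|\xi(x)-\xi(y)|^2 = 2h^2 G_{h,n}(\phi)/M_{h,n}(\phi)$ together with $A_1$ to pin down $G_{h,n}(\phi)$ near $E_0$, and use the sup-norm bound from $A_3$ (plus $A_1$) to show $N_{h,n}(\phi)\le Cn^{-d(p-1)/4}\to 0$. The only cosmetic difference is that the paper absorbs its constant by writing the gradient error as $\ep^{3/2}$ rather than $C\ep^2$, which is an equivalent bookkeeping choice.
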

\begin{proof}
Note that 
\begin{align*}
\biggl|\sum_{x,y\in V_n \atop x\sim y} |\xi(x)-\xi(y)|^2- \alpha_0\biggr| &= 2h^2\biggl|\frac{G_{h,n}(\phi)}{M_{h,n}(\phi)}- \frac{E_0}{m_0}\biggr|. 
\end{align*}
This shows that if  $\ep< C$, then $A_1\cap A_2$ implies that  
\begin{equation}\label{ghne}
|G_{h,n}(\phi)-E_0| \le \ep^{3/2}. 
\end{equation}
Now, $A_1 \cap A_3$ implies 
\[
\max_{x\in V_n}|\phi(x)|^2 \le n^{-d/2} \sum_{x\in V_n} |\phi(x)|^2 \le Cn^{-d/2}
\]
and hence 
\begin{align*}
N_{h,n}(\phi) &\le C\bigl(\max_{x\in V_n}|\phi(x)|^2\bigr)^{(p-1)/2} \sum_{x\in V_n}|\phi(x)|^2\\
&\le Cn^{-d(p-1)/4}. 
\end{align*}
Therefore if $n> C(\ep)$, then $N_{h,n}(\phi) \le \ep^{3/2}$. Combining this with \eqref{ghne} completes the proof. 
\end{proof}
\begin{lmm}\label{radlower}
\begin{align*}
\liminf_{\ep\ra0} \liminf_{n\ra\infty} \frac{\log\pp(A_0)}{n^d}&\ge 1+\log m_0 - m_0 - \Psi_d(\alpha_0). 
\end{align*}
\end{lmm}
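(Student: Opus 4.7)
The plan is to show $\pp(A_0) \ge \pp(A_1\cap A_2\cap A_3)$ for small $\ep$ and large $n$ (via Lemma \ref{a0a1a2a3}), and then compute the asymptotics of $\pp(A_1\cap A_2\cap A_3)$ using the independence of the radial and angular parts of $\phi$ together with Lemma \ref{gamma} and Theorem \ref{psifacts2}. Since $\xi := \phi/(\sum_y|\phi(y)|^2)^{1/2}$ is the direction of $\phi$ in $\cc^{V_n}$ and $M_{h,n}(\phi) = h^d\sum_y|\phi(y)|^2$ depends only on the magnitude, the standard complex-Gaussian decomposition gives that $\xi$ is uniform on the sphere and independent of $M_{h,n}(\phi)$. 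Hence $A_1$ is independent of $A_2\cap A_3$, so
\[
\pp(A_0) \ge \pp(A_1)\,\pp(A_2\cap A_3)
\]
once $\ep$ is small and $n$ is large enough that Lemma \ref{a0a1a2a3} applies.

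For the first factor, apply \eqref{gamma1} of Lemma \ref{gamma} to $S = \sum_{x\in V_n}|\phi(x)|^2$ with $k = n^d$, $x = n^d m_0$, $y = n^d\ep^2$ (noting $\ee|\phi(x)|^2 = (nh)^{-d}$ so $M_{h,n}(\phi) = h^d S$ is centered at $m_0$): this yields
\[
\frac{1}{n^d}\log\pp(A_1) = 1 + \log m_0 - m_0 + o_{\ep,n}(1),
\]
where $o_{\ep,n}(1)$ denotes a quantity tending to $0$ as $n\to\infty$ and then $\ep\to 0$ (the error term in Lemma \ref{gamma} is $O(\log n) + O(n^d\ep^2)$ divided by $n^d$).

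For the second factor, recall $\alpha_0 = 2h^2 E_0/m_0 \in [2d, 4d)$. Apply Theorem \ref{psifacts2} with $\alpha = \alpha_0$, tolerance $\ep^2$, and the additional max-condition $\max_x|\xi(x)|^2 \le n^{-d(1-\delta)}$ with $\delta = 1/2$ (so $A_3$ is exactly this event):
\[
\lim_{n\to\infty}\frac{1}{n^d}\log\pp(A_2\cap A_3) = -\Psi_{d,\ep^2}(\alpha_0).
\]
Since $\alpha_0 \in [2d,4d)$ lies in a compact subset of $(0,4d)$ where $\Psi_{d,\ep^2}\to \Psi_d$ uniformly as $\ep\to 0$ (the final assertion of Theorem \ref{psifacts2}; at $\alpha_0 = 2d$ one has $\Psi_{d,\ep^2}(2d) = 0 = \Psi_d(2d)$ directly from the definition), we get $\Psi_{d,\ep^2}(\alpha_0) \to \Psi_d(\alpha_0)$.

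Combining the three pieces and taking $\liminf_{n\to\infty}$ followed by $\liminf_{\ep\to 0}$ yields the claimed bound. There is no real obstacle here, as every ingredient has already been built in earlier sections; the only thing to verify carefully is that all error terms can be absorbed into the sequential $n\to\infty$ and $\ep\to 0$ limits, and that $\alpha_0 = 2h^2E_0/m_0$ stays bounded away from $4d$ under the assumption $E_0 < 2dm_0/h^2$, so that $\Psi_d(\alpha_0) < \infty$ and the bound is non-trivial.
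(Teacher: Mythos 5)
Your proposal is correct and follows essentially the same route as the paper: reduce to $\pp(A_1\cap A_2\cap A_3)$ via Lemma \ref{a0a1a2a3}, factor using the independence of $M_{h,n}(\phi)$ and $\xi$, and compute each factor's exponential rate with Lemma \ref{gamma} and Theorem \ref{psifacts2} respectively. The extra care you take in spelling out the $\Psi_{d,\ep^2}\to\Psi_d$ convergence (including the case $\alpha_0=2d$) and verifying $\alpha_0\in[2d,4d)$ is fine but not different in substance from the paper's brief argument.
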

\begin{proof}
The random variable $M_{h,n}(\phi)$ and the random vector $\xi$ are independent. Therefore by Lemma \ref{a0a1a2a3}, for $\ep < C$ and $n > C(\ep)$, 
\begin{equation}\label{a01}
\pp(A_0)\ge \pp(A_1\cap A_2\cap A_3) = \pp(A_1)\pp(A_2\cap A_3). 
\end{equation}
By Theorem \ref{psifacts2},
\begin{equation}\label{a02}
\lim_{\ep\ra0} \lim_{n\ra\infty}\frac{\log \pp(A_2\cap A_3)}{n^d} = -\Psi_d(\alpha_0). 
\end{equation}
By Lemma \ref{gamma}, 
\begin{equation}\label{a03}
\lim_{\ep\ra0} \lim_{n\ra\infty}\frac{\log \pp(A_1)}{n^d} = 1+\log m_0-m_0.
\end{equation}
Combining \eqref{a01}, \eqref{a02} and \eqref{a03} proves the lemma.
\end{proof}
\begin{proof}[Proof of Theorem \ref{radiating}]
Define an event $A_4$ as
\[
A_4 := \{\max_{x\in V_n} |\phi(x)| \ge \delta\}. 
\]
We have to show that 
\[
\limsup_{\ep\ra0} \limsup_{n\ra\infty} \frac{\log\pp(A_0\cap A_4) - \log \pp(A_0)}{n^d} < 0. 
\]
If  $|H_{h,n}(\phi)- E_0|\le \ep$  and $\max_{x\in V_n} |\phi(x)| \ge \delta$, then 
\begin{align*}
G_{h,n}(\phi) &= H_{h,n}(\phi) + \frac{h^d}{p+1}\sum_{x\in V_n} |\phi(x)|^{p+1}\\
&\ge E_0 -\ep + C\delta^{p+1}. 
\end{align*}
Therefore, $A_0\cap A_4$ implies
\begin{align*}
\sum_{x,y\in V_n\atop x\sim y} |\xi(x)-\xi(y)|^2 &= \frac{2h^2G_{h,n}(\phi)}{M_{h,n}(\phi)}\\
&\ge \frac{2h^2(E_0-\ep+C \delta^{p+1})}{m_0+\ep}. 
\end{align*}
Thus, there is a constant $C_0$ depending only on the fixed parameters such that if $\ep < C(\delta)$, then $A_0\cap A_4$ implies the event $A_1\cap A_5$, where 
\[
A_5 := \biggl\{\sum_{x,y\in V_n\atop x\sim y} |\xi(x)-\xi(y)|^2 \ge \alpha_0 + C_0\delta^{p+1}\biggr\}. 
\]
Moreover, by the independence of $M_{h,n}(\phi)$ and $\xi$, the events $A_1$ and $A_5$ are independent. Therefore, if $\ep < C(\delta)$, then 
\begin{align*}
\pp(A_0\cap A_4) &\le \pp(A_1\cap A_5) = \pp(A_1)\pp(A_5). 
\end{align*}
Theorem \ref{psifacts} shows that for any $\ep < C(\delta)$,  
\[
\lim_{n\ra\infty} \frac{\log \pp(A_5)}{n^d} = - \Psi_d(\alpha_0 + C_0\delta^{p+1}),
\]
and we already know the limit of $n^{-d}\log \pp(A_1)$ from \eqref{a03}. Since $\Psi_d$ is a strictly increasing function in the interval $[2d, 4d)$ by Proposition \ref{psiprops}, a combination of the above inequality and Lemma \ref{radlower} proves the theorem.
\end{proof}

\section{Discrete concentration-compactness}\label{conccompsec}
Fix $h > 0$ and $m > 0$. Let $f_n$ be a sequence of functions on $\zz^d$ such that $M_h(f_n)=m$ for all $n$ and $H_h(f_n) \ra E_{\min}(m,h)$ as $n \ra\infty$. The following theorem is the main result of this section. 
\begin{thm}\label{conccomp}
There is a subsequence $n_k$ of natural numbers and a sequence of points $y_{k}\in \zz^d$ such that the sequence of functions $g_k$ defined as 
\[
g_k(x) := f_{n_k}(y_{k} + x)
\] 
converges to a limit function $g$ in the $L^q$ norm for every $q\in [2,\infty]$. The function $g$ has mass $m$ and  energy $E_{\min}(m,h)$. 
\end{thm}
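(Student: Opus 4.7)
The plan is to adapt P.-L.\ Lions' concentration--compactness method to the lattice, using the strict subadditive inequality from Theorem \ref{elimitthm} to rule out dichotomy, and the elementary pointwise bound $|f(x)|^2\le h^{-d}M_h(f)$ specific to the discrete setting to strengthen $L^2$ convergence to $L^\infty$ convergence.

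Introduce the L\'evy concentration function
\begin{equation*}
Q_n(R):=\sup_{y\in\zz^d}\,h^d\sum_{x\,:\,|x-y|_1\le R}|f_n(x)|^2,
\end{equation*}
which is nondecreasing in $R$ with limit $m$. A diagonal extraction produces a subsequence (still denoted $f_n$) and a nondecreasing function $Q:\{0,1,2,\ldots\}\to[0,m]$ with $Q_n(R)\to Q(R)$ for every $R$; set $\mu:=\lim_{R\to\infty}Q(R)\in[0,m]$. I would then dispatch the classical trichotomy $\mu=0$, $0<\mu<m$, $\mu=m$ in turn.

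The vanishing case $\mu=0$ is immediate: $Q_n(0)\to 0$ forces $\|f_n\|_\infty\to 0$, hence $N_h(f_n)\le\|f_n\|_\infty^{p-1}m\to 0$ and therefore $\liminf H_h(f_n)\ge 0$, contradicting $E_{\min}(m,h)<0$ from Lemma \ref{emin0}. For the dichotomy case I would, using the definition of $\mu$, choose centers $y_n\in\zz^d$ together with radii $R_n<R'_n$ with $R'_n-R_n\to\infty$ such that $M_h(f_n,B(y_n,R_n))\to\mu$ and $M_h(f_n,B(y_n,R'_n))\to\mu$. Decomposing $f_n=u_n+v_n+w_n$ with supports in $B(y_n,R_n)$, the annulus $B(y_n,R'_n)\setminus B(y_n,R_n)$, and $B(y_n,R'_n)^c$ respectively, the middle piece carries vanishing mass; a pigeonhole selection of the buffer radii further ensures that the mass in a one-edge-thick layer separating $\operatorname{supp}(u_n)$ from $\operatorname{supp}(w_n)$ tends to zero, which controls the gradient cross-terms. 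One then obtains $H_h(f_n)\ge H_h(u_n)+H_h(w_n)+o(1)\ge E_{\min}(M_h(u_n),h)+E_{\min}(M_h(w_n),h)+o(1)$; passing to the limit via continuity of $E_{\min}(\cdot,h)$ (Lemma \ref{emfacts4}) yields $E_{\min}(m,h)\ge E_{\min}(\mu,h)+E_{\min}(m-\mu,h)$, contradicting the strict subadditivity from Theorem \ref{elimitthm}. The careful bookkeeping of the gradient cross-terms across the buffer is where I expect the main technical difficulty to lie.

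Only the compactness alternative $\mu=m$ survives. For each $\ep>0$ there exist $R_\ep$ and centers $y_n$ with $M_h(f_n,B(y_n,R_\ep))\ge m-\ep$ for all large $n$; set $g_n(x):=f_n(y_n+x)$, so $M_h(g_n)=m$ and $H_h(g_n)\to E_{\min}(m,h)$ by translation invariance. The uniform pointwise bound $|g_n(x)|\le\sqrt{h^{-d}m}$ together with a diagonal extraction along the countable set $\zz^d$ produces $g_{n_k}\to g$ pointwise. Fatou and the uniform concentration give $M_h(g)=m$ and a uniform mass-tail estimate, which upgrade pointwise convergence to $L^2$ convergence by splitting into the finite ball $B(0,R_\ep)$ (dominated convergence) and its complement (small-mass tail). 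The $L^\infty$ convergence then follows from the purely discrete inequality $|g_n(x)|\le h^{-d/2}\sqrt{M_h(g_n,B(0,R)^c)}$ for $x\notin B(0,R)$, which converts small mass in a region into small uniform values there; interpolation delivers $L^q$ convergence for every $q\in[2,\infty]$. Finally, Fatou applied termwise to the sum defining $G_h$ gives $G_h(g)\le\liminf G_h(g_{n_k})$, while $L^{p+1}$ convergence yields $N_h(g_{n_k})\to N_h(g)$; combining, $H_h(g)\le\liminf H_h(g_{n_k})=E_{\min}(m,h)$, and since $M_h(g)=m$ forces $H_h(g)\ge E_{\min}(m,h)$, we conclude $H_h(g)=E_{\min}(m,h)$, so that $g\in\ms(m,h)$.
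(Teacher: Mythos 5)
Your proposal tracks the paper's concentration--compactness proof closely: the same trichotomy, the same use of $E_{\min}(m,h)<0$ to kill vanishing, strict subadditivity of $E_{\min}(\cdot,h)$ to kill dichotomy, and elementary discrete features (the pointwise bound $|f(x)|\le h^{-d/2}\sqrt{M_h(f)}$ and the containment $L^2\subseteq L^\infty$ on $\zz^d$) to upgrade $L^2$-compactness to $L^q$ for all $q\in[2,\infty]$. One thing to be aware of: the strict subadditivity you cite from Theorem~\ref{elimitthm} is itself established by the paper \emph{inside this very section} (Lemma~\ref{cc5}, via the non-obvious test function $f_n(\cdot+y_n)+\I\, g_n(\cdot+z_n)$ that exploits the uniformly non-vanishing points from Lemma~\ref{cc3}), so a genuinely self-contained argument must supply that proof first rather than import it as external input.
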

The proof of Theorem \ref{conccomp} uses the well-known  concentration-compactness argument (see \cite[Section 1.4]{raphael08} and references therein). The main new challenge  in the discrete case is that the properties of $E_{\min}(m,h)$ are not as well-understood as those of $E_{\min}(m)$. 

For each $x\in \zz^d$ and each positive integer $R$, let $B(x,R)$ denote the $\ell^1$ ball of radius $R$ around $x$ in $\zz^d$. For any two positive integers $n$ and $R$, define the `concentration function'
\[
\rho_n(R) := \sup_{x\in \zz^d} \sum_{y\in B(x, R)} h^d |f_n(x)|^2. 
\]
Note that $\rho_n(R)$ is a non-decreasing function of $R$ for each $n$. Let
\[
\mu:= \lim_{R\ra\infty} \liminf_{n\ra\infty} \rho_n(R). 
\]
Clearly, there are sequences $R_k$ and $n_k$ increasing to infinity such that 
\[
\lim_{k\ra\infty} \rho_{n_k}(R_k)=\mu. 
\]
It is easy to fix $R_k$ such that $R_k$ is even for each $k$. 
Passing to a subsequence if necessary (using a diagonal argument), we may assume that 
\[
\rho(R) := \lim_{k\ra\infty} \rho_{n_k}(R)
\]
exists for each positive integer $R$. 
\begin{lmm}\label{cc1}
$\mu = \lim_{k\ra\infty} \rho_{n_k}(R_k) = \lim_{k\ra\infty} \rho_{n_k}(R_k/2) = \lim_{R\ra\infty} \rho(R)$. 
\end{lmm}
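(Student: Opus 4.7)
The plan is to prove all three quantities equal $\mu$ by sandwiching, using only the monotonicity of $\rho_n(\cdot)$ in its second argument and the defining properties of the chosen subsequence. The first equality, $\mu = \lim_{k\to\infty}\rho_{n_k}(R_k)$, is immediate from how $(R_k,n_k)$ were chosen. So the real work is showing $\lim_R \rho(R) = \mu$ and $\lim_k \rho_{n_k}(R_k/2) = \mu$, and my plan is to handle these in that order because the second will follow easily from the first.

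To show $\lim_R \rho(R) = \mu$, I would first note that $\rho(R)$ is non-decreasing in $R$ since it is the pointwise limit of the non-decreasing sequence $\rho_{n_k}(\cdot)$. For the upper bound, fix any $R$; for all $k$ large enough that $R_k \ge R$, monotonicity gives $\rho_{n_k}(R) \le \rho_{n_k}(R_k)$, and letting $k\to\infty$ yields $\rho(R) \le \mu$. For the matching lower bound, since $(n_k)$ is a subsequence of $\mathbb{N}$ I have $\rho(R) = \lim_k \rho_{n_k}(R) \ge \liminf_{n\to\infty}\rho_n(R)$; letting $R\to\infty$ gives $\lim_R \rho(R) \ge \lim_R \liminf_n \rho_n(R) = \mu$.

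Now the middle equality. By monotonicity, $\rho_{n_k}(R_k/2) \le \rho_{n_k}(R_k)$, so $\limsup_k \rho_{n_k}(R_k/2) \le \mu$. For the lower bound, fix any positive integer $R$; since $R_k \to \infty$, eventually $R_k/2 \ge R$, so monotonicity gives $\rho_{n_k}(R_k/2) \ge \rho_{n_k}(R)$, and letting $k\to\infty$ yields $\liminf_k \rho_{n_k}(R_k/2) \ge \rho(R)$. Taking $R\to\infty$ and invoking the previous paragraph, $\liminf_k \rho_{n_k}(R_k/2) \ge \mu$. This closes the sandwich.

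There is no real obstacle here; the only subtle point is that one must not confuse $\lim_k \rho_{n_k}(R)$ (an $n$-limit along a chosen subsequence at a \emph{fixed} $R$) with $\liminf_{n\to\infty}\rho_n(R)$ (the infimum-limit along all $n$). The inequality $\rho(R) \ge \liminf_n \rho_n(R)$ holds automatically because passing to a subsequence can only increase a $\liminf$, and this is precisely what lets one conclude $\lim_R \rho(R) \ge \mu$ without relying on any extra hypothesis about how fast $\rho_{n_k}$ converges to $\rho$.
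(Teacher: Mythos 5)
Your proof is correct and uses essentially the same sandwich-via-monotonicity argument as the paper; the only cosmetic difference is that you establish the upper bound $\lim_R \rho(R) \le \mu$ directly, whereas the paper closes the cycle of inequalities $\mu \ge \limsup_k \rho_{n_k}(R_k/2) \ge \liminf_k \rho_{n_k}(R_k/2) \ge \lim_R\rho(R) \ge \mu$ without stating that bound separately. Both routes are equivalent and each step you give is justified.
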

\begin{proof}
First observe that from the monotonicity of $\rho_{n_k}$, 
\[
\limsup_{k\ra\infty} \rho_{n_k}(R_k/2) \le \limsup_{k\ra\infty} \rho_{n_k}(R_k) = \mu. 
\]
On the other hand, for each $R$, 
\[
\rho(R) = \liminf_{k\ra\infty} \rho_{n_k}(R) \ge \liminf_{n\ra\infty} \rho_n(R),
\]
and thus
\[
\lim_{R\ra\infty} \rho(R) \ge \mu. 
\]
Lastly, for any $R$, we have $R_k/2\ge R$ for all $k$ large enough, and thus
\[
\rho_{n_k}(R_k/2)\ge \rho_{n_k}(R).
\]
Let $k\ra\infty$ gives that for all $R$,
\[
\liminf_{k\ra\infty} \rho_{n_k}(R_k/2) \ge \liminf_{k\ra\infty} \rho_{n_k}(R) = \rho(R). 
\]
Letting $R\ra\infty$ completes the proof.
\end{proof}
Since the function $\rho_n$ is bounded between $0$ and $m$ for every $n$, therefore $0\le \mu\le m$. 
\begin{lmm}\label{cc2}
$\mu> 0$. 
\end{lmm}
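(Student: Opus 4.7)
The plan is a proof by contradiction: assume $\mu = 0$ and derive a contradiction with the fact that $E_{\min}(m,h) < 0$, which is the content of Lemma \ref{emin0} (together with $p < 1+4/d$).

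The key observation is a pointwise-versus-local-mass domination that is specific to the discrete setting. Since $\rho_n$ is non-decreasing in $R$ and $B(x,0) = \{x\}$, one has $\rho_n(0) = h^d\|f_n\|_\infty^2$, whence
\[
\|f_n\|_\infty^2 \le h^{-d}\rho_n(R)\qquad\text{for every } R\ge 0.
\]
Hence along the subsequence $(n_k, R_k)$ constructed just before Lemma \ref{cc1}, the assumption $\mu = 0$ forces $\rho_{n_k}(R_k) \to 0$, and therefore $\|f_{n_k}\|_\infty \to 0$.

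With sup-norm vanishing in hand, the nonlinear part of the energy is controlled by the crude interpolation $|v|^{p+1} \le \|v\|_\infty^{p-1}|v|^2$:
\[
N_h(f_{n_k}) \le \frac{\|f_{n_k}\|_\infty^{p-1}}{p+1}\, M_h(f_{n_k}) = \frac{m}{p+1}\,\|f_{n_k}\|_\infty^{p-1}\longrightarrow 0.
\]
Since $G_h \ge 0$ always, this gives $\liminf_k H_h(f_{n_k}) \ge -\lim_k N_h(f_{n_k}) = 0$, contradicting $H_h(f_n) \to E_{\min}(m,h) < 0$. This contradiction shows $\mu > 0$.

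There is really no substantive obstacle here: in the classical continuum concentration--compactness argument, ruling out vanishing requires a genuine Gagliardo--Nirenberg inequality to convert a small local $L^2$ mass into a small $L^{p+1}$ contribution. In the discrete setting at fixed grid size $h$, the single-point ball already gives $\|f\|_\infty^2 \le h^{-d}\rho_n(R)$, so the vanishing case collapses to the trivial interpolation above. The mass-subcriticality $p < 1+4/d$ enters only indirectly, through Lemma \ref{emin0}, to guarantee that the limiting energy is strictly negative and thereby incompatible with a vanishing nonlinear term.
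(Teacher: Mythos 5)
Your proof is correct and takes essentially the same route as the paper: assume $\mu=0$, conclude $\|f_{n_k}\|_\infty\to 0$ from the vanishing of the concentration function, bound $N_h$ by $\|f_{n_k}\|_\infty^{p-1}\,m/(p+1)$, and contradict $E_{\min}(m,h)<0$ from Lemma~\ref{emin0}. The only cosmetic difference is in how the sup-norm decay is extracted — you read it off directly from $h^d\|f_n\|_\infty^2\le\rho_n(R)$, while the paper first passes to $\rho(R)\to 0$ via Lemma~\ref{cc1} and monotonicity and then uses $\rho_{n_k}(1)\to 0$ — and one should note that $\rho_n$ is defined in the paper for positive integers $R$, so the inequality to cite is $h^d\|f_n\|_\infty^2\le\rho_n(1)$ rather than an $R=0$ ball.
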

\begin{proof}
Suppose that $\mu=0$. Let all notation be as in Lemma \ref{cc1}. Then from Lemma \ref{cc1}, $\lim_{R\ra\infty} \rho(R)=0$. But $\rho$ is a non-decreasing, non-negative function. Therefore $\rho(R)=0$ for every $R$. In particular $\rho(1)=0$ and hence $\lim_{k\ra\infty} \rho_{n_k}(1)=0$. This implies that 
\[
\lim_{k\ra\infty} \sup_{x\in \zz^d}|f_{n_k}(x)| = 0, 
\]
and therefore 
\begin{align*}
\lim_{k\ra\infty}(H_h(f_{n_k}) - G_h(f_{n_k})) &= \lim_{k\ra\infty} \frac{h^d}{p+1}\sum_{x\in \zz^d} |f_{n_k}(x)|^{p+1}\\
&\le\lim_{k\ra\infty} \bigl(\sup_{x\in \zz^d}|f_{n_k}(x)|\bigr)^{p-1}\frac{M_h(f_{n_k})}{p+1}=0. 
\end{align*}
In particular, 
\[
\liminf_{k\ra\infty} H_h(f_{n_k}) \ge 0,
\]
contradicting $\lim_{k\ra\infty} H_h(f_{n_k}) = E_{\min}(m,h) < 0$ (Lemma \ref{emin0}). 
\end{proof}

\begin{lmm}\label{cc3}
There is a sequence of points $y_n$ in $\zz^d$ such that 
\[
\liminf_{n\ra\infty} |f_n(y_n)| > 0.
\] 
\end{lmm}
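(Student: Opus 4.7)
\medskip

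My plan is to unpack the definition of the concentration quantity $\mu$ from Lemma \ref{cc2} and then argue by a pigeonhole argument on a finite $\ell^1$-ball. The point is that $\mu > 0$ forces an entire fixed-radius ball (not just a single point) to carry a nontrivial amount of $L^2$ mass for infinitely many indices, and the finite cardinality of such a ball lets us convert this into a pointwise lower bound.

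In more detail, recall that
\[
\mu = \lim_{R\to\infty}\liminf_{n\to\infty}\rho_n(R),
\]
so by Lemma \ref{cc2} there exists some $R_0$ for which
\[
\liminf_{n\to\infty}\rho_n(R_0)\ge \mu/2 > 0.
\]
Thus, for all sufficiently large $n$ we have $\rho_n(R_0)\ge \mu/4$. By the definition of $\rho_n$, I can pick $x_n\in\zz^d$ such that
\[
\sum_{y\in B(x_n,R_0)} h^d|f_n(y)|^2 \ge \mu/8.
\]
Now the $\ell^1$-ball $B(x_n,R_0)$ has cardinality bounded by a constant $C(d,R_0)$ depending only on $d$ and $R_0$, so by pigeonhole there exists $y_n\in B(x_n,R_0)$ with
\[
|f_n(y_n)|^2 \ge \frac{\mu}{8 h^d C(d,R_0)} > 0.
\]
For the finitely many remaining indices $n$, simply choose $y_n$ arbitrarily. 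This yields the desired sequence.

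The main conceptual content is already in Lemma \ref{cc2}; the only genuine step here is the observation that ``nonvanishing concentration on balls of fixed radius'' upgrades to ``nonvanishing pointwise value'' because the balls have bounded cardinality in the discrete setting. There is no real obstacle—this is a short consequence of Lemma \ref{cc2} and a counting argument, and this is precisely why the discretization makes the standard concentration-compactness step cleaner than in the continuum (where one would need a local Sobolev embedding to pass from $L^2$ concentration to pointwise information).
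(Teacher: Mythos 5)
Your proof is correct and takes essentially the same approach as the paper: use Lemma \ref{cc2} to find a fixed radius $R_0$ at which the concentration function stays bounded away from zero, then average over the finite ball $B(x_n,R_0)$ to pigeonhole a point with nonvanishing modulus. The only cosmetic difference is that you work with a near-maximizer $x_n$ rather than an exact maximizer of $\rho_n(R_0)$ (which the paper takes, noting its existence from $M_h(f_n)=m<\infty$); both variants are fine.
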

\begin{proof}
Let all notation be as in Lemma \ref{cc1}. By Lemma \ref{cc2}, 
\[
0 < \mu = \lim_{R\ra\infty} \liminf_{n\ra \infty} \rho_n(R). 
\]
Thus, there is some $R$ such that 
\[
\liminf_{n\ra\infty} \rho_n(R) > 0. 
\]
Fix such an $R$. Let $x_n(R)$ be a point such that 
\[
\rho_n(R) = \sum_{z\in B(x_n(R), R)} h^d |f_n(z)|^2. 
\] 
(The existence of such a point follows easily from the assumption that $M_h(f^n)=m<\infty$.) Since $R$ is fixed, this shows that 
\[
\liminf_{n\ra\infty} \frac{\sum_{z\in B(x_n(R), R)} |f_n(z)|^2}{|B(x_n(R), R)|} > 0. 
\]
But there exists a point $y_n\in B(x_n(R), R)$ such that 
\[
|f_n(y_n)|^2 \ge \frac{\sum_{z\in B(x_n(R), R)} |f_n(z)|^2}{|B(x_n(R), R)|}. 
\]
This completes the proof. 
\end{proof}
\begin{lmm}\label{cc4}
If $a$ and $b$ are positive real numbers and $\alpha > 1$, then 
\[
(a+b)^\alpha \ge a^\alpha + b^{\alpha}. 
\]
Moreover, for each $0 < c_1 < c_2$, there is a positive constant $ C= C(c_1,c_2, \alpha)$ such that whenever $a,b\in [c_1, c_2]$, 
\[
(a+b)^\alpha \ge a^\alpha + b^{\alpha} - C.
\]
\end{lmm}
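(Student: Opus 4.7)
The plan is to handle the two inequalities separately. For the first inequality, I would reduce to a one-variable statement by homogeneity: dividing both sides of $(a+b)^\alpha \ge a^\alpha + b^\alpha$ by the positive quantity $(a+b)^\alpha$ and setting $t := a/(a+b) \in (0,1)$, the claim becomes $t^\alpha + (1-t)^\alpha \le 1$. Since $\alpha > 1$ and $s \in (0,1)$ implies $s^\alpha = s \cdot s^{\alpha - 1} \le s$, we obtain $t^\alpha \le t$ and $(1-t)^\alpha \le 1-t$; adding yields $t^\alpha + (1-t)^\alpha \le t + (1-t) = 1$. This proves the first inequality for all positive $a, b$.

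For the second inequality, the plan is to observe that it is literally a weakening of the first. Since the first part establishes $(a+b)^\alpha \ge a^\alpha + b^\alpha$ for all positive $a, b$, we automatically have $(a+b)^\alpha \ge a^\alpha + b^\alpha - C$ for any positive constant $C$ whatsoever, uniformly in $a, b$ and in particular for $a, b \in [c_1, c_2]$. So we may simply take $C = C(c_1, c_2, \alpha) = 1$ (say); no compactness argument or delicate estimate is required.

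The main ``obstacle'' here is conceptual rather than technical: the displayed second inequality is strictly weaker than the first and is therefore a free consequence of it. If the intended statement is instead the sharper strict-subadditivity bound $(a+b)^\alpha \ge a^\alpha + b^\alpha + C$ with $C = C(c_1, c_2, \alpha) > 0$---which is the form one would actually want for the concentration-compactness arguments in Section~\ref{conccompsec} (e.g., for proving the strict subadditivity of $E_{\min}(\cdot, h)$ in Theorem~\ref{elimitthm})---the plan would be to note that $\Phi(a,b) := (a+b)^\alpha - a^\alpha - b^\alpha$ is continuous on $[c_1, c_2]^2$ and strictly positive there (by the strict version of the first inequality, using that $c_1 > 0$ excludes degeneracy), hence attains a positive minimum by compactness, which serves as the required~$C$. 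But as written, the lemma demands only the weak version, which requires no additional work beyond the first inequality.
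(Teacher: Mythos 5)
Your proof of the first inequality is correct and is essentially the paper's argument: the paper simply cites $\sup_{0<x<1}\bigl(x^\alpha+(1-x)^\alpha\bigr)=1$ with $x=a/(a+b)$, and you supply the one-line justification ($t^\alpha\le t$ for $t\in(0,1)$, $\alpha>1$) that the paper leaves implicit. Your diagnosis of the second assertion is also right, and it is the valuable part of your write-up: as printed, ``$(a+b)^\alpha\ge a^\alpha+b^\alpha-C$'' is a vacuous weakening of the first inequality, and the sign is a typo. The version actually needed is $(a+b)^\alpha\ge a^\alpha+b^\alpha+C$ for $a,b\in[c_1,c_2]$: in the proof of Lemma~\ref{cc5} this is what forces $N_h(v_n)\ge N_h(f_n)+N_h(g_n)+\tfrac{h^dC}{p+1}$ and hence the \emph{strict} subadditivity $\limsup H_h(v_n)<\lim\bigl(H_h(f_n)+H_h(g_n)\bigr)$; with a minus sign the inequality would point the wrong way and the argument would collapse. (The same sign typo is repeated in the display inside the proof of Lemma~\ref{cc5}.) Your compactness proof of the corrected statement --- $\Phi(a,b)=(a+b)^\alpha-a^\alpha-b^\alpha$ is continuous and strictly positive on $[c_1,c_2]^2$, hence bounded below by a positive constant there --- is correct and is, up to whether one compactifies in $(a,b)$ or in $t=a/(a+b)\in\bigl[\tfrac{c_1}{c_1+c_2},\tfrac{c_2}{c_1+c_2}\bigr]$, exactly what the paper's ``the second assertion follows similarly'' must mean.
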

\begin{proof}
The first inequality is a simple consequence of 
\[
\sup_{0< x <1} (x^\alpha+(1-x)^\alpha) =1,
\]
by putting $x=a/(a+b)$. The second assertion follows similarly.
\end{proof}
\begin{lmm}\label{cc5}
For any positive $m, m'$, 
\[
E_{\min}(m,h)+E_{\min}(m',h) > E_{\min}(m+m',h).
\] 
\end{lmm}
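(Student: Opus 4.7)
My plan is to prove Lemma \ref{cc5} by first establishing that the function $m \mapsto E_{\min}(m,h)/m$ is strictly decreasing on $(0,\infty)$, and then reading off strict subadditivity from this monotonicity. The scaling engine is elementary: the gradient term $G_h$ is homogeneous of degree $2$ in $f$ while the nonlinear term $N_h$ is homogeneous of degree $p+1 > 2$, so multiplying a near-minimizer by a factor $\lambda^{1/2} > 1$ improves the energy-to-mass ratio whenever the nonlinear piece is nontrivial.

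For the scaling step, fix $\lambda > 1$ and $m > 0$. By Lemma \ref{emin0} we have $E_{\min}(m,h) < 0$, so we can pick $\ep > 0$ small enough that $E_{\min}(m,h) + \ep < 0$, and then choose a near-minimizer $f \in \cc^{\zz^d}$ with $M_h(f) = m$ and $H_h(f) \le E_{\min}(m,h) + \ep$. From $G_h(f) \ge 0$ and $H_h(f) < 0$ we obtain $N_h(f) = G_h(f) - H_h(f) \ge -H_h(f) > 0$. Setting $g := \lambda^{1/2} f$, we have $M_h(g) = \lambda m$ and
\[
H_h(g) = \lambda G_h(f) - \lambda^{(p+1)/2} N_h(f) = \lambda H_h(f) - (\lambda^{(p+1)/2} - \lambda) N_h(f).
\]
Since $\lambda^{(p+1)/2} - \lambda > 0$ and $-N_h(f) \le H_h(f)$, the second term is bounded above by $(\lambda^{(p+1)/2} - \lambda) H_h(f)$, giving $H_h(g) \le \lambda^{(p+1)/2} H_h(f)$. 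Letting $\ep \downarrow 0$ yields $E_{\min}(\lambda m, h) \le \lambda^{(p+1)/2} E_{\min}(m,h)$. Dividing by $\lambda m$ and using $E_{\min}(m,h)/m < 0$ together with $\lambda^{(p-1)/2} > 1$ produces $E_{\min}(\lambda m, h)/(\lambda m) < E_{\min}(m,h)/m$, which is the claimed strict monotonicity.

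To deduce subadditivity, apply the monotonicity at the two pairs $m < m+m'$ and $m' < m+m'$:
\[
\frac{E_{\min}(m,h)}{m} > \frac{E_{\min}(m+m',h)}{m+m'} \quad \text{and} \quad \frac{E_{\min}(m',h)}{m'} > \frac{E_{\min}(m+m',h)}{m+m'}.
\]
Multiplying the first inequality by $m$ and the second by $m'$ (both positive) and adding yields
\[
E_{\min}(m,h) + E_{\min}(m',h) > \frac{m+m'}{m+m'}\, E_{\min}(m+m',h) = E_{\min}(m+m',h),
\]
as required.

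The main pitfall is avoiding circularity. The natural impulse is to argue with an exact minimizer $f \in \ms(m,h)$, but the existence of such minimizers (Theorem \ref{orbital}) is proved via concentration-compactness and itself rests on this lemma. Working with near-minimizers and sending $\ep \downarrow 0$ is exactly what sidesteps the circle. The other indispensable ingredient is the strict negativity $E_{\min}(m,h) < 0$ supplied by Lemma \ref{emin0}; without it the scaling would give only a non-strict inequality, and in particular one could not separate $\lambda^{(p+1)/2} E_{\min}(m,h)$ from $\lambda E_{\min}(m,h)$.
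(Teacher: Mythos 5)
Your proof is correct, and it takes a genuinely different route from the paper's. The paper's argument is a direct concatenation: it takes near-minimizers $f_n, g_n$ for masses $m$ and $m'$, reduces to non-negative real-valued functions, uses Lemma~\ref{cc3} to locate points $y_n, z_n$ where the functions are bounded away from zero, and then forms $v_n(x) := f_n(x+y_n) + \I g_n(x+z_n)$. The orthogonal combination makes the mass and gradient terms add exactly, while the pointwise superadditivity $((a^2+b^2)^{(p+1)/2} \ge a^{p+1}+b^{p+1})$ from Lemma~\ref{cc4} controls the nonlinear term, with the \emph{strict} gain coming from the second part of Lemma~\ref{cc4} applied at the point $x=0$, where both pieces are bounded below. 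Your argument bypasses all of this: a pure amplitude rescaling $g = \lambda^{1/2} f$ gives $E_{\min}(\lambda m, h) \le \lambda^{(p+1)/2} E_{\min}(m,h)$ for $\lambda > 1$ (since the bound $H_h(g) \le \lambda^{(p+1)/2} H_h(f)$ reduces to $G_h(f) \ge 0$), and combined with the negativity from Lemma~\ref{emin0}, this shows $E_{\min}(m,h)/m$ is strictly decreasing, from which strict subadditivity drops out by a one-line convexity argument. In effect you have ported the continuum argument --- where $E_{\min}(m) = m^\alpha E_{\min}(1)$ makes subadditivity automatic --- to the lattice by noting that even though spatial dilation is unavailable, the amplitude-scaling half still yields a usable one-sided inequality. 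Your route is shorter and sidesteps Lemmas~\ref{cc3} and~\ref{cc4} entirely, though those lemmas are used again later in the concentration-compactness argument, so the overall savings are modest. Both proofs correctly rely only on $E_{\min}(m,h) < 0$ and avoid appealing to the existence of exact minimizers, which would be circular here.

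One small remark: the choice of $\ep$ with $E_{\min}(m,h)+\ep < 0$, and the observation $N_h(f) > 0$, are not actually needed for the bound $H_h(g) \le \lambda^{(p+1)/2}H_h(f)$ (which holds unconditionally); the strict negativity of $E_{\min}(m,h)$ enters only at the very end, when passing from the inequality $E_{\min}(\lambda m,h)\le\lambda^{(p+1)/2}E_{\min}(m,h)$ to the strict monotonicity of $E_{\min}(m,h)/m$. This is harmless, but the proof can be tightened by removing those preliminaries.
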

\begin{proof}
Let $f_n$ and $g_n$ be sequences of functions such that $M_h(f_n)=m$ and $M_h(g_n)=m'$ for all $n$, and 
\[
\lim_{n\ra\infty} H_h(f_n)=E_{\min}(m,h), \ \ \lim_{n\ra\infty} H_h(g_n) = E_{\min}(m',h). 
\]
Since for any function $f$, $M_h(|f|)= M(f)$ and $H_h(|f|)\le H_h(f)$ by the triangle inequality, we may assume that the functions $f_n$ and $g_n$ are non-negative real valued. By Lemma \ref{cc3}, there exist sequences of points $y_n$ and $z_n$ and a positive real number $\ep$ such that 
\[
\liminf_{n\ra\infty} f_n(y_n) > \ep, \ \ \liminf_{n\ra\infty} g_n(z_n) > \ep. 
\]
Define functions $v_n$ as 
\[
v_n(x) := f_n(x+y_n) + \I g_n(x+z_n), 
\] 
where $\I = \sqrt{-1}$. 
Since $f_n$ and $g_n$ are  real valued, it is clear that $M_h(v_n) = m+ m'$  and $G_h(v_n) = G_h(f_n) + G_h(g_n)$ for each $n$. Now note that since $p > 1$, Lemma~\ref{cc4} implies that for all $x$,  
\begin{align*}
|v_n(x)|^{p+1} &= |f_n(x+y_n) +\I g_n(x+z_n)|^{p+1} \\
&= ((f_n(x+y_n))^2 +(g_n(x+z_n))^2)^{(p+1)/2}\\
&\ge |f_n(x+y_n)|^{p+1} + |g_n(x+z_n)|^{p+1}. 
\end{align*}
If $n$ is large enough, then $f_n(y_n) >\ep$ and $g_n(z_n) > \ep$. Since $M_h(f_n)=m$ and $M_h(g_n)=m'$ for each $n$, therefore the sequences $f_n(y_n)$ and $g_n(z_n)$ are also uniformly bounded above. Therefore by Lemma \ref{cc4}, there is a positive constant $C$ such that for all $n$,
\begin{align*}
|v_n(0)|^{p+1} &= ((f_n(y_n))^2 +(g_n(z_n))^2)^{(p+1)/2}\\
&\ge |f_n(y_n)|^{p+1} +|g_n(z_n)|^{p+1} - C.
\end{align*}
Combining all of the above observations, it follows that 
\[
\limsup_{n\ra\infty} H_h(v_n) < \lim_{n\ra\infty} (H_h(f_n) + H_h(g_n)). 
\]
Since $M_h(v_n)= m+m'$ for all $n$, this completes the proof of the lemma. 
\end{proof}
\begin{lmm}\label{cc6}
$\mu=m$. 
\end{lmm}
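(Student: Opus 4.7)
The plan is to prove the lemma by contradiction: assume $0 < \mu < m$ (the ``dichotomy'' case). By Lemma \ref{cc1}, after extracting a further subsequence we may assume both $\rho_{n_k}(R_k/2) \to \mu$ and $\rho_{n_k}(R_k) \to \mu$; choosing $x_k \in \zz^d$ that attains the supremum in $\rho_{n_k}(R_k/2)$, the mass of $f_{n_k}$ on the annular shell $B(x_k, R_k) \setminus B(x_k, R_k/2)$ then tends to $0$. The idea is the classical concentration-compactness split: decompose $f_{n_k}$ into an ``inner'' piece of mass approaching $\mu$ and an ``outer'' piece of mass approaching $m - \mu$, show that their energies almost add up to $H_h(f_{n_k}) \to E_{\min}(m,h)$, and then invoke Lemma \ref{cc5} to contradict strict subadditivity of $E_{\min}(\cdot, h)$.

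To make the split compatible with the gradient term, I would use smooth discrete cutoffs $\chi_k,\psi_k:\zz^d\to[0,1]$ satisfying $\chi_k^2 + \psi_k^2 \equiv 1$. Concretely, set $\chi_k(x) := \cos\theta_k(x)$ and $\psi_k(x) := \sin\theta_k(x)$, where $\theta_k(x)$ depends only on the $\ell^1$-distance $|x-x_k|_1$, equals $0$ on $B(x_k, R_k/2)$, equals $\pi/2$ outside $B(x_k, R_k)$, and interpolates linearly. Then $|\theta_k(x)-\theta_k(y)|\le \pi/R_k$ for neighbors $x\sim y$, so both cutoffs are Lipschitz with constant $O(1/R_k)$. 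Define $f_k^{(1)} := \chi_k f_{n_k}$ and $f_k^{(2)} := \psi_k f_{n_k}$. The masses satisfy $M_h(f_k^{(1)}) = h^d\sum \chi_k^2 |f_{n_k}|^2$, which is sandwiched between the masses of $f_{n_k}$ on $B(x_k,R_k/2)$ and on $B(x_k,R_k)$, so $M_h(f_k^{(1)})\to \mu$ and consequently $M_h(f_k^{(2)})\to m-\mu$. For the gradient, expanding and using $\chi_k(x)\chi_k(y)+\psi_k(x)\psi_k(y) = \cos(\theta_k(x)-\theta_k(y))$ together with $1-\cos t \le t^2/2$ yields
\[
\bigl|G_h(f_k^{(1)})+G_h(f_k^{(2)})-G_h(f_{n_k})\bigr| \;\le\; C\,h^{-2}R_k^{-2}\,M_h(f_{n_k}) = o(1).
\]
For the nonlinear term, since $p>1$ gives $\chi_k^{p+1}+\psi_k^{p+1}\le \chi_k^2+\psi_k^2 = 1$ pointwise, we have $N_h(f_k^{(1)})+N_h(f_k^{(2)})\le N_h(f_{n_k})$; the reverse direction follows because $1-\chi_k^{p+1}-\psi_k^{p+1}$ is supported on the shell, where the uniform bound $\|f_{n_k}\|_\infty^{p-1}\le (m/h^d)^{(p-1)/2}$ lets one dominate $|f_{n_k}|^{p+1}$ by a constant multiple of $|f_{n_k}|^2$, and the shell mass tends to $0$.

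Combining these, $H_h(f_k^{(1)})+H_h(f_k^{(2)}) \le H_h(f_{n_k}) + o(1) \to E_{\min}(m,h)$. Since $H_h(f_k^{(j)}) \ge E_{\min}(M_h(f_k^{(j)}),h)$ by definition, and the absolute continuity of $m\mapsto E_{\min}(m,h)$ from Lemma \ref{emfacts4} lets us pass to the limit, we obtain
\[
E_{\min}(\mu,h) + E_{\min}(m-\mu,h) \;\le\; E_{\min}(m,h),
\]
which contradicts the strict subadditivity of Lemma \ref{cc5}. Together with $\mu\le m$ (obvious from $\rho_n\le m$) and $\mu>0$ (Lemma \ref{cc2}), this forces $\mu=m$. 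The only mildly delicate step is the bookkeeping for the gradient split: one needs $\chi_k^2+\psi_k^2\equiv 1$ to avoid a spurious cross term that would otherwise force $G_h(f_k^{(1)})+G_h(f_k^{(2)})$ to overshoot $G_h(f_{n_k})$ by an $O(1)$ amount, and the trigonometric choice above is exactly what achieves this on the discrete lattice.
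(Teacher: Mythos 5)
Your proof is correct, and it takes a genuinely different route from the paper's for the one step that needs real care, namely controlling the gradient under the concentration-compactness split.

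The paper uses a sharp cutoff: it sets $u_k = f_{n_k}\mathbf{1}_{B(x_k,R_k/2)}$, $v_k = f_{n_k}\mathbf{1}_{B(x_k,R_k)^c}$, $w_k = f_{n_k}\mathbf{1}_{\text{shell}}$, observes $M_h(w_k)\to0$, and then asserts (without detail) that $H_h(f_{n_k}) - H_h(u_k) - H_h(v_k) \to 0$. Making that rigorous requires noticing that the troublesome boundary cross-terms in the gradient, of the form $2\operatorname{Re}(f(x)\overline{f(y)})$ for edges crossing $|x-x_k|_1 = R_k/2$ and $R_k$, always have one endpoint in the annulus; a Cauchy--Schwarz against the vanishing shell mass then kills them, even though the other endpoint is in the bulk. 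This works, but it is a bit fiddly. Your trigonometric cutoff $\chi_k = \cos\theta_k$, $\psi_k = \sin\theta_k$ with $\chi_k^2 + \psi_k^2 \equiv 1$ eliminates the cross-term problem at the source: the identity $\chi_k(x)\chi_k(y)+\psi_k(x)\psi_k(y)=\cos(\theta_k(x)-\theta_k(y))$ turns the gradient defect into a $(1-\cos)$ factor, bounded by $(\pi/R_k)^2/2$, giving the clean $O(h^{-2}R_k^{-2} m)$ error that you want. Your handling of the nonlinear term via $\chi_k^{p+1}+\psi_k^{p+1}\le 1$ and the $L^\infty$ bound on the shell is also correct, and the conclusion via $H_h(f_k^{(j)})\ge E_{\min}(M_h(f_k^{(j)}),h)$, continuity of $E_{\min}(\cdot,h)$, and strict subadditivity (Lemma \ref{cc5}) matches the paper's. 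In short, your smooth cutoff buys a cleaner gradient bookkeeping at the cost of a slightly more elaborate construction; the paper's sharp cutoff is shorter to set up but requires the (unstated) Cauchy--Schwarz argument to handle the boundary edges. Both are standard concentration-compactness implementations, and both are correct.
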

\begin{proof}
By Lemma \ref{cc2} we know that $\mu > 0$. We also know from definition that $0\le \mu\le m$. So we only have to eliminate the case $0 < \mu < m$. Suppose that this is true. Let $x_n(R)$ be as in the proof of Lemma \ref{cc3}.  Then we can write
\[
f_{n_k} = u_k + v_k + w_k
\]
with
\begin{align*}
u_k(x) &= f_{n_k}(x) 1_{\{|x-x_{n_k}(R_k/2)|\le R_k/2\}},\\
v_k(x) &= f_{n_k}(x) 1_{\{|x-x_{n_k}(R_k/2)|> R_k\}},\\
w_k(x) &= f_{n_k}(x) 1_{\{R_k/2< |x-x_{n_k}(R_k/2)|\le R_k\}}.
\end{align*}
Then note that by Lemma \ref{cc1},
\begin{align*}
M_h(w_k) &= \sum_{x\in B(x_{n_k}(R_k/2), R_k)} h^d |u_{n_k}(x)|^2 - \sum_{x\in B(x_{n_k}(R_k/2), R_k/2)} h^d |u_{n_k}(x)|^2\\
&\le \rho_{n_k}(R_k) - \sum_{x\in B(x_{n_k}(R_k/2), R_k/2)} h^d |u_{n_k}(x)|^2\\
&= \rho_{n_k}(R_k)-\rho_{n_k}(R_k/2) \ra 0 \ \text{ as } k\ra\infty. 
\end{align*}
By Lemma \ref{cc1} it also follows that
\[
M_h(u_k) = \sum_{x\in B(x_{n_k}(R_k/2), R_k/2)} h^d |f_{n_k}(x)|^2 = \rho_{n_k}(R_k/2) \ra \mu \ \text{ as } k\ra\infty. 
\]
Thus, $\lim_{k\ra\infty} M_h(v_k) = m-\mu$. From similar arguments using Lemma \ref{cc1} it follows that  
\begin{equation}\label{hfnk}
\lim_{k\ra\infty} (H_h(f_{n_k}) - (H_h(u_k)+ H_h(v_k))) = 0. 
\end{equation}
By the continuity of $E_{\min}$ (Lemma \ref{emfacts4}), 
\[
\liminf_{k\ra\infty} H_h(u_k) \ge E_{\min}(\mu,h), \ \ \liminf_{k\ra\infty} H_h(v_k) \ge E_{\min}(m-\mu,h). 
\] 
Therefore by \eqref{hfnk}, 
\[
\liminf_{k\ra\infty} H_h(f_{n_k}) \ge E_{\min}(\mu,h)+E_{\min}(m-\mu,h). 
\]
By the initial assumption that $H_h(f_n) \ra E_{\min}(m,h)$, this shows that 
\[
E_{\min}(m,h)\ge  E_{\min}(\mu,h)+E_{\min}(m-\mu,h),
\]
contradicting Lemma \ref{cc5}. 
\end{proof}
\begin{proof}[Proof of Theorem \ref{conccomp}]
By Lemma \ref{cc6}, we know that 
\[
\lim_{R\ra\infty} \liminf_{n\ra\infty} \rho_n(R) = m.  
\] 
Choose $R_0$ so large that 
\[
\liminf_{n\ra\infty} \rho_n(R_0) > m/2. 
\]
Let $x_n(R)$ be as in the proof of Lemma \ref{cc3}. Then for all sufficiently large $n$,
\[
\sum_{x\in B(x_n(R_0), R_0)} h^d |f_n(x)|^2  > m/2. 
\]
Fix $\ep \in (0, m/2)$. Let $R_\ep$ be so large that 
\[
\liminf_{n\ra\infty} \rho_n(R_\ep) > m-\ep. 
\]
Then for all $n$ sufficiently large,
\[
\sum_{x\in B(x_n(R_\ep), R_\ep)} h^d |f_n(x)|^2  > m-\ep. 
\]
Since $m-\ep + m/2 > m$, and $M_h(f_n)=m$ for all $n$, therefore for sufficiently large $n$, the balls $B(x_n(R_0), R_0)$ and $B(x_n(R_\ep), R_\ep)$ cannot be disjoint. In particular, 
\[
|x_n(R_0)-x_n(R_\ep)|_1 \le R_0+R_\ep,
\]
and therefore, 
\[
B(x_n(R_\ep), R_\ep) \subseteq B(x_n(R_0), R_0 + 2R_\ep). 
\]
Thus, if we define 
\[
v_n(x) := f_n(x+ x_n(R_0)),
\]
then for every $\ep\in (0,m/2)$, there is a sufficiently large integer $S_\ep$ such that for all sufficiently large $n$, 
\[
\sum_{x \; : \; |x|_1 > S_\ep}h^d |v_n(x)|^2 \le \ep. 
\]
Thus, the sequence $v_n$ is compact in $L^2(\zz^d)$ and therefore has a convergence subsequence $v_{n_k}$, which we call $g_k$. Let $g$ denote the limit of $g_k$ in $L^2$. Since the convergence is in $L^2$, it follows automatically that $M_h(g) =m$. Again, since the $L^\infty$ norm of a function on $\zz^d$ is bounded above by its $L^2$ norm, it follows that $\|g_k-g\|_\infty$ also goes to zero. Therefore, since for any $q\in (2,\infty)$,
\[
\sum_{x\in \zz^d}|g_k(x)-g(x)|^q \le (\sup_{x\in\zz^d} |g_k(x)- g(x)|)^{q-2} \sum_{x\in \zz^d}|g_k(x)-g(x)|^2
\]
it follows that $g_k$ converges to $g$ in $L^q$ for any $q\in (2,\infty)$. This implies, in particular, that $H_h(g_k) \ra H_h(g)$. 
\end{proof}

\section{Harmonic analysis on the lattice}\label{harmonic}
In this section $p$ will not denote the nonlinearity parameter in the NLS. Instead, it will typically play the role of the $p$ in the $L^p$ norm.

We define the $L^p$ norm for functions on $\zz^d$ at grid size $h$ as follows:
\[
\|f\|_{p,h} := \biggl(h^d\sum_{x\in \zz^d} |f_x|^p\biggr)^{1/p} = h^{d/p}\|f\|_p. 
\]
It may seem strange to define a new norm by multiplying the standard $L^p$ norm by a constant;  the purpose of the definition is to ensure that the constants in the discrete analogs of classical inequalities (that we develop below) do not depend on the grid size~$h$. Note also that $\|f\|_{p,h} = \|\tf\|_{p}$, where $\tf$ is the continuum image of $f$ at grid size $h$. 

Similar inequalities were developed by Ladyzhenskaya \cite{ladyzhenskaya} in the context of the ``finite-difference method''. However, since I could not find in \cite{ladyzhenskaya} exactly what I needed  (in particular, some delicate analyses of discrete Green's functions and a discrete version of the Hardy-Littlewood-Sobolev inequality of fractional integration), I decided to go ahead with my own derivations.

The grid size $h$ will be fixed throughout this section. We will assume that $h\in (0,1)$, to avoid complications arising out of large values of $h$, in which we are not interested since we eventually want to send $h$ to zero.   
\subsection{Convolutions}
We define the convolution of two functions $f$ and $g$ on $\zz^d$ at grid size $h$ as 
\begin{align*}
(f*g)(x) := h^d\sum_{y\in \zz^d} f(y) g(x-y). 
\end{align*}
Although the notation does not explicit include the grid size, it will be understood from the context.

\subsection{Young's inequality}
Below we state and prove the discrete analog of Young's inequality for convolutions, at grid size $h$. The proof is exactly the same as in the continuous case; the important thing is that the constant does not depend on the grid size. 
\begin{prop}\label{young}
Let $f,g$ be complex-valued functions on $\zz^d$. Let 
$1\le p,q,r\le \infty$ satisfy 
\[
\frac{1}{p}+\frac{1}{q} = \frac{1}{r} + 1.
\] 
Then for any $h > 0$, 
\[
\|f*g\|_{r,h} \le \|f\|_{p,h} \|g\|_{q,h}. 
\]
\end{prop}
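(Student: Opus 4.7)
The plan is to mimic the classical continuum proof of Young's inequality essentially verbatim, using a three-way H\"older inequality on the $\ell^p$-type sums, and then to check that all the factors of $h^d$ appearing in the definition of $\ast$ and of $\|\cdot\|_{p,h}$ combine in exactly the right way so that no extra power of $h$ survives in the end.

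First I would dispose of the trivial endpoints. If $r=\infty$ then $1/p+1/q=1$, and a single application of H\"older in the variable $y$ gives
\[
|(f\ast g)(x)| \le h^d\sum_{y\in\zz^d}|f(y)||g(x-y)| \le \|f\|_{p,h}\|g\|_{q,h},
\]
which is exactly the claim. The cases $p=1$ or $q=1$ are also straightforward (Minkowski for sums), so I would state them as warm-ups and focus on $1<p,q,r<\infty$.

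For the main case, I would verify the arithmetic identity
\[
\frac{1}{a}+\frac{1}{b}+\frac{1}{c}=1, \qquad a=\frac{rp}{r-p},\ \ b=\frac{rq}{r-q},\ \ c=r,
\]
using $1/p+1/q=1+1/r$, and then for each fixed $x$ decompose
\[
|f(y)||g(x-y)|=|f(y)|^{p/a}\cdot|g(x-y)|^{q/b}\cdot\bigl(|f(y)|^p|g(x-y)|^q\bigr)^{1/c},
\]
noting that $p/a+p/c=1$ and $q/b+q/c=1$. Applying discrete H\"older in $y$ with exponents $(a,b,c)$ to the sum $h^d\sum_y|f(y)||g(x-y)|$ (which controls $|(f\ast g)(x)|$) gives
\[
|(f\ast g)(x)|\le \|f\|_{p,h}^{p/a}\,\|g\|_{q,h}^{q/b}\,\Bigl(h^d\sum_{y\in\zz^d}|f(y)|^p|g(x-y)|^q\Bigr)^{1/r},
\]
where the $h^d$ factor needed for the last bracket comes from the $h^d$ in the definition of $f\ast g$, and the other two $h^d$ factors needed for the first two norms appear because $h^d=h^{d/a}\cdot h^{d/b}\cdot h^{d/c}$.

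Raising to the $r$th power and summing against $h^d$ in $x$, I would then exchange the order of summation to get
\[
\|f\ast g\|_{r,h}^{r}\le \|f\|_{p,h}^{pr/a}\,\|g\|_{q,h}^{qr/b}\cdot h^{2d}\!\!\sum_{x,y\in\zz^d}\!|f(y)|^p|g(x-y)|^q=\|f\|_{p,h}^{pr/a+p}\,\|g\|_{q,h}^{qr/b+q},
\]
and finally check the exponents: $p(r/a+1)=p\cdot r/p=r$ and similarly $q(r/b+1)=r$, yielding $\|f\ast g\|_{r,h}\le\|f\|_{p,h}\|g\|_{q,h}$. I do not anticipate any genuine obstacle here; the only thing to keep an eye on is the bookkeeping of the $h^d$ factors, which is precisely why the norms $\|\cdot\|_{p,h}$ were rescaled in the first place, so that the statement and its proof are identical to the continuum ones with no $h$-dependent constant in sight.
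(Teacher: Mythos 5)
Your proof is correct and follows essentially the same approach as the paper's: both split $|f(x-y)||g(y)|$ into three factors and apply three-way H\"older with the same exponents (your $a=rp/(r-p)$, $b=rq/(r-q)$, $c=r$ are exactly the paper's $p_1$, $p_2$, $r$), then raise to the $r$th power and sum. The $h^d$ bookkeeping you describe is also handled the same way, by absorbing $h^d = h^{d/a}h^{d/b}h^{d/r}$ into the three factors.
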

\begin{proof}
Let $\alpha = (r-p)/r$ and $\beta=(r-q)/r$, so that $\alpha$ and $\beta$ both belong to the interval $[0,1]$. Let $p_1 = p/\alpha$ and $p_2= q/\beta$, so that $p_1,p_2\in [1,\infty]$. Note that 
\[
\frac{1}{p_1}+\frac{1}{p_2}+\frac{1}{r}=1. 
\]
Let $u= f*g$. By H\"older's inequality,
\begin{align*}
&|u(x)| = h^d \biggl|\sum_{y\in \zz^d} f(x-y) g(y)\biggr|\\
&\le h^d \sum_{y\in \zz^d} |f(x-y)|^{1-\alpha} |g(y)|^{1-\beta} |f(x-y)|^\alpha |g(y)|^\beta\\
&\le h^d \biggl(\sum_{y\in \zz^d} |f(x-y)|^{(1-\alpha)r} |g(y)|^{(1-\beta)r}\biggr)^{1/r} \\
&\qquad \qquad \times \biggl(\sum_{y\in \zz^d} |f(x-y)|^{\alpha p_1}\biggr)^{1/p_1} \biggl(\sum_{y\in \zz^d} |g(y)|^{\beta p_2}\biggr)^{1/p_2}\\
&= \biggl(h^d\sum_{y\in \zz^d} |f(x-y)|^{(1-\alpha)r} |g(y)|^{(1-\beta)r}\biggr)^{1/r}\|f\|_{\alpha p_1,h}^\alpha\|g\|_{\beta p_2, h}^\beta. 
\end{align*}
Taking $r$th power and summing over $x$ gives 
\begin{align*}
\|u\|_r^r &\le \|f\|_{(1-\alpha)r,h}^{(1-\alpha)r}\|g\|_{(1-\beta)r,h}^{(1-\beta)r} \|f\|_{\alpha p_1,h}^{\alpha  r}\|g\|_{\beta p_2,h}^{\beta r}. 
\end{align*}
Since $(1-\alpha)r = \alpha p_1 = p$  and $(1-\beta)r = \beta p_2 = q$, this completes the proof. 
\end{proof}
\subsection{Fourier transform}
Take any $h > 0$ and let $K=1/h$. For a function $f:\zz^d \ra\cc$ with finite $L^1$ norm, we define the Fourier transform of $f$ at grid size $h$ as
\[
\hat{f}(\xi) := h^d \sum_{x\in \zz^d} f(x) e^{\I\pi h x\cdot \xi}, \ \xi \in [-K, K]^d,  
\]
where $\I = \sqrt{-1}$. 
Again, the notation does not explicitly include the grid size; it is to be understood from the context.  
It is easy to verify the inversion formula
\begin{align*}
f(x) = 2^{-d}\int_{[-K,K]^d} \hat{f}(\xi) e^{-\I \pi h x\cdot \xi} \; d\xi. 
\end{align*}
With the above definition, if $u = f*g$ (at grid size $h$), then
\[
\hat{u}(\xi) = \hat{f}(\xi) \hat{g}(\xi),
\]
provided that $f$ and $g$ are in $L^1 \cap L^2$. Another easy fact is the Plancherel identity
\begin{equation}\label{plancherel}
\|f\|_{2,h} = 2^{-d} \int_{[-K,K]^d} |\hat{f}(\xi)|^2 d\xi. 
\end{equation}
\subsection{Littlewood-Paley decomposition}
Fix $h > 0$ and let $K=1/h$, as in the preceding Subsection. Let $\gamma_0: [-K,K] \ra[0,1]$ be a smooth (i.e.\ $C^\infty$) function that is $1$ in $[-1,1]$ and $0$ outside $(-2,2)$. Define $\gamma: [-K,K]^d \ra [0,1]$ as 
\[
\gamma(\xi) := \prod_{i=1}^d \gamma_0(\xi_i).
\]
Note that $\gamma$ is simply $\hat{g}$, where $g:\zz^d \ra \cc$ is the function 
\begin{align*}
g(x) &= 2^{-d}\int_{[-K,K]^d}\gamma(\xi) e^{-\I \pi h x\cdot \xi} \; d\xi \\
&= 2^{-d}\prod_{i=1}^d\int_{-2}^2\gamma_0(\xi_i) e^{-\I \pi h x_i \xi_i} \; d\xi_i= \prod_{i=1}^d \varphi(\pi h x_i), 
\end{align*}
where $\varphi: \rr\ra\cc$ is the function
\begin{equation}\label{phiform}
\varphi(x) = \frac{1}{2}\int_{-2}^2\gamma_0(t) e^{-\I x t}\; dt.  
\end{equation}
For any $a \in (0, K]$, let $\gamma_a:[-K,K]^d\ra[0,1]$ be the function $\gamma(\xi/a)$.
A computation similar to the above shows that $\gamma_a$ is the Fourier transform of $g_a$, where
\[
g_a(x) =  a^d\prod_{i=1}^d  \varphi(a\pi h x_i). 
\]
\begin{lmm}\label{glemma}
For any $a \in (0,K]$ and any $p\in [1,\infty]$, 
\[
C_1(p,d)a^{d(p-1)/p}\le \|g_a\|_{p,h}\le C_2(p,d) a^{d(p-1)/p}, 
\]
where $C_1(p,d)$ and $C_2(p,d)$ are constants depending only on $p$ and $d$. 
\end{lmm}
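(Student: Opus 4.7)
The plan is to separate variables: since $g_a(x) = a^d \prod_{i=1}^d \varphi(a\pi h x_i)$ factors over the coordinates, the $L^{p,h}$ norm factors as well. Explicitly, for $p < \infty$,
\[
\|g_a\|_{p,h}^p = a^{dp}\prod_{i=1}^d \Bigl(h\sum_{x_i\in\zz}|\varphi(a\pi h x_i)|^p\Bigr),
\]
so the whole problem reduces to bounding the one-dimensional sum $S(b) := b\sum_{n\in\zz}|\varphi(nb)|^p$ where $b := a\pi h$, since $h\sum_{n\in\zz}|\varphi(nb)|^p = S(b)/(a\pi)$. Because $a\le K=1/h$, the parameter $b$ ranges over $(0,\pi]$. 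I will show that $S(b)$ is bounded above and below by positive constants depending only on $p$ (and through $\varphi$, on the fixed cutoff $\gamma_0$, hence only on $d$). Given that, one gets $h\sum_{x_i}|\varphi(a\pi h x_i)|^p \asymp 1/a$, hence $\|g_a\|_{p,h}^p \asymp a^{dp-d}$, which is the claim. The case $p=\infty$ is essentially trivial: since $\gamma_0\ge 0$ equals $1$ on $[-1,1]$, we have $\varphi(0)=\tfrac12\int_{-2}^2\gamma_0 \ge 1$, giving $\|g_a\|_\infty \ge a^d\varphi(0)^d\ge a^d$ by evaluation at $x=0$, while the trivial upper bound $\|g_a\|_\infty \le a^d\|\varphi\|_\infty^d$ is finite since $\gamma_0$ is integrable.

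For $p<\infty$, the key input is that $\varphi$ is a Schwartz function: as the inverse Fourier transform (in one variable) of the $C^\infty$ compactly supported $\gamma_0$, it satisfies $|\varphi(t)|\le C_N(1+|t|)^{-N}$ for every $N$. For the upper bound on $S(b)$, I split off the $n=0$ term (contributing $b|\varphi(0)|^p \le \pi\|\varphi\|_\infty^p$) and bound the remainder by Schwartz decay: picking $N$ with $Np>1$,
\[
b\sum_{n\ne 0}|\varphi(nb)|^p \le 2C_N^p\, b\sum_{n\ge 1}(1+nb)^{-Np} \le 2C_N^p\int_0^\infty (1+s)^{-Np}\,ds,
\]
a constant independent of $b\in(0,\pi]$ (the Riemann-sum comparison uses monotonicity of $s\mapsto (1+s)^{-Np}$). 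Summing gives a uniform upper bound $S(b)\le M_p$.

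For the matching lower bound on $S(b)$, I split the range of $b$ into two regimes. When $b$ is small, $S(b)$ is a Riemann sum (with mesh $b$) for the integral $\int_\rr|\varphi(t)|^p\,dt$, which is a strictly positive finite number because $\varphi$ is Schwartz and not identically zero (indeed $\varphi(0)\ge 1$ by the computation above, together with continuity). Using Schwartz decay to dominate the tails uniformly in $b$, one checks that $S(b)\to \int_\rr|\varphi(t)|^p\,dt$ as $b\downarrow 0$; hence there exists $b_0>0$ such that $S(b)\ge \tfrac12\int_\rr|\varphi|^p$ for $b\in(0,b_0]$. For $b\in[b_0,\pi]$, we use only the $n=0$ term: $S(b)\ge b|\varphi(0)|^p \ge b_0$. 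Taking the minimum of these two positive constants yields a uniform lower bound $S(b)\ge c_p>0$, which combined with the product formula above gives the desired $\|g_a\|_{p,h}\ge C_1(p,d)a^{d(p-1)/p}$.

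The only mild subtlety is the uniform lower bound: one must verify the Riemann-sum convergence rigorously despite $\varphi$ being complex-valued and oscillatory, but this follows routinely from Schwartz decay (which gives a dominating integrable majorant). Everything else is direct bookkeeping of powers of $a$, $h$, and constants.
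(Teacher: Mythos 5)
Your proof is correct and takes essentially the same approach as the paper: separate variables, reduce to the one-dimensional quantity $b\sum_n|\varphi(nb)|^p$ with $b=a\pi h$, and use Schwartz decay of $\varphi$ (inverse Fourier transform of the $C^\infty_c$ bump $\gamma_0$) to control this Riemann-sum-like expression. The one substantive difference is that you explicitly treat the regime $b\in[b_0,\pi]$ separately via the $n=0$ term and the positivity $\varphi(0)\ge 1$, whereas the paper only states the limit as $ah\ra 0$ and then asserts the two-sided bound for all $a\le K$; that assertion implicitly relies on continuity and strict positivity of $b\mapsto b\sum_n|\varphi(nb)|^p$ on the compact interval $[b_0,\pi]$, exactly the point your case split makes explicit.
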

\begin{proof}
Note that for any $p\in [1,\infty]$, 
\begin{align*}
\|g_a\|_{p,h}^p &= h^da^{dp} \biggl(\sum_{x\in \zz} |\varphi(a\pi h x)|^p \biggr)^d. 
\end{align*}
From the formula \eqref{phiform}, the properties of $\gamma_0$, and standard results about oscillatory integrals (see e.g.\ \cite[Chapter VIII, Proposition 1]{stein93}) it follows that $\varphi(x)$ decays faster than $|x|^{-\alpha}$ for any $\alpha$ as $|x|\ra\infty$. Moreover, $\varphi$ is a continuous function. Therefore, as $ah\ra0$, 
\[
a\pi h\sum_{x\in \zz} |\varphi(a\pi h x)|^p\ra \int_{\rr} |\varphi(u)|^p\; du\in (0,\infty).
\]
This shows that 
\begin{align*}
\frac{C_1(p)}{ah} \le \sum_{x\in \zz} |\varphi(a\pi h x)|^p\le \frac{C_2(p)}{ah},
\end{align*}
which completes the proof.
\end{proof}
For each $a$, let $P_a f$ be the function whose Fourier transform is 
\[
(\gamma_a(\xi) - \gamma_{a/2}(\xi)) \hat{f}(\xi).
\] 
In other words, $P_a f = (g_a-g_{a/2}) * f$. Now, for any nonzero $\xi \in [-K, K]^d$,
\begin{align*}
\sum_{j=0}^\infty (\gamma_{2^{-j}K}(\xi)-\gamma_{2^{-(j+1)}K}(\xi)) =  1,  
\end{align*}
which shows that for functions with suitable decay at infinity (e.g.\ functions with bounded support)
\[
f = \sum_{j=0}^\infty P_{2^{-j}K} f.
\]
This is the discrete analog (at grid size $h$) of the classical Littlewood-Paley decompositions (see e.g.\ \cite[Appendix A]{tao06}).  
\begin{lmm}\label{palemma}
For any $h > 0$, any $1\le p\le q$, any $a\in (0,K]$, and any $f:\zz^d \ra \cc$, 
\begin{align*}
\|P_a f\|_{q,h} &\le C(p,q,d) a^{\frac{d}{p}-\frac{d}{q}} \|f\|_{p,h}. 
\end{align*}
\end{lmm}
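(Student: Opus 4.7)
The plan is to view $P_a f = (g_a - g_{a/2}) * f$ as a convolution and estimate it by applying the discrete Young inequality (Proposition \ref{young}) together with the explicit $L^s$ bounds on the kernels $g_a$ provided by Lemma \ref{glemma}.

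First, I would choose the third exponent for Young's inequality. Given $1 \le p \le q \le \infty$, define $s \in [1, q]$ by
\[
\frac{1}{s} = 1 + \frac{1}{q} - \frac{1}{p},
\]
which is a valid exponent in $[1,\infty]$ precisely because $p \le q$. Then Proposition \ref{young}, applied to the convolution $(g_a - g_{a/2}) * f$ with exponents $(s, p, q)$, yields
\[
\|P_a f\|_{q,h} \le \|g_a - g_{a/2}\|_{s,h} \, \|f\|_{p,h}.
\]

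Next, I would bound the kernel in $L^{s,h}$. By the triangle inequality and Lemma \ref{glemma} applied both to $g_a$ and to $g_{a/2}$,
\[
\|g_a - g_{a/2}\|_{s,h} \le \|g_a\|_{s,h} + \|g_{a/2}\|_{s,h} \le C(s,d) \bigl( a^{d(s-1)/s} + (a/2)^{d(s-1)/s}\bigr) \le C'(s,d)\, a^{d(s-1)/s}.
\]
A short arithmetic check, using the definition of $s$, gives
\[
\frac{d(s-1)}{s} = d - \frac{d}{s} = d - \Bigl(d + \frac{d}{q} - \frac{d}{p}\Bigr) = \frac{d}{p} - \frac{d}{q},
\]
so combining the last two displays yields the desired estimate with a constant that depends only on $p$, $q$ and $d$ (since $s$ is determined by $p$ and $q$).

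There is no real obstacle here: the lemma is just a discrete Bernstein-type bound, and the two nontrivial ingredients—the convolution structure of $P_a$ and the $h$-free $L^{s,h}$ bounds for $g_a$—have already been set up. The only thing to check is that the endpoint cases $p=1$, $q=\infty$, or $s=\infty$ are admissible; in each case the convention that $\|g_a\|_{\infty,h} = h^d \cdot C a^d$ (read off from Lemma \ref{glemma} at $p=\infty$, where $d(p-1)/p = d$) and the corresponding Young endpoint still match the arithmetic $d/p - d/q$, so no separate argument is needed.
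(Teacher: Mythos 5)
Your argument is correct and is essentially the same as the paper's: apply the discrete Young inequality with $1/s = 1 + 1/q - 1/p$, bound $\|g_a - g_{a/2}\|_{s,h}$ via the triangle inequality and Lemma \ref{glemma}, and check the exponent arithmetic $d(s-1)/s = d/p - d/q$. The only slip is a cosmetic one in your endpoint aside: by definition $\|g_a\|_{\infty,h} = h^{d/\infty}\|g_a\|_\infty = \|g_a\|_\infty \le C a^d$, with no extra factor of $h^d$; this does not affect the argument since Lemma \ref{glemma} already covers $p=\infty$.
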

\begin{proof}
By Young's inequality (Lemma \ref{young}), for any $1\le p\le q$ and any $a\in (0,K]$,
\begin{align*}
\|P_a f\|_{q,h} &\le (\|g_a\|_{s,h} + \|g_{a/2}\|_{s,h}) \|f\|_{p,h},
\end{align*}
where 
\[
\frac{1}{s}=\frac{1}{q}+1 - \frac{1}{p}. 
\]
The proof is now easily completed using  Lemma \ref{glemma}.
\end{proof}
\subsection{Gagliardo-Nirenberg inequality}
The goal of this subsection is the prove a version of the Gagliardo-Nirenberg inequality  for the lattice at grid size $h$. (For the well-known continuum version, see e.g.\ \cite[Appendix A]{tao06}.) The result is stated as Proposition \ref{gn} below. To prepare for this, we first need some definitions and lemmas.

For $i=1,\ldots, d$, let $e_i$ denote the $i$th coordinate  vector. Let $\nabla_i$ denote the discrete derivative operator in the $i$th coordinate direction, defined as 
\begin{equation}\label{discderiv}
\nabla_i f(x) := \frac{f(x+e_i)-f(x)}{h}. 
\end{equation}
Note that $\nabla_i f$ is simply the convolution of $f$ with $\delta_i$, where $\delta_i$ is the function
\[
\delta_i(x) =
\begin{cases}
-h^{-d-1} &\text{ if } x = 0,\\
h^{-d-1} &\text{ if } x=-e_i,\\
0 &\text{ for all other } x. 
\end{cases}
\]
\begin{lmm}\label{gradf}
For any $\xi\in [-K,K]^d$, 
\[
\sum_{i=1}^d |\widehat{\nabla_i f}(\xi)|^2 \ge C|\xi|^2 |\hat{f}(\xi)|^2,
\]
where $|\xi|$ is the Euclidean norm of $\xi$ and $C$ is a positive universal constant. 
\end{lmm}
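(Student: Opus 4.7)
The plan is to reduce the inequality to a pointwise trigonometric estimate by first computing the Fourier transform of $\nabla_i f$ explicitly. Starting from the definition
\[
\widehat{\nabla_i f}(\xi) = h^d \sum_{x\in \zz^d} \frac{f(x+e_i)-f(x)}{h} e^{\I \pi h x\cdot \xi}
\]
and shifting the summation index in the first term by $-e_i$, a one-line computation gives the multiplier identity
\[
\widehat{\nabla_i f}(\xi) = \frac{e^{-\I \pi h \xi_i}-1}{h}\, \hat f(\xi).
\]
Taking the squared modulus and using $|e^{\I\theta}-1|^2 = 4\sin^2(\theta/2)$, this yields
\[
|\widehat{\nabla_i f}(\xi)|^2 = \frac{4\sin^2(\pi h \xi_i/2)}{h^2}\, |\hat f(\xi)|^2.
\]

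Summing over $i$ from $1$ to $d$, the inequality will then follow once one shows
\[
\sum_{i=1}^d \frac{4\sin^2(\pi h \xi_i/2)}{h^2} \ge C\sum_{i=1}^d \xi_i^2 = C|\xi|^2
\]
for some universal $C > 0$ and all $\xi \in [-K,K]^d$ with $K = 1/h$. The restriction $\xi \in [-K,K]^d$ forces the arguments $\pi h \xi_i/2$ to lie in $[-\pi/2, \pi/2]$, which is precisely the range where Jordan's inequality $|\sin t| \ge (2/\pi)|t|$ holds. Applying it termwise,
\[
\sin^2(\pi h \xi_i/2) \ge \frac{4}{\pi^2}\cdot \frac{\pi^2 h^2 \xi_i^2}{4} = h^2 \xi_i^2,
\]
so each summand on the left is bounded below by $4\xi_i^2$, giving the claim with $C = 4$.

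I do not expect any real obstacle: the only content is that the confinement of $\xi$ to $[-K,K]^d$ keeps $\pi h \xi_i/2$ inside the first half-period of sine, exactly where $\sin^2$ is comparable to its argument squared. This is the discrete Fourier analog of the continuum identity $|\widehat{\partial_i f}(\xi)|^2 = |\xi_i|^2|\hat f(\xi)|^2$, with the symbol $\xi_i$ replaced by its discretization $(e^{-\I\pi h\xi_i}-1)/h$, and the inequality merely quantifies how close these two are on the fundamental domain.
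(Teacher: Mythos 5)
Your proof is correct and is essentially the same argument the paper gives: derive the multiplier identity $\widehat{\nabla_i f}(\xi) = h^{-1}(e^{-\I\pi h\xi_i}-1)\,\hat f(\xi)$ and then bound $|e^{-\I\theta}-1|$ from below by a multiple of $|\theta|$ on $[-\pi,\pi]$. The paper simply asserts the existence of such a constant $C_0$; you make it explicit via $|e^{\I\theta}-1|=2|\sin(\theta/2)|$ and Jordan's inequality, which is a harmless refinement.
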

\begin{proof}
A straightforward verification shows that
\begin{align*}
\widehat{\nabla_i f}(\xi) &= \frac{e^{-\I \pi h\xi_i}-1}{h} \hat{f}(\xi). 
\end{align*}
There is a positive constant $C_0$ such that for all $\theta\in [-\pi, \pi]$, 
\[
|1-e^{-\I \theta}| \ge C_0|\theta|.
\]
Therefore, if $\xi_i \in [-K, K]$, then 
\[
|\widehat{\nabla_i f}(\xi)| \ge C|\xi_i| |\hat{f}(\xi)|. 
\]
This completes the proof of the lemma.
\end{proof}
\begin{lmm}\label{gnstep}
For any $a\in (0,K]$ and any $f\in L^2(\zz^d)$,
\[
\|P_a f\|_{2,h} \le C a^{-1} G_h(f),
\]
where $C$ is a universal constant. 
\end{lmm}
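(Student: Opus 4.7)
The plan is to pass to the Fourier side, exploit the frequency-localization of $P_a f$ to produce the factor $a^{-1}$, and then use Lemma \ref{gradf} to convert the resulting weighted $L^2$ integral of $\hat f$ into the discrete gradient energy $G_h(f)$. (I read the intended statement as $\|P_a f\|_{2,h} \le C a^{-1} G_h(f)^{1/2}$, since $G_h$ is a squared quantity in its definition \eqref{energydefi}; the argument gives this.)

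First I would apply Plancherel \eqref{plancherel} to $P_a f$, which by construction has Fourier transform $(\gamma_a - \gamma_{a/2})\hat f$, obtaining
\[
\|P_a f\|_{2,h}^2 \;=\; 2^{-d}\int_{[-K,K]^d} |\gamma_a(\xi)-\gamma_{a/2}(\xi)|^2 |\hat f(\xi)|^2\, d\xi.
\]
Next I would exploit the key support property of the Littlewood--Paley increment: because $\gamma_{a/2}(\xi) = \gamma_a(\xi) = 1$ on the cube $[-a/2,\,a/2]^d$, the difference $\gamma_a - \gamma_{a/2}$ vanishes there, so the integrand is supported on the set where $|\xi|_\infty \ge a/2$, and in particular where $|\xi| \ge a/2$. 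Combined with the trivial bound $|\gamma_a - \gamma_{a/2}| \le 2$, this yields on the support the pointwise inequality $|\gamma_a-\gamma_{a/2}|^2 \le C\, |\xi|^2/a^2$, and hence
\[
\|P_a f\|_{2,h}^2 \;\le\; C\, a^{-2} \int_{[-K,K]^d} |\xi|^2 |\hat f(\xi)|^2\, d\xi.
\]

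Finally I would invoke Lemma \ref{gradf}, which provides the lower bound $\sum_i |\widehat{\nabla_i f}(\xi)|^2 \ge C |\xi|^2 |\hat f(\xi)|^2$ valid for every $\xi \in [-K,K]^d$; this converts the $|\xi|^2$ weight into a sum of Fourier transforms of discrete derivatives. Applying Plancherel once more in the reverse direction, together with the identity $G_h(f) = \sum_{i=1}^d \|\nabla_i f\|_{2,h}^2$ (which follows from \eqref{energydefi} after undoing the double counting of unordered neighbor pairs), gives
\[
\int |\xi|^2 |\hat f(\xi)|^2\, d\xi \;\le\; C\sum_{i=1}^d \int |\widehat{\nabla_i f}(\xi)|^2\, d\xi \;=\; C\,G_h(f).
\]
Chaining the two displayed inequalities and taking square roots yields the lemma. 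No step looks delicate: the substantive input has already been packaged into Lemma \ref{gradf}, and the ``discrete dyadic'' nature of the argument enters only through the elementary support observation for $\gamma_a - \gamma_{a/2}$; the only thing to watch is that all constants remain genuinely independent of the grid parameter $h$, which is automatic because Plancherel, Lemma \ref{gradf}, and the identity $\|f\|_{2,h} = \|\tilde f\|_2$ are all stated with $h$-free constants.
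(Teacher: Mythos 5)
Your proof is essentially the paper's: both pass to the Fourier side via Plancherel, use that $\gamma_a-\gamma_{a/2}$ is supported where $|\xi|\ge a/2$ to extract the factor $a^{-2}$, and then invoke Lemma~\ref{gradf} to convert the $|\xi|^2|\hat f|^2$ weight back into $G_h(f)$ (the paper re-applies Lemma~\ref{palemma} to dispose of the cutoff, whereas you simply bound $|\gamma_a-\gamma_{a/2}|\le 2$, an inessential simplification). You are also right that the stated inequality should read $\|P_af\|_{2,h}\le Ca^{-1}G_h(f)^{1/2}$; this is what the paper actually proves and what is used in \eqref{gnstep3}.
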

\begin{proof}
If $|\xi| \le a/2$, then $|\xi_i|\le a/2$ for each $i$, and hence 
\[
\gamma_a(\xi) = \gamma(\xi/a)= 1 = \gamma(2\xi/a)=\gamma_{a/2}(\xi).
\]
On the other hand, if $|\xi| > 2\sqrt{d}a$, then $|\xi_i|> 2a$ for some $i$, and therefore
\[
\gamma_a(\xi)=\gamma(\xi/a) = 0 = \gamma(2\xi/a)=\gamma_{a/2}(\xi).
\]
Combining these facts with Lemma \ref{gradf}, Lemma \ref{palemma} and the Plancherel identity \eqref{plancherel}, we~get 
\begin{align*}
\|P_a f\|_{2,h}^2 &= \int_{[-K,K]^d} |\widehat{P_a f}(\xi)|^2 \; d\xi \\
&= \int_{[-K,K]^d} |(\gamma_a(\xi)-\gamma_{a/2}(\xi)) \hat{f}(\xi)|^2 \; d\xi \\
&\le Ca^{-2}\int_{a/2\le |\xi| \le 2\sqrt{d}a} \sum_{i=1}^d|(\gamma_a(\xi)-\gamma_{a/2}(\xi)) \widehat{\nabla_if}(\xi)|^2\; d\xi\\
&\le Ca^{-2}\sum_{i=1}^d \|P_a \nabla_i f\|_{2,h}^2 \le Ca^{-2}\sum_{i=1}^d\|\nabla_i f\|_{2,h}^2= Ca^{-2}G_h(f). 
\end{align*}
This completes the proof. 
\end{proof}
The following proposition may be called the Gagliardo-Nirenberg inequality for the lattice with grid size $h$. The important thing, as usual, is that the constant does not depend on $h$. 
\begin{prop}\label{gn}
Take any $2<q\le \infty$ and let $\theta\in (0,1)$ solve 
\begin{equation}\label{qtheta}
\frac{1}{q}= \frac{1}{2}-\frac{\theta}{d}.
\end{equation}
Then for any $f\in L^q(\zz^d) \cap L^2(\zz^d)$, we have 
\[
\|f\|_{q,h} \le C(q,d) \|f\|_{2,h}^{1-\theta} G_h(f)^{\theta/2}. 
\]
\end{prop}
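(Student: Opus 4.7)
My plan is to apply the Littlewood-Paley machinery developed just above the proposition. Set $K = 1/h$ and decompose
\[
f = \sum_{j=0}^{\infty} P_{a_j} f, \qquad a_j := 2^{-j}K,
\]
then bound $\|f\|_{q,h} \le \sum_{j} \|P_{a_j} f\|_{q,h}$ by Minkowski (or, for $q=\infty$, pointwise triangle inequality).

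The key intermediate step is a Bernstein-type inequality for Littlewood-Paley blocks: for every $a \in (0,K]$,
\[
\|P_a f\|_{q,h} \le C(q,d)\, a^{\theta}\, \|P_a f\|_{2,h}, \qquad \theta = d\bigl(\tfrac{1}{2} - \tfrac{1}{q}\bigr).
\]
To prove this, I would fix an auxiliary bump $\tilde \gamma_a$ on $[-K,K]^d$ which equals $1$ on the support of $\gamma_a - \gamma_{a/2}$ (contained in $[-2a,2a]^d$) and vanishes outside $[-4a,4a]^d$, and call its inverse Fourier transform $\tilde g_a$. Then $\widehat{P_a f} = \tilde\gamma_a \widehat{P_a f}$, so $P_a f = \tilde g_a * P_a f$, and Young's inequality (Proposition \ref{young}) with exponents satisfying $\tfrac{1}{s}+\tfrac{1}{2}=\tfrac{1}{q}+1$ gives $\|P_a f\|_{q,h} \le \|\tilde g_a\|_{s,h}\,\|P_a f\|_{2,h}$. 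Lemma \ref{glemma} (applied to $\tilde\gamma_a$ in place of $\gamma_a$, which works verbatim since $\tilde\gamma_a$ is also smooth and dilation-scaled) then yields $\|\tilde g_a\|_{s,h} \le C a^{d(1-1/s)} = C a^{\theta}$.

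Next I would feed in two bounds on $\|P_a f\|_{2,h}$: the trivial one $\|P_a f\|_{2,h} \le \|f\|_{2,h}$ (from Plancherel \eqref{plancherel} and $|\gamma_a-\gamma_{a/2}|\le 1$) and the gradient estimate $\|P_a f\|_{2,h} \le C a^{-1} G_h(f)^{1/2}$ (the content of the main computation in the proof of Lemma \ref{gnstep}). Composed with the Bernstein inequality above, these give the two dyadic estimates
\[
\|P_{a_j} f\|_{q,h} \le C\, a_j^{\theta}\, \|f\|_{2,h}, \qquad \|P_{a_j} f\|_{q,h} \le C\, a_j^{\theta-1}\, G_h(f)^{1/2}.
\]
Finally I would balance them at the threshold $a^* := G_h(f)^{1/2}/\|f\|_{2,h}$: use the first bound for the indices $j$ with $a_j \le a^*$ and the second for $j$ with $a_j > a^*$. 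Since $\theta \in (0,1)$, the first sum is a geometric series with ratio $2^{-\theta}<1$ dominated by its largest term $C a^{*\theta}\|f\|_{2,h}$, while the second is a finite geometric series (only $j \le \log_2(K/a^*)$) with ratio $2^{1-\theta}>1$ dominated by its last term $C a^{*(\theta-1)} G_h(f)^{1/2}$. Both dominant terms equal $C \|f\|_{2,h}^{1-\theta} G_h(f)^{\theta/2}$, yielding the claim. The edge case $a^* \ge K$ (which occurs only when $G_h(f) \ge K^2\|f\|_{2,h}^2$) uses only the first bound and is handled identically.

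There is no genuine obstacle: all the analytic weight was front-loaded into Lemmas \ref{glemma} and \ref{gnstep}. The only discrete-versus-continuum subtlety is that Bernstein must only invoke frequencies inside the Brillouin cube $[-K,K]^d$, so the auxiliary bump $\tilde\gamma_a$ must be supported there; this is automatic since $a \le K$ and $\tilde\gamma_a$ is supported in $[-4a,4a]^d \subseteq [-K,K]^d$ (after choosing the original bump $\gamma_0$ with support radius less than, say, $1/4$, which costs nothing).
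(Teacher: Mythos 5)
Your proof follows the same plan as the paper's: Littlewood-Paley decomposition, a Bernstein bound on each block, the two estimates $\|P_a f\|_{2,h} \le \|f\|_{2,h}$ and $\|P_a f\|_{2,h}\le Ca^{-1}G_h(f)^{1/2}$, and a split of the dyadic sum at the crossover scale $a^*=G_h(f)^{1/2}/\|f\|_{2,h}$. One thing you do better than the paper: you explicitly prove the block Bernstein estimate $\|P_af\|_{q,h}\le Ca^\theta\|P_af\|_{2,h}$ via a fattened multiplier $\tilde\gamma_a$ with $\tilde\gamma_a\cdot(\gamma_a-\gamma_{a/2})=\gamma_a-\gamma_{a/2}$. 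The paper, in the display \eqref{gnstep1}, invokes Lemma~\ref{palemma} for this step, but that lemma only gives $\|P_af\|_{q,h}\le Ca^\theta\|f\|_{2,h}$ (with $\|f\|_{2,h}$, not $\|P_af\|_{2,h}$, on the right); the sharper form with $\|P_af\|_{2,h}$ is exactly what your fattened bump supplies, and it is genuinely needed because the two bounds from \eqref{gnstep2}--\eqref{gnstep3} must be applied to $\|P_af\|_{2,h}$, not $\|f\|_{2,h}$.

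However, your parenthetical fix for the Brillouin-zone subtlety is wrong. If you shrink the support of $\gamma_0$ so that $\gamma_0$ is no longer identically $1$ on $[-1,1]$, then $\gamma_K$ is no longer identically $1$ on the full cube $[-K,K]^d$, and the telescoping identity $\sum_{j\ge 0}(\gamma_{2^{-j}K}-\gamma_{2^{-(j+1)}K})=\gamma_K$ fails to equal $1$ near the boundary of the cube; you lose the reconstruction $f=\sum_j P_{2^{-j}K}f$. The right way to handle the scales $a=2^{-j}K$ for which $4a>K$ (only $j=0,1$) is much cheaper: at those scales take $\tilde\gamma_a\equiv 1$ on $[-K,K]^d$, whose inverse transform is $h^{-d}$ times the Kronecker delta, so that $\|\tilde g_a\|_{s,h}=K^\theta\le 4^\theta a^\theta$; equivalently, at those scales the trivial $\ell^2\hookrightarrow\ell^q$ embedding $\|g\|_{q,h}\le K^\theta\|g\|_{2,h}$ already gives the Bernstein bound with the same power of $a$ up to a constant. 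With that replacement your argument closes.
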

\begin{proof}
Then for any $f$, by Lemma \ref{palemma},
\begin{align}
\|f\|_{q,h} &\le \sum_{j=0}^\infty \|P_{2^{-j}K} f\|_{q,h}\nonumber\\
&\le C(q,d)\sum_{j=0}^\infty  (2^{-j} K)^{\frac{d}{2}-\frac{d}{q}} \|P_{2^{-j}K}f\|_{2,h}\nonumber \\
&= C(q,d) \sum_{j=0}^\infty (2^{-j}K)^{\theta} \|P_{2^{-j}K} f\|_{2,h}.\label{gnstep1}
\end{align} 
Again by Lemma \ref{palemma}, 
\begin{align}\label{gnstep2}
\|P_{2^{-j}K} f\|_{2,h} \le \|f\|_{2,h},
\end{align}
and by Lemma \ref{gnstep}, there is a universal constant $C_0$ such that for all $j\ge 0$, 
\begin{align}\label{gnstep3}
\|P_{2^{-j}K} f\|_{2,h} &\le C_0 2^{j}h G_h(f)^{1/2}. 
\end{align}
Let $j_0\in \zz$ be the unique integer  such that 
\begin{align*}
C_0 2^{j_0-1} h G_h(f)^{1/2}\le \|f\|_{2,h} \le C_0 2^{j_0} h G_h(f)^{1/2}.
\end{align*}
Then note that 
\begin{align}\label{gnstep4}
\sum_{j> j_0} 2^{-\theta j}K^\theta \le C(q,d)2^{-\theta j_0}K^\theta \le C(q,d) \biggl(\frac{G_h(f)^{1/2}}{\|f\|_{2,h}}\biggr)^\theta,
\end{align}
and similarly
\begin{align}
\sum_{j\le j_0} 2^{(1-\theta)j}K^\theta h &\le C(q,d)2^{(1-\theta)j_0}h^{1-\theta}\nonumber \\
&\le C(q,d) \biggl(\frac{\|f\|_{2,h}}{G_h(f)^{1/2}}\biggr)^{1-\theta}.\label{gnstep5}
\end{align}
Combining \eqref{gnstep1}, \eqref{gnstep2}, \eqref{gnstep3}, \eqref{gnstep4} and \eqref{gnstep5} we get
\begin{align*}
\|f\|_{q,h} &\le C(q,d)\sum_{j=0}^\infty (2^{-j} K)^\theta \|P_{2^{-j}K} f\|_{2,h} \\
&\le C(q,d)\sum_{j = j_0+1}^\infty(2^{- j}K)^\theta \|f\|_{2,h} \\
&\qquad + C(q,d)\sum_{j=-\infty}^{j_0}(2^{- j}K)^\theta 2^{j}hG_h(f)^{1/2}\\
&\le C(q,d)\|f\|_{2,h}^{1-\theta} G_h(f)^{\theta/2}. 
\end{align*}
This completes the proof of the discrete Gagliardo-Nirenberg inequality.
\end{proof}
\subsection{Green's function}
Let $\Delta$ be the Laplacian operator on $\cc^{\zz^d}$ with grid size $h$, i.e.
\[
\Delta f(x) = \frac{1}{h^2} \sum_{y\; : \; y\sim x} (f(y)-f(x)). 
\]
\begin{lmm}\label{greeninv}
Let $I$ be the identity operator on $\cc^{\zz^d}$ and $\omega$ be any positive real number. For any $u\in L^2(\zz^d)$, the unique solution to the equation 
\[
(\omega I - \Delta) f = u
\]
is given by $f=g*u$, where $g$ is the discrete Green's function
\begin{equation}\label{greendef}
g(x) = \frac{h^{2-d}}{2d}\sum_{k=0}^\infty r^{k+1}p(x, k),
\end{equation}
where $r= 2d/(2d+\omega h^2)$ and $p(x,k)$ is the probability that a $d$-dimensional simple symmetric random walk started at the origin is at $x$ at time $k$. 
\end{lmm}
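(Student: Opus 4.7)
\medskip

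The plan is to identify $\omega I - \Delta$ with a rescaling of $I - rP$, where $P$ is the transition operator of the discrete-time simple symmetric random walk on $\zz^d$, and then use the Neumann series expansion.

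Specifically, I would first rewrite the Laplacian. Let $P$ be the operator $(Pf)(x) = \frac{1}{2d}\sum_{y\sim x} f(y)$, the one-step transition operator of simple random walk. A direct computation gives $\Delta = -\frac{2d}{h^2}(I - P)$, and therefore
\[
\omega I - \Delta = \frac{2d + \omega h^2}{h^2}\bigl(I - rP\bigr), \qquad r = \frac{2d}{2d+\omega h^2}\in(0,1).
\]
Since $P$ is Markov, it is a contraction on $L^2(\zz^d)$ (indeed $\|P\|_{\ell^2\to\ell^2}\le1$, as can be seen by Fourier transform or by Jensen's inequality). Because $r<1$, the operator $I - rP$ is invertible on $L^2(\zz^d)$ with Neumann series
\[
(I - rP)^{-1} = \sum_{k=0}^\infty r^k P^k,
\]
which converges in operator norm. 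Consequently, for any $u\in L^2(\zz^d)$, the unique solution of $(\omega I - \Delta)f = u$ is
\[
f = \frac{h^2}{2d+\omega h^2}\sum_{k=0}^\infty r^k P^k u.
\]

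Next, I would identify this with convolution. Iterating $P$ gives $(P^k u)(x) = \sum_{y\in\zz^d} p(x-y,k)\,u(y)$, using the translation and reflection invariance of simple random walk on $\zz^d$ so that the $k$-step transition probability from $x$ to $y$ equals $p(x-y,k)$. Therefore
\[
f(x) = \frac{h^2}{2d+\omega h^2}\sum_{k=0}^\infty r^k \sum_{y\in\zz^d} p(x-y,k)\, u(y) = h^d \sum_{y\in\zz^d} g(x-y)\, u(y),
\]
where $g$ is given by
\[
g(z) = \frac{h^{2-d}}{2d+\omega h^2}\sum_{k=0}^\infty r^k p(z,k) = \frac{h^{2-d}}{2d}\sum_{k=0}^\infty r^{k+1}p(z,k),
\]
matching \eqref{greendef}. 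Thus $f = g*u$ in the convolution notation of this section.

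Uniqueness is automatic: $-\Delta$ is a non-negative self-adjoint operator on $L^2(\zz^d)$ (its Fourier multiplier is $\frac{4}{h^2}\sum_i \sin^2(\pi h\xi_i/2)\ge 0$), so $\omega I - \Delta$ has spectrum in $[\omega,\omega + 4d/h^2]$ and is therefore boundedly invertible. There is no real obstacle here; the only point requiring a modicum of care is the interchange of the sum over $k$ with the sum over $y$, which is justified by absolute convergence (both $r^k$ and $\sum_y p(z,k) = 1$ are summable against $r^k$) combined with the fact that the Neumann series converges in $L^2$ operator norm, so that the resulting $f$ is indeed in $L^2(\zz^d)$ whenever $u$ is.
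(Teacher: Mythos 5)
Your proof is correct and rests on the same underlying computation as the paper's: the identification of $\Delta$ with the random walk transition operator and the resulting geometric expansion in powers of $rP$. The paper verifies $(\omega I-\Delta)(g*u)=u$ directly by the same recursion $p(x,y,k+1)=\frac{1}{2d}\sum_{w\sim y}p(x,w,k)$ and handles uniqueness via positive-definiteness of $\omega I - \Delta$; your Neumann-series packaging $(I-rP)^{-1}=\sum_k r^k P^k$ is a cleaner reorganization of the identical argument and gets invertibility for free from $\|rP\|<1$.
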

\begin{proof}
Note that $\Delta$ is negative semidefinite, since 
\[
(f, \Delta f) = -\frac{1}{2h^2}\sum_{x,y\in \zz^d\atop x\sim y} |f_x-f_y|^2. 
\]
Thus for any positive $\omega$, $\omega I-\Delta$ is a positive definite operator. In particular, given a function $u\in L^2(\zz^d)$, there can be at most one solution of 
\[
(\omega I -\Delta) f = u. 
\]
To show that $g*u$ is a solution, one proceeds exactly as in the proof of Lemma \ref{invlap}. 
\end{proof}
The Green's function is an indispensable tool in classical harmonic analysis. While the continuum Green's functions are relatively simple objects, the discrete ones are more complicated. The purpose of this Subsection is to derive some careful estimates for the discrete Green's functions. 
\begin{lmm}\label{pxk}
Let $p(x,k)$ be as in Lemma \ref{greeninv}. Then for all $x\in \zz^d$ and $k\ge 0$,
\[
p(x,k) \le C(d)e^{-|x|^2/2k}(1+k)^{-d/2}, 
\]
where we interpret $|x|^2/2k$ as $\infty$ if $x\ne 0$ and $k=0$, and as $0$ if $x=0$ and~$k=0$.
\end{lmm}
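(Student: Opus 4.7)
I will prove the bound by subordinating the $d$-dimensional walk to one-dimensional walks. Realize $(S_k)$ as follows: at each step, independently pick a coordinate $I_k \in \{1,\ldots,d\}$ uniformly and take a $\pm 1$ step in that coordinate. If $N_j(k) = |\{i \le k : I_i = j\}|$, then $(N_1,\ldots,N_d) \sim \mathrm{Multinomial}(k; 1/d,\ldots,1/d)$, and conditionally on $(N_1,\ldots,N_d)$ the $d$ coordinate processes are independent 1D simple random walks of lengths $N_1,\ldots,N_d$. Hence, writing $q_n(y)$ for the $n$-step transition probability of the 1D SRW (with $q_0(0)=1$ and $q_0(y)=0$ for $y\ne 0$),
\[
p(x,k) = \ee\Bigl[\prod_{j=1}^d q_{N_j}(x_j)\Bigr].
\]

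First I would establish the 1D Gaussian bound $q_n(y) \le C(1+n)^{-1/2}\exp(-y^2/(2n))$ (with $y^2/0 := \infty$ when $y \ne 0$). From the explicit formula $q_n(y) = \binom{n}{(n+y)/2}2^{-n}$ and Stirling, for $|y|\le (1-\delta)n$ this reduces to the entropy inequality $H(t) \ge t^2/2$ for the binary divergence $H(t) = \tfrac{1+t}{2}\log(1+t)+\tfrac{1-t}{2}\log(1-t)$ at $t=y/n$, which follows from $H''(t) = 1/(1-t^2)\ge 1$. For the boundary region $|y|\sim n$ the estimate is much weaker than needed, since the trivial bound $q_n(y) \le 2^{-n}$ together with $e^{-y^2/(2n)} \ge e^{-n/2}$ already gives the conclusion because $2^{-n} \ll e^{-n/2}(1+n)^{-1/2}$.

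Second, I would separate the Gaussian and polynomial factors in the product. Plugging in the 1D bound,
\[
\prod_{j=1}^d q_{N_j}(x_j) \le C^d \prod_{j=1}^d (1+N_j)^{-1/2} \cdot \exp\!\Bigl(-\sum_{j=1}^d \frac{x_j^2}{2N_j}\Bigr).
\]
The product is nonzero only on configurations where $N_j>0$ whenever $x_j\ne 0$. On this set, Cauchy--Schwarz restricted to $J=\{j:N_j>0\}$ gives
\[
|x|^2 \le |x|_1^2 = \Bigl(\sum_{j\in J}|x_j|\Bigr)^2 \le \Bigl(\sum_{j\in J}N_j\Bigr)\Bigl(\sum_{j\in J}\frac{x_j^2}{N_j}\Bigr) \le k\sum_j \frac{x_j^2}{N_j},
\]
so $\exp(-\sum x_j^2/(2N_j)) \le e^{-|x|^2/(2k)}$. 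This pulls the exponential outside the expectation:
\[
p(x,k) \le C^d e^{-|x|^2/(2k)} \ee\Bigl[\prod_{j=1}^d (1+N_j)^{-1/2}\Bigr].
\]

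The last step is to bound $\ee\bigl[\prod_j (1+N_j)^{-1/2}\bigr] \le C(d)(1+k)^{-d/2}$. By H\"older's inequality with $d$ exponents all equal to $d$,
\[
\ee\Bigl[\prod_{j=1}^d (1+N_j)^{-1/2}\Bigr] \le \prod_{j=1}^d \bigl(\ee[(1+N_j)^{-d/2}]\bigr)^{1/d}.
\]
Each $N_j \sim \mathrm{Binomial}(k,1/d)$, and standard Chernoff gives $\pp(N_j < k/(2d)) \le e^{-c(d)k}$. Splitting the expectation accordingly, $\ee[(1+N_j)^{-d/2}] \le (1+k/(2d))^{-d/2} + e^{-c(d)k} \le C(d)(1+k)^{-d/2}$ once $k \ge k_0(d)$; for $k\le k_0(d)$ the bound is trivial since $(1+k)^{-d/2}$ is bounded below. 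Combining gives the desired estimate.

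The main obstacle is the uniform 1D estimate: Stirling's approximation degrades near $|y|=n$, where $\sqrt{1-(y/n)^2}$ in the denominator blows up, so the sub-Gaussian envelope has to be verified by a separate crude argument using $q_n(y)\le 2^{-n}$ at the boundary, as noted above. Everything else is bookkeeping with Cauchy--Schwarz, H\"older, and a Chernoff tail for the multinomial marginals.
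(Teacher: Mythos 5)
Your proposal is correct and takes essentially the same route as the paper's proof: both write $p(x,k)$ as the expectation over the multinomial step counts of a product of one-dimensional binomial factors, apply a sub-Gaussian Stirling bound to each factor, pull the exponential outside by $\sum_j x_j^2/N_j \ge |x|^2/k$ (which you make explicit via Cauchy--Schwarz, the paper leaves implicit), and finish with H\"older plus a Chernoff/Hoeffding tail on the binomial marginals. The only stylistic difference is that you verify the 1D Gaussian bound via the entropy inequality $H(t)\ge t^2/2$ and handle the edge case $|y|\sim n$ separately, whereas the paper uses a direct Stirling estimate with $\log(1+x)\ge x-x^2/2$; your version is a bit more careful about the boundary region.
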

\begin{proof}
Suppose we are given $x = (x_1,\ldots, x_d)\in \zz^d$, $k\ge 0$, and $k_1,\ldots, k_d$ summing to $k$. If we know that for each $i=1,\ldots, d$, the walk has taken a total of $k_i$ steps along the $i$th coordinate axis, then the number of  ways that the walk can be at $x$ at step $k$ is exactly
\[
\prod_{i=1}^d {k_i \choose (k_i+x_i)/2},
\]
where the combinatorial term is understood to be zero if $k_i+x_i$ is odd, or if $x_i\not \in [-k_i, k_i]$. Therefore, 
\begin{align}
p(x,k) &= (2d)^{-k}\sum_{0\le k_1,\ldots, k_d\le k\atop k_1+\cdots +k_d=k} \frac{k!}{k_1!\cdots k_d!} \prod_{i=1}^d {k_i \choose (k_i+x_i)/2} \nonumber\\
&= \sum_{0\le k_1,\ldots, k_d\le k\atop k_1+\cdots +k_d=k} \frac{k!d^{-k}}{k_1!\cdots k_d!} \prod_{i=1}^d {k_i \choose (k_i+x_i)/2} 2^{-k_i}.\label{pxkform}
\end{align}
Suppose $k$ balls are dropped independently and uniformly at random into $d$ boxes. Let $K_i$ be the number of balls falling into the $i$th box. Then for any $0\le k_1,\ldots, k_d\le k$ such that $k_1+\cdots + k_d= k$,
\[
\pp(K_1=k_1,\ldots, K_d = k_d)= \frac{k!d^{-k}}{k_1!\cdots k_d!}. 
\]
Therefore,
\begin{equation}\label{pxkexp}
p(x,k) = \ee \biggl(\prod_{i=1}^d {K_i \choose (K_i+x_i)/2} 2^{-K_i}\biggr).
\end{equation}
A simple computation using Stirling's formula (e.g.\ the matching upper and lower bounds in \cite{robbins55}) shows that there is a universal constant $C$ such that for any integers $a \ge 1$ and $b\in [-a,a]$ such that $a+b$ is even,
\begin{align*}
\log \biggl({a \choose (a+b)/2}2^{-a}\biggr) &\le C - \frac{1}{2}\log a - \frac{a+b+1}{2}\log \biggl(1+\frac{b}{a}\biggr)\\
&\qquad - \frac{a-b+1}{2}\log \biggl(1-\frac{b}{a}\biggr).
\end{align*}
It is easy to verify that for any $x\in (-1,\infty)$, $\log(1+x)\ge x-\frac{x^2}{2}$. Applying this to control the logarithms on the right-hand side of the above expression, one gets
\[
{a \choose (a+b)/2}2^{-a} \le C\frac{e^{-b^2/2a}}{\sqrt{a}}. 
\]
This holds for $a\ge 1$. To include $a=0$, one modifies the inequality slightly to get 
\begin{equation}\label{stirlingbd}
{a \choose (a+b)/2}2^{-a} \le C\frac{e^{-b^2/2a}}{\sqrt{1+a}}. 
\end{equation}
Using this bound in \eqref{pxkexp} and applying H\"older's inequality, we get
\begin{align}
p(x,k) &\le C(d)\ee\biggl(e^{-\sum_{i=1}^d x_i^2/2K_i}\prod_{i=1}^d (1+K_i)^{-1/2}\biggr)\nonumber \\
&\le C(d) e^{-|x|^2/2k}  \ee\biggl(\prod_{i=1}^d (1+K_i)^{-1/2}\biggr)\nonumber \\
&\le C(d) e^{-|x|^2/2k} \biggl(\prod_{i=1}^d \ee(1+K_i)^{-d/2}\biggr)^{1/d}.\label{pxkbd}
\end{align}
(The interpretation for $k=0$ is as in the statement of the Lemma.) 

Now, each $K_i$ has a Binomial distribution with parameters $k$ and $1/d$. Therefore by Hoeffding's inequality \cite{hoeffding63}, 
\[
\pp(K_i\le k/2d) \le e^{-C(d)k},
\]
which clearly shows that for any $r > 0$,
\begin{equation}\label{kir}
\begin{split}
\ee(1+K_i)^{-r} &\le \pp(K_i \le k/2d) + (1+k/2d)^{-r}\\
&\le C(d,r)(1+k)^{-r}. 
\end{split}
\end{equation}
Plugging this into \eqref{pxkbd} proves the lemma.
\end{proof}
\begin{lmm}\label{intlmm}
Suppose that $f:[0,\infty)\ra[0,\infty)$ is a non-increasing function and $g:[0,\infty) \ra[0,\infty)$ is a non-decreasing function. Then
\[
\sum_{k=0}^\infty f(k+1)g(k) \le \int_0^\infty f(t) g(t) dt. 
\]
More generally,
\[
\sum_{x\in \zz^d}^\infty f(|x|+\sqrt{d})g(|x|) \le \int_{\rr^d} f(|y|) g(|y|) dy. 
\]
\end{lmm}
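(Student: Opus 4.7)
The plan is a routine integral comparison argument using monotonicity. For the one-dimensional inequality, I would partition $[0,\infty)$ into the unit intervals $[k,k+1]$, $k=0,1,\ldots$. On each such interval, non-increasingness of $f$ gives $f(t)\ge f(k+1)$ and non-decreasingness of $g$ gives $g(t)\ge g(k)$; since both are non-negative this yields $f(t)g(t)\ge f(k+1)g(k)$ pointwise. Integrating over $[k,k+1]$ and summing over $k\ge 0$ gives the claim at once.

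For the $d$-dimensional inequality, I would produce, for each $x\in\zz^d$, a unit cube $C_x\subset\rr^d$ such that the $C_x$'s are pairwise disjoint and every $y\in C_x$ satisfies both $|y|\ge|x|$ and $|y-x|\le\sqrt{d}$ (hence $|y|\le|x|+\sqrt{d}$). A convenient explicit choice is
\[
C_x = \prod_{i=1}^d I_{x_i}, \qquad I_t = \begin{cases}[t,t+1) & \text{if }t\ge 0,\\ (t-1,t] & \text{if }t<0,\end{cases}
\]
so that in each coordinate the endpoint of $I_{x_i}$ nearest the origin is $x_i$ itself. A coordinatewise check shows that for $y\in C_x$ one has $|y_i|\ge|x_i|$ and $|y_i-x_i|<1$, which yields the two required inequalities on $|y|$. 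These cubes do not cover $\rr^d$ (they miss the strips in which some coordinate lies in $(-1,0)$), but that is harmless since we only want an upper bound on the sum.

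Given these cubes, non-negativity and the monotonicity assumptions give $f(|y|)g(|y|)\ge f(|x|+\sqrt{d})g(|x|)$ for every $y\in C_x$, so integrating over the unit-volume set $C_x$ and summing over $x\in\zz^d$ yields
\[
\sum_{x\in\zz^d} f(|x|+\sqrt{d})g(|x|) \le \sum_{x\in\zz^d}\int_{C_x} f(|y|)g(|y|)\,dy \le \int_{\rr^d} f(|y|)g(|y|)\,dy,
\]
which is the claimed inequality (and is trivially true if the right-hand side is infinite). There is no real obstacle here; the only mild care required is in choosing the cubes $C_x$ so that the two monotonicity comparisons hold simultaneously in every coordinate.
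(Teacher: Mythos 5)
Your proof is correct, and the one-dimensional part is exactly the paper's argument. For the $d$-dimensional inequality the two proofs share the same core idea -- compare the sum to the integral via unit cubes, one per lattice point, on each of which the monotonicity hypotheses give a pointwise bound -- but the implementations differ in how they handle the sign issue. The paper first reduces to the positive orthant: it observes that $\sum_{x\in\zz^d} f(|x|+\sqrt{d})g(|x|)\le 2^d\sum_{x\in\zz_+^d} f(|x|+\sqrt{d})g(|x|)$ (each $x\in\zz_+^d$ has at most $2^d$ preimages under $x\mapsto(|x_1|,\ldots,|x_d|)$) and $\int_{\rr^d}f(|y|)g(|y|)\,dy = 2^d\int_{\rr_+^d}f(|y|)g(|y|)\,dy$, and then uses the standard half-open cubes $\prod_i[x_i,x_i+1)$, on which $|y|\ge|x|$ holds because all coordinates are non-negative. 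You instead avoid the symmetry reduction by choosing cubes reflected towards the origin in each negative coordinate, so that $|y_i|\ge|x_i|$ holds coordinatewise on each $C_x$ directly, at the mild cost of having to check that the resulting $C_x$ are pairwise disjoint (they are, since the $I_t$, $t\in\zz$, are pairwise disjoint in $\rr$) and accepting that they do not tile $\rr^d$, which is harmless for an upper bound. Both routes are one-step variants of the same comparison; yours trades the $2^d$ bookkeeping for the slightly more delicate cube construction.
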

\begin{proof}
For the first part, simply observe that when $t\in [k, k+1]$, $f(t) \ge f(k+1)$ and $g(t) \ge g(k)$, so that $f(k+1)g(k)\le \int_k^{k+1} f(t)g(t) dt$.

For the second part, let $\zz_+ = \{0,1,\ldots\}$ and $\rr_+ = [0,\infty)$. Then 
\[
\sum_{x\in \zz^d}^\infty f(|x|+\sqrt{d})g(|x|) \le 2^d \sum_{x\in \zz_+^d} f(|x|+\sqrt{d})g(|x|)
\]
and 
\[
\int_{\rr^d} f(|y|) g(|y|) dy = 2^d \int_{\rr_+^d} f(|y|) g(|y|) dy,
\]
so that it suffices to prove 
\[
\sum_{x\in \zz_+^d}^\infty f(|x|+\sqrt{d})g(|x|) \le \int_{\rr_+^d} f(|y|) g(|y|) dy. 
\]
Take any $x\in \zz_+^d$. Let $B(x)$ be the set 
\[
\{y\in \rr^d : x_i\le y_i< x_i+1, \; i=1,\ldots,d\}.
\]
The sets $B(x)$ are pairwise disjoint and all have volume $1$. Moreover, on the set $B(x)$, $|x|\le |y|\le |x|+\sqrt{d}$, and therefore 
\[
f(|x|+\sqrt{d}) g(|x|)\le  \int_{B(x)} f(|y|) g(|y|) dy. 
\]
This completes the proof of the lemma.   
\end{proof}
\begin{lmm}\label{greenbds}
Given $\omega > 0$, let $r= 2d/(2d+\omega h^2)$, as in Lemma \ref{greeninv}. Take any $\beta > 0$, $\alpha \in \rr$ and $x\in \zz^d$. Let 
\[
S := \sum_{k=0}^\infty r^{k+1} e^{-\beta |x|^2/k}(1+k)^{-\alpha/2}. 
\]
The following bounds hold:
\begin{enumerate}
\item[\textup{(a)}]  If $\alpha > 2$, then 
\[
S \le C(d,\alpha, \beta) \bigl(\sqrt{d}+|x|\bigr)^{2-\alpha} e^{-C(d,\beta, \omega)h|x|}.
\]
\item[\textup{(b)}] If $\alpha = 2$ then 
\[
S \le C(d,\alpha, \beta)  \bigl(C(\beta) + C(\beta)\bigl|\log (h(\sqrt{d}+|x|))\bigr|\bigr)e^{-C(d,\beta, \omega) h|x|}.
\]
\item[\textup{(c)}] If $\alpha < 2$, then 
\[
S \le C(d,\alpha, \beta) h^{\alpha -2}e^{-C(d,\beta, \omega) h|x|}.
\]
\end{enumerate}
In all three cases, $C(d,\beta, \omega)$ is an increasing function of $\omega$.
\end{lmm}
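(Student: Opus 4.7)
The plan is to extract the exponential-in-$|x|$ decay by an AM--GM split of the exponent $ck + \beta|x|^2/k$ (where $c := -\log r > 0$), then control the residual sum in each of the three regimes separately. Set $R := |x|$. Since $h \in (0,1)$, the identity $c = \log(1 + \omega h^2/(2d))$ yields $c \ge \omega h^2/(2d+\omega)$, so $\sqrt{c\beta} \ge C(d,\beta,\omega)\,h$ with $C(d,\beta,\omega)$ increasing in $\omega$; this will produce the claimed exponential factor $e^{-C(d,\beta,\omega)h|x|}$. By AM--GM, for every $k \ge 1$,
\[
ck + \frac{\beta R^2}{k} \;\ge\; \sqrt{c\beta}\, R \;+\; \tfrac{1}{2}\!\left(ck + \frac{\beta R^2}{k}\right),
\]
so that (handling $k=0$ trivially, since $e^{-\beta R^2/k}$ vanishes there for $R>0$)
\[
S \;\le\; r\, e^{-\sqrt{c\beta}\,R}\, T, \qquad T := \sum_{k=1}^\infty \exp\!\left(-\tfrac{ck}{2} - \tfrac{\beta R^2}{2k}\right)(1+k)^{-\alpha/2}.
\]
It then remains to bound $T$ in each case.

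For case (a), with $\alpha > 2$, discard the harmless factor $e^{-ck/2} \le 1$ and apply Lemma~\ref{intlmm} with the non-increasing function $f(t) = (2+t)^{-\alpha/2}$ and the non-decreasing function $g(t) = e^{-\beta R^2/(2t)}$ to dominate the remaining sum by a constant multiple of $\int_0^\infty e^{-\beta R^2/(2t)}(1+t)^{-\alpha/2}\,dt$. The substitution $t = \beta R^2/(2u)$ recasts this as $\int_0^\infty e^{-u}(1 + \beta R^2/(2u))^{-\alpha/2}\bigl(\beta R^2/(2u^2)\bigr)\,du$; for $R \ge 1$ the bound $(1 + \beta R^2/(2u))^{-\alpha/2} \le C(\alpha,\beta)(R^2/u)^{-\alpha/2}$ extracts $R^{2-\alpha}$ and leaves the finite integral $\int_0^\infty e^{-u} u^{\alpha/2 - 2}\,du = \Gamma(\alpha/2 - 1)$, while for $R \le 1$ the original integral is bounded by $\int_0^\infty (1+t)^{-\alpha/2}\,dt$. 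Combining the two regimes gives $T \le C(d,\alpha,\beta)(\sqrt{d}+R)^{2-\alpha}$. For case (c), with $\alpha < 2$, discard the other factor $e^{-\beta R^2/(2k)} \le 1$ and estimate
\[
T \;\le\; \sum_{k=1}^\infty e^{-ck/2}(1+k)^{-\alpha/2} \;\le\; C(\alpha) + \int_0^\infty e^{-ct/2}(1+t)^{-\alpha/2}\,dt;
\]
the substitution $u = ct/2$ produces $C(\alpha)\,c^{\alpha/2 - 1}$, finite since $\alpha/2 < 1$, and the lower bound $c \ge C(d,\omega) h^2$ combined with $\alpha - 2 < 0$ converts this into the required $h^{\alpha-2}$ (with implicit $\omega$-dependence absorbed into the constant).

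The hard part will be case (b), where $\alpha = 2$ and neither factor in $T$ alone yields a summable series: killing $e^{-ck/2}$ leaves $\sum e^{-\beta R^2/(2k)}(1+k)^{-1}$ which diverges at infinity, while killing $e^{-\beta R^2/(2k)}$ leaves $\sum e^{-ck/2}(1+k)^{-1} \sim \log(1/c)$. The plan is to split the sum at $k_\star \asymp 2/c$, the natural scale at which the large-$k$ exponential cutoff kicks in. For $k \le k_\star$ use $e^{-ck/2} \le 1$ and estimate by $\int_1^{2/c} e^{-\beta R^2/(2t)}(1+t)^{-1}\,dt$; since $e^{-\beta R^2/(2t)}$ transitions from nearly $0$ to nearly $1$ across $t \sim \beta R^2$, this integral is comparable to $\log t \big|_{(\beta R^2)\vee 1}^{2/c} \asymp \log\bigl(1/(c(R^2 \vee 1))\bigr)$. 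For $k > k_\star$ use the $e^{-ck/2}$ decay to obtain only an $O(1)$ additive contribution. Because $c \asymp h^2$ (up to $\omega$-dependent constants), $\log(1/(c(R^2 \vee 1))) \asymp |\log(h(\sqrt{d}+R))|$ up to a multiplicative and additive $C(\beta)$, giving the stated bound. Carefully tracking the transition of $e^{-\beta R^2/(2t)}$ through the region $t \asymp \beta R^2$, and confirming that the split point and the decay constants interact as claimed, is the principal bookkeeping challenge.
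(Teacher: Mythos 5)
Your proposal is correct and follows essentially the same route as the paper: the exponential factor is extracted from $r\le e^{-C(d,\omega)h^2}$ together with an AM--GM split of $ck+\beta|x|^2/k$, and the residual sum is then handled in the same three regimes by dropping whichever half of the exponent is harmless. The only organizational difference is that the paper first passes to a single integral via Lemma~\ref{intlmm} and the substitution $u=|x|^2/t$, performing AM--GM in the $u$-variable so that all three cases are read off from $\int_0^\infty e^{-A/2u-\beta u/2}u^{(\alpha-4)/2}\,du$ (with the case~(b) split at $u=\sqrt{A}$); you instead apply AM--GM at the level of the discrete sum and re-derive an integral estimate separately in each case, splitting at $k\asymp1/c$ in case~(b) — a different but equally valid threshold — and the bookkeeping you flag there is routine given your outline.
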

\begin{proof}
First, assume that $x\ne 0$. For any $y \ge 0$, 
\[
\log(1+y) = \int_0^y \frac{1}{1+z} dz \ge \frac{y}{1+y}. 
\]
Consequently, for any $K>0$ and $x\in [0,1]$, 
\begin{align*}
\frac{1}{1+Kx} &= e^{-\log (1+Kx)} \le e^{-Kx/(1+Kx)} \le e^{-Kx/(1+K)}.
\end{align*}
Since $h\in (0,1)$, the above inequality shows that 
\begin{equation}\label{rcd0}
r = \frac{2d}{2d+\omega h^2}= \frac{1}{1+\frac{\omega}{2d} h^2}\le e^{-C(d,\omega) h^2},
\end{equation}
where $C(d,\omega)$ is  an increasing function of $\omega$. 

By Lemma \ref{pxk} , Lemma \ref{intlmm} and the inequality \eqref{rcd0},
\begin{align*}
S&\le \int_0^\infty r^t e^{-\beta |x|^2/t} t^{-\alpha/2} dt\\
&\le \int_0^\infty e^{-C(d, \omega)h^2 t-\beta |x|^2/t} t^{-\alpha/2} dt.
\end{align*}
Applying the change-of-variable $u=|x|^2/t$ in the above integration, we get 
\begin{align*}
S \le |x|^{2-\alpha}\int_0^{\infty} \exp\biggl(-\frac{A}{u} - \beta u\biggr)u^{(\alpha-4)/2} du,  
\end{align*}
where $A = C(d,\omega) h^2 |x|^2$. The inequality $a^2+b^2 \ge 2ab$ shows that for all $u> 0$, 
\[
\frac{A}{2u} + \frac{\beta u}{2} \ge \frac{\sqrt{A\beta}}{2} =: B. 
\]
Consequently, 
\begin{align*}
S &\le |x|^{2-\alpha} e^{-B}\int_0^\infty \exp\biggl(-\frac{A}{2u} - \frac{\beta u}{2}\biggr) u^{(\alpha-4)/2} du. 
\end{align*}
When $\alpha > 2$, the integrand may be bounded by a constant that depends only on $\beta$, $\alpha$ and $d$ (and not $x$), by simply dropping the $A/2u$ term.

Next, consider the case $\alpha = 2$. Then
\begin{align*}
\int_{\sqrt{A}}^\infty \exp\biggl(-\frac{A}{2u} - \frac{\beta u}{2}\biggr) u^{-1} du&\le \int_{\sqrt{A}}^\infty e^{-\beta u/2} u^{-1} du\\
&\le  C(\beta) + C(\beta)|\log A|,
\end{align*}
and by the change of variable $z=A/u$, 
\begin{align*}
\int_0^{\sqrt{A}} \exp\biggl(-\frac{A}{2u} - \frac{\beta u}{2}\biggr) u^{-1} du&\le \int_{\sqrt{A}}^\infty \frac{e^{-z/2}}{z} dz \\
&\le C + C|\log A|. 
\end{align*}
Finally, consider the case $\alpha <2$. Again by the change of variable $z=A/u$, 
\begin{align*}
\int_0^\infty \exp\biggl(-\frac{A}{2u} - \frac{\beta u}{2}\biggr) u^{(\alpha-4)/2} du&\le A^{(\alpha-2)/2}\int_0^\infty e^{-z/2} z^{-\alpha/2} dz \\
&\le C(\alpha)A^{(\alpha-2)/2}. 
\end{align*}
This completes the proofs of all three cases when $x\ne 0$. When $x=0$, the steps are essentially the same, but simpler. 
\end{proof}
\begin{prop}\label{gbound}
Suppose that $d\ge 3$. Given $\omega>0$, define $g$ as in \eqref{greendef}. Then for all $x\in \zz^d$,
\[
g(x) \le C(d) h^{2-d} (\sqrt{d}+|x|)^{2-d} e^{-C(d,\omega)h|x|}, 
\]
where $C(d,\omega)$ is an increasing function of $\omega$ (when $d$ is fixed). 
\end{prop}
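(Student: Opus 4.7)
The proof is essentially a direct combination of the two preceding lemmas, so the plan is brief. Starting from the definition
\[
g(x) = \frac{h^{2-d}}{2d}\sum_{k=0}^\infty r^{k+1} p(x,k),
\]
I would first substitute the Gaussian heat-kernel-type upper bound on $p(x,k)$ supplied by Lemma \ref{pxk}, namely $p(x,k) \le C(d) e^{-|x|^2/2k}(1+k)^{-d/2}$. This turns $g(x)$ into a constant multiple of $h^{2-d}$ times the sum
\[
S := \sum_{k=0}^\infty r^{k+1} e^{-|x|^2/2k} (1+k)^{-d/2},
\]
which is exactly of the form analyzed in Lemma \ref{greenbds} with $\beta = 1/2$ and $\alpha = d$.

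Because $d \ge 3$, the exponent $\alpha = d$ satisfies $\alpha > 2$, so case (a) of Lemma \ref{greenbds} applies and yields
\[
S \le C(d)\bigl(\sqrt{d}+|x|\bigr)^{2-d} e^{-C(d,\omega) h|x|},
\]
where the monotonicity of $C(d,\omega)$ in $\omega$ is inherited directly from the corresponding statement in Lemma \ref{greenbds}. Multiplying by the prefactor $h^{2-d}/(2d)$ gives the claimed bound.

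There is no real obstacle here; all the analytic work — the random-walk Stirling estimate, the passage from the sum to an integral via Lemma \ref{intlmm}, and the Laplace-type evaluation that produces the exponential decay factor $e^{-C(d,\omega)h|x|}$ and the polynomial factor $(\sqrt{d}+|x|)^{2-d}$ — has already been carried out in Lemmas \ref{pxk} and \ref{greenbds}. The only thing to check is the numerology: $\alpha = d$ and $\beta = 1/2$ are constants determined by $d$, so $C(d,\alpha,\beta)$ collapses to $C(d)$ and $C(d,\beta,\omega)$ collapses to $C(d,\omega)$, matching the statement. The restriction $d \ge 3$ is exactly what is needed so that we land in case (a) rather than the logarithmic case (b) or the case (c) whose $h$-dependence would be different.
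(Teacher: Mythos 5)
Your proof is correct and matches the paper's argument exactly: substitute the Gaussian bound on $p(x,k)$ from Lemma \ref{pxk} into the series \eqref{greendef}, recognize the resulting sum as $S$ from Lemma \ref{greenbds} with $\beta=1/2$ and $\alpha=d$, and invoke case (a) since $d\ge 3$ gives $\alpha>2$. The identification of parameters and the observation that the restriction $d\ge 3$ is precisely what selects case (a) are both correct and are the same steps the paper takes.
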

\begin{proof}
First, suppose that $x\ne 0$. Then by Lemma \ref{greeninv} and Lemma \ref{pxk}, 
\begin{align*}
g(x) &\le C(d)h^{2-d} \sum_{k=0}^\infty r^{k+1}e^{-|x|^2/2k} (1+k)^{-d/2}.
\end{align*}
Lemma \ref{greenbds} now completes the proof.
\end{proof}
\begin{cor}\label{gcor0}
Suppose that $d\ge 3$ and $g$ is the Green's function defined in~\eqref{greendef}. Then for any $t\in [1, d/(d-2))$, $\|g\|_{t,h}\le C(t,d, \omega)$, where $C(t,d,\omega)$ is a decreasing function of $\omega$.
\end{cor}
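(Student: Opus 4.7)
My plan is to reduce the estimate to a single convergent Gamma integral by combining the pointwise bound from Proposition~\ref{gbound} with a rescaling that exposes the natural scale-invariance of the norm $\|\cdot\|_{t,h}$. I would start from
\[
g(x) \le C(d) h^{2-d}(\sqrt{d}+|x|)^{2-d}e^{-C(d,\omega)h|x|},
\]
raise both sides to the $t$-th power, and multiply by $h^d$ to obtain
\[
\|g\|_{t,h}^t = h^d \sum_{x\in \zz^d} |g(x)|^t \le C(d,t)\, h^{d+t(2-d)} \sum_{x\in \zz^d} (\sqrt{d}+|x|)^{t(2-d)} e^{-tC(d,\omega)h|x|}.
\]
The summand is a decreasing radial function of $|x|$, so I would compare this sum to a radial integral (either via Lemma~\ref{intlmm} or directly using $|\{x\in\zz^d:|x|\le r\}|\le C(d)(1+r)^d$), bounding it by
\[
C(d) \int_0^\infty (\sqrt{d}+r)^{t(2-d)} e^{-tC(d,\omega)hr} r^{d-1}\, dr.
\]

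The key step will be the substitution $s = hr$, which turns the integral into
\[
h^{t(d-2)-d}\int_0^\infty (h\sqrt{d}+s)^{t(2-d)} s^{d-1}e^{-tC(d,\omega)s}\, ds,
\]
so that the $h$-prefactors combine as $h^{d+t(2-d)}\cdot h^{t(d-2)-d}=h^0=1$ and all $h$-dependence drops out. Since $t(2-d) < 0$, one may discard the $h\sqrt{d}$ inside the parenthesis to obtain the $h$-free majorant
\[
\int_0^\infty s^{t(2-d)+d-1} e^{-tC(d,\omega)s}\, ds = \Gamma\bigl(t(2-d)+d\bigr)\bigl(tC(d,\omega)\bigr)^{-(t(2-d)+d)}.
\]
This is finite precisely when $t(2-d)+d > 0$, i.e., when $t < d/(d-2)$, which is exactly the hypothesis. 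Monotonicity in $\omega$ is then immediate: since $C(d,\omega)$ is increasing in $\omega$ and the exponent $t(2-d)+d$ is positive, the bound is a decreasing function of $\omega$. Taking the $t$-th root yields the claim.

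The only mildly delicate point will be the lattice-to-integral comparison near the origin, which is the discrete counterpart of the fact that the Euclidean Green's function $|x|^{2-d}$ lies in $L^t_{\mathrm{loc}}$ iff $t(d-2) < d$; no real obstacle arises because the singularity of the majorant $s^{t(2-d)+d-1}$ at $s=0$ sits exactly at the edge of integrability dictated by the assumption on $t$, and the exponential factor $e^{-tC(d,\omega)s}$ handles the tail at infinity.
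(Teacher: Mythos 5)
Your calculation is correct and gives a more unified route than the paper's. The paper splits the sum at $|x|=2\sqrt d/h$, using only the polynomial decay of $g$ in the near region and only the exponential decay in the far region, handling each piece with its own change of variable $z=hy$; you instead carry both factors of the pointwise bound through a single substitution $s=hr$, making the cancellation $h^{d+t(2-d)}\cdot h^{t(d-2)-d}=1$ transparent and landing directly on a Gamma integral whose finiteness is equivalent to $t<d/(d-2)$. This is tidier and gets the threshold on $t$ for free.

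One point you gloss over, and it is precisely the point the paper's split was designed to handle: the step ``compare the lattice sum to $C(d)\int_0^\infty(\sqrt d+r)^{t(2-d)}e^{-tC(d,\omega)hr}r^{d-1}\,dr$.'' If you run Lemma~\ref{intlmm} with $f(r)=r^{t(2-d)}e^{-tC(d,\omega)h(r-\sqrt d)}$ and $g\equiv1$, you pick up an extra factor $e^{tC(d,\omega)h\sqrt d}$, which is \emph{increasing} in $\omega$, so your claim that the resulting bound is ``immediately'' decreasing in $\omega$ does not follow as stated. If instead you go by shell counting, the $j=0,1$ shells contribute an additive $\omega$-independent constant that has to be isolated rather than absorbed into the integral. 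Both issues are benign --- in the first case $C(d,\omega)$ from Proposition~\ref{gbound}, traced back through Lemma~\ref{greenbds}, equals a fixed multiple of $\sqrt{\omega/(2d+\omega)}$ and is therefore bounded above uniformly in $\omega$, so the offending factor can be replaced by an $\omega$-free constant; in the second case a constant plus a decreasing function is still decreasing --- but either way a sentence is needed. The paper avoids the issue altogether by choosing the split point $A=2\sqrt d$ so that the shift by $\sqrt d$ in the exponent is dominated by the lower limit of integration, making the tail integral manifestly decreasing in $\omega$ without any auxiliary bound on $C(d,\omega)$. You should either adopt a similar device or explicitly invoke the boundedness of the exponential rate from Lemma~\ref{greenbds}.
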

\begin{proof}
Let $A = 2\sqrt{d}$. By Proposition \ref{gbound},
\begin{align*}
\|g\|_{t,h}^t &\le h^d\sum_{x\in \zz^d \atop |x|\le A/h}  C(t,d)\bigl( h^{2-d} (\sqrt{d}+|x|)^{2-d}\bigr)^t \\
&\qquad + \sum_{x\in \zz^d \atop |x|> A/h}C(t,d) e^{-C_0(d,\omega) h |x|t}, 
\end{align*}
where $C_0(d,\omega)$ is an increasing function of $\omega$. By Lemma \ref{intlmm} and the assumption that $h < 1$,
\begin{align*}
h^d\sum_{x\in \zz^d \atop |x|> A/h}e^{-C_0(d,\omega) h |x|t} &= e^{C_0(d,\omega)t\sqrt{d}} h^d \sum_{x\in \zz^d} e^{-C_0(d,\omega) h(|x|+\sqrt{d}) t}1_{\{|x|> A/h\}}\\
&\le e^{C_0(d,\omega)t\sqrt{d}} h^d \int_{\rr^d} e^{-C_0(d,\omega) h|y| t}1_{\{|y|> A/h\}} dy\\
&=  e^{C_0(d,\omega)t\sqrt{d}} \int_{\rr^d} e^{-C_0(d,\omega) |z|t}1_{\{|z|> A\}} dz \\
&= e^{C_0(d,\omega)t\sqrt{d}} \int_A^\infty u^{d-1}e^{-C_0(d,\omega) ut}du.
\end{align*}
Since $A=2\sqrt{d}$, it is easy to see that  the last expression above can be bounded by a constant $C(t, d,\omega)$ that is a decreasing function of $\omega$.

Again by Lemma \ref{intlmm},
\begin{align*}
&h^d\sum_{x\in \zz^d \atop |x|\le A/h}  C(t,d)\bigl( h^{2-d} (\sqrt{d}+|x|)^{2-d}\bigr)^t\\
&\le C(t,d)h^d\int_{\rr^d} (h|y|)^{(2-d)t} 1_{\{|y|\le A/h + \sqrt{d}\}}dy\\
&= C(t,d) \int_{\rr^d} |z|^{(2-d)t} 1_{\{|z|\le A + h\sqrt{d}\}}dz\\
&= C(t,d) \int_{0}^{A+h\sqrt{d}} u^{d-1+(2-d)t}du. 
\end{align*}
Since $h < 1$ and $d-1+(2-d) t\ge -1 + \ep$ for some positive $\ep = \ep(t,d)$, this shows that the integrand is bounded by $C(t,d)$ and completes the proof of the lemma. 
\end{proof}

\subsection{Hardy-Littlewood-Sobolev inequality}
Take any $\omega > 0$ and let $g$ be the Green's function defined in \eqref{greendef}. Given a function $u\in L^2(\zz^d)$, let $f$ be the unique solution to 
\[
(\omega I - \Delta) f = u,
\]
so that by Lemma \ref{greeninv}, $f= g*u$. The following proposition is a discrete analog of the Hardy-Littlewood-Sobolev theorem of fractional integration (see \cite[Chapter V, Section 1.2]{stein70}). 
\begin{prop}\label{hls}
Let $f$ and $u$ be as above and suppose that $d\ge 3$. Let $1< p< q< \infty$ satisfy
\[
\frac{1}{q} > \frac{1}{p}-\frac{2}{d}. 
\]
Then $\|f\|_{q,h} \le C(p,q,d,\omega)\|u\|_{p,h}$, where $C(p,q,d,\omega)$ is a decreasing function of $\omega$. 
\end{prop}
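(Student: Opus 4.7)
The plan is to derive this immediately from discrete Young's inequality (Proposition \ref{young}) combined with the $L^t$ bound on the Green's function (Corollary \ref{gcor0}); this is the exact discrete analog of how one proves the classical Hardy–Littlewood–Sobolev inequality when the endpoints are excluded, i.e., by reducing to Young's convolution inequality applied to the kernel.

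First I would choose the exponent $t$ that makes Young's inequality balance: namely, let $t$ be defined by
\[
\frac{1}{t} = 1 + \frac{1}{q} - \frac{1}{p}.
\]
I claim this $t$ lies in the range $(1, d/(d-2))$ where Corollary \ref{gcor0} applies. Since $p < q$ we have $1/q < 1/p$, so $1/t < 1$, hence $t > 1$. The hypothesis $1/q > 1/p - 2/d$ rewrites as $1 + 1/q - 1/p > 1 - 2/d = (d-2)/d$, i.e.\ $1/t > (d-2)/d$, which is the same as $t < d/(d-2)$. So $t \in (1, d/(d-2))$ as needed.

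Next, since $f = g*u$ by Lemma \ref{greeninv}, Proposition \ref{young} applied with the triple $(t, p, q)$ (which indeed satisfies $\frac{1}{t} + \frac{1}{p} = \frac{1}{q} + 1$) yields
\[
\|f\|_{q,h} = \|g*u\|_{q,h} \le \|g\|_{t,h}\,\|u\|_{p,h}.
\]
Finally, Corollary \ref{gcor0} gives $\|g\|_{t,h} \le C(t,d,\omega)$, with $C(t,d,\omega)$ a decreasing function of $\omega$. Absorbing the dependence on $t$ into the dependence on $p,q,d$ (since $t$ is determined by $p$ and $q$), we obtain $\|f\|_{q,h} \le C(p,q,d,\omega) \|u\|_{p,h}$ with the stated monotonicity in $\omega$.

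There is no real obstacle here, since all of the hard work has already been done in establishing the pointwise exponential-decay estimate for the Green's function (Proposition \ref{gbound}) and the resulting integrability range (Corollary \ref{gcor0}), as well as the grid-size-independent Young's inequality (Proposition \ref{young}). The only thing to be careful about is that the strict inequality $1 < p$ (together with $p < q$) is what keeps $t$ strictly greater than $1$ and strictly less than $d/(d-2)$, so that one stays safely inside the interior of the range allowed by Corollary \ref{gcor0}; if the endpoint $p = 1$ or the critical $1/q = 1/p - 2/d$ were allowed, one would be forced onto the boundary and the simple Young's-inequality argument would break down, just as in the continuum case.
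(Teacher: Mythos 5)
Your proposal is correct and follows essentially the same route as the paper's proof: define the Young exponent $t$ with $1/t = 1 + 1/q - 1/p$, check that $t\in(1,d/(d-2))$, apply the discrete Young's inequality to $f=g*u$, and bound $\|g\|_{t,h}$ via Corollary \ref{gcor0}. (The paper phrases the exponent bookkeeping through an auxiliary $s\in(0,2)$ with $1/q=1/p-s/d$ and $t=d/(d-s)$, but this is the identical computation; one small slip in your closing remark is that $t>1$ is forced by $p<q$ alone, not by $1<p$.)
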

This theorem is actually quite a bit simpler than the classical  Hardy-Littlewood-Sobolev theorem, which includes the endpoint case $1/q=1/p-2/d$. Including the endpoint requires a somewhat delicate argument using the Marcinkiewicz interpolation theorem, which we can afford to avoid. 

\begin{proof}
Let $s=(0,2)$ satisfy
\[
\frac{1}{q}=\frac{1}{p} - \frac{s}{d}. 
\]
Let $t=d/(d-s)$. Then by Young's inequality,
\begin{align*}
\|f\|_{q,h} &\le \|g\|_{t,h} \|u\|_{p,h}. 
\end{align*}
Since $t < d/(d-2)$, therefore by Corollary \ref{gcor0}, 
$\|g\|_{t,h}\le C(p,q,d,\omega)$, 
where $C(p,q,d,\omega)$ is a decreasing function of $\omega$. This completes the proof. 
\end{proof}

\subsection{Derivatives of the Green's function}
Recall the discrete derivative operator $\nabla_i$ defined in \eqref{discderiv}. The following proposition gives an estimate on the size of $\nabla_i g$, where $g$ is the discrete Green's function defined in \eqref{greendef}.
\begin{prop}\label{gdbound}
Given $\omega>0$, define $g$ as in \eqref{greendef}. If $d\ge 2$, then for all $x\in \zz^d$ and all $1\le i\le d$,
\[
|\nabla_i g(x)| \le C(d) h^{1-d} (\sqrt{d}+|x|)^{1-d} e^{-C(d,\omega)h|x|}, 
\]
where $C(d,\omega)$ is an increasing function of $\omega$.  When $d=1$, 
\[
|\nabla_1 g(x)|\le C \bigl(1+\bigl|\log( h(1+|x|))\bigr|\bigr)e^{-C(\omega) h|x|},
\]
where $C(\omega)$ is an increasing function of $\omega$. 
\end{prop}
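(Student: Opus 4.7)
The approach mirrors the proof of Proposition \ref{gbound}. Writing the discrete derivative explicitly,
\[
\nabla_i g(x) = \frac{h^{1-d}}{2d}\sum_{k=0}^\infty r^{k+1}\bigl(p(x+e_i,k)-p(x,k)\bigr),
\]
the plan is to establish a Gaussian-type gradient estimate
\[
|p(x+e_i,k)-p(x,k)|\le C(d)\,e^{-|x|^2/(Ck)}(1+k)^{-(d+1)/2},
\]
which improves Lemma \ref{pxk} by one extra factor of $(1+k)^{-1/2}$. Plugging this into the series and applying Lemma \ref{greenbds} with $\alpha=d+1$ will then give the stated bounds: case (a) for $d\ge 2$ produces the factor $(\sqrt d+|x|)^{1-d}$, and case (b) for $d=1$ (where $\alpha=2$) produces the logarithmic factor. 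The $\omega$-dependence of the exponential decay will enter exactly as in Proposition \ref{gbound}, through the bound $r\le e^{-C(d,\omega)h^2}$ from \eqref{rcd0}.

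The hard part is the gradient estimate itself, because $p(x,k)$ and $p(x+e_i,k)$ are supported on opposite parities of $k$, so there is no termwise cancellation at all: $|p(x+e_i,k)-p(x,k)| = p(x+e_i,k) + p(x,k)$, which just returns the bound of Lemma \ref{pxk}. To recover the missing factor of $(1+k)^{-1/2}$ one must instead exploit cancellation \emph{after} the sum in $k$. I would do this by passing to the continuous-time simple random walk
\[
Q(x,t) := \sum_{k=0}^\infty e^{-t}\frac{t^k}{k!}p(x,k),
\]
which has no parity obstruction. A short Laplace-transform calculation with $\alpha = \omega h^2/(2d)$ gives the identity
\[
g(x) = \frac{h^{2-d}}{2d}\int_0^\infty Q(x,t)\,e^{-\alpha t}\,dt,
\]
so that
\[
\nabla_i g(x) = \frac{h^{1-d}}{2d}\int_0^\infty \bigl(Q(x+e_i,t)-Q(x,t)\bigr)e^{-\alpha t}\,dt.
\]

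Because $Q$ factorizes as $Q(x,t)=\prod_{j=1}^d q(x_j,t)$, where $q(n,t)$ is the one-dimensional continuous-time walk kernel with total jump rate $2/d$, the problem reduces to the one-dimensional gradient estimate
\[
|q(n+1,t)-q(n,t)|\le C(d)(1+t)^{-1}e^{-cn^2/t},
\]
combined with the standard heat-kernel bound $q(m,t)\le C(d)(1+t)^{-1/2}e^{-cm^2/t}$ used for the other $d-1$ coordinates. The one-dimensional estimate can be proved either via Fourier inversion (using $|e^{-i\theta}-1|\le|\theta|$ together with the quadratic lower bound $1-\cos\theta\ge c\theta^2$ on the characteristic exponent), or via the Bessel representation $q(n,t)=e^{-2t/d}I_{|n|}(2t/d)$ together with the recurrence $I_{n-1}(\lambda)-I_{n+1}(\lambda)=(2n/\lambda)I_n(\lambda)$ and classical Bessel asymptotics. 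Multiplying the one-dimensional estimates gives $|Q(x+e_i,t)-Q(x,t)|\le C(d)(1+t)^{-(d+1)/2}e^{-c|x|^2/t}$, after which the integral in $t$ is controlled by exactly the substitution $u=|x|^2/t$ and AM--GM splitting of the exponent used in Lemma \ref{greenbds}, finishing the proof.
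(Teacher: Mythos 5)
Your approach is correct in outline, but takes a genuinely different route from the paper. The paper stays with the discrete-time walk throughout: it observes that $p(x,k)$ and $p(x+e_i,k)$ are supported on opposite parities of $k$, and gets around this by rearranging the sum so that even and odd steps are paired (writing $\nabla_i g$ in terms of $p(x,k-1) - p(x+e_i,k)$ and a secondary $(1-r)$-weighted series). The cancellation is then extracted via a combinatorial identity (Lemma \ref{psipsi}) between multinomial/binomial coefficients, after which Stirling-type bounds and Hoeffding's inequality for the binomial count $K_i$ supply the extra factor of $k^{-1/2}$ and the Gaussian decay. Your proposal instead sidesteps the parity obstruction at the start by Poissonizing to continuous time. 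The Laplace identity you write down is indeed correct -- with $1+\alpha = 1/r$ one has $\int_0^\infty e^{-(1+\alpha)t}\frac{t^k}{k!}\,dt = r^{k+1}$, so $g(x) = \tfrac{h^{2-d}}{2d}\int_0^\infty Q(x,t)e^{-\alpha t}\,dt$ -- and the continuous-time kernel factorizes over coordinates by Poisson thinning (though the per-coordinate total jump rate is $1/d$, not $2/d$ as you wrote; harmless). Applying the one-dimensional heat-kernel gradient estimate in one coordinate and the plain heat-kernel bound in the other $d-1$, then running exactly the change-of-variables and AM--GM analysis from Lemma \ref{greenbds} with $\alpha=d+1$, reproduces the stated bounds in both the $d\ge2$ and $d=1$ cases, with the $\omega$-monotonicity coming through $\alpha = \omega h^2/(2d)$ in the discount factor. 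The trade-off is that your one-dimensional estimate $|q(n+1,t)-q(n,t)|\le C(1+t)^{-1}e^{-cn^2/t}$ is asserted but not proved; it is a standard bound (provable by contour-shifting the Fourier inversion integral, or by large-argument Bessel asymptotics combined with the recurrence $I_{n-1}-I_{n+1}=(2n/\lambda)I_n$), but if you want a self-contained argument you would still need to write out one of those routes, at which point the total length may end up comparable to the paper's. Conceptually your version is cleaner -- the parity bookkeeping and the multinomial gymnastics of Lemma \ref{psipsi} disappear -- and the underlying cancellation mechanism (differencing a Gaussian-shaped kernel gains $t^{-1/2}$) is made transparent.
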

To prove this proposition, we first need to introduce some notation. For each non-negative integer $k$, let 
\[
P_k := \{(k_1,\ldots, k_d) \in \zz^d: 0\le k_1,\ldots, k_d \le k, \ k_1+\cdots + k_d = k\}. 
\]
For any $k\ge 0$ and any $(k_1,\ldots, k_d)\in P_k$ and $(x_1,\ldots, x_d)\in \zz^d$, let
\[
\psi(k; k_1,\ldots, k_d; x_1, \ldots, x_d) := \frac{k!d^{-k}}{k_1!\cdots k_d!} \prod_{j=1}^d {k_j \choose (k_j+x_j)/2} 2^{-k_j}. 
\]
Here, as usual, we interpret ${a\choose b}$ as $0$ if either $a$ or $b$ is not a non-negative integer, or if $b > a$. In other words, for the above expression to be non-zero, it is necessary and sufficient that for all $j$, $x_j$ has the same parity as $k_j$ and satisfies $|x_j|\le k_j$. 
\begin{lmm}\label{psipsi}
For any $k\ge 1$, any $1\le i\le d$, any $(k_1,\ldots, k_d)\in P_{k-1}$ and any $(x_1,\ldots, x_d)\in \zz^d$, we have 
\begin{align*}
&\psi(k-1; k_1,\ldots, k_d; x_1,\ldots, x_d) \\
&= \psi(k; k_1,\ldots, k_i+1, \ldots, k_d; x_1,\ldots, x_i+1, \ldots, x_d) \frac{(k_i+x_i+2)d}{k}. 
\end{align*}
\end{lmm}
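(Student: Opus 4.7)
\medskip

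The plan is to prove this by direct computation: expand both $\psi$-expressions using the definition and then manipulate the binomial coefficient at coordinate $i$ via the elementary identity
\[
\binom{k_i+1}{m+1} = \frac{k_i+1}{m+1}\binom{k_i}{m} \quad \text{with } m = (k_i+x_i)/2,
\]
which rearranges to $\binom{k_i+1}{(k_i+x_i)/2+1} = \frac{2(k_i+1)}{k_i+x_i+2}\binom{k_i}{(k_i+x_i)/2}$.

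More concretely, I will start from the right-hand side. Substituting into the definition of $\psi$, the right-hand side equals
\[
\frac{k!\,d^{-k}}{k_1!\cdots (k_i+1)!\cdots k_d!}\binom{k_i+1}{(k_i+x_i)/2+1}2^{-(k_i+1)}\!\!\prod_{j\ne i}\!\binom{k_j}{(k_j+x_j)/2}2^{-k_j}\cdot\frac{(k_i+x_i+2)d}{k}.
\]
Replacing $(k_i+1)! = (k_i+1)\,k_i!$ in the denominator and applying the binomial identity above, the factor $(k_i+1)$ cancels, the factor $2$ from the binomial identity combines with $2^{-(k_i+1)}$ to give $2^{-k_i}$, and the factor $1/(k_i+x_i+2)$ cancels exactly with the explicit $(k_i+x_i+2)$ from the multiplier. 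Finally, $d^{-k}\cdot d = d^{-(k-1)}$ and $k!/k = (k-1)!$, so everything collapses to
\[
\frac{(k-1)!\,d^{-(k-1)}}{k_1!\cdots k_d!}\prod_{j=1}^d\binom{k_j}{(k_j+x_j)/2}2^{-k_j}=\psi(k-1;k_1,\ldots,k_d;x_1,\ldots,x_d),
\]
which is the left-hand side.

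The identity holds whether or not the various binomial coefficients are nonzero, since the parity and range constraints on $x_i$ match up on both sides: the factor $\binom{k_i+1}{(k_i+x_i)/2+1}$ on the right is nonzero exactly when $k_i+x_i$ is even and $-k_i\le x_i\le k_i$, which is precisely the condition for $\binom{k_i}{(k_i+x_i)/2}$ on the left to be nonzero (the edge case $x_i=-k_i-1$ makes both sides vanish via the factor $(k_i+x_i+2)$ on the right and the binomial coefficient $\binom{k_i}{-1/2}=0$ on the left). There is no genuine obstacle here; the lemma is purely algebraic and its role is to be the engine that lets one rewrite sums over random walk paths of length $k-1$ in terms of sums of length $k$, which is what will be needed to relate $\nabla_i g$ at $x$ to $g$-type sums in the proof of Proposition~\ref{gdbound}.
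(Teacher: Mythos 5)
Your algebraic computation from the right-hand side to the left-hand side is correct and fills in explicitly what the paper dismisses as ``a simple algebraic exercise'' (the paper spends its proof on the degenerate cases and leaves the non-degenerate algebra to the reader, while you do the reverse). So in substance this is the same approach. However, your final paragraph, which is supposed to verify that the identity persists through the degenerate cases, contains two errors. First, the conditions for the two binomial coefficients to be nonzero do \emph{not} ``match up'': $\binom{k_i}{(k_i+x_i)/2}$ on the left requires $(k_i+x_i)/2 \in \{0,1,\ldots,k_i\}$, i.e.\ $-k_i\le x_i\le k_i$ with matching parity, whereas $\binom{k_i+1}{(k_i+x_i)/2+1}$ on the right requires only $(k_i+x_i)/2 \in \{-1,0,\ldots,k_i\}$, i.e.\ $-k_i-2\le x_i\le k_i$. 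The extra case $x_i=-k_i-2$ is exactly where the left-hand binomial vanishes but the right-hand one does not, and it is the explicit multiplier $k_i+x_i+2=0$ (not a binomial coefficient) that makes the right side of the lemma vanish. Second, your parenthetical about $x_i=-k_i-1$ is misplaced: there $k_i+x_i+2=1\neq 0$, so the factor does not vanish; both sides are zero in that case purely by parity (the argument of each binomial is a half-integer), and this case is not the interesting one. The interesting case is $x_i=-k_i-2$, which the paper addresses explicitly. Relatedly, your use of the identity in the form $\binom{k_i+1}{m+1}=\frac{k_i+1}{m+1}\binom{k_i}{m}$ literally involves division by $m+1=(k_i+x_i+2)/2$, which is $0$ exactly at $x_i=-k_i-2$; to keep the manipulation valid uniformly you should instead invoke it in the product form $(m+1)\binom{k_i+1}{m+1}=(k_i+1)\binom{k_i}{m}$, equivalently $(k_i+x_i+2)\binom{k_i+1}{(k_i+x_i)/2+1}=2(k_i+1)\binom{k_i}{(k_i+x_i)/2}$, which holds for all integer $m$ (and trivially when $m$ is not an integer) without any division. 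With that cosmetic fix the proof is complete and avoids having to enumerate the degenerate cases at all, which is arguably cleaner than the paper's case analysis.
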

\begin{proof}
If $k_j+x_j$ is odd for some $j$, then both sides are zero. So assume that $k_j$ has the same parity as $x_j$ for each $j$. Similarly, if $|x_j|> k_j$ for some $j\ne i$, then both sides are zero. So assume that $|x_j|\le k_j$ for all $j\ne i$. 

If $|x_i|> k_i$, then the left side is zero, and there are three possibilities for the right side. First,  $x_i$ may be equal to $-k_i-2$. In that case, $k_i+x_i+2=0$ and so the right side is zero. The other possibilities are  that $x_i < -k_i-2$ or $x_i \ge k_i+1$. In both of these cases, $|x_i+1| > k_i+1$, and hence the right side is zero. Note that the case $x_i=-k_i-1$ is excluded because $x_i$ has the same parity as $k_i$. Therefore in all three cases we have equality of the two sides. So we may now safely assume that $|x_i|\le k_i$. 

At this point, we have that both sides are non-zero. Verifying the identity is now a simple algebraic exercise. 
\end{proof}
\begin{proof}[Proof of Proposition \ref{gdbound}]
Take any $x\in \zz^d$ such that $\sum_{i=1}^d x_i$ is even. Then $p(x,k) = 0$ for all odd $k$ and $p(x+e_i, k)$ is zero for all even $k$, where $e_i$ denotes the $i$th coordinate vector. Thus,  from the expression \eqref{greendef}, 
\begin{align}
\frac{g(x+e_i) - g(x)}{h} &= \frac{h^{1-d}}{2d}\biggl(\sum_{k \text{ even}} r^{k+1} p(x,k)- \sum_{k \text{ odd}} r^{k+1}p(x+e_i, k)\biggr)\nonumber \\
&= \frac{h^{1-d}}{2d}\sum_{k \text{ even}}r^{k+1}(1-r) p(x,k) \label{gderiveexp}\\
&\qquad + \frac{h^{1-d}}{2d}\sum_{k\text{ odd}} r^{k+1}(p(x, k-1)-p(x+e_i, k)). \nonumber
\end{align}
Now fix some odd $k$ and some $1\le i\le d$. 
Then by the formula \eqref{pxkform}, 
\begin{equation}\label{pxk1}
p(x,k-1) = \sum_{(k_1,\ldots, k_d)\in P_{k-1}} \psi(k-1; k_1,\ldots, k_d; x_1,\ldots x_d) 
\end{equation}
and 
\begin{equation}\label{pxk2}
p(x+e_i,k) = \sum_{(k_1,\ldots, k_d)\in P_k} \psi(k; k_1,\ldots, k_d; x_1, \ldots,x_i+1, \ldots, x_d).  
\end{equation}
Let $P_k'$ be the set of all $(k_1,\ldots, k_d) \in P_k$ with $k_i\ne 0$. It is easy to see that the map
\[
(k_1,\ldots, k_d) \mapsto (k_1,\ldots, k_{i-1}, k_i+1,k_{i+1},\ldots, k_d)
\]
is a bijection between $P_{k-1}$ and $P_k'$. 
Thus, by Lemma \ref{psipsi},
\begin{align*}
&p(x, k-1) =\sum_{(k_1,\ldots, k_d)\in P_{k-1}} \psi(k-1; k_1, \ldots, k_d; x_1, \ldots, x_d) \\
&= \hskip-.2in \sum_{(k_1,\ldots, k_d)\in P_{k-1}}\hskip-.2in\psi(k; k_1,\ldots, k_i+1, \ldots, k_d; x_1,\ldots, x_i+1, \ldots, x_d)\frac{(k_i+x_i+2)d}{k}\\
&= \hskip-.2in\sum_{(k_1,\ldots, k_d)\in P_k'}\hskip-.2in\psi(k; k_1,\ldots, k_d; x_1,\ldots,x_i+1, \ldots, x_d)\frac{(k_i+x_i+1)d}{k}. 
\end{align*}
By \eqref{pxk1} and \eqref{pxk2}, this shows that 
\begin{align*}
&|p(x, k-1)- p(x+e_i, k)| \\
&\le \hskip-.2in\sum_{(k_1,\ldots, k_d)\in P_k} \hskip-.2in\psi(k; k_1,\ldots, k_d; x_1,\ldots,x_i+1,\ldots, x_d)\biggl|\frac{(k_i+x_i+1)d}{k}1_{\{k_i\ne 0\}} - 1\biggr|.
\end{align*}
Let $K_1,\ldots, K_d$ be the random variables defined in the proof of Lemma \ref{pxk}. Then by the above inequality and \eqref{stirlingbd}, we get
\begin{align*}
&|p(x,k-1)-p(x+e_i, k)|\\
&\le \ee\biggl(2^{-k}\biggl|\frac{(K_i+x_i+1)d}{k}1_{\{K_i\ne 0\}} - 1\biggr|\prod_{j=1}^d {K_j \choose (K_j+x_j + 1_{\{j=i\}})/2}\biggr)\\
&\le C(d)e^{-|x|^2/2k} \ee\biggl(\biggl|\frac{(K_i+x_i+1)d}{k}1_{\{K_i\ne 0\}} - 1\biggr|\prod_{j=1}^d (1+K_j)^{-1/2}\biggr). 
\end{align*}
Now observe the following:
\begin{itemize}
\item By \eqref{kir}, for all $r > 0$, $\ee(1+K_i)^{-r} \le C(d,r)(1+k)^{-r}$. By H\"older's inequality, this gives 
\begin{align*}
&\ee\biggl(\biggl|\frac{(K_i+x_i+1)d}{k}1_{\{K_i\ne 0\}} - 1\biggr|\prod_{j=1}^d (1+K_j)^{-1/2}\biggr)\\
&\le (1+k)^{-d/2}\biggl(\ee\biggl|\frac{(K_i+x_i+1)d}{k}1_{\{K_i\ne 0\}} - 1\biggr|^{d+1}\biggr)^{1/(d+1)}.
\end{align*}
\item Since $|x_i|/k \le (|x|/\sqrt{k}) k^{-1/2}$,
\begin{align*}
&\biggl(\ee\biggl|\frac{(K_i+x_i+1)d}{k}1_{\{K_i\ne 0\}} - 1\biggr|^{d+1}\biggr)^{1/(d+1)}\\
&\le \frac{|x|}{\sqrt{k}} k^{-1/2} + \frac{d}{k} + \biggl(\ee\biggl|\frac{K_id}{k}1_{\{K_i\ne 0\}} - 1\biggr|^{d+1}\biggr)^{1/(d+1)}.
\end{align*}
\item Since $K_i$ is a Binomial random variable with parameters $k$ and $1/d$, it follows by Hoeffding's tail bound \cite{hoeffding63} that $\pp(K_i=0)\le e^{-C(d)k}$. Moreover, $K_i\le k$. Thus,
\begin{align*}
&\biggl(\ee\biggl|\frac{K_id}{k}1_{\{K_i\ne 0\}} - 1\biggr|^{d+1}\biggr)^{1/(d+1)}\\
&\le e^{-C(d)k} + \biggl(\ee\biggl|\frac{K_id}{k} - 1\biggr|^{d+1}\biggr)^{1/(d+1)}.
\end{align*}
\item Again by Hoeffding's bound, for any $r > 0$,
\[
\ee\biggl|\frac{K_id}{k} - 1\biggr|^r \le C(d,r) k^{-r/2}. 
\]
\end{itemize}
Combining all of the above, we see that for any odd $k$, 
\begin{equation}\label{ppxk}
|p(x, k-1) - p(x+e_i, k)|\le C_1(d) k^{-(d+1)/2}e^{-C_2(d)|x|^2/k}
\end{equation}
Using this estimate and Lemma \ref{pxk} to bound the right-hand side in \eqref{gderiveexp}, and applying Lemma \ref{greenbds}, we get that for $d\ge 2$, 
\begin{align*}
|\nabla_i g(x)| &\le C(d) h^{3-d} (\sqrt{d}+|x|)^{2-d} e^{-C(d,\omega)h|x|} \\
&\qquad + C(d)h^{1-d}(\sqrt{d}+|x|)^{1-d} e^{-C(d,\omega) h|x|},
\end{align*}
where $C(d,\omega)$ is an increasing function of $\omega$.   This proves Proposition \ref{gdbound} when $d\ge 2$.

When $d=1$, we use \eqref{ppxk}, \eqref{gderiveexp},~\eqref{greendef} and Lemma \ref{pxk} to get
\begin{align*}
|\nabla_i g(x)| &\le Ch^{2} \sum_{k=0}^\infty r^{k+1} e^{-|x|^2/2k} (1+k)^{-1/2} \\
&\qquad + C\sum_{k=0}^\infty r^{k+1} e^{-|x|^2/2k} (1+k)^{-1}. 
\end{align*}
Lemma \ref{greenbds} now completes the proof.
\end{proof}
\begin{cor}\label{greenderiv}
For any $d$ and any $1\le i\le d$, 
\[
\|\nabla_i g\|_{1,h}\le C(d, \omega),
\] 
where $C(d,\omega)$ is a decreasing function of $\omega$. 
\end{cor}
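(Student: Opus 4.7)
The plan is to use the pointwise bounds from Proposition \ref{gdbound} and perform a careful integral--sum comparison, tracking the $h$-dependence so that the resulting constant depends only on $d$ and $\omega$. Write $c=c(d,\omega)$ for the increasing function of $\omega$ appearing in the exponent in Proposition \ref{gdbound}, and recall that $\|\nabla_i g\|_{1,h}=h^d\sum_{x\in \zz^d}|\nabla_i g(x)|$.

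In the case $d\ge 2$, Proposition \ref{gdbound} gives
\[
\|\nabla_i g\|_{1,h}\le C(d)\,h\sum_{x\in \zz^d}(\sqrt{d}+|x|)^{1-d}e^{-ch|x|}.
\]
Grouping lattice points into spherical shells $\{x:|x|\in[n,n+1)\}$, which contain at most $C(d)(1+n)^{d-1}$ points, and using the elementary bound $(\sqrt{d}+n)^{1-d}(1+n)^{d-1}\le C(d)$, the sum is dominated by $C(d)\sum_{n\ge 0}e^{-chn}$. The geometric series sums to $(1-e^{-ch})^{-1}\le C/(ch)$ (using $1-e^{-y}\ge y/2$ for $y\in(0,1)$, and noting that the case $ch\ge 1$ gives a trivially bounded geometric series). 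Multiplying by $h$ absorbs the $h^{-1}$, yielding $\|\nabla_i g\|_{1,h}\le C(d)/c$, which is decreasing in $\omega$.

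The case $d=1$ is the main technical issue, because the pointwise bound now carries the logarithmic factor $1+|\log(h(1+|x|))|$, which blows up pointwise as $h\downarrow 0$. I would proceed by the same shell decomposition, reducing to
\[
\|\nabla_1 g\|_{1,h}\le C\,h\sum_{n\ge 0}\bigl(1+|\log(h(1+n))|\bigr)e^{-chn}.
\]
Replacing the sum by an integral via the change of variable $y=hn$ gives
\[
\|\nabla_1 g\|_{1,h}\le C\int_0^\infty\bigl(1+|\log(h+y)|\bigr)e^{-cy}\,dy +O(h),
\]
which is the essential estimate. The potential singularity is near $y=0$, where $|\log(h+y)|$ might be large, but $\int_0^1|\log(h+y)|\,dy=\int_h^{1+h}|\log u|\,du\le 1-h+h|\log h|+O(1)$, which remains uniformly bounded for $h\in(0,1)$; the tail $\int_1^\infty\log(h+y)e^{-cy}dy$ is obviously controlled by $1/c$ up to a constant. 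Combining, the whole expression is bounded by $C/c$, again decreasing in $\omega$.

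I do not expect any fundamental obstacle beyond this careful uniform handling of the logarithm in $d=1$; the $d\ge 2$ case is essentially a geometric series argument. The one step requiring attention throughout is to track that every constant arising from Lemma \ref{intlmm}-type comparisons and from the factors $(1-e^{-ch})^{-1}$ depends only on $d$ and the exponential rate $c=c(d,\omega)$, so that the final bound is genuinely decreasing in $\omega$.
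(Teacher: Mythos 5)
Your argument is correct and follows essentially the same route as the paper, whose proof is just the one-liner ``Follows easily from Proposition \ref{gdbound} and Lemma \ref{intlmm}'': both rely on the pointwise bounds of Proposition \ref{gdbound} combined with a lattice-sum-to-integral comparison, and your shell decomposition (for $d\ge 2$) and Riemann-sum estimate with the split at $y\approx 1$ (for $d=1$) are a hands-on version of what Lemma \ref{intlmm} packages abstractly. One minor inaccuracy worth flagging: in the $d=1$ case the near-origin contribution $\int_0^1\bigl(1+|\log(h+y)|\bigr)e^{-cy}\,dy$ does not tend to $0$ as $c\to\infty$, so the final bound is of the form $C(1+1/c)$ rather than $C/c$; since this is still a decreasing function of $c=C(d,\omega)$ and hence of $\omega$, the stated conclusion is unaffected.
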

\begin{proof}
Follows easily from Proposition \ref{greenderiv} and Lemma \ref{intlmm}. 
\end{proof}

\section{Regularity of discrete solitons}\label{solitonreg}
Suppose that $1< p< 1+4/d$. As in Section \ref{harmonic}, assume that $h\in (0,1)$. Take any $m > 0$ and let $f\in L^2(\zz^d)$ be a ground state soliton of mass $m$ for the discrete NLS on $\zz^d$ under grid size~$h$. That is, $f$ minimizes energy among all functions of mass $m$. By Theorem \ref{conccomp} it is easy to see that at least one such function exists. Also, by elementary Euler-Lagrange techniques, it follows that there is an $\omega > 0$ such that $f$ satisfies the soliton equation $(\omega I - \Delta)f = |f|^{p-1} f$, 
where $I$ is the identity operator and $\Delta$ is the discrete Laplacian on $\zz^d$ at grid size~$h$ as defined in \eqref{discretelap2}. 

The purpose of this section is to prove regularity properties of discrete solitons. The main goal will be to prove that the smoothness bounds remain uniformly bounded as the grid size goes to zero. This is necessary for proving convergence to continuum solitons. The proof follows more or less the sketch of the proof of regularity for continuum solitons (see \cite[Proposition B.7]{tao06}), but using the discrete estimates from Section \ref{harmonic}. 

In this section, $C$ will denote any positive constant that depends only on $p$, $d$ and $m$. In particular, $C$ will not depend on $h$. Moreover, we impose the additional condition that $C$ is uniformly bounded as $m$ ranges over any given compact subinterval of $(0,\infty)$. We will call this the {\it uniform boundedness condition}.  If $C$ depends on additional parameters $a,b,\ldots$, then it is denoted as $C(a,b,\ldots)$. 
\begin{lmm}\label{omega2}
$\omega \ge 1/C$.
\end{lmm}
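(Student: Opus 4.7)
The plan is to adapt the argument of Lemma \ref{omega}, an analogous positivity statement on the torus, to the full lattice $\zz^d$, and then couple the resulting estimate with a quantitative strengthening of Lemma \ref{emin0}. First I will test the soliton equation $(\omega I - \Delta) f = |f|^{p-1} f$ against $\overline{f(x)}$ and sum over $x \in \zz^d$ with weight $h^d$. Because $f \in L^2(\zz^d)$ by hypothesis and $f \in L^{p+1}(\zz^d)$ (the $L^{p+1}$ bound follows from the finiteness of $N_h(f)$, which in turn follows because $f$ has finite energy and $G_h(f) \ge 0$), every sum converges absolutely. Discrete summation by parts applied to the Laplacian term then produces the discrete Pohozaev-type identity
\[
\omega M_h(f) + 2\, G_h(f) = (p+1)\, N_h(f).
\]

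Substituting $M_h(f) = m$ and $G_h(f) = N_h(f) + E_{\min}(m,h)$ and using $p > 1$ together with $N_h(f) \ge 0$ gives
\[
\omega m = (p-1)\, N_h(f) - 2\, E_{\min}(m,h) \ge -2\, E_{\min}(m,h),
\]
so that $\omega \ge -2 E_{\min}(m,h)/m$.

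The last step is to upgrade this to the stated form $\omega \ge 1/C$ with $C$ satisfying the section's uniform boundedness condition. Lemma \ref{emin0} already gives $\sup_{0 < h \le 1} E_{\min}(m,h) < 0$ for each $m > 0$; inspecting its proof shows more. The explicit tent-function trial profiles $f_k$ constructed there yield $H_h(f_k) \le C_1(p,d)\, m\, (hk)^{-2} - C_2(p,d)\, m^{(p+1)/2}(hk)^{-d(p-1)/2}$ for large $k$, and since $p < 1 + 4/d$ forces $d(p-1)/2 < 2$, optimizing in the variable $s := hk$ (which is achievable for every $h \in (0,1]$ by a suitable integer choice of $k$) produces the quantitative bound $E_{\min}(m, h) \le -c(p,d)\, m^\beta$, uniformly in $h \in (0,1]$, with exponent $\beta = 1 + 2(p-1)/(4 - d(p-1)) > 1$ (the same exponent $\alpha$ appearing in \eqref{eminform}). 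Combined with the previous step this gives $\omega \ge 2 c(p,d)\, m^{\beta - 1}$, a positive quantity depending only on $p$, $d$, $m$ and bounded below uniformly as $m$ ranges over any compact subinterval of $(0,\infty)$, which is the required bound.

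No step presents a genuine obstacle; the only place requiring attention is the $m$-uniform form of the lower bound on $|E_{\min}(m,h)|$, which is obtained by tracing the $m$-dependence through the explicit construction in the proof of Lemma \ref{emin0} rather than appealing only to its bare statement.
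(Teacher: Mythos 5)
Your proof is correct and takes essentially the same route as the paper: test the Euler--Lagrange equation against $\overline{f}$, sum by parts to get $\omega m + 2G_h(f) = (p+1)N_h(f)$, drop the nonnegative nonlinear term, and invoke Lemma~\ref{emin0}. You are also right that the uniform-in-$m$ lower bound on $|E_{\min}(m,h)|$ requires tracing the explicit $m$-dependence through the tent-function construction (rather than just quoting Lemma~\ref{emin0} as stated), a detail the paper glosses over; the resulting exponent you compute indeed equals the $\alpha$ of \eqref{eminform}.
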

\begin{proof}
Following exactly the same steps as in the proof of Lemma \ref{omega}, we arrive at the inequality
\[
\omega \ge -\frac{E_{\min}(m,h)}{m}. 
\]
The uniform boundedness condition holds due to Lemma \ref{emin0}. 
\end{proof}
\begin{lmm}\label{regG}
$G_h(f) \le C$. 
\end{lmm}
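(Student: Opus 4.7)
The plan is to combine the discrete Gagliardo--Nirenberg inequality (Proposition~\ref{gn}) with the sign information $E_{\min}(m,h) < 0$ from Lemma~\ref{emin0}, exploiting the mass-subcriticality $p < 1 + 4/d$ to close the resulting inequality.

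First, since $f$ minimizes energy at mass $m$, $H_h(f) = E_{\min}(m,h)$, and Lemma~\ref{emin0} gives $E_{\min}(m,h) < 0$ (uniformly on compact subsets of $(0,\infty)$, with $h \in (0,1)$). Writing $H_h(f) = G_h(f) - N_h(f)$ where $N_h(f) = \frac{h^d}{p+1}\sum_x |f(x)|^{p+1}$, we obtain immediately
\[
G_h(f) \le N_h(f).
\]

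Next, I would apply Proposition~\ref{gn} with $q = p+1$. Since $1 < p < 1 + 4/d$ we have $q \in (2, \infty)$ and the corresponding $\theta$ satisfying $\frac{1}{p+1} = \frac{1}{2} - \frac{\theta}{d}$ is $\theta = \frac{d(p-1)}{2(p+1)} \in (0,1)$. The inequality gives
\[
\|f\|_{p+1,h} \le C(p,d)\, \|f\|_{2,h}^{1-\theta}\, G_h(f)^{\theta/2},
\]
and since $\|f\|_{2,h}^2 = M_h(f) = m$, raising to the power $p+1$ yields
\[
N_h(f) \le C(p,d)\, m^{(1-\theta)(p+1)/2}\, G_h(f)^{\theta(p+1)/2}.
\]
A direct computation shows $\theta(p+1)/2 = d(p-1)/4$, and the subcriticality condition $p < 1 + 4/d$ is exactly the assertion that this exponent is strictly less than $1$.

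Combining the two displays,
\[
G_h(f) \le C(p,d)\, m^{(1-\theta)(p+1)/2}\, G_h(f)^{d(p-1)/4}.
\]
If $G_h(f) = 0$ the conclusion is trivial; otherwise dividing through by $G_h(f)^{d(p-1)/4}$ gives
\[
G_h(f)^{1 - d(p-1)/4} \le C(p,d)\, m^{(1-\theta)(p+1)/2},
\]
and raising to the power $\bigl(1 - d(p-1)/4\bigr)^{-1}$ yields $G_h(f) \le C$ with $C$ depending only on $p$, $d$, $m$, and manifestly uniformly bounded as $m$ ranges over any compact subinterval of $(0,\infty)$, as required by the uniform boundedness convention. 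There is no real obstacle here; the only point to verify is that the Gagliardo--Nirenberg constant in Proposition~\ref{gn} is $h$-free, which is precisely the content of that proposition.
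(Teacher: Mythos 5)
Your proof is correct and follows essentially the same route as the paper: apply the discrete Gagliardo--Nirenberg inequality with $q = p+1$, use $H_h(f) = E_{\min}(m,h) < 0$ to get $G_h(f) \le N_h(f)$, and close the inequality using the fact that the exponent $d(p-1)/4 < 1$ under mass-subcriticality. The paper's proof is a bit terser but the argument is identical.
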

\begin{proof}
Taking  $q = p+1$  and 
\[
\theta = \frac{d(p-1)}{2(p+1)},
\]
the discrete Gagliardo-Nirenberg inequality (Proposition \ref{gn}) implies that
\begin{align*}
\|f\|_{p+1,h}^{p+1} &\le C \|f\|_{2,h}^{(1-\theta)(p+1)} G_h(f)^{\theta(p+1)/2}\\
&\le C G_h(f)^{d(p-1)/4}.   
\end{align*}
(Note that, since $h > 0$, $f\in L^2$ implies that $f\in L^q$. Also, it is easy to verify using the condition $1<p < 1+4/d$ that $\theta \in (0,1)$. Lastly, note that the uniform boundedness condition on $C$ is clearly satisfied.) Now, $H_h(f)=E_{\min}(m,h) < 0$ by Lemma \ref{emin0}. 
Consequently, by the previous display,
\begin{align*}
G_h(f) &= H_h(f) + N_h(f)\\
&\le C \|f\|_{p+1,h}^{p+1} \le C G_h(f)^{d(p-1)/4}. 
\end{align*}
Since $1<p< 1+4/d$, or equivalently, $d(p-1)/4 \in (0,1)$, this shows that $G_h(f)\le C$. 
\end{proof}
\begin{lmm}\label{fqbd}
For all $q\in [2,\infty]$, $\|f\|_{q,h} \le C(q)$.
\end{lmm}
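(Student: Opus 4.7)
The strategy is a bootstrap based on the soliton representation $f=g*u$ with $u:=|f|^{p-1}f$, using the Green's function estimates from Section~\ref{harmonic}. The base estimates are in hand: $\|f\|_{2,h}=\sqrt{m}\le C$ from $M_h(f)=m$, and $G_h(f)\le C$ from Lemma~\ref{regG}. The discrete Gagliardo--Nirenberg inequality (Proposition~\ref{gn}), applied with $\theta=d(q-2)/(2q)$, then produces $\|f\|_{q_0,h}\le C(q_0)$ for every $q_0$ in an initial range: all $q_0\in(2,\infty]$ when $d=1$ (here $\theta=1/2\in(0,1)$ even at $q_0=\infty$), all $q_0\in(2,\infty)$ when $d=2$, and $q_0\in(2,2d/(d-2))$ when $d\ge 3$. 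This already settles the case $d=1$, so I would assume $d\ge 2$ in what follows.

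For the inductive step I exploit the soliton equation $(\omega I-\Delta)f=|f|^{p-1}f$. By Lemma~\ref{greeninv} this gives $f=g*u$ with $g$ the discrete Green's function at parameter $\omega$, and Lemma~\ref{omega2} supplies $\omega\ge 1/C$, so every $\omega$-dependent constant below respects the uniform-boundedness convention. Observe that $\|u\|_{r,h}=\|f\|_{pr,h}^{p}$. Given $\|f\|_{q_n,h}\le C$ with $q_n>\max(2,p)$ (so that $r_n:=q_n/p>1$), the discrete Hardy--Littlewood--Sobolev inequality (Proposition~\ref{hls}) applied with Lebesgue indices $(r_n,q_{n+1})$ yields $\|f\|_{q_{n+1},h}\le C(q_{n+1})\,\|u\|_{r_n,h}\le C(q_{n+1})$ for any $q_{n+1}$ with $1/q_{n+1}>p/q_n-2/d$. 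The affine iteration $1/q\mapsto p/q-2/d$ has unique fixed point $1/q^*=2/(d(p-1))$; the mass-subcritical assumption $p<1+4/d$ forces $q^*<2\le q_0$, so starting from a $q_0$ produced above, finitely many iterations drive $p/q_n-2/d$ below zero, after which every finite $q_{n+1}$ is admissible. In dimension $d=2$, where Proposition~\ref{hls} is not directly available, the same iteration runs through Young's inequality (Proposition~\ref{young}) combined with the bound $\|g\|_{s,h}\le C(s)$ for every finite $s$, which follows from Lemma~\ref{greenbds} (case $\alpha=2$) after the rescaling $y=hx$: the logarithmic singularity of $g$ in two dimensions is integrable against its own exponential decay.

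For the endpoint $q=\infty$ I would apply Young's inequality to $f=g*u$, namely $\|f\|_{\infty,h}\le\|g\|_{s,h}\|u\|_{s',h}$ with conjugate exponents $1/s+1/s'=1$, choosing $s\in[1,d/(d-2))$ when $d\ge 3$ so that Corollary~\ref{gcor0} gives $\|g\|_{s,h}\le C$, and any large finite $s$ when $d=2$ via the bound just mentioned. This makes $s'>d/2$ finite, and $\|u\|_{s',h}=\|f\|_{ps',h}^{p}$ is already controlled by the finite-$q$ bootstrap. The main obstacle is bookkeeping rather than conceptual: one must check that every constant carried through the bootstrap depends only on $p$, $d$, and (locally in) $m$, which is guaranteed by Lemma~\ref{omega2} together with the monotone dependence of the HLS and Green's function constants on $\omega$ recorded in Proposition~\ref{hls} and Corollary~\ref{gcor0}.
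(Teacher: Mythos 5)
Your proposal is correct and takes essentially the same route as the paper: Gagliardo--Nirenberg on the base range, an HLS bootstrap via the Green's function representation $f = g*(|f|^{p-1}f)$ (with uniformity in $h$ secured by Lemma~\ref{omega2} and the $\omega$-monotonicity of the constants), and then Young's inequality with an $L^s$ bound on $g$ for the $q=\infty$ endpoint. The iteration bookkeeping you run is slightly looser than the paper's (the paper starts the exact affine recursion $1/q_{k+1}=p/q_k-2/d$ at the endpoint $q_0=2d/(d-2)$ and then perturbs inward at each stage), but both recognize that $p>1$ makes the recursion diverge from its fixed point $q^*=d(p-1)/2<2$, terminating in finitely many steps.

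One genuine improvement over the paper: you correctly observe that for the $q=\infty$ endpoint, the paper invokes Corollary~\ref{gcor0}, which is stated only for $d\ge 3$, so the paper's written argument does not literally cover $d=2$ at $q=\infty$ (for $d=1$, Gagliardo--Nirenberg already gives the $L^\infty$ bound since $\theta=1/2\in(0,1)$, as you note). Your replacement for $d=2$ — using the $\alpha=2$ case of Lemma~\ref{greenbds} to get $\|g\|_{s,h}\le C(s)$ for every finite $s$, noting that the logarithmic singularity $|\log(h(\sqrt{2}+|x|))|$ is tamed by the exponential decay $e^{-C(\omega)h|x|}$ after rescaling — is the right fix (one does need to handle the non-monotonicity of the log factor when invoking Lemma~\ref{intlmm}, by splitting at $|x|\approx 1/h$, but this is routine). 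The only minor redundancy is your suggestion of running the Young-inequality iteration for finite $q$ in $d=2$: as the paper observes, Gagliardo--Nirenberg alone already yields all finite $q$ when $d\le 2$, so the iteration is only needed there for the $q=\infty$ endpoint.
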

\begin{proof}
The case $q=2$ is already known from the assumption that $M_h(f)=m$. First, assume that $q\in (2,\infty)$. If $d\le 2$, then for any such $q$, we can find $\theta\in (0,1)$ satisfying \eqref{qtheta}, and therefore by the discrete Gagliardo-Nirenberg inequality and Lemma~\ref{regG}, it follows that $\|f\|_{q,h}\le C(q)$ for all $2 < q< \infty$. 

Next, suppose that $d\ge 3$. By the discrete Gagliardo-Nirenberg inequality (Proposition \ref{gn}) and Lemma~\ref{regG}, we have that $\|f\|_{q,h} \le C(q)$ for all $2\le q<2d/(d-2)$. 

Define a sequence $q_k$ as follows. Let $q_0 := 2d/(d-2)$. Note that the condition $p < 1+4/d$ implies that $q_0 > p + 1$.
For each $k$, let $q_{k+1}$ satisfy
\[
\frac{1}{q_{k+1}} = \frac{p}{q_k} - \frac{2}{d},
\]
unless the right hand side is nonpositive, in which let $q_{k+1}=\infty$. If $q_k=\infty$ for some $k$, then this definition implies that $q_j=\infty$ for all $j\ge k$.

Note that if $q_k$ is finite for all $k$, then for each $k$ we must have 
\begin{align*}
\frac{1}{q_k} &= \frac{p^k}{q_0} - \frac{2}{d}(1+p+\cdots + p^{k-1})\\ 
&= \frac{p^k}{2d}\biggl(d-2 - \frac{4}{p}\sum_{i=0}^{k-1} p^{-i}\biggr). 
\end{align*}
But as $k \ra\infty$, 
\begin{align*}
\frac{4}{p}\sum_{i=0}^{k-1} p^{-i} \ra \frac{4}{p-1}> d-2. 
\end{align*}
This shows that $q_k$ must become infinity at some finite $k$. 

Next, note that $q_k$ is an increasing sequence. This is easily proved by induction as follows. Suppose that $q_k \ge q_0=2d/(d-2)$. If $q_{k+1}=\infty$, there is nothing to prove. Otherwise, by the condition $p < 1+4/d$, 
\begin{align*}
\frac{1}{q_k}-\frac{1}{q_{k+1}} &= \frac{2}{d} - \frac{p-1}{q_k} \\
&\ge \frac{2}{d} -\frac{(p-1)(d-2)}{2d} > 0. 
\end{align*}
Take any $k$ such that $q_k < \infty$. Suppose we have proved that for all $q \in [2, q_k)$, 
\begin{equation}\label{qqk}
 \|f\|_{q,h}\le C(q).
\end{equation} 
Choose arbitrary $q\in [q_k, q_{k+1})$. Then $1<q_k/p < q < \infty$, and 
\[
\frac{1}{q} > \frac{1}{q_{k+1}}\ge \frac{p}{q_k} - \frac{2}{d}.
\] 
Take $q' \in [2,q_k)$ so close to $q_k$ that 
\[
\frac{1}{q} > \frac{p}{q'} - \frac{2}{d}. 
\]
Let $f_0 := |f|^{p-1} f$. Since $f = (\omega I - \Delta)^{-1} f_0$ and $\omega \ge 1/C$ by Lemma \ref{omega2}, 
it follows by the discrete Hardy-Littlewood-Sobolev inequality (Proposition~\ref{hls}) that 
\begin{align*}
\|f\|_{q} &\le C(q, q') \|f_0\|_{q'/p}\\
&= C(q,q') \|f\|_{q'}^p \le C(q,q'). 
\end{align*}
However, $q'$ can be chosen depending only on $q$, $p$ and $d$. Since the sequence $q_k$ increases to infinity, this proves by induction that $\|f\|_{q,h}\le C(q)$ for all $q\in (2,\infty)$.

(An important thing to note is that we crucially used the fact that the constant in Proposition \ref{hls} is a decreasing function of $\omega$, in conjunction with Lemma \ref{omega2}, to conclude that $C(q)$ may be chosen to depend on $\omega$ only through $m$ and not on the actual value of~$\omega$, and that moreover, $C(q)$ satisfies the uniform boundedness condition.)

Next, consider the case $q=\infty$. Take any $r > d/2$. Let $r' = r/(r-1)$, so that $r' < d/(d-2)$. Then by Corollary \ref{gcor0}, Young's inequality, Lemma \ref{omega2} and what we have already proved above, 
\begin{align*}
\|f\|_{\infty,h} &= \|g * f_0\|_{\infty,h} \le \|g\|_{r',h} \|f_0\|_{r,h}= \|g\|_{r',h}\|f\|_{pr,h}\le C. 
\end{align*}
This completes the proof. 
\end{proof}
\begin{lmm}\label{fqbd2}
For any $q\in [2, \infty]$ and any $1\le i, j\le d$, 
\[
\|\nabla_i f\|_{q,h}\le C(q) \ \ \text{ and } \ \ \|\nabla_i\nabla_j f\|_{q,h}\le C(q).
\]
\end{lmm}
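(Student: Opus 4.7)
The plan is to exploit the soliton equation $(\omega I - \Delta)f = |f|^{p-1}f$ to represent $f$ as a convolution against the discrete Green's function $g$ constructed in Lemma \ref{greeninv}. Setting $f_0 := |f|^{p-1}f$, we have $f = g * f_0$, and since convolution commutes with the discrete derivatives $\nabla_i$, we obtain
\[
\nabla_i f = (\nabla_i g) * f_0, \qquad \nabla_i\nabla_j f = (\nabla_j g) * (\nabla_i f_0).
\]
The key feature of Section \ref{harmonic} that makes this strategy uniform in $h$ is that the constants in Corollary \ref{greenderiv} and Proposition \ref{hls} depend on $\omega$ only through a lower bound, and by Lemma \ref{omega2} such a lower bound exists that depends only on $p$, $d$, and $m$ (satisfying the uniform boundedness condition).

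For the first-order bound, Young's inequality (Proposition \ref{young}) with exponents $1+q=q+1$ gives, for any $q\in[2,\infty]$,
\[
\|\nabla_i f\|_{q,h} \le \|\nabla_i g\|_{1,h}\,\|f_0\|_{q,h}.
\]
The first factor is bounded by $C$ by Corollary \ref{greenderiv} together with Lemma \ref{omega2}. The second factor equals $\|f\|_{pq,h}^{\,p}$, and since $p>1$ and $q\ge 2$ we have $pq\ge 2$, so Lemma \ref{fqbd} gives $\|f_0\|_{q,h}\le C(q)$ (for $q=\infty$ interpret directly as $\|f\|_{\infty,h}^p$).

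For the second-order bound, Young's inequality applied to $\nabla_i\nabla_j f = (\nabla_j g)*(\nabla_i f_0)$ reduces everything to bounding $\|\nabla_i f_0\|_{q,h}$. Here I use the elementary pointwise inequality
\[
\bigl||z_1|^{p-1}z_1 - |z_2|^{p-1}z_2\bigr| \le C(p)\,\max(|z_1|,|z_2|)^{p-1}\,|z_1-z_2|,
\]
valid for complex $z_1,z_2$ and any $p\ge 1$ (a direct Wirtinger-calculus computation). Applied with $z_1=f(x+e_i)$, $z_2=f(x)$ and combined with the uniform bound $\|f\|_{\infty,h}\le C$ from Lemma \ref{fqbd}, this yields the pointwise bound $|\nabla_i f_0(x)|\le C\,|\nabla_i f(x)|$, hence $\|\nabla_i f_0\|_{q,h}\le C\,\|\nabla_i f\|_{q,h}\le C(q)$ by the previous step. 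Another application of Corollary \ref{greenderiv} through Young's inequality then finishes the proof.

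The only mildly delicate point is the bookkeeping that no constant secretly blows up as $h\to 0$: this is why we are careful to invoke Lemma \ref{omega2} (bounding $\omega$ away from $0$) whenever we cite a Green's-function estimate, and why we verified in Lemma \ref{fqbd} that the $L^q$ bounds on $f$ itself respect the uniform boundedness condition. With those two inputs, the argument is essentially a clean two-step Young-inequality bootstrap and no further obstacle arises.
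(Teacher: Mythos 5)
Your proof follows the paper's argument essentially verbatim: represent $f$ via the Green's function as $f = g * (|f|^{p-1}f)$, push the discrete derivatives onto $g$, apply Young's inequality together with Corollary~\ref{greenderiv} and Lemma~\ref{omega2}, and control $\|\nabla_i(|f|^{p-1}f)\|_{q,h}$ by a pointwise Lipschitz bound for the nonlinearity combined with $\|f\|_{\infty,h}\le C$ from Lemma~\ref{fqbd}. The only difference is cosmetic: you invoke the complex Lipschitz estimate $\bigl||z_1|^{p-1}z_1 - |z_2|^{p-1}z_2\bigr| \le C(p)\max(|z_1|,|z_2|)^{p-1}|z_1-z_2|$ directly, whereas the paper first observes that minimality forces $f$ to be a unit-modulus scalar times a non-negative real function and then applies the real-valued mean value inequality for $a\mapsto a^p$; both are correct and arrive at the same pointwise bound $|\nabla_i(|f|^{p-1}f)|\le C|\nabla_i f|$.
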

\begin{proof}
Since the ground state soliton minimizes energy, and the function $|f|$ satisfies $M_h(f)= M_h(|f|)$ and $H_h(|f|)\le H_h(f)$ by the triangle inequality, therefore we must have $H_h(|f|) = H_h(f)$. Consequently, $||f(x)|-|f(y)|| = |f(x)-f(y)|$ for each neighboring pair of points $(x,y)$, which shows that $f$ must be of the form $f(x) = \alpha f_1(x)$ for some constant $\alpha\in \cc$ with $|\alpha|=1$ and some function $f_1:\zz^d \ra [0,\infty)$. Therefore, without loss of generality we will assume in this proof that $f$ is a non-negative function.

By Young's inequality, Lemma \ref{fqbd},  and the observation that $\nabla_i$ is a convolution operator, we have that for any $q\in [2,\infty]$, 
\begin{equation}\label{nabla1}
\begin{split}
\|\nabla_i f\|_{q,h} &= \|(\nabla_i g)*f^p\|_{q,h}\\
&\le \|\nabla_i g\|_{1,h} \|f^p\|_{q,h}\le C(q) \|\nabla_i g\|_{1,h}.
\end{split}
\end{equation}
Next, note that by the inequality $|a^p - b^p| \le \max\{pa^{p-1}, p b^{p-1}\} |a-b|$ for non-negative $a$ and $b$ (which follows by the mean value theorem), and fact that $\|f\|_{\infty, h}\le C$ from Lemma \ref{fqbd}, we have that for all $x$,
\[
|\nabla_i f^p(x)|\le C |\nabla_i f(x)|.
\]
In particular, by Lemma \ref{fqbd} this implies that for all $q\in [2,\infty]$, 
\begin{equation}\label{nablafp}
\|\nabla_i f^p\|_{q,h} \le C \|\nabla_i f\|_{q,h}\le C(q). 
\end{equation}
By the commutativity of convolution operators,
\begin{align*}
\nabla_i \nabla_j f &= (\nabla_i g)*( \nabla_j f^p). 
\end{align*}
Thus, by Young's inequality and \eqref{nablafp}, we see that for any $q\in [2,\infty]$, 
\begin{equation}\label{nabla2}
\begin{split}
\|\nabla_i \nabla_j f\|_{q,h} &\le \|\nabla_i g\|_{1,h} \|\nabla_j f^p\|_{q,h} \\
&\le C(q)\|\nabla_i g\|_{1,h}. 
\end{split}
\end{equation}
Applying Corollary \ref{greenderiv} (in conjunction with Lemma \ref{omega2}) to \eqref{nabla1} and \eqref{nabla2} completes the proof. The uniform boundedness condition follows from the monotonicity of the constant in Corollary \ref{greenderiv}. 
\end{proof}

\section{Continuum limit of discrete solitons}\label{solitonlim}
In this section we establish that discrete solitons converge to continuum solitons as the grid size goes to zero, provided that the nonlinearity is mass-subcritical.

Assume that $1< p< 1+4/d$. Fix $h > 0$, $m > 0$ and let $f$ be a ground state soliton of mass $m$ for the DNLS system \eqref{dnlseq2} on $\zz^d$ at grid size $h$. Let $Q_{\lambda(m)}$ be the unique ground state soliton of mass $m$ for the continuum system~\eqref{nlsequation}. Let $\tf$ be the continuum image of $f$ at grid size $h$, as defined in Section \ref{basic}. 
\begin{thm}\label{tfthm}
For any $q\in [2,\infty]$, 
\[
\tl^q(\tf, Q_{\lambda(m)}) \le C(p,d, m,q,h),
\]
where $C(p,d,m,q,h)$ satisfies, for any $0<m_0\le m_1< \infty$ and any fixed $p$, $d$ and $q$,
\begin{equation}\label{cunif}
\lim_{h\ra0}\sup_{m_0\le m\le m_1} C(p,d,m,q,h) = 0.
\end{equation}
The same bound also holds for $|H_h(f)-E_{\min}(m)|$ (with the modification that there is no $q$). 
\end{thm}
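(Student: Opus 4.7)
The plan is to argue by contradiction, extract a subsequential continuum limit using the $h$-uniform regularity bounds of Section~\ref{solitonreg}, and identify it as the unique continuum ground state via the orbital stability of $Q_{\lambda(m)}$. Suppose \eqref{cunif} fails. Then there exist $0<m_0\le m_1<\infty$, some $q\in[2,\infty]$, a constant $\delta>0$, sequences $h_k\downarrow 0$ and $m_k\in[m_0,m_1]$, and discrete ground states $f_k\in\ms(m_k,h_k)$ such that either $\tl^q(\tf_k,Q_{\lambda(m_k)})\ge\delta$ for all $k$, or $|H_{h_k}(f_k)-E_{\min}(m_k)|\ge\delta$ for all $k$. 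By passing to a subsequence, $m_k\to m\in[m_0,m_1]$.

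First I would set up the compactness. Lemmas \ref{fqbd} and \ref{fqbd2} furnish constants $C(r)$, bounded uniformly for $m$ in compact subsets, controlling $\|f_k\|_{r,h_k}$, $\|\nabla_i f_k\|_{r,h_k}$ and $\|\nabla_i\nabla_j f_k\|_{r,h_k}$ for every $r\in[2,\infty]$. Translating $\tf_k$ by $h_ky_k$ with $y_k$ chosen via Lemma~\ref{cc3} so that $|f_k(y_k)|\ge c>0$ (possible since $H_{h_k}(f_k)=E_{\min}(m_k,h_k)\le -c<0$ uniformly by Lemma~\ref{emin0} and Lemma~\ref{emfacts4}), I then convolve $\tf_k(\cdot+h_ky_k)$ against a smooth mollifier of scale~$h_k$ to obtain smooth approximants $\tilde F_k$ satisfying $\|\tilde F_k-\tf_k(\cdot+h_ky_k)\|_{L^r(\rr^d)}\to 0$ and enjoying uniform $H^2\cap L^\infty$ bounds. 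Applying Theorem~\ref{expodecaythm} to $f_k$ yields tightness of $\{\tf_k(\cdot+h_ky_k)\}$ in $L^2$, because exponential decay away from a set of bounded cardinality around the centering point gives uniform integrability at infinity. Rellich--Kondrachov applied to $\tilde F_k$, together with tightness, yields a subsequence converging to some $g_*\in H^1(\rr^d)\cap L^\infty(\rr^d)$ strongly in $L^r(\rr^d)$ for every $r\in[2,\infty)$ and weakly in $H^1$.

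Next I would identify the limit. Strong $L^2$ convergence combined with tightness gives $M(g_*)=m$. Weak lower semicontinuity of the Dirichlet energy and strong $L^{p+1}$ convergence give
\[
H(g_*)\le\liminf_{k\to\infty}H_{h_k}(f_k)=\liminf_{k\to\infty}E_{\min}(m_k,h_k).
\]
For the matching upper bound on $E_{\min}(m_k,h_k)$, I would restrict a near-minimizing smooth function $v_k\in C_c^\infty(\rr^d)$ with $M(v_k)=m_k$, $H(v_k)\le E_{\min}(m_k)+1/k$, to the grid $h_k\zz^d$, rescale mass, and compare the resulting Riemann sums to continuum integrals using $v_k\in C_c^\infty$; the explicit scaling formula \eqref{eminform} and continuity of $E_{\min}$ imply $\limsup_k E_{\min}(m_k,h_k)\le E_{\min}(m)$. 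Since $H(g_*)\ge E_{\min}(m)$ by definition of $E_{\min}$ and $M(g_*)=m$, we conclude
\[
H(g_*)=E_{\min}(m)=\lim_{k\to\infty}H_{h_k}(f_k)=\lim_{k\to\infty}E_{\min}(m_k,h_k),
\]
which already contradicts the energy branch of the assumption and, by uniqueness of the continuum ground state soliton, forces $g_*=\alpha_0 Q_{\lambda(m)}(\cdot+x_0)$ for some $x_0\in\rr^d$ and $\alpha_0\in S^1$.

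Finally I would upgrade the convergence to $\tl^q$ for all $q\in[2,\infty]$. The cases $q\in[2,\infty)$ follow immediately from the strong $L^r$ convergence above, after re-centering by the symmetries $(x_0,\alpha_0)$ absorbed into the pseudometric. For $q=\infty$, the uniform Lipschitz bound $\|\nabla_i f_k\|_{\infty,h_k}\le C$ translates into a uniform $C^{0,1}$ modulus of continuity for $\tilde F_k$, so Arzelà--Ascoli plus tightness converts the pointwise $L^r$ convergence into uniform convergence and thence into $\tl^\infty$ convergence. This contradicts $\tl^q(\tf_k,Q_{\lambda(m_k)})\ge\delta$, completing the argument. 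The main obstacle is the passage to $q=\infty$: without the $h$-uniform second-derivative estimate in Lemma~\ref{fqbd2} there is no way to promote strong $L^r$ convergence (for finite~$r$) to uniform convergence, and a secondary technical difficulty is ensuring the comparison $\limsup_k E_{\min}(m_k,h_k)\le E_{\min}(m)$ with uniform constants as $m$ ranges over $[m_0,m_1]$, which requires a careful discretization of the continuum minimizer.
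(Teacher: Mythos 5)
Your proposal takes a genuinely different route from the paper: you argue by contradiction, extract a subsequential $L^2(\rr^d)$ limit via Rellich--Kondrachov, and identify it with $Q_{\lambda(m)}$ by uniqueness of the continuum minimizer. The paper instead builds a piecewise-linear interpolant $f^c$ directly, shows its mass and energy converge to $(m,E_{\min}(m))$ with explicit $O(h^{2/(p+1)})$ rates via a sandwich against the discretized continuum soliton $\widehat Q$, and then invokes the orbital-stability theorem for the continuum ground state. The orbital-stability theorem is doing real work there: it absorbs the concentration-compactness argument and in particular the control of mass escaping to infinity.

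That control of mass at infinity is precisely where your proposal has a genuine gap. You invoke Theorem~\ref{expodecaythm} to claim tightness of $\{\tf_k(\cdot+h_k y_k)\}$ in $L^2(\rr^d)$, but the constants in that theorem degenerate badly as $h\to 0$. The decay rate there is $b=\tfrac12\log(1/r)$ with $r=2d/(2d+\omega h^2)$, so $b\sim \omega h^2$ in lattice units, i.e.\ $\sim\omega h$ in continuum units, which tends to $0$; the prefactor $A=2C_0=2\omega^{-1}m^{p+1}h^{-d(p-1)}\delta^{-2}$ and the threshold $\delta$ also depend on $h$ in a way that makes the bound vacuous in the continuum limit (tracing through the proof, $\delta$ is forced to be $O(h^{2d/(p-1)})$, so $|U|\le m/(h^d\delta^2)$ corresponds to a continuum ball whose radius diverges as $h\to 0$). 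Moreover, Theorem~\ref{expodecaythm} is stated and proved only for solitons on the finite torus $V_n$, not on $\zz^d$. So the exponential-decay bound in the paper is not uniform in $h$ and cannot be used to establish the tightness you need. A sharper, $h$-uniform decay could plausibly be derived from the Green's function estimate in Proposition~\ref{gbound} (whose continuum decay rate $C(d,\omega)$ is $h$-independent), but that is additional work not supplied by the paper and not supplied by your outline either. Without that, your compactness step stalls; the paper avoids the issue entirely by quoting continuum orbital stability, which internalizes concentration-compactness.

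A secondary (but fixable) issue: your proof of $\limsup_k E_{\min}(m_k,h_k)\le E_{\min}(m)$ by discretizing a $C_c^\infty$ near-minimizer is sound in spirit, but you cannot invoke Corollary~\ref{elimit} for it, since that corollary is itself a consequence of this theorem; you would need to carry out the Riemann-sum comparison with care, essentially re-deriving Lemma~\ref{qprops}. The passage from the step function $\tf_k$ to a mollified $\tilde F_k$ with uniform $H^1$ bounds and with $G_{h_k}(f_k)\to\tfrac12\|\nabla g_*\|_2^2$ (so that lower semicontinuity applies to the correct quantity) also needs to be made precise, since the weak derivative of the step function is not the continuum image of the discrete gradient; the paper's piecewise-linear $f^c$, whose partial derivatives are written explicitly in terms of $\nabla_i f$ (Lemma~\ref{fcrep}), is the natural device for this.
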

In the following, $C$ will denote any positive constant that depends only on $p$, $d$ and $m$, satisfying the uniform boundedness condition defined in Section \ref{solitonreg}: that is, $C$ remains uniformly bounded as $m$ varies over a compact subinterval of $(0,\infty)$, with $p$ and $d$ fixed. If $C$ depends on additional parameters $a,b,\ldots$, then it will be written as $C(a,b,\ldots)$. We will also adopt the convention that $o(1)$ denotes any constant that depends only on $p$, $d$, $m$ and $h$ and satisfies \eqref{cunif}. 

Let $w: \rr\ra\rr$ be the function
\[
w(t) := 
\begin{cases}
1-|t| &\text{ if } |t|\le 1,\\
0 &\text{ if } |t|> 1. 
\end{cases}
\]
Extend $w$ to $\rr^d$ as 
\[
w(t_1,\ldots, t_d) := w(t_1)w(t_2)\cdots w(t_d). 
\]
Define a function $f^c: \rr^d\ra\cc$ as 
\begin{align*}
f^c(y) &:= \sum_{x\in \zz^d} f(x)w\biggl(\frac{y-hx}{h}\biggr).
\end{align*}
\begin{lmm}\label{fcrep}
The function $f^c$ is absolutely continuous. If $x\in \zz^d$ and $y=hx+ht$ for some $t\in (0,1)^d$, then 
\[
f^c(y) = \sum_{s\in \{0,1\}^d} f(x + s) \prod_{i=1}^dt_i^{s_i} (1-t_i)^{1-s_i}. 
\]
If $\partial_i f^c$ denotes the partial derivative of $f$ in the $i$th coordinate, then for $x$ and $y$ as above,
\begin{align*}
\partial_i f^c(y) &= \sum_{s\in \{0,1\}^{d} \atop s_i=0} \nabla_i f(x+s) \prod_{1\le j\le d\atop j\ne i} t_j^{s_j}(1-t_j)^{1-s_j} 
\end{align*}
\end{lmm}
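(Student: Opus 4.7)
The plan is to verify the three assertions essentially by direct computation; this is a routine statement about tensor-product linear interpolation, and there is no real obstacle.

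First I would establish the closed form in the middle assertion. Fix $x \in \zz^d$ and $y = hx + ht$ with $t \in (0,1)^d$. A term $f(x') w((y-hx')/h)$ in the defining sum is nonzero iff $|(y-hx')_i / h| < 1$ for all $i$, i.e.\ $|x_i + t_i - x'_i| < 1$; since $t_i \in (0,1)$, this forces $x'_i \in \{x_i, x_i+1\}$, so $x' = x+s$ for some $s \in \{0,1\}^d$. For such $s$, $(y - h(x+s))_i/h = t_i - s_i$, and a direct case check ($s_i = 0$ gives $1-t_i$; $s_i = 1$ gives $t_i$) yields
\[
w\!\left(\frac{y - h(x+s)}{h}\right) = \prod_{i=1}^d t_i^{s_i}(1-t_i)^{1-s_i},
\]
which is the claimed formula.

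Next I would handle the partial derivative. On the open cube $\{hx + ht : t \in (0,1)^d\}$ the formula from the previous step expresses $f^c$ as a polynomial in $t$, with $y_i = hx_i + ht_i$, so $\partial_i = h^{-1} \partial/\partial t_i$. Differentiating the single factor $t_i^{s_i}(1-t_i)^{1-s_i}$ in $t_i$ gives $2s_i - 1$. Pairing each $s$ with $s_i = 0$ against $s + e_i$ gives
\[
\partial_i f^c(y) = \frac{1}{h} \sum_{s \in \{0,1\}^d,\, s_i = 0} \bigl(f(x+s+e_i) - f(x+s)\bigr) \prod_{j \ne i} t_j^{s_j}(1-t_j)^{1-s_j},
\]
and the parenthesized difference is precisely $h \nabla_i f(x+s)$, yielding the stated expression.

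Finally, for absolute continuity: the explicit formula from the first step shows that on each closed cube $\prod_i [hx_i, h(x_i+1)]$ the function $f^c$ coincides with the multilinear interpolant of $f$ at the $2^d$ vertices. Two adjacent closed cubes share a $(d-1)$-dimensional face, and on that face the multilinear interpolant depends only on the values of $f$ at the shared vertices, so the two restrictions agree; hence $f^c$ is continuous on $\rr^d$. Within each open cube the partial derivatives are bounded by sums of finitely many $|\nabla_i f(x+s)|$, which are finite (and in fact uniformly $L^\infty$ by Lemma \ref{fqbd2}), so $f^c$ is locally Lipschitz. Local Lipschitz continuity gives absolute continuity on every line parallel to a coordinate axis with locally bounded classical derivative a.e., which is the sense of absolute continuity required here.
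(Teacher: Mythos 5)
Your proposal is correct and follows essentially the same path as the paper: identify which $x'$ contribute via the support of $w$, compute $w(t_i - s_i) = t_i^{s_i}(1-t_i)^{1-s_i}$ by case check, and then use $\partial_i = h^{-1}\partial/\partial t_i$ together with pairing $s \leftrightarrow s+e_i$ for the derivative formula. The only difference is in the absolute-continuity claim, where the paper just observes that $f^c$ inherits absolute continuity from $w$ being absolutely continuous with bounded support, whereas you spell out a more explicit route (continuity across faces plus local Lipschitz bounds on each cube); both are valid and both are routine.
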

\begin{proof}
Since $w$ is absolutely continuous with bounded support, it follows easily from the definition of $f^c$ that $f^c$ is an absolutely continuous function on $\rr^d$.  Take $x$ and $y$ as in the statement of the lemma. Take any $z\in \zz^d$, and note that $w((y-hz)/h)$ is non-zero if and only if $|y_i/h -  z_i| < 1$ for $i=1,\ldots, d$. Since $y_i/h = x_i + t_i$ for some $t_i\in (0,1)$ for each $i$, therefore $w((y-hz)/h)\ne 0$ if and only if each $z_i$ is either $x_i$ or $x_i+1$. In other words, 
\begin{align*}
f^c(y) &= \sum_{s\in \{0,1\}^d} f(x + s)w\biggl(\frac{y-hx-hs}{h}\biggr)\\
&=\sum_{s\in \{0,1\}^d} f(x + s)\prod_{i=1}^dw(t_i-s_i).
\end{align*}
An easy verification shows that when $s_i\in \{0,1\}$, $w(t_i-s_i) = t_i^{s_i}(1-t_i)^{1-s_i}$. 
This completes the proof of the first identity. For the second, note that by the previous display,
\begin{align*}
\partial_i f^c(y) &= \frac{1}{h}\fpar{}{t_i} f^c(hx+ht)\\
&= \frac{1}{h}\sum_{s\in \{0,1\}^d} f(x + s)\fpar{}{t_i}\biggl(\prod_{j=1}^dw(t_j-s_j)\biggr). 
\end{align*}
Now, 
\[
\fpar{}{t_i} w(t_i-s_i) =
\begin{cases}
1 &\text{ if } s_i = 1,\\
-1 &\text{ if } s_i=0.
\end{cases}
\]
This proves the second identity. 
\end{proof}

\begin{lmm}\label{fcprops}
The function $f^c$ satisfies
\[
|M_h(f)-M(f^c)|\le Ch, \ \text{ and } \ |H_h(f)-H(f^c)|\le Ch^{2/(p+1)}.
\]
\end{lmm}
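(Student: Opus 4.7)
The plan is to exploit the explicit multilinear-interpolation formula for $f^c$ from Lemma \ref{fcrep} together with the uniform discrete-derivative estimates from Lemma \ref{fqbd2}, partitioning $\rr^d$ into the cubes $C_x := hx + h[0,1)^d$ so that $M(f^c)=\sum_x\int_{C_x}|f^c|^2\,dy$ and $H(f^c)$ decomposes similarly. The strategy on each cube is to compare the integrals of $|f^c|^2$, $|\partial_i f^c|^2$ and $|f^c|^{p+1}$ against their discrete counterparts, and then sum the resulting cube-wise error bounds using the global $L^{q,h}$ control on $f$ and its discrete derivatives.

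For the mass estimate I would write
\[
M(f^c)-M_h(f)=\sum_x\int_{C_x}\bigl(|f^c(y)|^2-|f(x)|^2\bigr)\,dy,
\]
use the identity $|f^c(y)|^2-|f(x)|^2 = |f^c(y)-f(x)|^2 + 2\operatorname{Re}\bigl((f^c(y)-f(x))\overline{f(x)}\bigr)$, and estimate $|f^c(y)-f(x)|$ on $C_x$ via telescoping along a lattice path from $x$ to $x+s$, giving the pointwise bound $|f^c(y)-f(x)|^2\le Ch^2\sum_{s\in\{0,1\}^d,\,i}|\nabla_i f(x+s)|^2$. Integrating and summing, the quadratic term contributes at most $Ch^2\sum_i\|\nabla_i f\|_{2,h}^2\le Ch^2$ by Lemma \ref{fqbd2}, while the cross term is at most $Ch\cdot M_h(f)^{1/2}\le Ch$ by Cauchy-Schwarz. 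The $Ch$ mass bound follows.

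For the gradient part of the energy, Lemma \ref{fcrep} gives $\partial_i f^c(y)=\sum_{s:\,s_i=0}\nabla_i f(x+s)\prod_{j\ne i}t_j^{s_j}(1-t_j)^{1-s_j}$ on $C_x$, so expanding the square and integrating yields $\int_{C_x}|\partial_i f^c|^2\,dy = h^d\sum_{s,s'}A_{s,s'}\nabla_i f(x+s)\overline{\nabla_i f(x+s')}$ for explicit constants $A_{s,s'}$ (products of Beta-type integrals) with $\sum_{s,s'}A_{s,s'}=1$. Summing over $x$ reduces the comparison of $\sum_x\int_{C_x}|\partial_i f^c|^2$ with $\|\nabla_i f\|_{2,h}^2$ to controlling the shifted inner products $h^d\sum_x\nabla_i f(x)\overline{\bigl(\nabla_i f(x+d)-\nabla_i f(x)\bigr)}$ for small lattice vectors $d$; Cauchy-Schwarz combined with $\|\nabla_i f(\cdot+d)-\nabla_i f(\cdot)\|_{2,h}\le Ch\sum_j\|\nabla_j\nabla_i f\|_{2,h}\le Ch$ (Lemma \ref{fqbd2}) gives an error of order $Ch$ per direction, so the discrete and continuous gradient energies differ by at most $Ch$.

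For the nonlinear term, the same interpolation bound raised to the power $p+1$ yields $\int_{C_x}|f^c-\tilde f|^{p+1}\,dy\le Ch^{d+p+1}\sum_{s,i}|\nabla_i f(x+s)|^{p+1}$, where $\tilde f$ is the step-function continuum image of $f$; summing and applying Lemma \ref{fqbd2} gives $\|f^c-\tilde f\|_{p+1}\le Ch$. Combined with the $L^{p+1}$ bounds on $f^c$ and $\tilde f$ from Lemma \ref{fqbd} and the elementary inequality $\bigl|\,|a|^{p+1}-|b|^{p+1}\bigr|\le C(|a|^p+|b|^p)|a-b|$ followed by H\"older with conjugate exponents $(p+1)/p$ and $p+1$, this yields $\bigl|\int|f^c|^{p+1}\,dy-\int|\tilde f|^{p+1}\,dy\bigr|\le Ch$; note that $\int|\tilde f|^{p+1}\,dy = h^d\sum_x|f(x)|^{p+1}$, so this matches the nonlinear part of $H_h(f)$. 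Together with the gradient estimate this gives $|H_h(f)-H(f^c)|\le Ch\le Ch^{2/(p+1)}$ (the weaker power $h^{2/(p+1)}$ being adequate since $2/(p+1)<1$ for $p>1$). The principal obstacle is the cube-by-cube bookkeeping while preserving the uniform-in-$m$ condition from Section \ref{solitonreg}; since every lemma invoked (\ref{fqbd}, \ref{fqbd2}) produces constants satisfying the uniform boundedness condition on compact subintervals of $(0,\infty)$, the final constants inherit it.
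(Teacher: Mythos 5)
Your proposal is correct and follows essentially the same skeleton as the paper's argument: partition $\rr^d$ into the cubes $C_x = hx+h[0,1)^d$, use Lemma \ref{fcrep}'s interpolation formula and the pointwise bounds $|f^c(y)-f(x)|\le\sum_{s,j}h|\nabla_j f(x+s)|$ and $|\partial_i f^c(y)-\nabla_i f(x)|\le\sum_{s,j}h|\nabla_j\nabla_i f(x+s)|$, then invoke the $h$-uniform bounds of Lemmas \ref{fqbd} and \ref{fqbd2}. The one genuine point of divergence is the nonlinear term. The paper introduces the step function $\tilde f$, proves $\|f^c-\tilde f\|_2\le Ch$ and $\|f^c-\tilde f\|_\infty\le C$, and interpolates to get $\|f^c-\tilde f\|_{p+1}\le Ch^{2/(p+1)}$, which is where the power $h^{2/(p+1)}$ in the lemma statement comes from. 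You instead raise the pointwise increment estimate directly to the $(p+1)$th power and use the $L^{p+1,h}$ bound on $\nabla_i f$ from Lemma \ref{fqbd2} (which indeed covers any $q\in[2,\infty]$), yielding the stronger bound $\|f^c-\tilde f\|_{p+1}\le Ch$. Combined with the Lipschitz estimate for $|\cdot|^{p+1}$ and H\"older, this gives $|H_h(f)-H(f^c)|\le Ch$, which of course implies the stated $Ch^{2/(p+1)}$. Your cross-term expansion for the gradient integral is also sound (since the Beta-weights $A_{s,s'}$ sum to one), though the paper's direct pointwise comparison of $\partial_i f^c$ against $\nabla_i f(x)$ followed by $L^2$ differencing is slightly shorter. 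Both routes preserve the uniform-in-$m$ boundedness condition because every ingredient lemma does.
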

\begin{proof}
For $x=(x_1,\ldots, x_d)\in \zz^d$, let $B(x)$ be the cube
\begin{align*}
B(x) &:= \{y\in \rr^d: x_i\le y_i/h< x_i+1, \ i=1,\ldots, d\}. 
\end{align*}
Another way to represent $B(x)$ is $\{hx + ht: t\in [0,1)^d\}$. If $y$ is a point in the interior of $B(x)$, then Lemma \ref{fcrep} shows that $f^c(y)$ is a convex combination of $\{f(x+s): s\in \{0,1\}^d\}$. Therefore, for any $r\in [2,\infty)$, 
\begin{align}\label{fc1}
|f^c(y)|^r&\le \max_{s\in \{0,1\}^d}|f(x+s)|^r \le \sum_{s\in \{0,1\}^d} |f(x+s)|^r.
\end{align}
Consequently, 
\begin{align*}
\int_{B(x)} |f^c(y)|^r dy &\le h^d \sum_{s\in \{0,1\}^d} |f(x+s)|^r. 
\end{align*}
Summing over $x$ gives
\begin{equation}\label{fc2}
\begin{split}
\int_{\rr^d} |f^c(y)|^r dy &= \sum_{x\in \zz^d} \int_{B(x)} |f^c(y)|^r dy \\
&\le 2^d h^d \sum_{x\in \zz^d} |f(x)|^r = 2^d \|f\|_{r,h}. 
\end{split}
\end{equation}
By Lemma \ref{fqbd}, the bounds \eqref{fc1} and \eqref{fc2} show that for all $r\in [2, \infty]$,
\begin{equation}\label{fcbd}
\|f^c\|_r\le C(r). 
\end{equation}
Next, note that 
\begin{align}
|f^c(y)-f(x)| &\le \max_{s\in \{0,1\}^d}|f(x+s)-f(x)|\nonumber \\
&\le \sum_{s\in \{0,1\}^d} \sum_{j=1}^dh|\nabla_j f(x+s)|. \label{fc3}
\end{align}
Recall that if $y\in B(x)$, then $\tf(y) = f(x)$. Therefore by inequality \eqref{fc3} and Lemma \ref{regG}, 
\begin{align*}
\int_{\rr^d} (f^c(y)-\tf(y))^2 dy &= \sum_{x\in \zz^d} \int_{B(x)} (f^c(y)-f(x))^2 dy \\
&\le C h^2 G_h(f)\le C h^2. 
\end{align*}
It is easy to see that for all $r$,  $\|\tf\|_r = \|f\|_{r,h}$. Therefore, for any $r\in [2,\infty)$, by the $L^\infty$ bounds from \eqref{fcbd} and Lemma \ref{fqbd} and the above inequality, 
\begin{align*}
\bigl|\|f^c\|_r - \|\tf\|_r\bigr| &\le \|f^c-\tf\|_r \\
&\le \|f^c-\tf\|_\infty^{1-2/r} \|f^c-\tf\|_2^{2/r} \le C(r)h^{2/r}. 
\end{align*}
Using this bound, and again applying the $L^r$ bounds from inequality  \eqref{fcbd} and Lemma \ref{fqbd}, we get 
\begin{equation}\label{fctf}
\bigl|\|f^c\|_r^r - \|\tf\|_r^r\bigr| \le C(r)h^{2/r}. 
\end{equation}
Next, let $\partial_i f^c$ be the partial derivative of $f^c$ in the $i$th coordinate. By Lemma~\ref{fcrep}, 
\begin{align}
|\partial_i f^c(y) - \nabla_i f(x)| &\le \max_{s\in\{0,1\}^d} |\nabla_i f(x+s) - \nabla_i f(x)|\nonumber \\
&\le \sum_{s\in \{0,1\}^d} \sum_{j=1}^d h|\nabla_j \nabla_i f(x+s)|.  \label{fcd1}
\end{align}
Let $\partial_i \tf$ be the function that is identically equal to $\nabla_i f(x)$ in the interior of the box $B(x)$, and arbitrarily defined on the boundaries. Then the above inequality, together with Lemma \ref{fqbd2}, implies that
\begin{align*}
\int_{\rr^d} (\partial_if^c(y)-\partial_i\tf(y))^2 dy &= \sum_{x\in \zz^d} \int_{B(x)} (\partial_if^c(y)-\nabla_if(x))^2 dy \\
&\le C h^2 \max_{1\le j\le d} \|\nabla_j\nabla_i f\|_{2,h}^2\le C h^2. 
\end{align*}
Clearly for all $r$, $\|\partial_i \tf\|_r = \|\nabla_i f\|_{r,h}$. Therefore by the above inequality and Lemma \ref{fqbd2}, $\|\partial_i f^c\|_2$ and $\|\partial_i \tf\|_2$ are both bounded by $C$. Consequently, again applying the previous display, 
\begin{align*}
\bigl|\|\partial_i f^c\|_2^2- \|\partial_i \tf\|_2^2\bigr| &\le C \bigl|\|\partial_i f^c\|_2- \|\partial_i \tf\|_2\bigr|\\
&\le C \|\partial_i f^c - \partial_i \tf\|_2\\
&\le Ch. 
\end{align*}
Using the above bound and \eqref{fctf}, we get
\begin{align*}
|M_h(f)- M(f^c)| &= \bigl|\|\tf\|_2^2 - \|f^c\|_2^2\bigr| \le Ch,
\end{align*}
and 
\begin{align*}
|H_h(f)- H(f^c)| &\le \bigl|\|\partial_i \tf\|_2^2 - \|\partial_i f^c\|_2^2 \bigr| + \bigl|\|\tf\|_{p+1}^{p+1} - \|f^c\|_{p+1}^{p+1}\bigr|\\
&\le Ch^{2/(p+1)}. 
\end{align*}
This completes the proof of the lemma.
\end{proof}
Let $Q$ be the unique, positive radially symmetric solution of \eqref{solitondefine}. Let $Q' := Q_{\lambda(m')}$ be the  continuum ground state soliton of mass $m' := M(f^c)$. Let $\hq:\zz^d \ra\rr$ be the function $\hq(x) := Q'(hx)$. 
\begin{lmm}\label{qprops}
Recall the $o(1)$ convention introduced immediately after the statement of Theorem \ref{tfthm}. Then 
\[
|M(Q')-M_h(\hq)|\le o(1) \ \text{ and } \ |H(Q')-H_h(\hq)|\le o(1). 
\]
\end{lmm}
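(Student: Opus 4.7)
The plan is to realize both differences $|M(Q')-M_h(\hq)|$ and $|H(Q')-H_h(\hq)|$ as Riemann-sum approximation errors for integrals of smooth, exponentially decaying functions, and to extract a rate of order $h^\alpha$ for some fixed $\alpha>0$. The uniformity required by the $o(1)$ convention will follow once I verify that the scale parameter $\lambda(m')$ stays in a compact subinterval of $(0,\infty)$.

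First I would combine Lemma \ref{fcprops} with the assumption $M_h(f)=m$ to conclude $|m'-m|\le Ch$, so that $m'$ lies in a compact subinterval of $(0,\infty)$ depending only on the given range of $m$. Since $\lambda(m')$ is the unique positive scale with $M(Q_{\lambda(m')})=m'$ and the mass depends continuously (indeed by an explicit power law, via \eqref{eminform}-type scaling) on $\lambda$, the parameter $\lambda(m')$ also belongs to a compact subinterval. Via $Q'(x)=\lambda(m')^{2/(p-1)} Q(\lambda(m') x)$, together with the smoothness and exponential decay of $Q$ and its derivatives (\cite[Section 8.1]{cazenave89}), I obtain uniform bounds $|\partial^\alpha Q'(y)|\le C e^{-c|y|}$ for all multi-indices $|\alpha|\le 2$, with $c,C$ depending only on $p$, $d$, and the original $m$-range.

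Next, for the mass I would write, with $B(x)=\{y:hx_i\le y_i<h(x_i+1)\}$,
\[
M_h(\hq)-M(Q')=\sum_{x\in\zz^d}\int_{B(x)}\bigl(|Q'(hx)|^2-|Q'(y)|^2\bigr)\,dy,
\]
and bound each integrand by $h(\sup_{B(x)}|\nabla Q'|)(|Q'(hx)|+|Q'(y)|)$, using the factorization $|a|^2-|b|^2=(|a|-|b|)(|a|+|b|)$ and the mean value inequality. The exponential decay then makes the resulting double integral $\le Ch$. The nonlinear part of the energy is treated the same way with $|\cdot|^2$ replaced by $|\cdot|^{p+1}$.

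The remaining ingredient, and the only step that is not completely routine, is the gradient part of the energy. For this I would use a first-order Taylor expansion
\[
\nabla_i\hq(x)-\partial_i Q'(hx)=\frac{1}{h}\int_0^h(h-s)\,\partial_i^{\,2}Q'(hx+se_i)\,ds,
\]
which, by the uniform $C^2$ and exponential-decay bounds above, gives $|\nabla_i\hq(x)-\partial_iQ'(hx)|\le Ch\sup_{B(x)}|\partial_i^2 Q'|$. Expanding the square and summing, and then performing one more Riemann sum approximation of $\int|\partial_iQ'(y)|^2\,dy$ by $h^d\sum_x|\partial_iQ'(hx)|^2$ at cost $Ch$, completes the estimate. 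The main (mild) obstacle is ensuring the uniformity of all constants over $m\in[m_0,m_1]$; this reduces entirely to continuity of $\lambda(m)$ and to uniform $C^2$ and decay bounds on the one-parameter family $\{Q_\lambda\}$ over a compact $\lambda$-interval, both of which follow from the scaling identity for $Q_\lambda$.
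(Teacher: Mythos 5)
Your proof is correct and fills in, with explicit Riemann-sum and Taylor-remainder estimates, exactly the argument that the paper leaves implicit: the paper's proof of this lemma is a single sentence citing that $Q$ is a Schwartz function and invoking the scaling relation \eqref{solitonscaling}, which is precisely the regularity and uniformity input your argument makes quantitative. Your observation that $\lambda(m')$ stays in a compact subinterval of $(0,\infty)$ (via $|m'-m|\le Ch$ and the power-law dependence of mass on $\lambda$) is the right way to justify the uniformity required by the $o(1)$ convention, and is what the paper means by "together with the scaling relation."
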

\begin{proof}
The function $Q$ is a Schwartz function (see the remark following the proof of Proposition B.7 in \cite[Appendix B]{tao06}). In other words, $Q$ is a $C^\infty$ function, and all its derivatives decay faster than any polynomial. This provides ample regularity of $Q$, and together with the scaling relation \eqref{solitonscaling}, this completes the proof of this lemma.
\end{proof}
\begin{lmm}\label{fcqprops}
$|H(f^c)-H(Q')| \le o(1)$. 
\end{lmm}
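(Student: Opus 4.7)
The plan is a two-sided sandwich. For the lower bound on $H(f^c)$, observe that by the very definition of $m'$, we have $M(f^c) = m'$, and $Q' = Q_{\lambda(m')}$ is by construction the unique continuum ground state soliton of mass $m'$. Hence
\[
H(Q') = E_{\min}(m') \le H(f^c).
\]
This step is essentially free.

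For the matching upper bound, I would use $\hq$ as a test function in the \emph{discrete} variational problem for $f$. The first issue is that $M_h(\hq)$ need not equal $m$ exactly. However, by Lemma~\ref{fcprops}, $m' = M(f^c) = M_h(f) + o(1) = m + o(1)$, and by Lemma~\ref{qprops}, $M_h(\hq) = M(Q') + o(1) = m' + o(1) = m + o(1)$. So I would rescale: let $\alpha := (m/M_h(\hq))^{1/2} = 1 + o(1)$ and $g := \alpha \hq$, so that $M_h(g) = m$ exactly. Since $f$ minimizes $H_h$ among functions of mass $m$ at grid size $h$,
\[
H_h(f) \le H_h(g) = \alpha^2 G_h(\hq) - \alpha^{p+1} N_h(\hq) = H_h(\hq) + o(1),
\]
where the last equality uses the uniform boundedness of $G_h(\hq)$ and $N_h(\hq)$. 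This boundedness follows because $Q$ is a Schwartz function and the scaling $Q' = Q_{\lambda(m')}$ shows that $G(Q')$ and $N(Q')$ stay bounded while $m'$ ranges over a compact subinterval of $(0,\infty)$, and these continuum quantities are approximated by their discrete Riemann-sum analogues $G_h(\hq)$ and $N_h(\hq)$ (the same type of approximation that underlies Lemma~\ref{qprops}).

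Chaining the two bounds via Lemma~\ref{qprops} (which says $H_h(\hq) = H(Q') + o(1)$) and Lemma~\ref{fcprops} (which says $H(f^c) = H_h(f) + o(1)$), I get
\[
H(f^c) = H_h(f) + o(1) \le H_h(\hq) + o(1) = H(Q') + o(1),
\]
which combined with the free lower bound $H(Q') \le H(f^c)$ yields $|H(f^c) - H(Q')| \le o(1)$.

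The main obstacle is ensuring that the various $o(1)$ terms decay \emph{uniformly} in $m$ over compact subintervals of $(0,\infty)$, as required by the convention of this section. The rescaling step $\alpha = 1 + o(1)$ is uniform because $m$ is bounded away from $0$ and $\infty$ and Lemmas~\ref{fcprops} and~\ref{qprops} are themselves uniform. The subtlety is in showing $G_h(\hq)$ and $N_h(\hq)$ are uniformly bounded and uniformly close to $G(Q')$ and $N(Q')$; this reduces to a Riemann-sum approximation with a uniform remainder, made possible by the Schwartz decay of $Q$ and the fact that $\lambda(m')$ stays in a compact set when $m'$ does.
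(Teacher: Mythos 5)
Your proposal is correct and follows essentially the same route as the paper's own proof: the lower bound comes for free from $Q'$ being the continuum ground state of mass $m' = M(f^c)$, and the upper bound comes from inserting $\alpha\hq$ (rescaled to mass $M_h(f)$) as a competitor in the discrete minimization and chaining through Lemmas~\ref{fcprops} and~\ref{qprops}. Your explicit display $H_h(g) = \alpha^2 G_h(\hq) - \alpha^{p+1} N_h(\hq)$ and your remarks on the uniformity of the $o(1)$ terms match the paper's argument (which likewise relies on $|\alpha - 1| \le o(1)$ together with $L^r$-boundedness of $\hq$ and $\nabla_i \hq$, deduced from the Schwartz decay of $Q$ and the scaling relation).
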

\begin{proof}
Let $\alpha$ be a number such that $\alpha \hq$ has the same mass as $f$. In other words,
\[
\alpha = \sqrt{\frac{M_h(f)}{M_h(\hq)}}. 
\]
By Lemma \ref{fcprops}, $|M_h(f)-M(f^c)|\le o(1)$, and by Lemma \ref{qprops}, $|M(Q')-M(\hq)|\le o(1)$. But by definition of $Q'$, $M(Q')=M(f^c)$. Thus, 
\begin{equation}\label{alphaprop}
|\alpha-1|\le o(1).
\end{equation}
Now recall that:
\begin{itemize}
\item $Q$ is a Schwartz function (see the remark in the proof of Lemma~\ref{qprops}).
\item $|M(f^c)-m|\le o(1)$ by Lemma \ref{fcprops}.
\item $Q'$ is related to $Q$ by the scaling relation \eqref{solitonscaling}.
\end{itemize}
Combining the above, it follows easily that for all $r\in [2,\infty)$ and $1\le i\le d$, $\|\hq\|_{r,h}\le C(r)$ and  $\|\nabla_i \hq\|_{r,h}\le C(r)$. Together with \eqref{alphaprop}, this implies that 
\[
|H(\alpha \hq)-H(\hq)|\le o(1). 
\]
Since $f$ is a discrete ground state soliton, therefore $H_h(\alpha\hq) \ge H_h(f)$. Combining this with the previous display and Lemmas \ref{fcprops} and \ref{qprops}, we get
\begin{align*}
H(Q') \le H(f^c) &\le H_h(f)+o(1) \\
&\le H_h(\alpha\hq)+ o(1)\\
&\le H_h(\hq)+o(1)\le H(Q')+o(1). 
\end{align*}
This completes the proof of the lemma. 
\end{proof}

\begin{lmm}\label{gni}
For any $q\in [2,\infty]$ and any absolutely continuous $v:\rr^d\ra\cc$ such that $v\in L^2(\rr^d)$ and $\nabla v$   is uniformly bounded, we have
\[
\|v\|_q \le  C(d,q)\|v\|_2^{\frac{2}{d+2}\bigl(1-\frac{2}{q}\bigr) + \frac{2}{q}} \|\nabla v\|_\infty^{\frac{d}{d+2}\bigl(1-\frac{2}{q}\bigr)}, 
\]
where $\|\nabla v\|_\infty := \max_{1\le i\le d} \|\partial_i v\|_\infty$. 
\end{lmm}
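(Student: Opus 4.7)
The plan is to reduce everything to the endpoint case $q=\infty$ via a one-line H\"older interpolation, and to prove that endpoint by a standard Morrey-type lemma exploiting that $v$ is Lipschitz. I expect no serious obstacle; the only mild subtlety is handling complex-valued $v$ and the particular convention $\|\nabla v\|_\infty = \max_i \|\partial_i v\|_\infty$, neither of which causes real trouble.

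First I would establish the endpoint estimate
\[
\|v\|_\infty \le C(d)\,\|v\|_2^{2/(d+2)} \|\nabla v\|_\infty^{d/(d+2)}.
\]
Set $M := \|v\|_\infty$ and $K := \|\nabla v\|_\infty$. Since $v$ is absolutely continuous and $\partial_i v$ is essentially bounded for each $i$, integrating along coordinate-parallel paths gives the pointwise Lipschitz bound $|v(x)-v(y)| \le K|x-y|_1$. Fix $\varepsilon>0$ and choose $x_0$ with $|v(x_0)|\ge M-\varepsilon$. For every $y$ in the $\ell^1$ ball $B_1(x_0,r)$ with $r = (M-\varepsilon)/(2K)$ we get $|v(y)| \ge (M-\varepsilon)/2$. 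Since the $\ell^1$ ball of radius $r$ in $\rr^d$ has Lebesgue volume $c_d r^d$,
\[
\|v\|_2^2 \;\ge\; \int_{B_1(x_0,r)}|v(y)|^2\,dy \;\ge\; \frac{c_d}{4}\,(M-\varepsilon)^2 \left(\frac{M-\varepsilon}{2K}\right)^{\!d} \;=\; \tilde c_d\,\frac{(M-\varepsilon)^{d+2}}{K^d}.
\]
Letting $\varepsilon\downarrow 0$ gives $M^{d+2} \le C(d)\|v\|_2^2 K^d$, which is the claimed $q=\infty$ inequality. (If $\|v\|_2 = 0$ the result is trivial; if $K=\infty$ the right-hand side is $\infty$.)

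Next I would deduce the general $q\in(2,\infty)$ case by the elementary pointwise bound $|v|^q \le \|v\|_\infty^{q-2}|v|^2$, which integrates to
\[
\|v\|_q \;\le\; \|v\|_\infty^{1-2/q}\,\|v\|_2^{2/q}.
\]
Substituting the endpoint bound from the previous paragraph yields
\[
\|v\|_q \;\le\; C(d,q)\,\|v\|_2^{\frac{2}{d+2}(1-2/q)+\frac{2}{q}}\,\|\nabla v\|_\infty^{\frac{d}{d+2}(1-2/q)},
\]
which is precisely the stated inequality. The cases $q=2$ and $q=\infty$ are covered by taking, respectively, the trivial bound $\|v\|_2=\|v\|_2$ (exponents $1$ and $0$) and the endpoint estimate already proved. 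A quick scaling check $v_\lambda(x):=v(\lambda x)$ confirms the exponents: both sides scale as $\lambda^{-d/q}$, so the inequality is dimensionally consistent, reinforcing that no extra factor of $h$ or $\lambda$ can appear in $C(d,q)$.
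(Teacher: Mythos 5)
Your proof is correct, and both arguments are Morrey-type estimates exploiting the Lipschitz bound to compare the $L^\infty$ and $L^2$ norms on a ball; but the details differ enough to be worth noting. The paper fixes an arbitrary $x_0$, chooses the ball radius $r=(\|v\|_2/\|\nabla v\|_\infty)^{2/(d+2)}$ in advance, locates by averaging a point $y$ in that ball where $|v(y)|\le C r^{-d/2}\|v\|_2$, and then bounds $|v(x_0)|\le |v(y)|+r\|\nabla v\|_\infty$; optimizing $r$ balances the two terms, and taking a supremum over $x_0$ finishes. You instead place $x_0$ near the (essential) supremum $M$ and exploit the Lipschitz bound to show that $|v|\ge M/2$ on a ball of radius $\sim M/K$, so the $L^2$ mass is at least of order $M^{d+2}/K^d$; this gives the same inequality without any explicit optimization step. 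Your route is a bit more direct and perhaps more intuitive, while the paper's version makes the ``sup over $x_0$'' structure explicit and does not require locating a near-maximizer (relevant if one only assumes measurability rather than continuity, though here the Lipschitz hypothesis makes that distinction moot). The interpolation step $\|v\|_q^q\le \|v\|_\infty^{q-2}\|v\|_2^2$ is identical in both. One small caveat: your pointwise Lipschitz bound $|v(x)-v(y)|\le K|x-y|_1$ relies on absolute continuity along coordinate lines plus the uniform bound on each $\partial_i v$; this is exactly the same implicit step the paper uses in the line $|v(x_0)|\le|v(y)|+\|x_0-y\|\,\|\nabla v\|_\infty$, so neither proof has an advantage there.
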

\begin{proof}
If $\|\nabla v\|_\infty=0$, then since $v\in L^2$, therefore $v$ must be zero almost everywhere. So let us assume that $\|\nabla v\|_\infty\in (0,\infty)$. 

Take any $x_0\in \rr^d$. Let $B$ be the ball of radius $r$ around $x_0$, where 
\[
r = \biggl(\frac{\|v\|_2}{\|\nabla v\|_\infty}\biggr)^{2/(d+2)}.
\] 
Note that 
\begin{align*}
\frac{\int_B |v(x)|^2 dx}{\vol(B)} \le \frac{\|v\|_2^2}{\vol(B)},
\end{align*}
which shows that there exists $y\in B$ such that 
\[
|v(y)|^2 \le \frac{\|v\|_2^2}{\vol(B)} = C(d) r^{-d}\|v\|_2^2. 
\]
Since $\|x_0-y\| \le r$, this shows that 
\begin{align*}
|v(x_0)|&\le |v(y)| + \|x_0-y\|\|\nabla v\|_\infty\\
&\le C(d)r^{-d/2}\|v\|_2+ Cr\|\nabla v\|_\infty\\
&\le C(d) \|v\|_2^{2/(d+2)} \|\nabla v\|_\infty^{d/(d+2)}. 
\end{align*}
Since this is true for every $x_0$, the right-hand side is a bound for $\|v\|_\infty$. 
To complete the proof, note that for any $q\in (2,\infty)$, $\|v\|_q^q \le \|v\|_\infty^{q-2}\|v\|_2^2$. 
\end{proof}

\begin{proof}[Proof of Theorem \ref{tfthm}]
By Lemma \ref{fcprops}, 
\begin{equation}\label{mequ}
|M(f^c)-m| =|m'-m|\le o(1).
\end{equation}
Again by Lemma~\ref{fcqprops}, $|H(f^c)-H(Q')|\le o(1)$. But $H(Q')=E_{\min}(m')$, $|m'-m|\le o(1)$ and $E_{\min}$ is a continuous function by \eqref{eminform}; consequently, 
\begin{equation}\label{hequ}
|H(f^c)-E_{\min}(m)|\le o(1).
\end{equation}
Together with Lemma \ref{fcprops}, this proves that $|H_h(f)-E_{\min}(m)|\le o(1)$. 
 
The bounds \eqref{mequ} and \eqref{hequ}, together with the orbital stability of ground state solitons (see e.g.\ \cite[Proposition 3]{raphael08}) imply that 
\[
\tl^2(f^c, Q_{\lambda(m)}) \le o(1). 
\]
By \eqref{fcd1} and Lemma \ref{fqbd2}, $\|\partial_i f^c\|_\infty\le C$ for each $i$. Therefore by Lemma \ref{gni}, $\tl^q(f^c, Q_{\lambda(m)}) \le o(1)$ for each $q\in [2,\infty]$.   
\end{proof}

\begin{cor}\label{elimit}
For every fixed $0< m_0\le m_1<\infty$, 
\[
\lim_{h\ra0}\sup_{m_0\le m\le m_1}|E_{\min}(m,h)-E_{\min}(m)|=0.
\]
\end{cor}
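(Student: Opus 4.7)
The plan is to observe that this corollary is an essentially immediate consequence of the last assertion of Theorem \ref{tfthm}. Indeed, fix $0 < m_0 \le m_1 < \infty$ and take any $m \in [m_0, m_1]$ and $h \in (0,1)$. By Theorem \ref{orbital}, the set $\ms(m,h)$ of discrete ground state solitons of mass $m$ at grid size $h$ is non-empty, so we may choose some $f = f_{m,h} \in \ms(m,h)$. By the definition of $\ms(m,h)$, we have $H_h(f_{m,h}) = E_{\min}(m,h)$.

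The final assertion of Theorem \ref{tfthm} says that
\[
|H_h(f_{m,h}) - E_{\min}(m)| \le C(p,d,m,h),
\]
where $C(p,d,m,h)$ satisfies the uniformity condition \eqref{cunif}, namely
\[
\lim_{h\ra0}\sup_{m_0\le m\le m_1} C(p,d,m,h) = 0.
\]
Substituting $H_h(f_{m,h}) = E_{\min}(m,h)$ into the previous display gives
\[
|E_{\min}(m,h) - E_{\min}(m)| \le C(p,d,m,h).
\]
Taking the supremum over $m \in [m_0, m_1]$ and then sending $h \to 0$, the uniform bound \eqref{cunif} yields exactly the desired conclusion.

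There is no real obstacle here: all of the analytic work has already been carried out in proving Theorem \ref{tfthm}, which in turn relied on the discrete harmonic analytic machinery of Section \ref{harmonic}, the discrete soliton regularity estimates of Section \ref{solitonreg}, the orbital stability of the continuum ground state, and the scaling relation \eqref{eminform} to compare $E_{\min}(m')$ with $E_{\min}(m)$ when $|m'-m|$ is small. The only point worth emphasizing is that the quantitative bound from Theorem \ref{tfthm} depends on $m$ only through its membership in a fixed compact subinterval of $(0,\infty)$, which is precisely what delivers the uniform convergence asserted in the corollary.
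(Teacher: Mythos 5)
Your proposal is correct and follows essentially the same route as the paper's own one-line proof: pick $f_{m,h}\in\ms(m,h)$ (which exists, the paper cites Theorem \ref{conccomp} directly for this rather than Theorem \ref{orbital}, but that is immaterial since Theorem \ref{orbital} is proved from Theorem \ref{conccomp}), observe $E_{\min}(m,h)=H_h(f_{m,h})$, and invoke the final assertion of Theorem \ref{tfthm} together with the uniformity \eqref{cunif}. You have merely spelled out the chain of substitutions that the paper leaves implicit.
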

\begin{proof}
For every $m> 0$ and $h>0$, let $f_{m,h}$ be a discrete ground state soliton of mass $m$ for the DNLS at grid size $h$. Then $E_{\min}(m,h)=H_h(f_{m,h})$. Theorem \ref{tfthm} completes the proof. 
\end{proof}

\section{Continuum limit of the variational problem}\label{variationalcont}
Recall the objects $\Theta$, $\mm$ an $\mr$ defined in Section \ref{variational}, and the function $\Psi_d$ defined in \eqref{psiddef}.  The following theorem shows that as the grid size goes to zero, the set $\mm(E_0, m_0, h)$ converges to the single point $(0,E_0-E_{\min}(m_0))$. 
\begin{thm}\label{varlim}
Fix $m_0>0$ and $E_{\min}(m_0)< E_0<\infty$. For each $h > 0$, let $(E_h, m_h)$ be an element of $\mm(E_0, m_0,h)$. Then 
\[
\lim_{h\ra 0} m_h = 0 \ \text{ and } \ \lim_{h\ra 0} E_h = E_0-E_{\min}(m_0).
\]
\end{thm}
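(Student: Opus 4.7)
My plan is to reparametrize the objective function by introducing $\alpha := 2h^2 E^+(m,h)/m$, which gives the identity
\[
\Theta(E^+(m,h), m, h) = 2\log h + \log(2 E^+(m,h)) - G(\alpha), \qquad G(\alpha) := \log \alpha + \Psi_d(\alpha).
\]
By Lemma \ref{rlmm}, every $(E_h, m_h) \in \mm(E_0, m_0, h)$ satisfies $E_h = E^+(m_h, h)$, so it suffices to analyze this one-parameter family on $m \in (0, m_0]$. The $2\log h$ term will be the only source of divergence as $h \to 0$; the remaining prefactors will determine where the maximum lives.

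The preliminary step is to show that $g_0 := \inf_{\alpha \in (0, 2d]} G(\alpha)$ is finite. Jensen's inequality gives $K_\alpha(\gamma) \le \log(1 - \gamma + 2d\gamma/\alpha)$ (using $\int_{[0,1]^d} \sum_i \sin^2(\pi x_i)\,dx = d/2$), and optimizing in $\gamma$ yields $\Psi_d(\alpha) \le \log(2d) - \log \alpha$ for $\alpha < 2d$, so $G(\alpha) \le \log(2d)$. For the matching lower bound, $K_\alpha(1/2) = \int \log(1/2 + (2/\alpha) S(x))\,dx$ with $S(x) := \sum_i \sin^2(\pi x_i)$; since $\log(1/2 + u) \ge \log u$ for $u > 0$, one obtains $\Psi_d(\alpha) \ge -\log \alpha + \log 2 + \int \log S\,dx$, and the integral is finite because $\log S$ has only logarithmic singularities at lattice points. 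Hence $G$ is bounded below uniformly on $(0, 2d]$, so $g_0$ is finite, and for any $\epsilon > 0$ I can choose $\alpha_0 \in (0, 2d]$ with $G(\alpha_0) \le g_0 + \epsilon$.

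With this in hand I produce matching bounds. Set $\tilde E_0 := E_0 - E_{\min}(m_0) > 0$. The test point $m_{\rm test} := 2h^2 \tilde E_0/\alpha_0$ lies in $(0, m_0]$ for small $h$, and by Corollary \ref{elimit} (applied to $m_0 - m_{\rm test} \to m_0$) we have $E^+(m_{\rm test}, h) \to \tilde E_0$, hence $2h^2 E^+(m_{\rm test}, h)/m_{\rm test} \to \alpha_0$; continuity of $\Psi_d$ then yields
\[
\widehat{\Theta}(E_0, m_0, h) \ge 2\log h + \log(2\tilde E_0) - g_0 - \epsilon + o(1).
\]
Conversely, suppose for contradiction that $m_h \ge \delta$ along a subsequence $h \to 0$. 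Since $E^+(\cdot, h)$ is strictly decreasing in $m$ (because $E_{\min}(\cdot, h)$ is itself strictly decreasing, by the subadditivity and negativity from Theorem \ref{elimitthm}), Corollary \ref{elimit} gives $E^+(m_h, h) \le E^+(\delta, h) \to E_0 - E_{\min}(m_0 - \delta)$; combined with the trivial bound $G \ge g_0$,
\[
\widehat\Theta(E_0, m_0, h) = \Theta(E_h, m_h, h) \le 2\log h + \log(2(E_0 - E_{\min}(m_0 - \delta))) - g_0 + o(1).
\]
Combining the two displays, sending $h \to 0$, and then $\epsilon \to 0$ forces $\tilde E_0 \le E_0 - E_{\min}(m_0 - \delta)$, i.e., $E_{\min}(m_0 - \delta) \le E_{\min}(m_0)$. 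This contradicts the strict monotonicity of $E_{\min}$ coming from $E_{\min}(m) = m^\alpha E_{\min}(1)$ with $\alpha > 1$ and $E_{\min}(1) < 0$. Hence $m_h \to 0$, and the second claim $E_h \to E_0 - E_{\min}(m_0)$ is then immediate from $E_h = E_0 - E_{\min}(m_0 - m_h, h)$ and Corollary \ref{elimit}. The principal technical obstacle is the preliminary finiteness of $g_0$, which rests on the asymptotic $\Psi_d(\alpha) = -\log \alpha + O(1)$ as $\alpha \to 0^+$; once this is in hand the rest of the argument is an essentially algebraic comparison exploiting the strict monotonicity of $E_{\min}$.
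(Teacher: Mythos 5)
Your proof is correct, and it takes a recognizably different route from the paper's while using the same raw ingredients. The paper's argument works by subsequence: it assumes $m_{h_i}\to m'>0$, fixes an intermediate test mass $m''\in(0,m')$, and then directly compares $\Theta(E_{h_i},m_{h_i},h_i)$ against $\Theta(E''_i,m'',h_i)$ after subtracting $2\log h_i$ from both, bounding the former via $\Psi_d(\alpha)\ge K_\alpha(\gamma)$ and sending $\gamma\to 1$, and the latter via the $\gamma=1$ value of the defining integral directly. What you do differently is to isolate once and for all the divergent term $2\log h$ through the change of variable $\alpha=2h^2E^+(m,h)/m$, package the remainder as $G(\alpha)=\log\alpha+\Psi_d(\alpha)$, and prove $g_0:=\inf G>-\infty$ as a preliminary lemma (your Jensen/$K_\alpha(1/2)$ argument for this is sound). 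You then compute a lower bound on $\widehat\Theta$ with a vanishing test mass $m_{\mathrm{test}}\sim 2h^2\tilde E_0/\alpha_0$ tuned so that $\alpha\to\alpha_0$ near-minimizes $G$, and an upper bound on $\widehat\Theta$ under the hypothesis $m_h\ge\delta$, and the incompatibility of the two gives $E_{\min}(m_0-\delta)\le E_{\min}(m_0)$, contradicting strict monotonicity of $E_{\min}$. This buys a cleaner separation between the ``$h$-divergence,'' the ``prefactor'' $\log(2E^+)$, and the ``shape'' $G(\alpha)$; the cost is the extra step of establishing that $g_0$ is finite, which the paper sidesteps by keeping $\gamma$ fixed and only passing $\gamma\to 1$ at the end. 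One small point you leave implicit: your upper bound uses $G(\alpha_h)\ge g_0$ with $g_0$ defined as the infimum over $(0,2d]$, so you need $\alpha_h\le 2d$. This does hold — the proof of Lemma~\ref{rlmm} rules out $E^*(m,h)<E^+(m,h)$ at the maximizer, hence $E_h=E^+(m_h,h)\le E^*(m_h,h)=dm_h/h^2$, i.e.\ $\alpha_h\le 2d$ — but it is worth stating, or else simply extend the infimum to $(0,4d)$, where $G$ is still bounded below since $\Psi_d\ge 0$.
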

\begin{proof}
Recall the functions $E^+(m,h)$ and $E^-(m,h)$. 
If $h$ is sufficiently small, then $E_0< dm_0/h^2$ and by Lemma \ref{elimit}, $E_0 \ge E_{\min}(m_0, h)$. Therefore by Lemma \ref{rlmm}, when $h$ is sufficiently small, 
\[
E_h = E^+(m_h,h) = E_0-E_{\min}(m_0-m_h, h).
\] 
We will prove that $m_h \ra 0$ as $h\ra0$ by a subsequence argument. Then by the above identity and Lemma \ref{elimit}, the limit for $E_h$ will be automatically established. Let $m_h$ tend to a point $m'\in [0, m_0]$ along a sequence $h_i\ra0$. Then by Corollary~\ref{elimit}, 
\[
\lim_{i\ra\infty}E_{h_i} = E' := E_0-E_{\min}(m_0-m')\ge 0.
\]
We will show that $m'=0$ by the method of contradiction. Suppose that $m' >0$. Fix $m''\in (0, m')$ and let 
\[
E'' := E_0-E_{\min}(m_0-m'').
\]
Note that since $E_{\min}$ is strictly decreasing (by Lemma \ref{cc5} and Lemma \ref{emin0}), therefore
\begin{equation}\label{eeee}
E'' > E'\ge 0. 
\end{equation}
Also, defining 
\[
E''_i := E_0-E_{\min}(m_0-m'',h_i) = E^+(m'', h_i),
\]
we see that by Corollary \ref{elimit}, 
\begin{equation}\label{epplim}
\lim_{i\ra\infty} E''_i = E''.
\end{equation}
Fix $\gamma\in (0,1)$ and note that for all $i$, 
\begin{align*}
&\Theta(E_{h_i}, m_{h_i}, h_i) - 2\log h_i \\
&\le \log E_{h_i} -  \int_{[0,1]^d}\log \biggl(\frac{(1-\gamma)h_i^2 E_{h_i}}{m_{h_i}} + 2\gamma  \sum_{j=1}^d \sin^2(\pi x_j) \biggr) dx_1\cdots dx_d, 
\end{align*}
and therefore (since $m'>0$), 
\[
\limsup_{i\ra\infty} (\Theta(E_{h_i}, m_{h_i}, h_i) - 2\log h_i) \le  \log E' - C(\gamma),
\]
where 
\[
C(\gamma) := \int_{[0,1]^d}\log \biggl(2\gamma \sum_{j=1}^d \sin^2(\pi x_j) \biggr) dx_1\cdots dx_d. 
\]
Since this is true for all $\gamma\in (0,1)$, we can take $\gamma\ra 1$ in the above bound and get 
\begin{equation}\label{eplim}
\limsup_{i\ra\infty} (\Theta(E_{h_i}, m_{h_i}, h_i) - 2\log h_i) \le \log E' - C(1). 
\end{equation}
On the other hand, for any $i$, \eqref{epplim} gives 
\begin{align*}
&\Theta(E''_i, m'', h_i) - 2\log h_i \\
&= \log E''_i -  \sup_{0< \gamma < 1}\int_{[0,1]^d}\log \biggl(\frac{(1-\gamma)h_i^2 E''_i}{m''} + 2\gamma  \sum_{j=1}^d \sin^2(\pi x_j) \biggr) dx_1\cdots dx_d\\
&\ge \log E''_i - \int_{[0,1]^d}\log \biggl(\frac{h_i^2 E''_i}{m''} + 2  \sum_{j=1}^d \sin^2(\pi x_j) \biggr) dx_1\cdots dx_d\\
&\ra \log E'' - C(1) \ \ \text{as } i\ra\infty. 
\end{align*}
Therefore, by \eqref{eplim} and \eqref{eeee}, this shows that for all sufficiently large $i$,
\begin{equation}\label{ememem}
\Theta(E''_i, m'', h_i) > \Theta(E_{h_i}, m_{h_i}, h_i).
\end{equation} 
But by the definition of $E''_i$, we know that $(E''_i, m'')\in \mr(E_0, m_0, h_i)$. This contradicts the definition of $(E_{h_i}, m_{h_i})$ as a maximizer of $\Theta(E,m,h)$ in $\mr(E_0, m_0, h)$. 
\end{proof}

\section{Proofs of Theorems \ref{elimitthm}, \ref{orbital} and \ref{solitonconv}}\label{threethm}
\begin{proof}[Proof of Theorem \ref{elimitthm}]
Lemma \ref{emax} gives the formula for $E_{\max}(m,h)$. Lemma~\ref{emin0} and Lemma \ref{emfacts3} show that $-\infty < E_{\min}(m,h)< 0$. The subadditive inequality follows from Lemma \ref{cc5}. Lastly, Corollary \ref{elimit} shows that $E_{\min}(m,h) \ra E_{\min}(m)$ as $h \ra 0$, and that the convergence is uniform over compact subsets of $(0,\infty)$. 
\end{proof}

\begin{proof}[Proof of Theorem \ref{orbital}]
Take any sequence of function $f_k$ on $\zz^d$ such that $M_h(f_k) \ra m$ and $H_h(f_k)\ra E_{\min}(m,h)$ as $k \ra\infty$. Let $\alpha_k$ be a constant such that $M(\alpha_k f_k)=m$. Then $\alpha_k \ra 1$ and therefore $H_h(\alpha_k f_k)$ also tends to $E_{\min}(m,h)$.  Theorem \ref{conccomp} now guarantees the existence of a subsequence $\alpha_{k_j}f_{k_j}$ converging to a limit $f\in \ms(m,h)$ in the $\tl^q$ pseudometric for every $q\in [2,\infty]$. Now, $f_k$ is uniformly $L^2$ bounded, and hence uniformly $L^q$ bounded for every $q\in [2,\infty]$. Therefore, since $\alpha_{k_j}\ra1$, $f_{k_j}$ also tends to $f$ in $\tl^q$. This proves compactness of $\ms(m,h)$. To prove that the set is non-empty, simply note that by the definition of $E_{\min}(m,h)$, there exists a sequence $f_k$ satisfying $M_h(f_k)=m$ for all $k$ and $H_h(f_k)\ra E_{\min}(m,h)$.  
\end{proof}

\begin{proof}[Proof of Theorem \ref{solitonconv}]
This is a direct consequence of Theorem \ref{tfthm}. 
\end{proof}

\section{Proof of Theorem \ref{discretelimit}}\label{discretelimitproof}
To be notationally compatible with the theorems of Sections \ref{upperbound}, \ref{lowerbound} and~\ref{radiatingsec}, we will write $E_0$ instead of $E$ and $m_0$ instead of $m$. As in those sections, the numbers $p$, $d$, $h$, $E_0$ and $m_0$ will be fixed throughout this section and will be called the `fixed parameters'. Any positive constant that depends only on the fixed parameters will be denoted simply by $C$, instead of $C(p,d,h,E_0, m_0)$. If the constant depends on additional parameters $a,b,\ldots$, then it will be denoted by $C(a,b,\ldots)$.

Fix $n$, and recall the definition of $\delta$-soliton from Section \ref{upperbound}. Recall also the random function $\phi$ defined in Section \ref{gaussian}, and the objects $\Theta$, $\mm$ and $\mr$ defined in Section \ref{variational}. Lastly, recall that the set $\ms(m,h)$ denotes the set of discrete ground state solitons of mass $m$ at grid size $h$.

Given a function $f: V_n \ra\cc$, let $f^e : \zz^d \ra \cc$ be the extension of $f$ to $\zz^d$, defined as
\[
f^e(x) =
\begin{cases}
f(x) &\text{ if } x\in V_n,\\
0 &\text{ if } x\not \in V_n. 
\end{cases}
\]
Having defined $f^e$, say that $f$ is an ``improved $\delta$-soliton'' if 
there exists $g:\zz^d \ra \cc$ such that
\begin{enumerate}
\item[(a)] $\|f^e-g\|_\infty \le \delta$, and 
\item[(b)] there exists $(E^*,m^*)\in \mm(E_0, m_0,h)$ such that 
\begin{align*}
&|(E_0-E^*)-H_{h}(g)|\le \delta  \ \text{ and } \\ 
&|(m_0-m^*)-M_{h}(g)|\le \delta.
\end{align*}
\end{enumerate}
\begin{lmm}\label{klemma}
If $E_{\min}(m_0,h)< E_0 < dm_0/h^2$, then the set $K$ in the statement of Theorem \ref{discretelimit} can be alternatively described as
\begin{equation}\label{khdef}
\begin{split}
K = K(h) &:= \{m'\in [0,m]: m' = m_0-m^* \\
&\qquad \textup{ for some } (E^*,m^*) \in \mm(E_0, m_0,h)\}. 
\end{split}
\end{equation}
\end{lmm}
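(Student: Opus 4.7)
The plan is to unwind the two definitions and show they coincide via the substitution $m' = m_0 - m^*$. By Lemma~\ref{rlmm}, the hypothesis $E_{\min}(m_0,h) < E_0 < dm_0/h^2$ guarantees that every $(E^*, m^*) \in \mm(E_0, m_0, h)$ satisfies $m^* \in (0, m_0]$ and $E^* = E^+(m^*, h) = E_0 - E_{\min}(m_0 - m^*, h)$. Hence $(E^*, m^*) \in \mm(E_0, m_0, h)$ if and only if $m^*$ maximizes
\[
G(m^*) := \Theta(E^+(m^*,h), m^*, h) = \log m^* - \Psi_d\!\left(\frac{2h^2\bigl(E_0 - E_{\min}(m_0 - m^*, h)\bigr)}{m^*}\right)
\]
over the set $\{m^* \in (0, m_0] : E^+(m^*, h) \ge 0\}$, which is precisely the slice of $\mr(E_0, m_0, h)$ obtained by fixing $E = E^+(m, h)$.

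Next, I would introduce the substitution $m' = m_0 - m^*$, carrying the domain $(0, m_0]$ onto $[0, m_0)$, and observe that $G(m_0 - m')$ agrees with the function
\[
F(m') := \log(m_0 - m') - \Psi_d\!\left(\frac{2h^2\bigl(E_0 - E_{\min}(m', h)\bigr)}{m_0 - m'}\right)
\]
appearing in the statement of Theorem~\ref{discretelimit} (with $E = E_0$ and $m = m_0$). Thus maximizers of $G$ on $(0, m_0]$ correspond bijectively under $m^* \leftrightarrow m_0 - m^*$ to maximizers of $F$ on $[0, m_0)$.

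To conclude I must verify that extending the domain of $F$ from $[0, m_0)$ to $[0, m_0]$ does not introduce new maximizers, and that the implicit constraint $E_0 \ge E_{\min}(m', h)$ coming from $E^+(m^*, h) \ge 0$ is automatically enforced by $F$. For the first point, at $m' = m_0$ we have $\log(m_0 - m') = -\infty$, while the argument of $\Psi_d$ diverges to $+\infty$ (since $E_0 > E_{\min}(m_0, h)$ by hypothesis), forcing $F(m_0) = -\infty$; meanwhile $F$ is finite at some interior points (e.g.\ at $m' = 0$, using $E_{\min}(0, h) = 0$ and $2h^2 E_0/m_0 < 2d$, combined with Proposition~\ref{psiprops} to see the $\Psi_d$ term is finite). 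For the second point, recall from the definition \eqref{psiddef} that $\Psi_d(\alpha) = \infty$ whenever $\alpha \le 0$; so if $E_0 < E_{\min}(m', h)$ the $\Psi_d$ term is $+\infty$ and $F(m') = -\infty$, meaning such $m'$ are never maximizers, exactly matching the feasibility constraint hidden in $\mr(E_0, m_0, h)$.

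The main (mild) obstacle is bookkeeping at the boundary $m' = m_0$, i.e.\ confirming $F(m_0) = -\infty$ and that $F$ actually attains a finite maximum in the interior so that the reformulation is meaningful; this boils down to an explicit evaluation at $m' = 0$ together with Proposition~\ref{psiprops} and the hypothesis on $E_0$. Once these endpoint checks and the constraint-matching above are in place, the equality $K = K(h)$ is immediate from the bijective correspondence of maximizers.
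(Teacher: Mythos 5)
Your reduction is correct and mirrors the paper's proof: Lemma~\ref{rlmm} pins every maximizer of $\Theta$ over $\mr(E_0,m_0,h)$ to the curve $E=E^+(m,h)$, the change of variable $m'=m_0-m^*$ identifies $\Theta(E^+(m^*,h),m^*,h)$ with the objective $F(m')$ from Theorem~\ref{discretelimit}, and the constraint $E^+(m^*,h)\ge 0$ coming from $\mr$ is enforced automatically because $\Psi_d=\infty$ on $(-\infty,0]$. So the strategy is essentially the same as the paper's two-inclusion argument, just packaged as a single bijective correspondence.

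The one step that fails as written is the finiteness check at $m'=0$. You assert $F(0)>-\infty$ from $2h^2E_0/m_0<2d$, but Proposition~\ref{psiprops} requires the argument of $\Psi_d$ to lie in $(0,4d)$, so you also need $E_0>0$. The hypothesis only gives $E_0>E_{\min}(m_0,h)$, and $E_{\min}(m_0,h)<0$, so $E_0\le 0$ is genuinely possible (and does occur in the continuum-limit application). When $E_0\le 0$, $F(0)=-\infty$, and your claimed witness disappears. A correct witness: $E_{\min}(\cdot,h)$ is continuous and strictly decreasing from $E_{\min}(0,h)=0$ to $E_{\min}(m_0,h)<E_0$ (Lemmas~\ref{cc5},~\ref{emin0},~\ref{emfacts4}), so there is $m_1\in[0,m_0)$ with $E_{\min}(m_1,h)=\min\{E_0,0\}$; for $m'$ slightly larger than $m_1$ one has $0<E_0-E_{\min}(m',h)<E_{\max}(m_0-m',h)$, hence $F(m')>-\infty$. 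Alternatively, cite Lemma~\ref{rlmm} directly: it guarantees $\mm$ is non-empty with $m^*>0$ and $E^*=E^+(m^*,h)$ for each $(E^*,m^*)\in\mm$, which already forces $\widehat{\Theta}(E_0,m_0,h)>-\infty$. (Incidentally, the paper's own justification at this point, that ``$q(m)>-\infty$ for $m$ sufficiently close to $m_0$'', also seems incorrect: as $m\to m_0^-$ the argument of $\Psi_d$ tends to $+\infty$ since $E_0-E_{\min}(m,h)\to E_0-E_{\min}(m_0,h)>0$ while $m_0-m\to 0$, so $q(m)=-\infty$ near $m_0$, and the same repair is needed there.)
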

\begin{proof}
Suppose that $m'\in [0,m_0]$ maximizes
\[
q(m) := \log(m_0-m) - \Psi_d\biggl(\frac{2h^2 (E_0-E_{\min}(m,h))}{m_0-m}\biggr).
\]
Let $m^*=m_0-m'$ and $E^* = E_0-E_{\min}(m',h) = E^+(m^*,h)$. Note that  $E^+(m^*,h)$ is necessarily positive, for otherwise $q(m')$ would be $-\infty$, which is impossible since it maximizes $q(m)$ in the interval $[0,m_0]$ and $q(m)>-\infty$ for $m$ sufficiently close to $m_0$. This shows that $(E^*, m^*)\in \mr(E_0, m_0, h)$. Now take any $(E,m)\in \mm(E_0, m_0, h)$. Then by Lemma \ref{rlmm}, $E= E^+(m, h)$, and therefore
\begin{align*}
\Theta(E,m,h) &= \log m - \Psi_d\biggl(\frac{2h^2(E_0-E_{\min}(m_0-m, h))}{m}\biggr)\\
&= q(m_0-m)\le q(m') = \Theta(E^*,m^*,h).  
\end{align*} 
Thus, $(E^*, m^*)\in \mm(E_0, m_0,h)$. 

Next, suppose that we are given $m'$ such that $m'=m_0-m^*$ for some $(E^*,m^*)\in \mm(E_0, m_0,h)$. By Lemma \ref{rlmm}, 
\[
\Theta(E^*, m^*, h) = q(m'). 
\]
Take any $m\in [0,m_0]$. We have to show that $q(m')\ge q(m)$. If $q(m)=-\infty$, there is nothing to prove. If not, then 
\[
0 < E_0-E_{\min}(m,h) = E^+(m_0-m, h). 
\]
Let $m_1:= m_0-m$ and $E_1:= E^+(m_0-m,h)$. The above display proves that 
\[
\max\{E^-(m_1, h), 0\} \le E_1 = E^+(m_1,h),
\]
and hence $(E_1,m_1)\in \mr(E_0, m_0, h)$. Thus,
\[
q(m') = \Theta(E^*, m^*, h) \ge \Theta(E_1, m_1, h) = q(m).
\]
This completes the proof of the lemma. 
\end{proof}
\begin{lmm}\label{impsolit}
For any $\eta > 0$, there exists $\delta = \delta(\eta) > 0$ depending only on $\eta$ and the fixed parameters (and not on $n$), such that if $f:V_n \ra\cc$ is an improved $\delta$-soliton, then there exists $v\in \ms(m',h)$ for some $m'\in K$, such that $\tl^\infty(f^e, v) \le \eta$. 
\end{lmm}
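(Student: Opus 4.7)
\medskip

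\noindent\textbf{Proof plan for Lemma \ref{impsolit}.} The plan is to argue by contradiction, exploiting the compactness of $\mm(E_0,m_0,h)$ together with the discrete orbital stability result (Theorem \ref{orbital}). Specifically, suppose the claim fails for some $\eta > 0$. Then there exist sequences $\delta_k \downarrow 0$, integers $n_k$, and functions $f_k : V_{n_k} \ra \cc$ such that each $f_k$ is an improved $\delta_k$-soliton (via witnesses $g_k : \zz^d \ra \cc$ and $(E_k^*, m_k^*) \in \mm(E_0,m_0,h)$), yet for every $m' \in K$ and every $v \in \ms(m',h)$ we have $\tl^\infty(f_k^e, v) > \eta$. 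The goal is to extract a subsequence along which $f_k^e$ actually approaches some element of $\ms(m',h)$ with $m' \in K$, producing a contradiction.

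Since $\mm(E_0,m_0,h)$ is a compact subset of $\rr^2$ by Lemma \ref{rlmm}, I can pass to a subsequence along which $(E_k^*, m_k^*) \ra (E^*, m^*) \in \mm(E_0,m_0,h)$. Set $m_k' := m_0 - m_k^*$ and $m' := m_0 - m^* \in K$ (by the description of $K$ given in Lemma \ref{klemma}). From the definition of improved $\delta_k$-soliton and Lemma \ref{rlmm}, which gives $E_k^* = E^+(m_k^*,h) = E_0 - E_{\min}(m_k',h)$, combined with the continuity of $E_{\min}(\cdot,h)$ from Lemma \ref{emfacts4}, I obtain
\[
M_h(g_k) \ra m' \quad \text{and} \quad H_h(g_k) \ra E_{\min}(m',h).
\]

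Now split into cases. If $m' > 0$, then for large $k$ I rescale $\tilde g_k := \sqrt{m'/M_h(g_k)}\, g_k$, so that $M_h(\tilde g_k) = m'$ exactly. The discrete Gagliardo–Nirenberg inequality (Proposition \ref{gn}) applied in mass-subcritical fashion (as in Lemma \ref{regG}) shows that $G_h(g_k)$ and $N_h(g_k)$ stay bounded, which in turn forces $H_h(\tilde g_k) \ra E_{\min}(m',h)$ and $\|\tilde g_k - g_k\|_\infty \ra 0$ (using $\|g_k\|_\infty \le h^{-d/q}\|g_k\|_{q,h}$ for a fixed large $q$ to control the sup-norm). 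Theorem \ref{orbital} then yields a further subsequence and a $v \in \ms(m',h)$ such that $\tilde g_k \ra v$ in the $\tl^\infty$ pseudometric, hence the same holds for $g_k$, and finally for $f_k^e$ since $\|f_k^e - g_k\|_\infty \le \delta_k \to 0$. This contradicts $\tl^\infty(f_k^e, v) > \eta$.

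If $m' = 0$, then $M_h(g_k) \ra 0$ and $H_h(g_k) \ra E_{\min}(0,h) = 0$. Since $\|g_k\|_\infty^{p-1}$ stays bounded, one has $N_h(g_k) \le C\|g_k\|_\infty^{p-1} M_h(g_k) \ra 0$, so $G_h(g_k) \ra 0$ as well. A single application of discrete Gagliardo–Nirenberg for a fixed finite $q > 2$ gives $\|g_k\|_{q,h} \ra 0$, and then $\|g_k\|_\infty \le h^{-d/q}\|g_k\|_{q,h} \ra 0$. Taking $v = 0 \in \ms(0,h)$ (with $0 \in K$ by Lemma \ref{klemma}), again $\tl^\infty(f_k^e, v) \le \|g_k\|_\infty + \delta_k \to 0$, a contradiction.

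The main technical point is the rescaling in the $m' > 0$ case: Theorem \ref{orbital} is stated for sequences with masses converging to the target, not at it exactly, but the cleanest route is to renormalize and verify that the renormalization is harmless. This verification in turn rests on the uniform Gagliardo–Nirenberg-type control of $G_h$ and $\|g_k\|_\infty$ established earlier. Once this is set up, the rest is a soft compactness argument that uses no quantitative information beyond Theorem \ref{orbital} itself.
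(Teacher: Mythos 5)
Your proposal is correct and follows the same contradiction-plus-compactness architecture as the paper's proof, but you have done more work than necessary in one place and slightly more than the paper in another. The rescaling step in the $m' > 0$ case is superfluous: Theorem~\ref{orbital} is already stated for sequences $f_k$ with $M_h(f_k) \to m$ and $H_h(f_k) \to E_{\min}(m,h)$ — convergence to the target mass, not exact equality. The paper applies it directly to the sequence $g_k$ without any renormalization. Your concern that the hypothesis requires exact mass was a misreading, and the machinery you invoke to fix it (Gagliardo--Nirenberg control of $G_h(g_k)$ and $\|g_k\|_\infty$) is unnecessary overhead.

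On the other hand, your separate treatment of $m'=0$ is a legitimate point of care that the paper's proof passes over in silence. Lemma~\ref{rlmm} only guarantees $m^* \in (0, m_0]$, so $m' = m_0 - m^*$ can a priori vanish, and Theorem~\ref{orbital} is stated only for $m > 0$. You handle it correctly by taking $v = 0 \in \ms(0,h)$, though the argument can be compressed: once $M_h(g_k) \to 0$ one has directly $\|g_k\|_\infty^2 \le \|g_k\|_2^2 = h^{-d}M_h(g_k) \to 0$ with $h$ fixed, so the Gagliardo--Nirenberg detour through a finite $q > 2$ is not needed. In short: same route as the paper, one unnecessary complication, one genuinely useful extra case that could be streamlined.
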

\begin{proof}
We will argue by contradiction. Suppose that the statement of the theorem is false. Then there exists an $\eta > 0$, such that for every positive integer $k$, there exist $n_k$ and a function $f_k :V_{n_k} \ra\cc$ such that $f_k$ is an improved $k^{-1}$-soliton, but  $\tl^\infty(f_k^e, v)> \eta$ for all $v\in \cup_{m'\in K} \ms(m',h)$. 

For each $k$, let $g_k$ be a function on $\zz^d$ satisfying the requirements (a) and~(b) in the definition of improved $\delta$-soliton (with $\delta = k^{-1}$ and $f= f_k$). Let $(E^*_k, m^*_k)$ be the corresponding element of $\mm(E_0, m_0, h)$. We will show that $f^e_k$ approaches an element of $\cup_{m'\in K} \ms(m', h)$ in $\tl^\infty$ pseudometric through a subsequence, which will give us the necessary contradiction. 

Since $\mm(E_0, m_0,h)$ is a compact set (Lemma \ref{rlmm}), we may assume without loss of generality that $(E^*_k,m^*_k)$ approaches a limit $(E^*,m^*)\in \mm(E_0, m_0,h)$ as $k\ra\infty$. Let $E' := E_0-E^*$ and $m' := m_0-m^*$. Then $M_h(g_k) \ra m'$ and $H_h(g_k)\ra E'$. But by Lemma \ref{rlmm}, $E'= E_{\min}(m',h)$. Therefore, by Theorem~\ref{orbital}, $g_k$ approaches some $g\in \ms(m',h)$ in the $\tl^\infty$ pseudometric through a subsequence. Since $\|f^e_k-g_k\|_\infty \ra 0$ as $k\ra\infty$, this shows that $f^e_k$ also approaches $g$ in the $\tl^\infty$ pseudometric through the same subsequence. But by Lemma \ref{klemma}, $m'\in K$. This completes the argument. 
\end{proof}

Given a function $f: V_n \ra\cc$, let $f_\tau$ denote a random translate of $f$, that is,
\[
f_\tau(x) := f(x+\tau),
\]
where $\tau$ is uniformly distributed on $V_n$ and the addition on the right-hand side is addition modulo $n$ in each coordinate. 
\begin{lmm}\label{ftaulmm}
If $f$ is a $\delta$-soliton, then, provided that $n > C(\delta)$, 
\[
\pp(f_\tau \textup{ is an improved $2\delta$-soliton}) \ge 1- Cn^{-1}. 
\]
\end{lmm}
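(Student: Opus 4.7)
The plan is to produce a witness for the improved $2\delta$-soliton property directly from the $\delta$-soliton witness, after a random translation. Let $g : V_n \to \cc$ be a witness to the fact that $f$ is a $\delta$-soliton, with associated $(E^*, m^*) \in \mm(E_0, m_0, h)$. Let $g_\tau(x) := g(x+\tau)$ (addition mod $n$), and propose $(g_\tau)^e$ (the extension of $g_\tau$ to $\zz^d$ by zero) as the witness for $f_\tau$ being an improved $2\delta$-soliton. Translation invariance on the torus gives $\|f_\tau - g_\tau\|_\infty = \|f - g\|_\infty \le \delta$, so extending both by zero yields $\|f_\tau^e - (g_\tau)^e\|_\infty \le \delta \le 2\delta$, and $M_h((g_\tau)^e) = M_{h,n}(g_\tau) = M_{h,n}(g)$ is within $\delta$ of $m_0 - m^*$. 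These two requirements therefore hold \emph{deterministically}.

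The only substantive step is to control $|H_h((g_\tau)^e) - (E_0 - E^*)|$. Since $N_h((g_\tau)^e) = N_{h,n}(g_\tau) = N_{h,n}(g)$, the discrepancy between the $\zz^d$-energy and the torus-energy comes entirely from the gradient term. On the $\zz^d$ side, each boundary vertex $x \in \partial V_n$ contributes extra edges to outside neighbors where $(g_\tau)^e$ vanishes, giving contribution bounded by $C h^{d-2} |g_\tau(x)|^2$. On the torus side, each wrap-around edge joins opposite boundary vertices, and its contribution is bounded by $C h^{d-2}(|g_\tau(x)|^2 + |g_\tau(y)|^2)$. Both effects are therefore controlled by the same quantity:
\[
|H_h((g_\tau)^e) - H_{h,n}(g_\tau)| \le C \sum_{x \in \partial V_n}|g_\tau(x)|^2,
\]
where $C$ depends only on the fixed parameters. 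If this sum is at most $\delta/C$, then the two $\delta$'s add to give the required $2\delta$.

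The remaining step is the probabilistic estimate on $\tau$. By translation invariance of the uniform measure on $V_n$,
\[
\ee_\tau \sum_{x\in \partial V_n}|g_\tau(x)|^2 = \frac{|\partial V_n|}{n^d}\sum_{y\in V_n} |g(y)|^2 = \frac{|\partial V_n|}{n^d}\cdot \frac{M_{h,n}(g)}{h^d}.
\]
Using $|\partial V_n| \le C n^{d-1}$ and $M_{h,n}(g) \le m_0 + \delta$, the expectation is at most $C/n$ with $C$ independent of $\delta$. By Markov's inequality, the probability that the boundary $\ell^2$ mass exceeds $\delta/C$ is at most $C/(\delta n)$. Requiring $n > C(\delta) := 1/\delta$ absorbs the extra $1/\delta$ into the hypothesis, yielding the stated bound $\ge 1 - Cn^{-1}$ with $C$ depending only on the fixed parameters.

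The main technical point to be careful about is the clean bookkeeping of the two boundary contributions (wrap-around torus edges versus extension-by-zero $\zz^d$ edges), ensuring that the constant in front of $\sum_{x \in \partial V_n}|g_\tau(x)|^2$ depends only on $h$ and $d$ and not on $n$; everything else is routine Markov and translation-invariance.
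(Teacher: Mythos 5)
Your proposal is essentially the paper's own argument: propagate the $\delta$-soliton witness $g$ to $g_\tau$, observe that the nonlinear and mass terms are unaffected by extension-by-zero, bound the gradient discrepancy $|G_h((g_\tau)^e) - G_{h,n}(g_\tau)|$ by a constant times the boundary $\ell^2$ mass of $g_\tau$, and control that boundary mass in expectation by translation invariance plus Markov. The only structural difference is the Markov threshold: you use $\delta/C$, giving a probability defect of $C/(\delta n)$, while the paper uses $n^{-1/2}$, giving a defect of $Cn^{-1/2}$ (independent of $\delta$) and then forces $Cn^{-1/2}\le\delta$ by taking $n > C(\delta)$. One small slip: your final sentence claims that imposing $n > 1/\delta$ ``absorbs'' the extra $1/\delta$ and yields $1 - Cn^{-1}$ with $C$ depending only on the fixed parameters; that step is a non-sequitur, since $C/(\delta n)$ is not $\le C'n^{-1}$ for a $\delta$-independent $C'$ merely because $n$ is large. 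What you actually prove is $1 - C(\delta)\,n^{-1}$. This is the mirror image of a mild imprecision in the paper, whose proof as written yields $1 - Cn^{-1/2}$ rather than the stated $1 - Cn^{-1}$. Neither discrepancy matters downstream, since the lemma is only used to conclude that the probability tends to $1$ as $n\to\infty$ for each fixed $\delta$, and both your version and the paper's deliver that.
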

\begin{proof}
Let $g:V_n \ra\cc$ be a function satisfying the requirements (a) and (b) in the definition of $\delta$-soliton. Then clearly $f_\tau$ is also a $\delta$-soliton, with $g_\tau$ serving the role of $g$. Let $\partial V_n$ denote the boundary of $V_n$ in $\zz^d$. Since $\tau$ is uniformly distributed on $V_n$, it is easy to see that 
\[
\ee(M_{h,n}(g_\tau, \partial V_n)) = \frac{|\partial V_n|}{|V_n|} M_{h,n}(g) \le Cn^{-1}. 
\]
Therefore by Markov's inequality,
\begin{equation}\label{pmhng}
\pp(M_{h,n}(g_\tau, \partial V_n) > n^{-1/2}) \le Cn^{-1/2}. 
\end{equation}
Now, $M_h(g_\tau^e) = M_{h,n}(g_\tau)$ and $N_h (g_\tau^e) = N_{h,n}(g_\tau)$. Also,  it is easy to verify that 
\[
|G_h(g_\tau^e) - G_{h,n}(g_\tau)| \le C M_{h,n}(g_\tau, \partial V_n). 
\]
Thus, if $M_{h,n}(g_\tau, \partial V_n)\le n^{-1/2}$, then $f_\tau$ is an improved $\delta'$-soliton, where $\delta' = \delta + Cn^{-1/2}$. By \eqref{pmhng}, this completes the proof. 
\end{proof}
\begin{proof}[Proof of Theorem \ref{discretelimit}]
First, assume that 
\begin{equation}\label{focusingcondi}
E_{\min}(m_0, h) < E_0 < \frac{1}{2}E_{\max}(m_0,h).
\end{equation}
By Lemma \ref{emax}, $E_{\max}(m_0,h)= 2dm_0/h^2$. Therefore, we are in the setting of Theorem \ref{lower}. Let 
\begin{align*}
S &:= S_{\ep, h,n} (E_0, m_0) \\
&= \{v\in \cc^{V_n} : |M_{h,n}(v)-m_0|\le \ep, \; |H_{h,n}(v)-E_0|\le \ep\},
\end{align*}
as defined in Section \ref{basic}. Let $f = f_{\ep,h,n}$ be a random  function chosen uniformly from $S$. Let
\[
A := \{v\in S: v \text{ is not a $\delta$-soliton}\}. 
\]
Then by Theorem \ref{upper} and Theorem \ref{lower},
\begin{align*}
\limsup_{\ep \ra0} \limsup_{n\ra\infty} \frac{\log\pp(\phi \in A)-\log\pp(\phi\in S)}{n^d} < 0.  
\end{align*}
By Lemma \ref{basiclmm}, this shows that 
\[
\limsup_{\ep \ra0} \limsup_{n\ra\infty} \frac{\log \pp(f\in A)}{n^d} < 0. 
\]
In particular, 
\begin{equation}\label{fsoliton}
\lim_{\ep \ra 0} \lim_{n\ra\infty} \pp(f \text{ is a $\delta$-soliton}) =1. 
\end{equation}
But by Lemma \ref{ftaulmm}, 
\[
\pp(f_\tau \text{ is an improved $2\delta$-soliton})\ge (1-Cn^{-1})\;\pp(f \text{ is a $\delta$-soliton}).
\]
However, $f_\tau$ has the same distribution as $f$. Combined with \eqref{fsoliton}, this shows that 
\[
\lim_{\ep \ra 0} \lim_{n\ra\infty} \pp(f \text{ is an improved $2\delta$-soliton}) =1. 
\]
Since this is true for any $\delta> 0$, Lemma \ref{impsolit} completes the proof of Theorem~\ref{discretelimit} for the $\tl^\infty$ pseudometric, under the condition \eqref{focusingcondi}. 

To prove the result for general $q\in (2,\infty)$, simply observe that for any such $q$ and any $v:\zz^d \ra\cc$,
\begin{equation}\label{q2inf}
\|v\|^q_q \le \|v\|_\infty^{q-2} \|v\|_2^2,
\end{equation}
and that $\|f\|_2\le C$ by definition. 

When $E_0 \ge \frac{1}{2}E_{\max}(m_0, h)$, Theorem~\ref{discretelimit} is a direct consequence of Theorem~\ref{radiating} and Lemma \ref{basiclmm}. 
\end{proof}

\section{Proof of Theorem \ref{ourmain}}\label{ourmainproof}
As in Section \ref{discretelimitproof}, we will write $E_0$ instead of $E$ and $m_0$ instead of $m$. The convention about the notation $C$ will also be the same. 

Let $K(h)$ be defined as in \eqref{khdef}. 
Fix $q\in (2,\infty]$ and $\delta > 0$. 
\begin{lmm}\label{ourmainlmm}
Whenever $h< C(q,\delta)$, for all $v\in \cup_{m'\in K(h)} \ms(m',h)$
\[
\tl^q(\tilde{v}, Q_{\lambda(m_0)}) \le \delta/2,
\]
where $\tilde{v}$ is the continuum image of $\tilde{v}$ at grid size $h$. 
\end{lmm}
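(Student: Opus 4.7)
The plan is to combine Theorem \ref{varlim} with Theorem \ref{solitonconv} via a compactness/contradiction argument. By Lemma \ref{klemma}, every $m'\in K(h)$ has the form $m' = m_0 - m^*$ for some $(E^*, m^*)\in \mm(E_0, m_0, h)$, so it suffices to control the family of discrete ground state solitons with masses $m' = m_0-m^*$ where $(E^*,m^*)$ ranges over $\mm(E_0,m_0,h)$ and $h$ is small.

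Suppose, for contradiction, that the conclusion fails. Then there exist sequences $h_n \downarrow 0$, points $(E^*_n, m^*_n)\in \mm(E_0, m_0, h_n)$, and functions $v_n \in \ms(m_0 - m^*_n, h_n)$ such that
\[
\tl^q(\tilde{v}_n, Q_{\lambda(m_0)}) > \delta/2 \ \text{ for all } n,
\]
where $\tilde{v}_n$ is the continuum image of $v_n$ at grid size $h_n$. By Theorem \ref{varlim}, $m^*_n \to 0$ as $n\to\infty$, hence the masses $m'_n := m_0 - m^*_n$ converge to $m_0 > 0$. Applying Theorem \ref{solitonconv} with this sequence of masses and grid sizes yields $\tl^q(\tilde{v}_n, Q_{\lambda(m_0)}) \to 0$, contradicting the displayed inequality.

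The main step that does the real work is Theorem \ref{varlim} (already proved), which forces $m^*\to 0$ in the continuum limit and thus pushes the discrete soliton mass towards the full value $m_0$; the rest is a clean application of the convergence of discrete solitons to the continuum ground state (Theorem \ref{solitonconv}). No new estimates are needed. In fact, the only mildly subtle point is that the family $\ms(m', h)$ may contain several (possibly many) discrete ground states for each $(m', h)$, so the contradiction hypothesis must be phrased so as to select, for each $h_n$, both the worst element $(E^*_n, m^*_n)\in \mm(E_0,m_0,h_n)$ and the worst $v_n\in\ms(m'_n,h_n)$; once that is done, Theorem \ref{solitonconv} applies to the chosen sequence and closes the argument.
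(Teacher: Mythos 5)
Your proposal is correct and takes essentially the same route as the paper: extract a sequence $h_n\downarrow 0$ with corresponding worst-case $v_n$, use Theorem \ref{varlim} to force the soliton masses $m'_n\to m_0$, and then apply Theorem \ref{solitonconv} to get $\tl^q(\tilde{v}_n,Q_{\lambda(m_0)})\to 0$, contradicting the assumption. The paper phrases the argument as first proving the limit for an arbitrary sequence and then invoking a "simple argument by contradiction," while you state the contradiction hypothesis up front, but the substance is identical.
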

\begin{proof}
Take a sequence $h_k$ decreasing to $0$. 
For each $k$, let $v_k$ be an element of $\cup_{m'\in K(h_k)}\ms(m', h_k)$.  Let $m_k' := M_{h_k}(v_k)$ and $m_k := m_0-m'_k$. By definition of $v_k$, $m_k'\in K(h_k)$, and by definition of $K(h_k)$, there exists $E_k$ such that $(E_k, m_k)\in \mm(E_0, m_0,h)$. Therefore by Theorem \ref{varlim}, $\lim_{k\ra\infty} m_k=0$. Consequently, $M_{h_k}(v_k)$ tends to $m_0$. Since we know that $v_k$ is a discrete ground state soliton for every $k$, therefore by Theorem~\ref{solitonconv}, 
\[
\lim_{k\ra\infty}\tl^q(\tilde{v}_k, Q_{\lambda(m_0)}) =0. 
\] 
A simple argument by contradiction now completes the proof. 
\end{proof}
\begin{proof}[Proof of Theorem \ref{ourmain}]
Fix $q\in (2,\infty]$ and $\delta > 0$. Take any $h$ and any 
\[
v\in \bigcup_{m'\in K(h)} \ms(m',h).
\]
Note that 
\begin{align*}
\tl^q(\tf_{\ep,h,n},\; Q_{\lambda(m_0)}) &\le \tl^q(\tf_{\ep,h,n},\; \tilde{v}) + \tl^q(\tilde{v},\; Q_{\lambda(m_0)})\\
&= \tl^q(f_{\ep,h,n},\; v) + \tl^q(\tilde{v},\; Q_{\lambda(m_0)}).
\end{align*}
Thus, if $\tl^q(\tilde{v},\; Q_{\lambda(m_0)})\le \delta/2$ 
for all $v\in \cup_{m'\in K(h)} \ms(m',h)$, then 
\begin{align*}
\pp\bigl(\tl^q(\tf_{\ep,h,n},\; Q_{\lambda(m_0)}) > \delta\bigr) \le \pp\bigl(\inf_{m'\in K(h)} \inf_{v\in \ms(m',h)}\tl^q(f_{\ep,h,n},\; v) > \delta/2\bigr).
\end{align*}
Now, $E_{\min}(m_0)< E_0 <\infty$ by assumption. Therefore, Corollary \ref{elimit} and Lemma \ref{emax} show that for all sufficiently small $h$,
\[
E_{\min}(m_0, h) < E_0 < \frac{dm_0}{h^2}=\frac{1}{2}E_{\max}(m_0,h). 
\]
Therefore by Theorem \ref{discretelimit} and Lemma \ref{ourmainlmm}, for all $h< C(q,\delta)$, 
\begin{equation}\label{limitzero}
\limsup_{\ep \ra0} \limsup_{n\ra\infty} \pp\bigl(\tl^q(\tf_{\ep,h,n},\; Q_{\lambda(m_0)}) > \delta\bigr) =0. 
\end{equation}
This completes the proof of the first part of Theorem \ref{ourmain}. For the second part, first fix $q\in (2,\infty]$ and $\delta > 0$. Fix some $k > 0$. Choose $h_k$ so small that $h_k< 1/k$ and \eqref{limitzero} holds with $h=h_k$. Given $h_k$, choose $\ep_k$ so small that $\ep_k < 1/k$ and 
\[
\limsup_{n\ra\infty} \pp\bigl(\tl^q(\tf_{\ep_k,h_k,n},\; Q_{\lambda(m_0)}) > \delta\bigr) < 1/k. 
\]
Finally, given $h_k$ and $\ep_k$, choose $n_k$ so large that $n_kh_k > k$ and 
\[
\pp\bigl(\tl^q(\tf_{\ep_k,h_k,n_k},\; Q_{\lambda(m_0)}) > \delta\bigr) < 2/k.
\]
This shows that 
\[
\lim_{k\ra\infty} \pp\bigl(\tl^q(\tf_{\ep_k,h_k,n_k},\; Q_{\lambda(m_0)}) > \delta\bigr) =0.
\]
Since such a sequence $(\ep_k, h_k, n_k)$ exists for any $\delta >0$ (with $q$ fixed), one can extract a sequence that works simultaneously for all $\delta > 0$ by a diagonal argument. In particular, there exists a sequence $(\ep_k, h_k, n_k)$ such that for all $\delta >0$, 
\[
\lim_{k\ra\infty} \pp\bigl(\tl^\infty(\tf_{\ep_k,h_k,n_k},\; Q_{\lambda(m_0)}) > \delta\bigr) =0.
\]
But again, for any $v:\rr^d \ra\cc$ and any $q\in (2,\infty]$, we have the inequality~\eqref{q2inf}. Since the $L^2$ norm of $\tf_{\ep,h,n}$ is uniformly bounded (because $\|\tf_{\ep,h,n}\|^2_2 = M_{h,n}(f_{\ep,h,n})\in [m_0-\ep,m_0+\ep]$), this completes the proof of the second assertion of Theorem \ref{ourmain}. 
\end{proof}

\section{Proof of Theorem \ref{src}}\label{srcproof}
Take any $\nu \in \mm$ and any bounded measurable function $\phi$ on $S$. By Birkhoff's ergodic theorem (see e.g.~\cite[Theorem 1.14]{walters82}) and the ergodicity of the map $T_1$ with respect to the measure $\nu$, 
\begin{align*}
\nu\biggl\{f\in S : \lim_{l\ra \infty} & \frac{1}{l}\sum_{r=1}^l \phi(T_r f)= \int_S\phi(v) d\nu(v)\biggr\}=1.
\end{align*}
Now fix $\phi$ to be the function
\[
\phi(f) = \int_0^1 1_{\bigl\{\tl^\infty(\widetilde{T_tf}, \, Q_{\lambda(m)})\, > \, \delta\bigr\}}\, dt.
\]
Then for any $r$, 
\begin{align*}
\phi(T_r f) &= \int_0^1 1_{\bigl\{\tl^\infty(\widetilde{T_{t+r}f}, \, Q_{\lambda(m)})\, > \, \delta\bigr\}}\, dt\\
&= \int_r^{r+1} 1_{\bigl\{\tl^\infty(\widetilde{T_tf}, \, Q_{\lambda(m)})\, > \, \delta\bigr\}}\, dt.
\end{align*}
Consequently,
\[
\frac{1}{l}\sum_{r=1}^l \phi(T_r f) = \frac{1}{l}\int_0^l 1_{\bigl\{\tl^\infty(\widetilde{T_tf}, \, Q_{\lambda(m)})\, > \, \delta\bigr\}}\, dt.
\]
The above steps show that for any $\nu\in \mm$, 
\begin{equation}\label{srceq}
\begin{split}
&\text{if } \int_S\phi(v)\, d\nu(v) < \delta, \\
&\text{then $\nu$ satisfies SRC with error $\delta$.}
\end{split}
\end{equation} 
For each $j$, let $g_j:\rr\ra\rr$ be the function
\[
g_j(x) = 
\begin{cases}
1 &\text{ if } x > \delta + j^{-1},\\
y &\text{ if } x = \delta + yj^{-1} \text{ for some } y\in [0,1],\\
0 &\text{ if } x< \delta. 
\end{cases}
\]
Note that (a) $g_j$ is a continuous function, (b) $g_j(x) \le 1_{\{x > \delta\}}$ for all $x$ and $j$, and (c) for each $x$, $g_j(x)$ increases to $1_{\{x> \delta\}}$ as $j\ra\infty$. 

Define the map $\xi: S \ra \rr$ as 
\[
\xi(v) := \tl^\infty(\tilde{v}, Q_{\lambda(m)}). 
\]
It is easy to see that $\xi$ is a continuous function on $S$. For each $j$, let 
\[
\phi_j(v) := \int_0^1 g_j(\xi(T_t v)) dt.
\]
By our previous observations about $g_j$ and the continuity of $\xi$ and $T_t$ (as remarked in Section \ref{srcsec}), we see that (a) for each $j$, $\phi_j$ is a continuous function taking value in $[0,1]$, (b) $\phi_j(v)\le \phi(v)$ for each $v$ and $j$, and (c) for each $v$, $\phi_j(v)$ increases to $\phi(v)$ as $j\ra\infty$. 

By \eqref{ergdec} and the continuity of $\phi_j$, we have that for each $j$, 
\begin{equation}\label{phi1}
\int_{\mm}\biggl(\int_S\phi_j(v)\, d\nu(v)\biggr) d\tau(\nu) = \int_S\phi_j(v)\, d\mu(v).
\end{equation}
Since $\phi_j\le \phi$, therefore 
\begin{equation}\label{phi2}
\int_S\phi_j(v)\, d\mu(v)\le \int_S\phi(v)\, d\mu(v)
\end{equation}
for all $j$. Since $\phi_j \ra\phi$ pointwise, therefore by Fatou's lemma from measure theory and \eqref{phi1} and \eqref{phi2},
\begin{equation}\label{twoeqs}
\int_{\mm}\biggl(\int_S\phi(v)\, d\nu(v)\biggr) d\tau(\nu)\le \liminf_{j\ra\infty} \int_S\phi_j(v)\, d\mu(v) \le \int_S\phi(v)\, d\mu(v)
\end{equation}
Again, since $T_t$ preserves $\mu$,
\begin{align}
\int_S\phi(v) d\mu(v)&= \int_S \int_0^1 1_{\bigl\{\tl^\infty(\widetilde{T_tv}, \, Q_{\lambda(m)})\, > \, \delta\bigr\}}\, dt \,d\mu(v)\nonumber \\
&= \int_0^1 \int_S  1_{\bigl\{\tl^\infty(\widetilde{T_tv}, \, Q_{\lambda(m)})\, > \, \delta\bigr\}}\,d\mu(v)\, dt \nonumber \\
&= \int_0^1 \int_S  1_{\bigl\{\tl^\infty(\tilde{v}, \, Q_{\lambda(m)})\, > \, \delta\bigr\}}\,d\mu(v)\, dt \nonumber \\
&= \int_S  1_{\bigl\{\tl^\infty(\tilde{v}, \, Q_{\lambda(m)})\, > \, \delta\bigr\}}\,d\mu(v). \label{invariance}
\end{align}
By \eqref{twoeqs}, this gives
\begin{equation}\label{invariance3}
\begin{split}
\int_{\mm}\biggl(\int_S\phi(v)\, d\nu(v)\biggr) d\tau(\nu)&\le \int_S1_{\bigl\{\tl^\infty(\tilde{v}, \, Q_{\lambda(m)})\, > \, \delta\bigr\}}\, d\mu(v).
\end{split}
\end{equation}
But by Theorem \ref{ourmain}, 
\begin{equation}\label{ourmain2form}
\lim_{h\ra 0} \limsup_{\ep \ra 0} \limsup_{n\ra \infty}\int_S1_{\bigl\{\tl^\infty(\tilde{v}, \, Q_{\lambda(m)})\, > \, \delta\bigr\}}\, d\mu(v) = 0.  
\end{equation}
The displays \eqref{srceq}, \eqref{invariance3} and Markov's inequality show that 
\begin{align*}
&\tau\bigl\{\nu\in \mm: \nu \text{ satisfies SRC with error } \delta\bigr\} \\
&\ge \tau\biggl\{\nu\in \mm:\int_S\phi(v)\, d\nu(v) < \delta\biggr\}\\
&\ge 1 - \frac{1}{\delta} \int_{\mm}\biggl(\int_S\phi(v)\, d\nu(v)\biggr) d\tau(\nu)\\
&\ge 1 - \frac{1}{\delta} \int_S1_{\bigl\{\tl^\infty(\tilde{v}, \, Q_{\lambda(m)})\, > \, \delta\bigr\}}\, d\mu(v).
\end{align*}
Together with \eqref{ourmain2form}, this completes the proof of Theorem \ref{src}.

\vskip.2in
\noindent{\bf Acknowledgments.} The author thanks Pierre Germain, Partha Dey, Terence Tao, Jalal Shatah, Phil Sosoe, Kay Kirkpatrick, Julien Barr\'e, Lai-Sang Young, Carlos Kenig, Persi Diaconis, Fraydoun Rezakhanlou, Stefano Olla, Raghu Varadhan and the anonymous referee for useful comments. The manuscript of Rapha\"el \cite{raphael08} (which was brought to my attention by Pierre Germain) and Tao's book~\cite{tao06} have been of immeasurable help.

\end{document}